\pdfoutput=1
\documentclass[10pt]{aart}

\usepackage{arxiv-setting}
\usepackage{booktabs}
\usepackage{placeins}
\usepackage{tabularx}
\usepackage{needspace}

\cslet{blx@noerroretextools}\empty
\usepackage{autonum}

\newcommand{\EnumParameter}{\omega}
\newcommand{\GraphMoment}{\mathfrak M}
\newcommand{\WordClass}[1]{[#1]}
\newcommand{\ReversalClass}[1]{[#1]_{\mathrm{rev}}}
\newcommand{\CoreCode}{%
  \bigl(\gU,\zeta,r,(k_f)_{f\in\gE(\gU)}\bigr)}
\newcommand{\SquarePluralClasses}[1]{\sQ^{#1,{\rm p}}}
\newcommand{\SquareSingletonClasses}[1]{\sQ^{#1,{\rm s}}}
\newcommand{\RectangularPluralClasses}[2]{%
  \widetilde{\sQ}_{#1}^{#2,{\rm p}}}
\newcommand{\RectangularSingletonClasses}[2]{%
  \widetilde{\sQ}_{#1}^{#2,{\rm s}}}
\newcommand{\SquarePluralContribution}{T^{\rm p}}
\newcommand{\SquareSingletonContribution}{T^{\rm s}}
\newcommand{\RectangularPluralContribution}{\widetilde T^{\rm p}}
\newcommand{\RectangularSingletonContribution}{\widetilde T^{\rm s}}
\def\ratio{{\gamma}} % proportional ratio n/m >= 1
\renewcommand{\rowvarmin}{\underline{v}_{\rm r}}
\renewcommand{\rowvarmax}{\overline{v}_{\rm r}}
\renewcommand{\colvarmin}{\underline{v}_{\rm c}}
\renewcommand{\colvarmax}{\overline{v}_{\rm c}}
\renewcommand{\hermvarmin}{\underline{v}}
\renewcommand{\hermvarmax}{\overline{v}}

\title{Concentration of Regularized Sparse Random Matrices:\\
Spectral Edge Bounds via Nonbacktracking Operators}

\author{
Hai-Xiao Wang\orcidlink{0000-0003-2730-1439}%
\thanks{Department of Applied Mathematics, University of Washington, Seattle, WA 98195, USA; \texttt{haixwang@uw.edu}.}
}

\begin{document}

\maketitle

\begin{abstract}

  In sparse random matrices, spectral outliers (eigenvalues and singular values located away from the bulk) emerge due to degree fluctuations: high degrees inflate the operator norm, while low column degrees reduce the least singular value. As proved by Feige and Ofek \cite{feige2005spectral} and Le, Levina, and Vershynin \cite{le2017concentration}, degree regularization enforces concentration at the expected norm scale. However, precise bounds incorporating the cutoffs remain unexplored and challenging since regularization introduces dependencies among entries.

  \smallskip

For the first time in the literature, we provide variance and cutoff dependent bounds for extreme singular values and eigenvalues of regularized inhomogeneous random matrices. In the absence of regularization, our lower bound for the least singular value matches the same leading constant obtained by Brailovskaya and van Handel \cite{brailovskaya2022universality}. Moreover, our error term vanishes under the milder condition $d/\log N\to\infty$, as opposed to their stronger requirement $d/(\log N)^4\to\infty$.
A key ingredient is to extend spectral radius bounds for nonbacktracking matrices to the dependent setting. We build on approaches for independent cases established by Benaych-Georges, Bordenave, and Knowles \cite{benaych2020spectral}, as well as Dumitriu and Zhu \cite{dumitriu2024extreme}, and carefully handle edges traversed only once.
Our proof framework separates deterministic spectral comparisons from probabilistic estimates: once Loewner inequalities and columnwise variance controls are established, the remaining probabilistic analysis boils down to verifying the graph moment conditions formulated in this paper. We hope this framework can be extended to handle general random matrices with more complex dependencies.

\end{abstract}

\noindent\textbf{Keywords:}
Sparse random matrices, degree-regularized random graphs, singular values, eigenvalues, nonbacktracking operator, moment method.

\newpage
\tableofcontents

%%%%%%%%%%%%%%%%%%%%%%%%%%%%%%%%%%%%%%%%%%%%%%
\newpage
%%%%%%%%%%%%%%%%%%%%%%%%%%%%%%%%%%%%%%%%%%%%%%

\section{Introduction}
\label{sec:introduction}
%%%%%%%%%%%%%%%%%%%%%%%%%%%%%%%%%%%%%%%%%%%%%%

Sparse random matrices arise naturally in many applications, including community detection \cite{abbe2018community}, numerical linear algebra \cite{martinsson2020randomized}, and modern efficient attention mechanisms \cite{liu2025deepseek}. Over the past several decades, understanding the spectral behavior of sparse random matrices, especially the precise locations of the extreme eigenvalues, has attracted significant attention. As a motivating example, consider the Erd\H{o}s--R\'enyi random graph $\gG\sim\mathds{G}(N, d/N)$ on $N$ vertices, where edges are sampled independently with probability $d/N$. Let $\mA$ denote its adjacency matrix without self-loops, and assume $d=o(N)$. When $d/\log(N)\to\infty$, the extreme eigenvalues of the centered adjacency matrix approach the semicircle edges; in particular, $\|\mA-\E\mA\|=(2+o_\P(1))\sqrt d$ \cite{furedi1981eigenvalues, vu2007spectral, benaych2020spectral}. However, when $d/\log(N) \to 0$, high-degree fluctuations cause $\|\mA-\E\mA\|/\sqrt d\to\infty$ in probability, so the centered matrix does not concentrate at the $\sqrt d$ scale \cite{krivelevich2003largest,le2017concentration,benaych2019largest}.
In contrast, the removal of high degree vertices, a.k.a., regularization enforces the concentration of $\|(\mA - \E\mA)_{I}\| = O(\sqrt{d})$ as proved in \cite{feige2005spectral, lei2015consistency, le2017concentration}, where $I$ represents the set of remaining indices supporting the new adjacency matrix. It is worth noting that even though the regularized matrix $\mA_{I}$ concentrates with the optimal rate $\sqrt{d}$, the precise constant factor multiplying $\sqrt{d}$ has never been explicitly characterized.

A related and equally important area in random matrix theory is the study of the least singular values, which plays a crucial role in understanding matrix invertibility \cite{tikhomirov2020singularity} and conditioning \cite{litvak2020small}, with broad implications in numerical linear algebra \cite{martinsson2020randomized} and algorithms analysis \cite{spielman2001smoothed}. In the context of rectangular sparse random matrices, we consider the bipartite Erd\H{o}s--R\'enyi random graph $\widetilde{\gG}\sim\mathds{G}(\gV_{1}, \gV_{2}, d/\sqrt{mn})$, where edges, containing exactly one vertex in $\gV_{1} =[n]$ and another vertex in $\gV_2 =[m]$, are sampled independently with probability $d/\sqrt{mn}$. Let $\widetilde{\mA}\in \{0, 1\}^{n\times m}$ denote the \emph{bi-adjacency} matrix of $\widetilde{\gG}$ with $n\geq m$. We assume $n\geq m$ and focus on the first $m$ nontrivial singular values. As shown in \cite{dumitriu2025singular}, below a certain sparsity, the least of these fails to converge to the left edge of the limiting spectral distribution. This failure is due to the presence of \emph{low degree} vertices, and in particular, the least singular value may actually become zero when isolated vertices emerge, i.e., zero columns\footnote{In the tall matrix considered here, zero rows will not lead to essential zero singular values} in $\widetilde{\mA}$. Consequently, it is natural to consider the effect of regularization on the smallest singular values. This raises the question of whether the removal of vertices with extreme degrees can likewise recover concentration at the left and right edge of the spectrum. To the best of our knowledge, the effect of regularization on singular values has not yet been studied.

In this paper, we study spectral edges of mean-field type sparse random matrices after regularization with pre-determined thresholds. For the first time in the literature, we establish the explicit relationship between cutoff thresholds and extreme eigenvalues/singular values. Our contributions can be summarized as follows:

\begin{enumerate}
    \item \emph{Least singular value.}
    When the lowest expected degree is at least $\log(N)$ and the lower column cutoff is properly controlled, ensuring a non-vanishing fraction of columns remaining, we establish lower bounds for least singular values (Theorem~\ref{thm:rectangular-smallest-singular-value}), relying on both lower cutoff threshold and minimum column variance.    
    When no thresholding is applied, the leading contant of our lower bound matches the one previously established by Brailovskaya and van Handel \cite{brailovskaya2022universality} for the regime $d/(\log(N))^4\to\infty$. In contrast, our error term vanishes when $d/\log(N)\to\infty$, extending the regime where this tight lower bound remains valid. See Remark~\ref{rem:rectangular-smallest-singular-value-comparison}.

    \item \emph{Largest singular values and eigenvalues.}
    We provide upper bounds for the largest singular values (Theorem~\ref{thm:rectangular-largest-singular-value}) and eigenvalues (Theorem~\ref{thm:hermitian-edge}), which depend on both the upper cutoffs and the maximal row and column variance profiles. Whereas estimating the left edge necessitates the lowest expected degree to be at least $\log(N)$, upper bounds for the right edge remain valid as long as the highest expected degree exceeds some sufficiently large constant.

    \item \emph{Spectral radii of nonbacktracking matrices.} For nonbacktracking matrices constructed from matrices with dependent entries, we provide upper bounds for their spectral radii (Proposition~\ref{prop:nonbacktracking-radius-bounds}) via the moment method (Lemma~\ref{lem:general-nonbacktracking-trace-bounds}), extending previous analysis in \cite{benaych2020spectral} and \cite{dumitriu2024extreme} where independent entries are required. Our analysis show that traces for powers of nonbacktracking matrices are upper bounded, as long as the entries of the base matrix satisfy certain graph moment conditions in the Subsection~\ref{subsec:moment-assumptions}, where independence is not mandatory. See Section~\ref{sec:trace-estimates} for details.
    
    \item \emph{Loewner inequalities and edge bounds.}
    We derive deterministic Loewner inequalities from the Ihara--Bass formulas, combining nonbacktracking radius bounds with entrywise and separate row and column variance bounds to control extreme eigenvalues and singular values. In particular, Proposition~\ref{prop:rectangular-lower-quadratic-form} provides a weighted Gram matrix comparison that sharpens the least singular value bound, and generalizes the previous established result in \cite{dumitriu2025singular} for homogeneous entries.

    \item \emph{Vertex retention guarantees.}
    We quantify deletion cascades ( Remark~\ref{remark:deletion-cascade}) and retained dimensions, and
    establish near-full retention in the stated upper-cutoff and strong-band regimes (Section~\ref{sec:regularization-thresholds}). These guarantees ensure that the spectral bounds apply to submatrices of nearly the original dimensions, with explicit control on the proportion of vertices removed by regularization.

    \item \emph{A general framework for spectral edges.}
    We extend the framework that separates deterministic spectral comparisons from probabilistic input, in the vein of \cite{benaych2020spectral} and \cite{dumitriu2024extreme}. With the Loewner inequalities and the required entrywise and row/column variance bounds in place, the remaining probabilistic task (Section~\ref{sec:model-moment-verification}) is to verify the graph moment assumptions listed in Subsection~\ref{subsec:moment-assumptions}. We hope this framework can be adapted to a broader class of random matrix models, including those with dependent entries.
\end{enumerate}
Finally, we would like to make a remark on the effectiveness of the nonbacktracking operator method. As will be demonstrated at the beginning of Section~\ref{sec:trace-estimates}, this approach is particularly well-suited for handling lower sparsity regimes, and sometimes it has been used in ways that sacraficed the accuracy to achieve near critical sparsity 
\cite{dumitriu2024extreme}, perhaps 
unachievable by other methods. Here, we show that, when applied in full strength, the nonbacktracking operator method is not only capable of matching the accuracy of alternative approaches, but it can also be used to reach optimal sparsity thresholds.

\subsection{Sparse random matrices and degree regularization}
\label{subsec:bounded-row-and-column-sums}
We first introduce the sparse random matrix model that will be used throughout the paper, and an algorithm to obtain the regularized matrix through the row-column projection.
\begin{definition}[Sparse random matrix]
\label{def:sparse-random-matrix}
We say that a random matrix $\mX \in \C^{n\times m}$ has \emph{sparsity scale} $d >0$ if each of its entries $\emX_{ij}$, $i\in[n], j\in[m]$, satisfies
\begin{align}
 |\emX_{ij}|&\leq C d^{-1/2}\quad\text{almost surely},
 &\E|\emX_{ij}|&\leq\frac{C\sqrt d}{N},
 &\E|\emX_{ij}|^k&\leq\frac{C}{Nd^{(k-2)/2}}
       \quad(k\geq2),
 \label{eqn:sparse-variance-budgets}
\end{align}
where $C>0$ is some fixed constant that does not rely on $n$, $m$, $d$, and $k$.
\end{definition}
Note that the model above permits dependencies between entries and does not assume that the entries are centered or identically distributed. In addition, $\sqrt{d}|\emX_{ij}|$ are bounded univerally. We further denote the extremal second-moment sums of $\mX$ by
\begin{subequations}
\begin{align}
 \rowvarmin&\coloneqq\min_{i\in[n]}
       \sum_{j=1}^m\E|\emX_{ij}|^2,
 &\colvarmin&\coloneqq\min_{j\in[m]}
       \sum_{i=1}^n\E|\emX_{ij}|^2,
 \label{eqn:row-column-var-min}\\
 \rowvarmax&\coloneqq\max_{i\in[n]}
       \sum_{j=1}^m\E|\emX_{ij}|^2,
 &\colvarmax&\coloneqq\max_{j\in[m]}
       \sum_{i=1}^n\E|\emX_{ij}|^2.
 \label{eqn:row-column-var-max}
\end{align}
\end{subequations}
We use $\sigma_{\min}(\mX)$ and $\sigma_{\max}(\mX)$ to denote the smallest and largest singular values of $\mX$, respectively. For any $\mX \in \C^{n\times m}$, we define its Hermitian linearization as
\begin{align}
 \widetilde{\mH}\coloneqq
 \begin{bmatrix}
  \bzero_{n\times n}&\mX\\
  \mX^*&\bzero_{m\times m}
 \end{bmatrix}\in\C^{(n+m)\times (n+m)},
 \label{eqn:block-linearization}
\end{align}
which satisfies $\|\widetilde{\mH}\|=\|\mX\|$. Throughout the paper, we work under the proportional dimension regime.
\begin{assumption}
\label{assumption:proportional-dimension-regime}
There is some constant $\Gamma > 0$ such that
      \begin{align}
     1 \leq  \ratio \coloneqq n/m \leq \Gamma,
      \label{eqn:proportional-block-regime}
     \end{align}
\end{assumption}

\paragraph{Projection regularization.}
We now introduce an algorithm to obtain the regularized matrix through the row-column projection. Let $\mX_{i\bullet}$ and $\mX_{\bullet j}$ denote the $i$-th row and $j$-th column of $\mX$, respectively. Recall that the $\ell_0$ norm of a vector $\vx \in \C^n$ is defined as $\|\vx\|_0=|\{j:\evx_j\ne0\}|$, counting the number of non-zero entries. For fixed integer bands $0\leq L_{\rm r}\leq U_{\rm r}\leq m$ and $0\leq L_{\rm c}\leq U_{\rm c}\leq n$, define the support events by
\begin{align}
 \sR_i(\mX)
 &=\{L_{\rm r}\leq\|\mX_{i\bullet}\|_0\leq U_{\rm r}\},
 &\sC_j(\mX)
 &=\{L_{\rm c}\leq\|\mX_{\bullet j}\|_0\leq U_{\rm c}\}.
 \label{eqn:row-column-support-events}
\end{align}
Algorithm~\ref{alg:row-column-projection} iteratively eliminates rows and columns that do not meet the criteria, continuing this process until the sets of active rows and columns remain static.

\begin{algorithm}[h]
 \caption{Row--column projection regularization of $\mX$}
 \label{alg:row-column-projection}
 \begin{algorithmic}[1]
  \Require $\mX\in\C^{n\times m}$ and the bands in
     \eqref{eqn:row-column-support-events}
  \Ensure Active sets $I_\star,J_\star$, projections
     $\mP_{\rm r} \in \C^{n\times n}, \mP_{\rm c} \in \C^{m\times m}$, and active submatrix $\mX_{\rm reg} \in \C^{|I_\star|\times|J_\star|}$
  \State $I_0\gets[n]$, $J_0\gets[m]$, and $t\gets0$
  \Repeat
   \State $\mP_{\rm r}^{(t)}\gets
    \diag((\indi{i\in I_t})_{i\in[n]})$,\quad
    $\mP_{\rm c}^{(t)}\gets
    \diag((\indi{j\in J_t})_{j\in[m]})$
   \State $\mX^{(t)}\gets
    \mP_{\rm r}^{(t)}\mX\mP_{\rm c}^{(t)}$
   \State $I_{t+1}\gets\{i\in I_t:\sR_i(\mX^{(t)})\}$,\quad
    $J_{t+1}\gets\{j\in J_t:\sC_j(\mX^{(t)})\}$
   \State $t\gets t+1$
  \Until{$I_t=I_{t-1}$ and $J_t=J_{t-1}$}
  \State $I_\star\gets I_t$, $J_\star\gets J_t$
  \State $\mP_{\rm r}\gets
   \diag((\indi{i\in I_\star})_{i\in[n]})$,\quad
   $\mP_{\rm c}\gets
   \diag((\indi{j\in J_\star})_{j\in[m]})$
  \State $\mX_{\rm reg}\gets\mX_{I_\star,J_\star}$
 \end{algorithmic}
\end{algorithm}

\begin{remark}\label{remark:deletion-cascade}
Positive lower cutoffs can cause a deletion cascade, as each removal reduces the remaining support sizes and may trigger further deletions, potentially leaving $I_\star=J_\star=\varnothing$.
\end{remark}

\subsection{Extreme singular values bounds for regularized rectangular matrices}
\label{sec:bounds-on-largest-and-smallest-singular-values-of-sparse-rectangular-random-matrices}

Let $\widetilde{\gG}\sim\mathds{G}(n,m,\{p_{ij}\})$ be an
inhomogeneous bipartite Erd\H{o}s--R\'enyi graph with $N=n+m$ and vertex classes $\gV_1=[n]$, $\gV_2=[N]\setminus[n]$, which, for convenience, identified as $[m]$. Let $\widetilde{\mA}$ denote the \emph{biadjacency} matrix of $\widetilde{\gG}$, i.e., $\widetilde{\mA}\in\{0,1\}^{n\times m}$ with independent entries $\widetilde{\emA}_{ij}\sim\Ber(p_{ij})$.
For each row $i\in[n]$ and column $j\in[m]$, we denote their expected degrees by $\mu_i^{\rm r}=\sum_jp_{ij}$ and $\mu_j^{\rm c}=\sum_ip_{ij}$. The minimum and maximum expected degrees for each row and column are given by
\begin{align}
  \underline{\drow}\coloneqq\min_i\mu_i^{\rm r},\qquad
  \underline{\dcol}\coloneqq\min_j\mu_j^{\rm c}, \qquad
 \overline{\drow}\coloneqq\max_i\mu_i^{\rm r},\qquad
 \overline{\dcol}\coloneqq\max_j\mu_j^{\rm c}.
\end{align}
We define the sparsity scale as the geometric mean of the minimum row and column expected degrees, formally
\begin{align}
 d\coloneqq
   \bigl(\underline{\drow}\,\underline{\dcol}\bigr)^{1/2}>0.
 \label{eqn:bipartite-normalization}
\end{align}
Throughout the subsection, we work under the propositional dimension regime \eqref{eqn:proportional-block-regime}. We further introduce the uniform sparsity assumption.

\begin{assumption}[Uniform rectangular sparsity]
\label{assumption:rectangular-uniform-sparsity}
For a fixed constant $C_{\rm sp}$,
\begin{align}
 p_{\star} \coloneqq \max_{i,j}p_{ij}\leq C_{\rm sp}d/\sqrt{nm}.
 \label{eqn:rectangular-uniform-sparsity-bound}
\end{align}
\end{assumption}
\paragraph{Projection and centering.}
We apply Algorithm~\ref{alg:row-column-projection} to $\widetilde{\mA}$ with the prescribed conditions in \eqref{eqn:row-column-support-events}, obtaining $I_\star,J_\star$ and $\mP_{\rm r},\mP_{\rm c}$. Define the centered matrix and the active submatrices by
\begin{align}
 \mX&\coloneqq(\widetilde{\mA}-\E\widetilde{\mA})/\sqrt d,
 &\mX^{(\tau)}&\coloneqq\mX_{I_\star,J_\star},
 &\widetilde{\mA}^{(\tau)}&\coloneqq\widetilde{\mA}_{I_\star,J_\star}.
 \label{eqn:compressed-rectangular-matrix}
\end{align}
Both active matrices have size $|I_\star|\times|J_\star|$.
For the centered matrix $\mX$, using notations in \eqref{eqn:row-column-var-min}--\eqref{eqn:row-column-var-max}, the second-moment summations are
\begin{subequations}
\begin{align}
 \rowvarmin&=d^{-1}\min_i\sum_jp_{ij}(1-p_{ij}),
 &\colvarmin&=d^{-1}\min_j\sum_ip_{ij}(1-p_{ij}),
 \label{eqn:rectangular-variance-lower-bounds}\\
 \rowvarmax&=d^{-1}\max_i\sum_jp_{ij}(1-p_{ij}),
 &\colvarmax&=d^{-1}\max_j\sum_ip_{ij}(1-p_{ij}).
 \label{eqn:rectangular-variance-upper-bounds}
\end{align}
\end{subequations}

Below, we will provide bounds for the largest and smallest singular values of the active submatrix $\mX^{(\tau)}$, separately. We first define the cutoff errors by
\begin{align}
 \varepsilon_N&\coloneqq\min\big\{
 ([U_{\rm r}-\overline{\drow}]_+ +[U_{\rm c}-\overline{\dcol}]_+)/{d},\quad
 \sqrt{\log(N)/{d}}\,\big\}.
 \label{eqn:rectangular-cutoff-error-scale}
\end{align}
We then introduce the normalized cutoff-dependent variance parameters by
\begin{align}
 \overline{\rho_{\rm r}}^{2}
 &\coloneqq\min\left\{ U_{\rm r}/{d},\,\, \rowvarmax\right\},
 &\overline{\rho_{\rm c}}^{2}
 &\coloneqq\min\left\{ U_{\rm c}/{d},\,\,\colvarmax\right\},
 &\underline{\rho_{\rm c}}^{2}
 &\coloneqq\max\left\{ (1-p_\star)^2L_{\rm c}/{d},\,\,
                          \colvarmin\right\}.
\label{eqn:cutoff-block-budgets}
\end{align}
We further define the right and left singular value thresholds by
\begin{align}
      \sigma_{\rm L}&\coloneqq\sqrt{\colvarmin}-\sqrt{\rowvarmax}, 
      &
 \sigma_{\rm R}&\coloneqq\inf_{t\geq(\rowvarmax\colvarmax)^{1/4}}
 \sqrt{t^2+\overline{\rho_{\rm r}}^{2}+\overline{\rho_{\rm c}}^{2}
       +\overline{\rho_{\rm r}}^{2}\overline{\rho_{\rm c}}^{2}/t^2},
 \label{eqn:rectangular-explicit-upper-threshold}
\end{align}

\begin{assumption}[Right edge sparsity]\label{ass:right-edge-sparsity}
Fix $0<\delta_{\rm NB}<1$. Let $C_{\rm NB}>1$ be some sufficiently large constant, which depends only on the fixed profile and aspect-ratio constant $\Gamma$ from \eqref{eqn:proportional-block-regime}, such that
\begin{align}
 d\geq C_{\rm NB},\qquad
 d^{\,6}(\log(N))^3\leq N^{1-\delta_{\rm NB}},
\label{eqn:rectangular-edge-sparsity-range}
\end{align}
\end{assumption}
For regularization conditions, we allow any integer cutoffs satisfying
\begin{align}
 L_{\rm r}=0,\qquad 0\leq U_{\rm r}\leq m,\qquad
 0\leq L_{\rm c}\leq U_{\rm c}\leq n.
 \label{eqn:rectangular-edge-cutoff-range}
\end{align}
\begin{theorem}[Largest singular value]
\label{thm:rectangular-largest-singular-value}
Suppose the sparsity Assumptions~\ref{assumption:rectangular-uniform-sparsity}, ~\ref{ass:right-edge-sparsity}, and the cutoff conditions \eqref{eqn:rectangular-edge-cutoff-range}. Recall $\sigma_{\rm R}$ in \eqref{eqn:rectangular-explicit-upper-threshold} and $\varepsilon_N$ in \eqref{eqn:rectangular-cutoff-error-scale}. 
There exists a constant $K>0$ such that, for all sufficiently large $N$,
with probability at least $1-N^{-1}$,
\begin{align}
 \|\mX^{(\tau)}\|
 &\leq\sigma_{\rm R}+K(\varepsilon_N+d^{-1/2}).
\label{eqn:largest-singular-edge}
\end{align}
The operator norm of an empty restriction is taken to be zero.
The constant $K$ depends only on the fixed hypothesis constants, uniformly over the admissible cutoffs.
\end{theorem}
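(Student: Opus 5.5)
We will bound $\|\mX^{(\tau)}\|$ through the Hermitian linearization $\widetilde{\mH}^{(\tau)}$ of $\mX^{(\tau)}$, as in \eqref{eqn:block-linearization}, together with an Ihara--Bass type Loewner inequality. Let $\mB^{(\tau)}$ be the nonbacktracking matrix built from $\widetilde{\mH}^{(\tau)}$. For every $t$ exceeding its spectral radius $\rho(\mB^{(\tau)})$, the Ihara--Bass identity shows that $t^2\mI - t\widetilde{\mH}^{(\tau)} + \mD(t)$ is invertible along the whole ray $[t,\infty)$. Here $\mD(t)$ is the diagonal correction with entries $\sum_{j}|\emX_{ij}|^2 t^2/(t^2-|\emX_{ij}|^2)$. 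Since the matrix is positive definite at infinity, it stays positive semidefinite on the ray. This gives the deterministic bound
\[
\lambda_{\max}(\widetilde{\mH}^{(\tau)})\leq t+\frac{1}{t}\max_{v}\mD_{vv}(t)\,(1+O(d^{-1}t^{-2})).
\]
We will use the bipartite refinement, in which the row and column degree sums enter separately. Using a two-parameter diagonal similarity $\diag(a\mI_n,b\mI_m)$ before applying Ihara--Bass, and optimizing the ratio $a/b$, this becomes
\[
\sigma_{\max}^2\leq t^2+R+C+RC/t^2,
\]
where $R$ and $C$ bound the active row and column sums $\sum_j|\emX^{(\tau)}_{ij}|^2$ and $\sum_i|\emX^{(\tau)}_{ij}|^2$. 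This has exactly the form of $\sigma_{\rm R}$. The error in replacing $t^2/(t^2-|\emX_{ij}|^2)$ by $1$ is $O(d^{-1})$, since $|\emX_{ij}|\leq Cd^{-1/2}$.

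\textbf{Step 1 (row and column sums).} We will bound $R$ by $\overline{\rho_{\rm r}}^2+O(\varepsilon_N+d^{-1/2})$, and likewise $C$. For an active row $i$, we have $\sum_j|\emX^{(\tau)}_{ij}|^2 = d^{-1}\sum_{j\in J_\star}(\widetilde{\emA}_{ij}(1-2p_{ij})+p_{ij}^2)$. The cutoff gives at most $U_{\rm r}/d+O(p_\star\overline{\drow}/d)$, and $p_\star\overline{\drow}/d$ is negligible. Independently, Bernstein's inequality and a union bound give $\sum_j\widetilde{\emA}_{ij}\leq \overline{\drow}+O(\sqrt{\overline{\drow}\log N}+\log N)$. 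Combining with the variance bound $\rowvarmax$ yields $\min\{U_{\rm r}/d,\rowvarmax\}$ plus an error. That error is $\min\{[U_{\rm r}-\overline{\drow}]_+/d,\sqrt{\log N/d}\}$ up to constants, i.e.\ $\varepsilon_N$. When $d$ is only a large constant, we will instead use the truncation $U_{\rm r}/d$ in place of the high-probability bound, so the minimum in $\varepsilon_N$ is always available. The column case is identical.

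\textbf{Step 2 (nonbacktracking radius).} We will invoke Proposition~\ref{prop:nonbacktracking-radius-bounds} to get $\rho(\mB^{(\tau)})\leq(\rowvarmax\colvarmax)^{1/4}(1+O(d^{-1/2}))+O(d^{-1/2})$ with probability $1-N^{-1}$. This is valid under Assumption~\ref{ass:right-edge-sparsity} once the graph moment conditions of Subsection~\ref{subsec:moment-assumptions} are verified for the regularized, dependent entries $\mP_{\rm r}\mX\mP_{\rm c}$. We expect this verification to be the main obstacle. We would appeal to Section~\ref{sec:model-moment-verification}, where deletion is monotone: the indicator of survival multiplies the entry, so $|\emX^{(\tau)}_{ij}|\leq|\emX_{ij}|$ entrywise on the uncentered part. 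The danger lies in edges traversed once, whose centering is destroyed by the selection. Here we would bound the conditional mean of a once-traversed entry given the rest of the path by $O(\sqrt d/N)$ plus a correction from the cutoff events, in the manner of the paper's treatment of single edges.

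\textbf{Step 3 (assembly).} Take any $t\geq(\rowvarmax\colvarmax)^{1/4}$ and enlarge it by $O(d^{-1/2})$ so that it exceeds $\rho(\mB^{(\tau)})$. Plugging Steps~1 and~2 into the Loewner bound and taking the infimum over $t$ gives $\|\mX^{(\tau)}\|\leq\sigma_{\rm R}+K(\varepsilon_N+d^{-1/2})$, since the map is Lipschitz in $t,R,C$ on the relevant compact range. If $I_\star$ or $J_\star$ is empty the bound is trivial. Uniformity over the cutoffs follows because none of the constants depend on $U_{\rm r},L_{\rm c},U_{\rm c}$.
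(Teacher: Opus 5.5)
\textbf{Verdict: there is a genuine gap, in Step~2.} Your deterministic inequality (of the form $t^2+a+b+ab/t^2$) and your variance step match the paper. The problem is the radius input.

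\textbf{Why Step~2 fails.} You apply Ihara--Bass to the active matrix itself, so you need a bound on $\rho(\widetilde{\mB}(\widehat{\mX}))=\rho(\widetilde{\mB}(\mX^{(\tau)}))$ for every cutoff allowed by \eqref{eqn:rectangular-edge-cutoff-range}. That means arbitrary $U_{\rm r}$, $U_{\rm c}$, and $L_{\rm c}$.
\begin{itemize}
\item Proposition~\ref{prop:nonbacktracking-radius-bounds}\ref{item:nonbacktracking-bipartite} only covers $L_{\rm c}=0$ together with the margin \eqref{eqn:rectangular-edge-upper-cutoffs}.
\item That margin is what gives $q_{\theta,{\rm B}}\leq d^{-4}$ and $\alpha_{\ell,{\rm B}}\leq d^{-3}$. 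These in turn give the penalty $d^{-\varsigma(\xi)}$ on singleton chains.
\item A conditional-mean bound of order $\sqrt d/N$ per once-traversed edge, as you propose, is the unpenalized bound. It does not suffice: in \eqref{eqn:singleton-extremal-example} such weights grow like $d^{\ell-1}$.
\item More seriously, the margin forces $U_q/d\geq\overline d_q/d\geq\overline v_q$, hence $\overline{\rho_q}^{\,2}=\overline v_q$. So the regime where the cutoff actually lowers $\sigma_{\rm R}$ is exactly the regime your radius bound cannot reach. There, deletion is not a rare event, the singleton bias is not small, and positive $L_{\rm c}$ adds deletion cascades.
\item You cannot bootstrap from the parent at the nonbacktracking level either. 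For signed, centered weights, zeroing rows and columns need not decrease the nonbacktracking spectral radius: a principal submatrix of a non-nonnegative matrix can have larger spectral radius.
\end{itemize}

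\textbf{The missing idea.} Use monotonicity at the vertex level rather than the nonbacktracking level.
\begin{itemize}
\item The paper applies Lemma~\ref{lem:weighted-ihara-bass-positivity}\textup{(i)} to the unregularized parent $\mX$. Its entries are independent, so the only radius needed is $\rho(\widetilde{\mB}(\mX))\leq(\rowvarmax\colvarmax)^{1/4}+Cd^{-1/2}$. This is Proposition~\ref{prop:nonbacktracking-radius-bounds}\ref{item:nonbacktracking-bipartite} with full caps.
\item Positivity of the parent matrix $\mM(t)-\widetilde{\mH}(t)$ survives compression to its principal block on $I_\star\sqcup J_\star$, even though these sets depend on the data.
\item The Schur complement of that block gives Lemma~\ref{lem:rectangular-singular-value-comparisons}\textup{(i)}, with the same $t^2+a+b+ab/t^2$ form as yours. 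Here $a,b$ are parent row and column sums over the retained indices, summed over all of $[m]$ and $[n]$ respectively.
\item These sums are at most $U_q/d+C_0p_\star$, because every retained vertex passed the first-round check on its original degree. They are also at most $\overline v_q+a_q$ by Bennett's inequality.
\item This route handles all admissible cutoffs uniformly, including lower cutoffs and cascades, and never needs moment bounds for dependent entries.
\end{itemize}
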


We now introduce our lower bound for least singular values.
\begin{assumption}[Left edge sparsity]\label{ass:left-edge-sparsity}
      In addition to \eqref{eqn:rectangular-edge-sparsity-range}, we assume that
\begin{align}
 d\geq c_{\rm deg}\log(N),
 \label{eqn:rectangular-lower-edge-sparsity}
\end{align}
where $c_{\rm deg}>0$ is some sufficiently large constant.
\end{assumption}
For degree regularization, the integer cutoffs in \eqref{eqn:rectangular-edge-cutoff-range} must satisfy both the upper separation conditions
\begin{align}
 5\sqrt{C_0d\log d}
 &\leq U_{\rm r}-\overline{\drow},
 &5\sqrt{C_0d\log d}
 &\leq U_{\rm c}-\overline{\dcol},
 \label{eqn:rectangular-edge-upper-cutoffs}
\end{align}
and, for a fixed $C_{\rm cut}>0$, the lower column conditions
\begin{align}
 (1-p_\star)^2L_{\rm c}/d
 &\geq\colvarmin-C_{\rm cut}\sqrt{\log(d)/d},
 &L_{\rm c}&\leq\underline{\dcol}.
\label{eqn:rectangular-edge-lower-cutoff}
\end{align}
The lower bound on $L_{\rm c}$ controls the minimum column support,
while $L_{\rm c}\leq\underline{\dcol}$ ensures that a positive
fraction of columns survives, as required in
Remark~\ref{remark:deletion-cascade}.

\begin{theorem}[Least singular value]
\label{thm:rectangular-smallest-singular-value}
Suppose Assumptions~\ref{assumption:rectangular-uniform-sparsity} and~\ref{ass:left-edge-sparsity} hold. For integer cutoffs satisfying \eqref{eqn:rectangular-edge-cutoff-range}, \eqref{eqn:rectangular-edge-upper-cutoffs}, and~\eqref{eqn:rectangular-edge-lower-cutoff}, there are constants $K,C,c>0$ such that the following holds with probability at least $1-Cd^{-3}-e^{-cm}-N^{-1}$ for all sufficiently large $N$: both active sets are nonempty, $|J_\star|\geq cm$, and
\begin{align}
 \sigma_{\min}(\mX^{(\tau)})
 &\geq\left[\sqrt{\colvarmin}-\sqrt{\rowvarmax}-K(\varepsilon_N+d^{-1/2})\right]_+.
 \label{eqn:smallest-singular-edge}
\end{align}
Separately, under the same sparsity assumptions alone, the unthresholded case $I_\star=[n]$, $J_\star=[m]$ satisfies \eqref{eqn:smallest-singular-edge} with $\mX^{(\tau)}=\mX$ and $\varepsilon_N=\sqrt{\log(N)/d}$, with probability at least $1-N^{-1}$ for all sufficiently large $N$.
The constant $K$ depends only on the fixed hypothesis constants, uniformly over the admissible cutoffs in the regularized case.
\end{theorem}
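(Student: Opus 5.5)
We derive the bound from a deterministic Loewner inequality obtained from the Ihara--Bass formula for the Hermitian linearization, combined with a spectral radius bound for the nonbacktracking matrix built from the regularized centered matrix. Let $\widetilde{\mH}^{(\tau)}$ be the linearization \eqref{eqn:block-linearization} of $\mX^{(\tau)}$, and let $\mB^{(\tau)}$ be its nonbacktracking matrix with edge weights $\emX_{ij}$ on the active index sets. For $t>\rho(\mB^{(\tau)})$, the Ihara--Bass identity gives $\mI - t^{-1}\widetilde{\mH}^{(\tau)} + t^{-2}\mD_t \succ 0$ on the active coordinates, with $\mD_t$ diagonal and carrying the row and column squared sums $\sum_j|\emX_{ij}|^2$, $\sum_i|\emX_{ij}|^2$. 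Restricted to the block structure, this yields a weighted Gram comparison of the form
\[
(\mX^{(\tau)})^*\mX^{(\tau)} \succeq \text{(column-sum diagonal)}\,(1-O(\cdot)) - \text{(row-sum term)} - \text{error},
\]
which is exactly what Proposition~\ref{prop:rectangular-lower-quadratic-form} provides. Choosing $t$ of order $(\rowvarmax\colvarmax)^{1/4}$ and optimizing, we obtain $\sigma_{\min}\geq \sqrt{\min_j S^{\rm c}_j}-\sqrt{\max_i S^{\rm r}_i}-O(\rho(\mB^{(\tau)})-(\rowvarmax\colvarmax)^{1/4})$. Here $S^{\rm c}_j, S^{\rm r}_i$ are the realized column and row sums over active indices.

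The probabilistic inputs come in three steps. First, Proposition~\ref{prop:nonbacktracking-radius-bounds}, applied to $\mX^{(\tau)}$ after verifying the graph moment conditions of Subsection~\ref{subsec:moment-assumptions} as in Section~\ref{sec:model-moment-verification}, gives $\rho(\mB^{(\tau)})\leq(\rowvarmax\colvarmax)^{1/4}+O(d^{-1/2})$ with probability $1-N^{-1}$. Second, we need entrywise control: since each entry has modulus at most $d^{-1/2}$, this bounds the diagonal error terms. Third, we need row and column sum control. For the \emph{upper} row sums, we bound $S^{\rm r}_i\le \rowvarmax+O(\varepsilon_N)$. In the unthresholded case this follows from Bernstein's inequality and a union bound using $d\ge c_{\rm deg}\log N$, which gives the error $\sqrt{\log N/d}$. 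In the regularized case it follows because the cutoff $U_{\rm r}$ caps the degree, which gives the error $[U_{\rm r}-\overline{\drow}]_+/d$. For the \emph{lower} column sums, every surviving column has degree at least $L_{\rm c}$, and each nonzero entry of a $0/1$ column contributes $(1-p_{ij})^2/d\ge(1-p_\star)^2/d$. Hence $S^{\rm c}_j\geq (1-p_\star)^2L_{\rm c}/d - (\text{removed-row mass})$. Combined with \eqref{eqn:rectangular-edge-lower-cutoff}, this gives $S^{\rm c}_j\ge \colvarmin-O(\sqrt{\log d/d})$. Since $\sqrt{\log d/d}\lesssim \varepsilon_N+d^{-1/2}$ up to the constant $K$ (absorbing logs into $K$ via the definition of $\varepsilon_N$), we obtain \eqref{eqn:smallest-singular-edge}. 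In the unthresholded case the same Bernstein argument gives $S^{\rm c}_j\ge\colvarmin-O(\sqrt{\log N/d})$ directly.

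The main obstacle is the regularized case. The support events create dependence, and the removal cascade can lower the column sums of surviving columns by deleting rows. We would handle this with the retention results of Section~\ref{sec:regularization-thresholds}. Under \eqref{eqn:rectangular-edge-upper-cutoffs}, a row or column exceeds its upper cutoff with probability at most $d^{-C}$, by a Chernoff bound at deviation $5\sqrt{C_0 d\log d}$. Summing these failure probabilities gives the $Cd^{-3}$ term. The condition $L_{\rm c}\le\underline{\dcol}$, together with a concentration bound for the number of low-degree columns, gives $|J_\star|\ge cm$ with probability $1-e^{-cm}$. We would also need to show that each surviving column loses only $O(1)$ entries to deleted rows, since an entry-weight of $d^{-1}$ per loss is absorbed into $d^{-1/2}$. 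To do this, we would bound, using a sparse-graph argument on the sparsity \eqref{eqn:rectangular-edge-sparsity-range}, the number of neighbors of any column among the high-degree rows. Finally, we take a union bound over all failure events.
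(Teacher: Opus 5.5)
\textbf{There is a genuine gap in your deterministic step.} The comparison you invoke (Proposition~\ref{prop:rectangular-lower-quadratic-form}, or Lemma~\ref{lem:rectangular-singular-value-comparisons}(ii)) uses a row budget $u_{\rm r}^2\approx\rowvarmax$ and a column floor $l_{\rm c}^2\approx\colvarmin$. Up to $O(d^{-1/2})$ errors it gives
\[
 \sigma_{\min}^2\gtrsim(\beta^2-u_{\rm r}^2)\Bigl(\frac{l_{\rm c}^2}{\beta^2}-1\Bigr)
 =(l_{\rm c}-u_{\rm r})^2-\Bigl(\beta-\frac{u_{\rm r}l_{\rm c}}{\beta}\Bigr)^2,
 \qquad\beta>\rho(\widetilde{\mB}).
\]
So the leading term $l_{\rm c}-u_{\rm r}$ is reached only if $\beta$ can be taken near $\sqrt{u_{\rm r}l_{\rm c}}\approx(\rowvarmax\colvarmin)^{1/4}$.

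With your radius bound $\rho\leq(\rowvarmax\colvarmax)^{1/4}+O(d^{-1/2})$, that choice is not admissible when $\colvarmax>\colvarmin$. Taking $\beta=(\rowvarmax\colvarmax)^{1/4}$ gives only $\sigma_{\min}^2\gtrsim(1-\sqrt{\rowvarmax/\colvarmax})(\colvarmin-\sqrt{\rowvarmax\colvarmax})$. That is exactly the weaker constant of \cite[Theorem~2.4]{dumitriu2024extreme}, which this theorem is meant to improve.

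Your claimed loss $O(\rho-(\rowvarmax\colvarmax)^{1/4})$ is therefore false. The true loss is $2[\rho-\sqrt{u_{\rm r}l_{\rm c}}]_+$, which is of order one for inhomogeneous column profiles. Remark~\ref{rem:block-radius-obstruction} explains why variance separation plus a coarse radius bound cannot be rescued. The same problem affects your unthresholded case.

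\textbf{The missing idea is column rescaling.}
\begin{itemize}
\item Pass to $\mZ=\mP_{\rm r}^{\rm u}\mX\mP_{\rm c}^{\rm a}$ with $f_j=\min\{1,\sqrt{\colvarmin/Q_j}\}$, so that every column sum is capped at $\colvarmin$.
\item Since $f_j\leq1$, you have $\sigma_{\min}(\mX^{(\tau)})\geq\sigma_{\min}(\mZ_{I_\star,J_\star})$.
\item Prove the matching radius bound $\rho(\widetilde{\mB}(\mZ))\leq(\rowvarmax\colvarmin)^{1/4}+O(\sqrt{\log d/d}+d^{-1/2})$ of Proposition~\ref{prop:nonbacktracking-radius-bounds}\ref{item:nonbacktracking-rescaled}. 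This needs its own graph moment verification, because the rescaling couples all entries within a column.
\end{itemize}

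\textbf{Second, you apply the radius bound to the wrong matrix.} Proposition~\ref{prop:nonbacktracking-radius-bounds}\ref{item:nonbacktracking-bipartite} assumes $L_{\rm r}=L_{\rm c}=0$. Here \eqref{eqn:rectangular-edge-lower-cutoff} forces $\underline{\dcol}-L_{\rm c}=O(\sqrt{d\log d})$, far below the strong-band width. Column deletions and cascades then occur with probability tending to one, so the forced-band criterion \eqref{eqn:bipartite-forced-band-condition} fails. No moment verification is available for the lower-cut active matrix.

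The paper avoids this by bounding the radius only for the parent $\mZ$. That matrix sees only the original row upper caps, and since $L_{\rm r}=0$ it vanishes exactly off $I_\star$. The paper then uses the parent-restriction comparison, Lemma~\ref{lem:rectangular-singular-value-comparisons}(ii), which allows arbitrary dependence of $J_\star$ on the entries.

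\textbf{Smaller points.}
\begin{itemize}
\item You do not need to control any ``removed-row mass.'' At termination of Algorithm~\ref{alg:row-column-projection}, every $j\in J_\star$ already has at least $L_{\rm c}$ ones in rows of $I_\star$. This directly gives $\sum_i|\emZ_{ij}|^2\geq(1-p_\star)^2L_{\rm c}/d$.
\item The bound $\sqrt{\log d/d}\leq C\varepsilon_N$ does not come from ``absorbing logs into $K$.'' It comes from the upper margins \eqref{eqn:rectangular-edge-upper-cutoffs} in the regularized case, and from $\varepsilon_N=\sqrt{\log(N)/d}$ in the unthresholded case.
\end{itemize}
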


\begin{remark}[Comparison with previous results]\label{rem:rectangular-smallest-singular-value-comparison} When no thresholding, $\varepsilon_N = \sqrt{\log(N)/d}$ in \eqref{eqn:smallest-singular-edge}, which vanishes when $d\gg\log(N)$. The leading term matches \cite[Corollary~3.18]{brailovskaya2022universality}, for which the normalized error $O(d^{-1/6}(\log(N))^{2/3})$ becomes negligible when $d\gg(\log(N))^4$. Compared with \cite[Theorem~2.4]{dumitriu2024extreme}, our result improve their leading term
\(
\big((1-\sqrt{\rowvarmax/\colvarmax})             (\colvarmin-\sqrt{\rowvarmax\,\colvarmax})\big)^{1/2}
\).
The improvement stems from the use of column rescaling as will be introduced in \eqref{eqn:rescaled-column-matrix}, the spectral radius bound for the nonbacktracking matrix of the rescaled matrix in Proposition~\ref{prop:nonbacktracking-radius-bounds}~\ref{item:nonbacktracking-rescaled}, and a refined deterministic comparison for singular values provided in Lemma~\ref{lem:rectangular-singular-value-comparisons}\textup{(ii)}, which is based on the Ihara--Bass formula. Together, these tools address the dependencies created by regularization and yield tighter quantitative error bounds.
\end{remark}

Regarding the sparsity assumption, both right and left edge bounds must satisfy the restriction $d^6(\log(N))^3\leq N^{1-\delta_{\rm NB}}$ in \eqref{eqn:rectangular-edge-sparsity-range}, which arises from the requirement in nonbacktracking trace estimates. For the right edge, it suffices to have constant sparsity $d\geq C_{\rm NB}$. In contrast, the left edge requires a logarithmic lower bound on $d$ to ensure that very low degree vertices are sufficiently rare. This prevents excessive vertex removal and avoids the degree-deletion cascade that could occur if the lower cutoff were set too high.

\subsection{Extreme eigenvalues of regularized Hermitian matrices}
\label{sec:bounds-on-eigenvalues-of-sparse-hermitian-random-matrices}
Let $\gG\sim\mathds{G}(N,\{p_{uv}\})$ be an inhomogeneous Erd\H{o}s--R\'enyi graph with $p_{uu}=0$ and $p_{uv}=p_{vu}$. Its adjacency matrix $\mA\in\{0,1\}^{N\times N}$ is symmetric, with zero diagonal and independent entries $\emA_{uv}\sim\Ber(p_{uv})$ for $u<v$.
For each vertex $i\in[N]$, we let $\mu_i=\sum_{j} p_{ij}$ denote its expected degree. Let the maximum expected degree denote the sparsity scale:
\begin{align}
 d\coloneqq \overline{d} &\coloneqq\max_i\mu_i >0.
 \label{eqn:hermitian-sparsity-scale}
\end{align}

\begin{assumption}[Uniform Hermitian sparsity]
\label{ass:uniform-Hermitian-sparsity}
For a fixed constant $C_{\rm sp}>0$,
\begin{align}
 p_\star\coloneqq\max_{u,v}p_{uv}\leq C_{\rm sp}d/N.
 \label{eqn:uniform-sparsity-bound}
\end{align}
\end{assumption}

\paragraph{Projection and centering.}
Apply Algorithm~\ref{alg:row-column-projection} to $\mA$ with
the same integer band $[L,U]$, where $0\leq L\leq U\leq N-1$,
for rows and columns.  Symmetry gives $I_\star=J_\star$ and
$\mP_{\rm r}=\mP_{\rm c}=\mP$.  Similarly, define
\begin{align}
 \mH&\coloneqq(\mA-\E\mA)/\sqrt d,
 &\mH^{(\tau)}&\coloneqq\mH_{I_\star,I_\star},
 &\mA^{(\tau)}&\coloneqq\mA_{I_\star,I_\star}.
 \label{eqn:compressed-hermitian-matrix}
\end{align}
Both active matrices have size $|I_\star|\times|I_\star|$, and
the active adjacency matrix has all degrees in $[L,U]$.
The extremal second-moment sums of $\mH$ are
\begin{align}
 \hermvarmin&\coloneqq d^{-1}\min_u\sum_vp_{uv}(1-p_{uv}),
 &\hermvarmax&\coloneqq d^{-1}\max_u\sum_vp_{uv}(1-p_{uv}).
 \label{eqn:hermitian-variance-extrema}
\end{align}

We control both eigenvalue edges through the spectral norm of
$\mH^{(\tau)}$.  Define the variance error by
\begin{align}
 \varepsilon_N&\coloneqq
       \min\big\{[U-d]_+/d,\,\sqrt{\log(N)/d}\big\},
 \label{eqn:hermitian-cutoff-error-scale}
\end{align}
and the normalized cutoff-dependent variance parameter and
eigenvalue threshold by
\begin{align}
 \overline\rho^{\,2}&\coloneqq\min\{U/d,\,\hermvarmax\},
 &\lambda_{\rm H}&\coloneqq
       \inf_{t\geq1}\left(t+\overline\rho^{\,2}/t\right)
       =1+\overline\rho^{\,2}.
 \label{eqn:hermitian-norm-budget}
\end{align}

\begin{theorem}[Spectral norm]
\label{thm:hermitian-edge}
Suppose Assumptions~\ref{ass:right-edge-sparsity} and~\ref{ass:uniform-Hermitian-sparsity} hold, and let $0\leq L\leq U\leq N-1$ be integer cutoffs.
Recall $\lambda_{\rm H}$ in \eqref{eqn:hermitian-norm-budget} and $\varepsilon_N$ in \eqref{eqn:hermitian-cutoff-error-scale}. There is a constant $K>0$ such that, for all sufficiently large $N$, with probability at least $1-N^{-1}$,
\begin{align}
 \|\mH^{(\tau)}\|
 &\leq\lambda_{\rm H}+K(\varepsilon_N+d^{-1/2}).
 \label{eqn:hermitian-edge-probability}
\end{align}
The constant $K$ depends only on the fixed hypothesis constants, uniformly over the admissible cutoffs.
\end{theorem}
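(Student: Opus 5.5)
We plan to follow the same scheme as for the rectangular right edge: combine a deterministic Ihara--Bass Loewner inequality with a spectral radius bound for the nonbacktracking matrix of the active matrix $\mH^{(\tau)}$. Write $\mB^{(\tau)}$ for the nonbacktracking matrix built from $\mH^{(\tau)}$, viewed as a weighted graph on $I_\star$. The Ihara--Bass type identity for weighted matrices gives the following: if $t>\rho(\mB^{(\tau)})$, then the matrix
\[
t^2 \mathbf I - t\,\mH^{(\tau)} + \mathbf D_t
\]
is nonsingular. Here $\mathbf D_t$ is a diagonal correction whose $u$-th entry is essentially
\[
\sum_{v}\frac{t^2 \,|\emH^{(\tau)}_{uv}|^2}{t^2-|\emH^{(\tau)}_{uv}|^2}\,.
\]
A continuity argument in $t\to\infty$ then yields the Loewner bound
\[
\pm\mH^{(\tau)}\preceq \bigl(t+\max_u (\mathbf D_t)_{uu}/t\bigr)\mathbf I .
\]
Because $|\emH_{uv}|\le C d^{-1/2}$, the ratio $t^2/(t^2-|\emH_{uv}|^2)=1+O(d^{-1})$ for $t\ge1/2$. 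So it suffices to bound the row sums $\max_{u\in I_\star}\sum_v|\emH^{(\tau)}_{uv}|^2$ by $\overline\rho^{\,2}+O(\varepsilon_N+d^{-1/2})$, and to show $\rho(\mB^{(\tau)})\le 1+O(d^{-1/2})$ with probability $1-N^{-1}$. Given these two inputs, we take $t=1+Kd^{-1/2}$. Then $t+\overline\rho^{\,2}/t\le 1+\overline\rho^{\,2}+O(d^{-1/2})$, which is \eqref{eqn:hermitian-edge-probability}.

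\textbf{Row sums.} For $u\in I_\star$ we expand
\[
d\sum_v|\emH_{uv}|^2=\sum_v \emA_{uv}(1-2p_{uv})+\sum_v p_{uv}^2 ,
\]
restricted to $v\in I_\star$. The error terms are $O(d/N)$ after normalization, so the sum is at most $\deg^{(\tau)}(u)/d+O(d/N)\le U/d+o(1)$. This gives the $U/d$ branch of $\overline\rho^{\,2}$. For the $\hermvarmax$ branch we use Bernstein's inequality with a union bound over $u$. Each centered sum $\sum_v(\emA_{uv}-p_{uv})$ is at most $C\sqrt{d\log N}+C\log N$ with probability $1-N^{-2}$, which gives $\hermvarmax+O(\sqrt{\log N/d})$. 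Combining the two branches: if $U\le d$ then $U/d\le 1$, and otherwise the excess is $[U-d]_+/d$. Together this produces the $\min\{U/d,\hermvarmax\}$ term plus an error $\varepsilon_N$. (When $\log N/d$ is large, the bound $U/d$ alone is used, and $\hermvarmax\le1$ handles the comparison.) Restricting to $I_\star$ only decreases nonnegative sums, so the dependence created by the regularization is harmless here.

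\textbf{Nonbacktracking radius.} This is the main obstacle, because $\mH^{(\tau)}$ has dependent entries. We intend to invoke Proposition~\ref{prop:nonbacktracking-radius-bounds} through the trace bound of Lemma~\ref{lem:general-nonbacktracking-trace-bounds}. This bounds $\E\,\mathrm{tr}\bigl[(\mB^{(\tau)})^{\ell}(\mB^{(\tau)*})^{\ell}\bigr]$ for $\ell\asymp\log N/\log d$, which yields $\rho\le(1+O(d^{-1/2}))$ via Markov's inequality at level $N^{-1}$. The condition $d^6(\log N)^3\le N^{1-\delta_{\rm NB}}$ of Assumption~\ref{ass:right-edge-sparsity} is exactly the one that lemma needs. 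The real work is verifying the graph moment conditions of Subsection~\ref{subsec:moment-assumptions} for $\mH^{(\tau)}=\mP\mH\mP$. Write $\emH^{(\tau)}_{uv}=\emH_{uv}\indi{u,v\in I_\star}$. For a path whose edges have multiplicities $k_f\ge2$, we bound $|\emH^{(\tau)}_{uv}|^{k}\le|\emH_{uv}|^k$ and use independence of the original entries, which gives the sparse moment budgets \eqref{eqn:sparse-variance-budgets}. The difficulty is edges traversed exactly once: their expectations no longer vanish, since the indicator correlates with $\emA_{uv}$. We plan to handle these by writing
\[
\indi{u,v\in I_\star}=\indi{u,v\in I_\star'}+\bigl(\indi{u,v\in I_\star}-\indi{u,v\in I_\star'}\bigr),
\]
where $I_\star'$ is the output of Algorithm~\ref{alg:row-column-projection} run with the entries on the path's edge set $\gE(\gU)$ resampled or removed. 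The first term is independent of the single edges, so their centered factors kill its expectation. The second term is nonzero only if some vertex of the path is within one degree unit of a cutoff. Such a vertex carries an extra factor of $O(1/\sqrt d)$ per singleton edge after normalization, or lies on a bounded-degree boundary. Cascades complicate this, since removing an edge can change $I_\star$ far away. Because only indicator products along the path matter, we will bound the difference by a monotonicity argument: adding edges can only remove vertices when $L=0$, and for general $L$ we use the sandwich $I_\star(\text{fewer edges})$ versus $I_\star(\text{more edges})$. This gives each singleton edge an effective weight $O(d^{-1/2}\cdot\sqrt d/N)$, matching the condition required in Subsection~\ref{subsec:moment-assumptions}. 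Once these conditions are verified uniformly in $L,U$, the constant $K$ is uniform over the cutoffs, and a union bound over the two failure events completes the proof.
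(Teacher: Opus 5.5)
There is a genuine gap in your nonbacktracking step. You need $\rho(\mB(\mH^{(\tau)}))\le 1+O(d^{-1/2})$ with probability $1-N^{-1}$, uniformly over \emph{all} integer cutoffs $0\le L\le U\le N-1$, and nothing available gives this. Proposition~\ref{prop:nonbacktracking-radius-bounds}\ref{item:nonbacktracking-hermitian} concerns the unregularized $\mH$, where independence and centering make every singleton word vanish. The paper's moment verification for $\widehat{\mH}=\mP\mH\mP$ (Lemma~\ref{lem:hermitian-cutoff-full-trace-verification}) requires either upper-only cutoffs with $U-d\ge5\sqrt{C_0d\log d}$ or strong bands. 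The theorem, by contrast, allows e.g.\ $U=d$, or $L$ near $d$ with deletion cascades.

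Your sketch does not close this gap. Comparing $I_\star$ with $I_\star'$ (word edges removed) only certifies that \emph{some} word vertex is near a cutoff, which is one small factor for the whole word. The singleton line of $\mathsf{(HM)}_\ell$ instead needs $d^{-\varsigma(\xi)}$, i.e.\ about $d^{-1/2}$ per singleton edge, a decay that must grow with chain length (cf.\ the singleton cycle of weight $d^{\ell-1}$ in Figure~\ref{fig:singleton-cycle-compression}). Extracting a gain per singleton edge forces a finite difference in each edge, and then each internal chain vertex serves two singleton edges. A boundary probability of order $d^{-1/2}$ per vertex (the local-limit size near $U\approx d$; for $\{D_u>U\}$ it is even of order one) then gives only $d^{-1/4}$ per edge. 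This is why the paper needs each selected internal vertex to cost at most of order $d^{-1}$, via $32d\alpha_{\ell,{\rm H}}\le1$. For $L>0$, a single trigger can moreover propagate through a cascade to any word vertex.

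Independently, $\ell\asymp\log N/\log d$ is too short. From a trace bound $C\ell^7dN^2$, Markov only gives $\rho\le N^{O(1)/\ell}=d^{O(1)}$; you need $\ell\asymp\sqrt d\log N$.

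The missing idea is that the nonbacktracking matrix of the regularized matrix is never needed. Apply Lemma~\ref{lem:weighted-ihara-bass-positivity}\textup{(i)} to $\pm$ the full parent $\mH$, using $\mB(-\mH)=-\mB(\mH)$. For $\lambda>\max\{\rho(\mB(\mH)),\|\mH\|_{1\to\infty}\}$ this gives $\pm\mH\preceq\lambda\id_N+(\lambda-\|\mH\|_{1\to\infty})^{-1}\mV$, where $\mV$ is the diagonal of \emph{parent} row sums $\sum_{v\in[N]}|\emH_{uv}|^2$. Loewner order survives compression to $I_\star$, so $\|\mH^{(\tau)}\|\le\inf_{t>\rho(\mB(\mH))}(t+v/t)+d^{-1/2}$ for any $v\ge\max_{u\in I_\star}\sum_{v\in[N]}|\emH_{uv}|^2$ (Proposition~\ref{prop:hermitian-comparison}).

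Every $u\in I_\star$ passed the first-round check, so its \emph{original} degree is at most $U$, giving $v\le U/d+C_0p_\star$. Bennett gives $v\le\hermvarmax+a$, as in your row-sum step. The radius is now that of the independent centered $\mH$, which Proposition~\ref{prop:nonbacktracking-radius-bounds}\ref{item:nonbacktracking-hermitian} covers. Hence the dependence created by regularization never enters, and uniformity in $(L,U)$ is automatic.

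Finally, take $t=\max\{r,\sqrt v\}$ rather than fixing $t=1+Kd^{-1/2}$. With $t$ fixed near $1$, your Bernstein error $C(s+s^2)$, $s=\sqrt{\log N/d}$, enters quadratically when $s>1$.
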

\begin{remark}
While \cite{le2017concentration} established the optimal concentration rate $\|(\mA - \E\mA)_{I_\star}\| = O(\sqrt{d})$ up to a constant factor, our result specifies the exact leading constant $\lambda_{\rm H}$ in the spectral norm bound.
\end{remark}

\subsection{Spectral radius of nonbacktracking matrices}
\label{subsec:nonbacktracking-radii}
An essential probabilistic step in proving Theorems~\ref{thm:rectangular-largest-singular-value}, \ref{thm:rectangular-smallest-singular-value}, and~\ref{thm:hermitian-edge} is to derive bounds on the spectral radius of the nonbacktracking matrices. Below, we first introduce the nonbacktracking matrices built from square and rectangular matrices, respectively. We then bound their radii for the bipartite degree-projected matrix, the rescaled matrix, and the original Hermitian matrix. Let $\gK_N$ be the complete host including loops and
$\gK_{n,m}$ the complete bipartite host on $\gV_1\sqcup\gV_2$.
Define their ordered edge sets by
\begin{align}
 \vec{\gE}(\gK_N)
 &\coloneqq[N]^2,
 &
 \vec{\gE}(\gK_{n,m})
 &\coloneqq
 (\gV_1\times\gV_2)\sqcup(\gV_2\times\gV_1).
 \label{eqn:ordered-host-edges}
\end{align}
\begin{definition}[Nonbacktracking matrices]
\label{def:nonbacktracking-matrix}
For $\mH\in\C^{N\times N}$, define
$\mB=\mB(\mH)\in\C^{N^2\times N^2}$ by
\begin{align}
 \emB_{ef}
 &\coloneqq\emH_{kl}\indi{j=k}\indi{i\neq l},
 &e=(i,j),\quad f=(k,l)&\in\vec{\gE}(\gK_N).
 \label{eqn:nonbacktracking-definition-hermitian}
\end{align}
For $\mX\in\C^{n\times m}$ with block linearization
$\widetilde{\mH}$ from \eqref{eqn:block-linearization}, define
$\widetilde{\mB}=\widetilde{\mB}(\mX)\in\C^{2nm\times2nm}$ by
\begin{align}
 \widetilde{\emB}_{ef}
 &\coloneqq\widetilde{\emH}_{kl}\indi{j=k}\indi{i\neq l},
 &e=(i,j),\quad f=(k,l)&\in\vec{\gE}(\gK_{n,m}).
 \label{eqn:nonbacktracking-definition-bipartite}
\end{align}
Each transition joins consecutive oriented edges without an immediate reversal, with weight given by the destination edge.  In the bipartite case, the two orientations use entries of $\mX$ and $\mX^*$.
\end{definition}
\begin{remark}
\label{rem:nonbacktracking-definition-bipartite-block-linearization}
Given the block linearization $\widetilde{\mH}$ of $\mX$, and using the construction in \eqref{def:nonbacktracking-matrix}, if we order the crossing host edges first, we obtain
\begin{align}
 \mB(\widetilde{\mH})
 =\begin{bmatrix}
  \widetilde{\mB}&\bzero\\
  \ast&\bzero
 \end{bmatrix} \in \C^{N^2\times N^2}.
\end{align}
Hence $\mB(\widetilde{\mH})$ and $\widetilde{\mB}$ share the same non-zero spectrum.
\end{remark}

After regularization, the dimensions of the active submatrices $\mX^{(\tau)}$ and $\mH^{(\tau)}$ are random. To construct nonbacktracking matrices, we embed the active matrices in their original dimensions, with zero rows and columns outside the active sets. Recall $\mP_{\rm r}$ and $\mP_{\rm c}$ from Algorithm~\ref{alg:row-column-projection}, we define the padded matrices by
\begin{align}
 \widehat{\mX}&\coloneqq\mP_{\rm r}\mX\mP_{\rm c}
       \in\R^{n\times m},
 &\widehat{\mH}&\coloneqq\mP\mH\mP
       \in\R^{N\times N}
 \label{eqn:padded-regularized-matrices}
\end{align}
Their nonbacktracking matrices act on the fixed host edge sets in \eqref{eqn:ordered-host-edges}.

We then introduce the \emph{rescaled} matrix $\mZ$, which retains rows that satisfy the original upper-degree cutoff and caps each column's expected variance at $\colvarmin$. Precisely, denote the row degree $\rD_i^{\rm r}=\sum_j\widetilde{\emA}_{ij}$ and the indicator function $U_i=\indi{\rD_i^{\rm r}\leq U_{\rm r}}$, and we define the rescaling factor by
\[
 Q_j=\sum_iU_i|\emX_{ij}|^2,\qquad
 f_j=\min \Big\{1,\sqrt{\colvarmin/Q_j}\Big\}, \text{ and } f_j=1 \text{ if } Q_j=0.
\]
Then we define the row projection $\mP_{\rm r}^{\rm u}$, the column rescaling $\mP_{\rm c}^{\rm a}$, and the rescaled matrix $\mZ$ by
\begin{align}
 \mZ&=\mP_{\rm r}^{\rm u}\mX\mP_{\rm c}^{\rm a},
 &\mP_{\rm r}^{\rm u}&=\diag((U_i)_{i\in[n]}),
 &\mP_{\rm c}^{\rm a}&=\diag((f_j)_{j\in[m]})
 \label{eqn:rescaled-column-matrix}
\end{align}
For each $j \in [m]$, the matrix $\mZ$ satisfies $\sum_i|\emZ_{ij}|^2 = \min\{Q_j,\,\colvarmin\}$.
As noted in Remark~\ref{rem:rectangular-smallest-singular-value-comparison}, our improved leading term in Theorem~\ref{thm:rectangular-smallest-singular-value} compared to \cite[Theorem~2.4]{dumitriu2024extreme} is due to the application of column rescaling. Importantly, reducing column variances to the minimum value $\colvarmin$ maintains the direction required to transfer lower bounds back to $\mX$. Further details on this point can be found in the proof of Theorem~\ref{thm:rectangular-smallest-singular-value}.

We now illustrate the nonbacktracking radius bounds under three different scenarios.

\begin{proposition}[Nonbacktracking radius bounds]
\label{prop:nonbacktracking-radius-bounds}
Suppose the sparsity conditions \eqref{eqn:rectangular-edge-sparsity-range} hold, with $N$ and $d$ defined for the relevant model. For every fixed $\nu>0$, under the respective assumptions below, each bound holds with probability at least $1-N^{-\nu}$ for all sufficiently large $N$. The constant $C_\nu>0$ depends only on $\nu$ and the fixed hypothesis constants.
\begin{enumerate}[label=\textup{(\roman*)},ref=\textup{(\roman*)},leftmargin=2.5em,itemsep=0.6em]
\item \emph{Bipartite model.}
\label{item:nonbacktracking-bipartite}
Suppose Assumptions~\ref{assumption:proportional-dimension-regime} and~\ref{assumption:rectangular-uniform-sparsity}, and the upper cutoff conditions \eqref{eqn:rectangular-edge-upper-cutoffs}, with lower cutoffs $L_{\rm r}=L_{\rm c}=0$. For the padded matrix $\widehat{\mX}$ in \eqref{eqn:padded-regularized-matrices}, the spectral radius of its nonbacktracking matrix is bounded by
\begin{align}
 \rho(\widetilde{\mB}(\widehat{\mX}))
 &\leq(\rowvarmax\colvarmax)^{1/4}+C_\nu d^{-1/2}.
 \label{eqn:bipartite-radius-bound}
\end{align}

\item \emph{Rescaled bipartite model.}
\label{item:nonbacktracking-rescaled}
Suppose Assumptions~\ref{assumption:proportional-dimension-regime} and~\ref{assumption:rectangular-uniform-sparsity}, the upper cutoff conditions \eqref{eqn:rectangular-edge-upper-cutoffs}, the lower sparsity bound \eqref{eqn:rectangular-lower-edge-sparsity}, and $\rowvarmax<\colvarmin$, with variance sums taken for the original matrix $\mX$. For the rescaled matrix $\mZ$ in \eqref{eqn:rescaled-column-matrix}, the spectral radius of its nonbacktracking matrix is bounded by
\begin{align}
 \rho(\widetilde{\mB}(\mZ))
 &\leq(\rowvarmax\colvarmin)^{1/4}
       +C_\nu\bigl(\sqrt{\log d/d}+d^{-1/2}\bigr).
 \label{eqn:rescaled-column-radius-bound}
\end{align}

\item \emph{Hermitian matrix.}
\label{item:nonbacktracking-hermitian}
Suppose Assumption~\ref{ass:uniform-Hermitian-sparsity}. For the original centered matrix $\mH$, the spectral radius of its nonbacktracking matrix is bounded by
\begin{align}
 \rho(\mB(\mH))\leq1+C_\nu d^{-1/2}.
 \label{eqn:hermitian-radius-bound}
\end{align}
\end{enumerate}
\end{proposition}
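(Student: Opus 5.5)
The plan is to follow the moment method of \cite{benaych2020spectral} and \cite{dumitriu2024extreme}, using the trace bound the paper announces as Lemma~\ref{lem:general-nonbacktracking-trace-bounds}. For a matrix $\mM$ (one of $\widehat{\mX}$, $\mZ$, $\mH$) with nonbacktracking matrix $\mB$, write $\ell\asymp \log N/\log d$. We bound
\[
\rho(\mB)^{2\ell}\le\|\mB^{\ell}\|^{2}\le \operatorname{tr}\bigl[(\mB^{\ell})(\mB^{\ell})^*\bigr].
\]
Expanding $\E\operatorname{tr}[(\mB^\ell)(\mB^\ell)^*]$ gives a sum over closed circuits formed by two nonbacktracking walks of length $\ell$. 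Markov's inequality then gives the bound with probability $1-N^{-\nu}$. The taking of $2\ell$-th roots is what turns combinatorial factors like $(C\ell)^{C}$ into the $1+O(d^{-1/2})$ correction.

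\textbf{Step 1: graph moment bounds.} We group circuits by their underlying multigraph $\gU$, a core code as in \CoreCode. For each $\gU$ with edge multiplicities $k_f$, we need
\[
\E\Bigl|\prod_f \emM_f^{k_f}\Bigr|\;\lesssim\;\prod_{f:\,k_f\ge2}\frac{v_f}{d^{(k_f-2)/2}}\;\prod_{f:\,k_f=1}\frac{C}{\sqrt d\,N}\cdot(\text{small}),
\]
where the $v_f$ are variance weights whose row and column sums are bounded by $\rowvarmax,\colvarmax$ in (i), by $\rowvarmax,\colvarmin$ in (ii), and by $1$ in (iii). Case (iii) has independent entries and is classical. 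In cases (i) and (ii) the entries are dependent through degrees. The plan is to condition on the entries of the edges of $\gU$, which are at most $O(\ell)$ many, and use that the degree indicators depend on them only through $O(\ell)$ coordinates. Since $\ell\ll\sqrt d$ and the cutoffs exceed the expected degrees by at least $5\sqrt{C_0 d\log d}$, the indicator $\indi{\rD\le U}$ is, up to factors $1+O(\ell/\sqrt{d\log d})$, independent of the entries on $\gU$. The same holds for the rescaling factor $f_j$, which concentrates around its deterministic counterpart with error $O(\sqrt{\log d/d})$ using $d\ge c_{\rm deg}\log N$. The multiplicity-$\ge2$ edges then contribute their variance times a cutoff factor.

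\textbf{Step 2: edges traversed once.} For centred independent entries these edges vanish. Here they do not, because the projection or rescaling breaks centring. The plan is to bound $|\E[\emX_{ij}\,\indi{\text{event}}]|=|\E[\emX_{ij}(\indi{\cdot}-\P(\cdot))]|$, using $\E\emX_{ij}=0$ before regularization. This should be $O(d^{-1/2}N^{-1}\cdot\P(\text{degree near cutoff}))$, and the tail probability is at most $d^{-C}$ under the separation condition \eqref{eqn:rectangular-edge-upper-cutoffs}. Each single edge thus costs an extra factor that beats the entropy of placing it. This is the main obstacle: one must check that summing over circuits with many once-traversed edges still yields a geometric series. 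I would first try to pair each such edge with a factor $d^{-1/2}$, and absorb it into the count of excess edges $\chi(\gU)=|\gE|-|\gV|+1$ in the enumeration of \cite{benaych2020spectral}.

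\textbf{Step 3: counting and conclusion.} With these moment bounds, the combinatorial enumeration of \cite{benaych2020spectral,dumitriu2024extreme} applies with only the weights changed. Tree-like components contribute the variance factors $(\rowvarmax\colvarmax)^{\ell/2}$, because in the bipartite case the walk alternates sides. Graphs with excess $g$ cost $(C\ell^{C}/N)^{g}$ relative to tree-like ones, and $g=O(1)$ suffices since $\ell\le c\log N$. The condition $d^6(\log N)^3\le N^{1-\delta_{\rm NB}}$ is what ensures the sum over excess and over wandering paths converges. The result is
\[
\E\operatorname{tr}\le N^{C}(\rowvarmax\colvarmax)^{\ell/2}(1+Cd^{-1/2})^{2\ell},
\]
and similarly with $\colvarmin$ replacing $\colvarmax$ in (ii), with the extra error $\sqrt{\log d/d}$ coming from Step 1, and with $1$ in (iii). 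Taking $2\ell$-th roots with $N^{C/2\ell}=1+O(d^{-1/2})$ for $\ell$ a large multiple of $\log N/\log d$ (adjusted by $\nu$) gives \eqref{eqn:bipartite-radius-bound}, \eqref{eqn:rescaled-column-radius-bound} and \eqref{eqn:hermitian-radius-bound}.
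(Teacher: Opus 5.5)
There is a genuine gap, and it starts with the choice of power. With $\ell=K\log N/\log d$ you get $N^{C/(2\ell)}=d^{C/(2K)}$, not $1+O(d^{-1/2})$. Equivalently, Markov's inequality at the threshold $R(1+C_\nu d^{-1/2})$ against a prefactor $N^{C}$ requires $(1+C_\nu d^{-1/2})^{2\ell}\geq N^{C+\nu}$, that is, $\ell\gtrsim\sqrt d\log N$. As written, your argument therefore only yields $\rho\le R\,d^{O(1)}$. The paper takes $\ell=\lfloor c\sqrt d\log N\rfloor$, and at that power the other parts of your plan break down.
\begin{itemize}
\item Your assumption $\ell\ll\sqrt d$ is false, so the decoupling of degree indicators ``up to factors $1+O(\ell/\sqrt{d\log d})$'' is unavailable. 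Such factors compounded over the $\asymp\ell$ vertices of a word would not be harmless anyway.
\item You cannot truncate to $g=O(1)$. The walk factor $(C(g+1)/\sqrt d)^{t}$, with $t$ up to $2\ell$, must be balanced against $(C\ell^3d^{9/2}/N)^{g}$ over all $1\le g\le 2\ell$; this is the saddle-point estimate of Lemma~\ref{lem:roadmap-common-genus-sum}. Note that with $\ell\asymp\sqrt d\log N$ the per-genus ratio $\ell^3d^{9/2}/N$ is of order $d^6(\log N)^3/N$. That is exactly why the hypothesis reads $d^6(\log N)^3\le N^{1-\delta_{\rm NB}}$.
\item The moment bounds must be verified uniformly up to that power. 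For example, $\alpha_{\ell,{\rm B}}$ grows with $\ell$ through $p_\star(2\ell+1)$.
\end{itemize}

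The second gap is the treatment of dependence, which is the heart of the proposition and which you leave open. For plural words no decoupling is needed: the survival indicator $S$ of the word's vertices lies in $[0,1]$, so $|\E[SW]|\le\E|W|$, a product of moments of independent Bernoulli variables. For singleton words, bounding $|\E[\emX_{ij}\mathbf 1_{\cdot}]|$ edge by edge does not control the joint expectation. A per-edge gain is also not what is needed. The paper's cycle example has genus one and support weight $d^{\ell-1}$, and what rescues it is a penalty $d^{-\varsigma(\xi)}$, with $\varsigma(\xi)$ roughly half the number of singleton edges on each chain.

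The paper obtains this penalty as follows.
\begin{itemize}
\item It expands $S=\prod_v(1-B_v)$, where $B_v$ are the cap-violation indicators.
\item Centering kills every term whose index set $T$ misses both endpoints of some singleton edge.
\item After tilting by $|W|$ and forcing the support edges to one, the surviving joint exceedance probabilities cost about $\alpha_{\ell,{\rm B}}\lesssim d^{-3}$ per selected internal vertex.
\item Any cover of a singleton chain of length $k$ must contain $\lfloor(k-1)/2\rfloor$ internal vertices.
\end{itemize}
Chain compression (Lemma~\ref{lem:singleton-chain-compression}), together with the fact that singleton classes have genus at least one, then closes the enumeration.

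For part~(ii), $f_j$ depends on an entire column, so concentration of $f_j$ around a deterministic value does not by itself handle singleton words. The paper instead uses finite differences of $x\mapsto\min\{1,\colvarmin/x\}$ along singleton columns, with joint lower tails for column variances outside the word support. This produces the same $d^{-\varsigma}$ payment, at the cost of the factor $e^{C\ell\sqrt{\log d/d}}$ that becomes the extra error term in \eqref{eqn:rescaled-column-radius-bound}.
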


\begin{remark}[Active matrices and nonbacktracking radius bounds]
\label{rem:cutoff-active-padded-radius}
For every realization, zero padding preserves the nonzero
nonbacktracking spectrum and the following equalities hold:
\begin{align}
 \rho(\widetilde{\mB}(\widehat{\mX}))
 &=\rho(\widetilde{\mB}(\mX^{(\tau)})),
 &\rho(\mB(\widehat{\mH}))
 &=\rho(\mB(\mH^{(\tau)})).
 \label{eqn:cutoff-active-padded-radius}
\end{align}
Thus part~\ref{item:nonbacktracking-bipartite} applies to
$\mX^{(\tau)}$ without conditioning on the active sets.
Without thresholding, it recovers the independent bound for $\mX$ in
\cite[Theorem~4.1]{dumitriu2024extreme}.
The rescaling in part~\ref{item:nonbacktracking-rescaled} caps every column variance of $\mZ$ at $\colvarmin$. Under its additional hypotheses, this replaces $\colvarmax$ by $\colvarmin$ in the leading radius bound, from $(\rowvarmax\colvarmax)^{1/4}$ to $(\rowvarmax\colvarmin)^{1/4}$, at the cost of an additional
$O(\sqrt{\log d/d})$ error. This sharper profile dependence is the key input for the leading term $\sqrt{\colvarmin}-\sqrt{\rowvarmax}$ in the smallest-singular-value bound. The same column rescaling also gives the unthresholded lower bound,
since full upper caps satisfy the cutoff conditions in
part~\ref{item:nonbacktracking-rescaled}.
\end{remark}

\subsection{Related literature}

\paragraph{Edge universality for classical random matrices}
For Wigner matrices, the work of F\"uredi and Koml\'os \cite{furedi1981eigenvalues} and Bai and Yin \cite{bai1988necessary} established the necessary and sufficient conditions for the convergence of largest eigenvalue to the semicircle edge. The distribution function of the largest eigenvalue was first identified by Tracy and Widom for Gaussian ensembles \cite{tracy1994level,tracy1996orthogonal}, later extended to Wigner matrices \cite{soshnikov1999universality, ruzmaikina2006universalityot, tao2010random}. A necessary and sufficient condition for Tracy-Widom law of Wigner matrices was established by Lee and Yin \cite{lee2014necessary}.
For rectangular matrices, Bai and Yin \cite{yin1988limit,bai1993limit} identify the convergence of largest and smallest singular values to the right and left edges of the Marcenko-Pastur distribution under finite fourth moment assumptions. Later, Tikhomirov \cite{tikhomirov2015limit} reduced the requirement for the convergence of smallest singular value to finite variance. Chafa{\"i} and Tikhomirov \cite{chafai2018convergence} extended the Bai-Yin to random matrices with dependent entries. Tracy-Widom fluctuation for the largest singular value was proved for isotropic \cite{soshnikov2002note, peche2009universality, Johnstone2001OnTD, pillai2012edge, pillai2014universality} and anisotropic \cite{elkaroui2007tracywidom, onatski2008tracywidom, bao2015universality, knowles2017anisotropic, lee2016tracywidom, ding2018necessary, yang2019edge} sample covariance matrices, respectively. For the smallest singular value, Tracy-Widom fluctuation was identified in \cite{feldheim2010universality, tao2010smallest, pillai2012edge, pillai2014universality}. See the recent survey \cite{yau2026randomwigner} for more details.

\paragraph{Spectral edges for sparse random matrices}
For the Erd\H{o}s--R\'enyi graph $\gG(N,d/N)$, it was shown in \cite{furedi1981eigenvalues, vu2007spectral} that the smallest
and second-largest eigenvalues of the adjacency matrix converge to the edges of the support of the asymptotic eigenvalue distribution provided that $d/(\log(N))^{4}\to\infty$, and subsequently strengthened to $d/\log(N)\to\infty$ by Benaych-Georges, Bordenave, and Knowles \cite{benaych2020spectral} for the inhomogeneous case. When $d/\log(N) \to 0$, Krivelevich and Sudakov \cite{krivelevich2003largest} showed that the largest eigenvalue of the adjacency matrix is asymptotically equivalent to the maximum of the square root of the largest degree and the maximal mean degree $d$, later refined by Benaych-Georges, Bordenave, and Knowles \cite{benaych2019largest} for the $k$-th largest eigenvalue. At the critical scale $d= b\log(N)$ for some constant$b>0$, Alt, Ducatez, and Knowles \cite{alt2021extremal}, as well as Tikhomirov and Youssef \cite{tikhomirov2021outliers}, showed that spectral outliers beyond the semicircle edges, due to high-degree vertices, emerge if and only if $b<1/\ln(4/e)$. For the bipartite Erd\H{o}s--R\'enyi graphs $\gG(n, m, d/\sqrt{mn})$, when $d= b\log(n+m)$ for some constant$b>0$, Dumitriu, Wang, Wang, and Zhu \cite{dumitriu2025singular} established the sharp thresholds the emergence of left and right outliers outside the Marcenko-Pastur edges induced by high- and low-degree vertices, respectively, with additional parameter restrictions for the lower edge. For inhomogeneous case, Dumitriu and Zhu \cite{dumitriu2024extreme} obtained bounds for largest and smallest singular values when $d/\log(N)\to\infty$, while their lower bound is not as sharp as the current paper. Brailovskaya and van Handel \cite{brailovskaya2022universality} obtained sharp bounds for smallest singular value which is valid when $d/(\log^{4}N)\to \infty$. Che and Lopatto \cite{che2019universality} proved that the distribution of the least singular value is the same as the distribution of the least singular value of a Gaussian matrix ensemble. See the recent reviews \cite{yau2026randomwigner, bordenave2026sparse} for more details.

\paragraph{Concentration of regularized sparse random matrices and algorithm applications.}
Following the approach developed by Friedman, Kahn, and Szemer\'edi \cite{friedman1989second}, Feige and Ofek \cite{feige2005spectral} showed that removing high-degree vertices restores concentration. Later results extended regularized adjacency concentration to stochastic block models \cite{chin2015stochastic,lei2015consistency} and hypergraph variants \cite{dumitriu2025partial}. Through a different decomposition of directed inhomogeneous Erd\H{o}s--R\'enyi graphs, Le, Levina, and Vershynin \cite{le2017concentration} established concentration for adjacency matrices and normalized graph Laplacians after regularization. For community detection tasks, these concentration results provide theoretical guarantees for regularized spectral clustering algorithms \cite{chaudhuri2012spectral,qin2013regularized,joseph2016impact,gao2017achieving,guedon2016community,dumitriu2026optimal}.

\paragraph{Spectra of nonbacktracking matrices and algorithm applications}
The Ihara--Bass formula \cite{hashimoto1989zeta, bass1992ihara, kotani2000zeta, angel2015non} relates the spectrum of the nonbacktracking matrix to the spectrum of the base matrix. Bordenave \cite{bordenave2020new} reproved Friedman's second eigenvalue Theorem for random $d$-regular graph using the nonbacktracking matrix. Wang and Wood \cite{wang2017limiting} provided a precise description of the limiting empirical spectral distribution (ESD) for the non-backtracking matrices constructed from $\gG(N, d/N)$ when $d/\log (N)\to\infty$. Benaych-Georges, Bordenave, and Knowles \cite{benaych2020spectral} obtained spectral radius bounds for nonbacktracking matrices of inhomogeneous sparse square matrices, with Dumitriu and Zhu \cite{dumitriu2024extreme} extending these results to the rectangular case. More recently, McKenzie \cite{mckenzie2026alon} obtained a lower bound on the $k$-th largest modulus of an eigenvalue of the non-backtracking matrix for any fixed $k$. Spectrum of nonbacktracking matrices constructed from hypergraphs were also explored in \cite{dumitriu2021spectra,stephan2022sparse, stephan2024long, fernandez2026achieving}. For applications, algorithms based on nonbacktracking matrices achieve weak recovery in Stochastic Block Models \cite{massoulie2014community, mossel_reconstruction_2015,mossel2018proof, bordenave2018nonbacktracking, stephan2020non, coste2021eigenvalues, stephan2025bethe} and their hypergraph variants \cite{stephan2022sparse,fernandez2026achieving}. Nonbacktracking algorithms are also applied to matrix recovery \cite{bordenave2020detection, stephan2020non} and tensor unfolding \cite{stephan2024long}.

\paragraph{Non-asymptotic matrix concentration results}
Rudelson and Vershynin \cite{rudelson2009smallest} proved an optimal estimate of the smallest singular value of a random subgaussian matrix, combining the approach developed in \cite{litvak2005smallest} with certain Littlewood-Offord-type theorems \cite{rudelson2008littlewood}. Their results were extended to random matrices with weaker moment assumptions \cite{koltchinskii2015bounding, tikhomirov2016smallest}, inhomogeneous entries \cite{cook2018lower,livshyts2021smallest} and heavy-tailed entries \cite{tikhomirov2018sample,guedon2020random,livshyts2021smallest2}. Bandeira and van Handel \cite{bandeira2016sharp} established nonasymptotic bounds on the spectral norm of random matrices with independent entries. Dimension-free bounds were later obtained in \cite{van2017spectral, latala2018dimension}. Fluctuations at the Tracy-Widom scale were established in \cite{brailovskaya2024extremal}. For heteroskedastic Wishart matrices, non-asymptotic concentration bounds were established in \cite{zhivotovskiy2024dimension,cai2022non}.

\paragraph{Matrix concentration inequalities for sum of random matrices}
Tropp established matrix concentration inequalities for sums of independent random self-adjoint matrices, providing noncommutative analogues of classical results through the matrix Laplace-transform method \cite{tropp2012user,tropp2015introduction}. Oliveira \cite{oliveira2009concentration} and Tropp \cite{tropp2011freedman} established matrix Freedman inequalities for matrix-valued martingales. Moreover, Tropp \cite{tropp2018second} refined matrix Khintchine inequality by using \emph{matrix alignment parameter} to reduce the dimensional dependence. More recently, Tropp obtained one-sided Gaussian comparisons for extreme eigenvalues \cite{tropp2025comparison,tropp2026extreme} and an exchangeable-counterpart proof of Brailovskaya and van Handel's universality theorems \cite{tropp2026exchangeable}.
Bandeira, Boedihardjo, and van Handel \cite{bandeira2021matrix} established nonasymptotic upper bounds for the spectra of general Gaussian random matrices, relying on concepts such as asymptotic freeness \cite{Voi91} and strong convergence \cite{HT05,And13} to account for matrix noncommutativity. Corresponding lower bounds for the spectral edges were subsequently established in \cite{bandeira2024matrix}. For sum of random matrices with non-Gaussian entries, \cite{brailovskaya2022universality} proved concentration inequalities by comparing them to sums of independent Gaussian matrices having the same means and covariances. Exponential concentration inequalities for sums of dependent random matrices have been established using an alternative exchangeable-pair method \cite{MJCFT14}, while \cite{SvW23} demonstrates Gaussian universality for sums arising from $\psi$-mixing Markov chains. For noncommutative polynomial of random matrices, strong convergence, a.k.a, convergence of operator norm, were established in \cite{bordenave2019eigenvalues, bordenave2023norm, bordenave2024strong, chen2024new}. See the recent reviews \cite{bordenave2026sparse, tropp2026applied, van2026strong} for more details.

\subsection{Organization}
\label{subsec:organization}
The rest of the paper is organized as follows. Section~\ref{sec:proofs-main-results} presents the proofs of the main results. Section~\ref{sec:trace-estimates} develops the trace estimates used in proving Proposition~\ref{prop:nonbacktracking-radius-bounds}. Section~\ref{sec:future-directions} discusses possible extensions. Numerical experiments are presented in  Appendix~\ref{sec:numerical-experiments}. Estimates for degree-cutoff and vertex-retention are presented in Appendix~\ref{sec:regularization-thresholds}. The graph moment conditions for the models considered in this paper are presented in Appendix~\ref{sec:model-moment-verification}.

\subsection{Notation}
\label{subsec:notation}
For an integer $k\geq1$, write $[k]=\{1,\ldots,k\}$. For a finite set $I$, $|I|$ denotes its cardinality and $I^c$ its complement in the relevant ambient index set. All logarithms are natural, $\ii$ denotes the imaginary unit, and $[x]_+\coloneqq\max\{x,0\}$ for real $x$.

For a matrix $\mM$, $\mM^*$ denotes its conjugate transpose,
$\mM_{I,J}$ its restriction to rows in $I$ and columns in $J$, and
$\mM_I\coloneqq\mM_{I,I}$ a principal submatrix when $\mM$ is square.
The notation $\id_k$ denotes the $k\times k$ identity matrix, with
the dimension omitted when it is clear from context.
We write $\Tr$ for the trace and $\diag$ for a diagonal or block
diagonal matrix, as appropriate.
The norm $\|\mM\|\coloneqq\sup_{\|x\|_2=1}\|\mM x\|_2$ is the
Euclidean operator norm, and
$\|\mM\|_{1\to\infty}\coloneqq\max_{i,j}|\emM_{ij}|$ is the
entrywise maximum norm. The operator norm of an empty restriction
is taken to be zero.

For a nonempty matrix $\mM\in\C^{n\times m}$, its
$k=\min\{n,m\}$ singular values are ordered as
$\sigma_1(\mM)\geq\cdots\geq\sigma_k(\mM)\geq0$; we write
$\sigma_{\max}(\mM)=\sigma_1(\mM)$ and
$\sigma_{\min}(\mM)=\sigma_k(\mM)$.
For a Hermitian $k\times k$ matrix, eigenvalues are ordered as
$\lambda_1(\mM)\geq\cdots\geq\lambda_k(\mM)$, with
$\lambda_{\max}(\mM)=\lambda_1(\mM)$ and
$\lambda_{\min}(\mM)=\lambda_k(\mM)$.
For any square matrix, its spectral radius is
$\rho(\mM)\coloneqq\max\{|\lambda|:\lambda\text{ is an eigenvalue of }\mM\}$,
with $\rho(\mM)=0$ for an empty matrix.
For Hermitian matrices of the same size, $\mA\preceq\mB$ means that
$\mB-\mA$ is positive semidefinite, and $\mA\prec\mB$ means that
it is positive definite; $\succeq$ and $\succ$ denote the reversed orders.

Unless otherwise stated, limits are taken as $N\to\infty$.
For $b_N>0$, $a_N=O(b_N)$ means that $|a_N|\leq Cb_N$ eventually,
and $a_N=o(b_N)$, also written $a_N\ll b_N$, means that
$a_N/b_N\to0$; $a_N\gg b_N$ means $b_N\ll a_N$.
For positive $a_N,b_N$, the notation $a_N\asymp b_N$ means that
$cb_N\leq a_N\leq Cb_N$ eventually for fixed constants $c,C>0$.
Constants denoted by $c$ or $C$ may change from line to line;
their permitted dependencies are those specified in the relevant statement.

We write $\P$, $\E$, and $\Var$ for probability, expectation, and
variance, respectively, $\indi{E}$ for the indicator of an event $E$,
and $\Ber(p)$ for the Bernoulli distribution with success probability $p$.
Unless otherwise stated, convergence of random quantities is in probability.
For deterministic $b_N>0$, $Y_N=o_\P(b_N)$ means that
$\P(|Y_N|>\eta b_N)\to0$ for every fixed $\eta>0$.
An event holds with high probability if its probability tends to one.

\section{Proofs of Main Results}
\label{sec:proofs-main-results}

Our framework separates the proof of spectral-edge bounds into a deterministic step and a probabilistic step. 
\begin{itemize}
      \item \emph{Deterministic step}: we establish the Ihara--Bass formulas to derive Loewner inequalities that control extreme eigenvalues and singular values in terms of spectral radius of corresponding nonbacktracking matrices, the maximum entry magnitude, and the row and column squared $\ell^2$-norms.
      \item \emph{Probabilistic step}: once the entrywise and row/column variance estimates are available, the spectral radius bounds for nonbacktracking matrices follow from trace-moment estimates and Markov's inequality. In Section~\ref{sec:trace-estimates}, those trace estimates are reduced to verify the graph moment assumptions in Subsection~\ref{subsec:moment-assumptions}; we carry out this model-specific verification in Section~\ref{sec:model-moment-verification}.
\end{itemize}

This section is organized as follows: we first establish the deterministic comparisons in Subsection~\ref{subsec:ihara-bass-loewner-orders}. Proofs of main Theorems are subsequently established in Subsections~\ref{subsec:singular-value-proofs}, ~\ref{subsec:rectangular-smallest-singular-value-proof} and \ref{subsec:eigenvalue-proofs}, by applying these deterministic comparisons with Proposition~\ref{prop:nonbacktracking-radius-bounds}. Finally in Subsection~\ref{subsec:trace-to-radius}, we will show how the proof of Proposition~\ref{prop:nonbacktracking-radius-bounds} can be reduced to trace estimates.

\subsection{Ihara--Bass formulas and Loewner orders}
\label{subsec:ihara-bass-loewner-orders}

\subsubsection{Ihara--Bass identities.}
The Ihara--Bass formulas below provide the connection between the nonbacktracking spectral radius and the spectral parameters at which a matrix acting on vertices becomes singular. 

\begin{lemma}[Ihara--Bass formula, {\cite[Lemma~4.1]{benaych2020spectral}}]
\label{lem:general-ihara-bass}
Let $\mH\in\C^{N\times N}$ be Hermitian, and let
$\mB=\mB(\mH)$ be its nonbacktracking matrix from
Definition~\ref{def:nonbacktracking-matrix}. For $\lambda\in\C\setminus\{0\}$ satisfying $\lambda^2\ne|\emH_{uv}|^2$ for all $u,v\in[N]$, define $\mH(\lambda)\in\C^{N\times N}$ and the diagonal matrix $\mM(\lambda)=\diag(\emM_{uu}(\lambda))_{u\in[N]}$ by
\begin{align}
 \emH_{uv}(\lambda)
 &\coloneqq\frac{\lambda\emH_{uv}}
                    {\lambda^2-|\emH_{uv}|^2},
 &\emM_{uu}(\lambda)
 &\coloneqq 1+\sum_{v=1}^N\frac{|\emH_{uv}|^2}
                         {\lambda^2-|\emH_{uv}|^2}.
 \label{eqn:weighted-ihara-bass-matrix}
\end{align}
Then $\lambda$ is an eigenvalue of $\mB$ if and only if
$\det\bigl(\mM(\lambda)-\mH(\lambda)\bigr)=0$.
\end{lemma}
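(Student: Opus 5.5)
The plan is to follow the standard eigenvector-lifting argument behind Ihara--Bass identities. It shows directly that $\lambda$ is an eigenvalue of $\mB$ exactly when the vertex-level system $(\mM(\lambda)-\mH(\lambda))\vx=0$ has a nontrivial solution.

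Fix $\lambda\neq 0$ with $\lambda^2\neq|\emH_{uv}|^2$ for all $u,v$. Write the eigenvalue equation $\mB\vxi=\lambda\vxi$ coordinatewise: for $e=(i,j)$,
\[
\lambda\xi_{(i,j)}=\sum_{l\neq i}\emH_{jl}\xi_{(j,l)}=\eta_j-\emH_{ji}\xi_{(j,i)},\qquad \eta_j\coloneqq\sum_{l}\emH_{jl}\xi_{(j,l)}.
\]
Pairing this with the same equation for the reversed edge $(j,i)$, namely $\lambda\xi_{(j,i)}=\eta_i-\emH_{ij}\xi_{(i,j)}$, gives a $2\times 2$ linear system in $(\xi_{(i,j)},\xi_{(j,i)})$. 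Its determinant is $\lambda^2-|\emH_{ij}|^2\neq 0$, using Hermitian symmetry $\emH_{ji}=\overline{\emH_{ij}}$. Solving it yields
\[
\xi_{(i,j)}=\frac{\lambda\eta_j-\emH_{ji}\eta_i}{\lambda^2-|\emH_{ij}|^2}.
\]
Thus every edge vector in the kernel of $\mB-\lambda$ is determined by the vertex vector $\boldsymbol\eta$. The loops $i=j$ are covered by the same formula, because the host $\gK_N$ includes loops and $\indi{i\neq l}$ treats them consistently.

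Next I will substitute this expression back into the definition of $\eta_i$:
\[
\eta_i=\sum_j\emH_{ij}\xi_{(i,j)}=\sum_j\frac{\lambda\emH_{ij}\eta_j-|\emH_{ij}|^2\eta_i}{\lambda^2-|\emH_{ij}|^2}.
\]
Rearranged, this reads $\bigl(1+\sum_j\frac{|\emH_{ij}|^2}{\lambda^2-|\emH_{ij}|^2}\bigr)\eta_i=\sum_j\emH_{ij}(\lambda)\eta_j$, that is, $(\mM(\lambda)-\mH(\lambda))\boldsymbol\eta=0$.

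For the converse, start from any $\boldsymbol\eta$ in this kernel and define $\vxi$ by the displayed formula. The same algebra run backwards will show that $\sum_l\emH_{jl}\xi_{(j,l)}=\eta_j$, and then that the edge equations $\mB\vxi=\lambda\vxi$ hold.

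The key step, and the one needing care, is injectivity: I must show that $\vxi\neq0$ forces $\boldsymbol\eta\neq0$, and conversely. If $\boldsymbol\eta=0$, the solved formula gives $\vxi=0$, so a nonzero eigenvector of $\mB$ yields a nonzero kernel vector of $\mM(\lambda)-\mH(\lambda)$. In the other direction, suppose $\boldsymbol\eta\neq 0$ but the constructed $\vxi$ vanished. Then the consistency relation $\eta_j=\sum_l\emH_{jl}\xi_{(j,l)}=0$ would contradict $\boldsymbol\eta\neq0$. So the correspondence $\vxi\leftrightarrow\boldsymbol\eta$ is a linear bijection between the two kernels.

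It follows that $\lambda$ is an eigenvalue of $\mB$ if and only if $\det(\mM(\lambda)-\mH(\lambda))=0$. The only delicate point is checking that the reconstructed $\vxi$ really reproduces $\boldsymbol\eta$ through $\sum_l\emH_{jl}\xi_{(j,l)}$, and this is exactly the vertex equation. Alternatively, the statement is cited verbatim from \cite[Lemma~4.1]{benaych2020spectral}, and I would simply check that their nonbacktracking convention, with the weight on the destination edge and loops included, matches Definition~\ref{def:nonbacktracking-matrix}.
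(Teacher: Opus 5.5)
Your proof is correct and is essentially the eigenvector-lifting argument of \cite[Lemma~4.1]{benaych2020spectral}, which the paper cites rather than reproves. You solve the paired edge equations for $\xi_{(i,j)}$ in terms of the vertex vector $\eta$, substitute back to obtain $(\mM(\lambda)-\mH(\lambda))\eta=0$, and check that nonvanishing is preserved in both directions; the one point worth stating explicitly is that for a loop $(i,i)$ the $2\times 2$ system collapses to the single equation $(\lambda+\emH_{ii})\xi_{(i,i)}=\eta_i$, which agrees with your formula precisely because $\emH_{ii}\in\R$ for Hermitian $\mH$.
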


\begin{lemma}[Bipartite Ihara--Bass formula, {\cite[Lemma~3.2]{dumitriu2024extreme}}]
\label{lem:bipartite-ihara-bass}
Let $\mX\in\C^{n\times m}$, set $N=n+m$, and let
$\widetilde{\mB}=\widetilde{\mB}(\mX)$ be its nonbacktracking
matrix from Definition~\ref{def:nonbacktracking-matrix}.
For $\lambda\in\C\setminus\{0\}$ with
$\lambda^2\ne|\emX_{ij}|^2$ for all $i,j$, define
$\mX(\lambda)\in\C^{n\times m}$,
$\mX^\sharp(\lambda)\in\C^{m\times n}$, and the diagonal matrices
$\mM_{\rm r}(\lambda)$ and $\mM_{\rm c}(\lambda)$ by
\begin{align}
 \emX_{ij}(\lambda)
 &\coloneqq\frac{\lambda\emX_{ij}}
                    {\lambda^2-|\emX_{ij}|^2},
& \emX_{ji}^{\sharp}(\lambda)
 &\coloneqq\frac{\lambda\overline{\emX_{ij}}}
                    {\lambda^2-|\emX_{ij}|^2}, \label{eqn:rectangular-X(lambda)}\\
 \emM_{\rm{r}, ii}(\lambda)
 &\coloneqq 1+\sum_{j=1}^m\frac{|\emX_{ij}|^2}
                          {\lambda^2-|\emX_{ij}|^2},
 &\emM_{\rm{c}, jj}(\lambda)
 &\coloneqq 1+\sum_{i=1}^n\frac{|\emX_{ij}|^2}
                          {\lambda^2-|\emX_{ij}|^2}.
 \label{eqn:rectangular-M(lambda)}
\end{align}
\begin{enumerate}[label=\textup{(\roman*)},leftmargin=2.5em,itemsep=0.6em]
\item For the block linearization $\widetilde{\mH}$ in
\eqref{eqn:block-linearization}, the vertex matrix is
\begin{align}
 \mM(\lambda)-\widetilde{\mH}(\lambda)
 =\begin{bmatrix}
    \mM_{\rm r}(\lambda)&-\mX(\lambda)\\
    -\mX^{\sharp}(\lambda)&\mM_{\rm c}(\lambda)
   \end{bmatrix}.
 \label{eqn:block-ihara-entries}
\end{align}
The parameter $\lambda$ is an eigenvalue of $\widetilde{\mB}$ if
and only if this block matrix is singular.

\item If $\mM_{\rm r}(\lambda)$ is nonsingular, then $\lambda$ is an eigenvalue of $\widetilde{\mB}$ if and only if
\begin{align}
      \det\bigl(\mM_{\rm c}(\lambda)
      -\mX^{\sharp}(\lambda)[\mM_{\rm r}(\lambda)]^{-1}
        \mX(\lambda)\bigr) = 0.
 \label{eqn:weighted-ihara-bass-block-determinant}
\end{align}
\end{enumerate}
\end{lemma}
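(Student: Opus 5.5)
Both parts follow from Lemma~\ref{lem:general-ihara-bass} applied to the block linearization $\widetilde{\mH}$ of $\mX$, followed by a Schur complement. The plan is to reduce the bipartite nonbacktracking spectrum to that of $\mB(\widetilde{\mH})$, identify the vertex matrix of Lemma~\ref{lem:general-ihara-bass} in block form, and then eliminate the row block.

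\emph{Step 1 (reduction of the spectrum).} By Remark~\ref{rem:nonbacktracking-definition-bipartite-block-linearization}, once the crossing edges are ordered first, $\mB(\widetilde{\mH})$ is block lower triangular with diagonal blocks $\widetilde{\mB}$ and $\bzero$. Hence, for $\lambda\neq0$, $\lambda$ is an eigenvalue of $\widetilde{\mB}$ if and only if it is an eigenvalue of $\mB(\widetilde{\mH})$.

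To justify the triangular form, note that a transition into an oriented edge $f=(k,l)$ carries the weight $\widetilde{\emH}_{kl}$, which vanishes unless $(k,l)$ is a crossing edge. So no transition lands on a non-crossing edge, and the corresponding columns of $\mB(\widetilde{\mH})$ are zero. The entries among crossing edges coincide with \eqref{eqn:nonbacktracking-definition-bipartite}. The non-backtracking constraint $i\neq l$ in the two definitions agrees because both are taken on the same vertex set $[N]$, with $\gV_2$ identified with $[N]\setminus[n]$.

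\emph{Step 2 (identifying the vertex matrix).} The hypothesis $\lambda^2\neq|\emX_{ij}|^2$ for all $i,j$, together with $\lambda\neq0$, covers all entries of $\widetilde{\mH}$. Indeed, its only nonzero entries are $\emX_{ij}$ and $\overline{\emX_{ij}}$, and its zero entries are excluded by $\lambda\neq0$. So Lemma~\ref{lem:general-ihara-bass} applies.

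Now compute $\mM(\lambda)$ and $\widetilde{\mH}(\lambda)$ from \eqref{eqn:weighted-ihara-bass-matrix}. Zero entries of $\widetilde{\mH}$ contribute nothing to either matrix. For a row vertex $u=i\le n$, the sum defining $\emM_{uu}(\lambda)$ runs over the columns $j$ and equals $\emM_{{\rm r},ii}(\lambda)$. For a column vertex, it equals $\emM_{{\rm c},jj}(\lambda)$. The off-diagonal blocks of $\widetilde{\mH}(\lambda)$ are $\mX(\lambda)$ and $\mX^\sharp(\lambda)$, since the $(n+j,i)$ entry of $\widetilde{\mH}$ is $\overline{\emX_{ij}}$ and $|\overline{\emX_{ij}}|=|\emX_{ij}|$. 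This gives \eqref{eqn:block-ihara-entries}, and part (i) follows by combining it with Step 1.

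\emph{Step 3 (Schur complement).} For part (ii), assume $\mM_{\rm r}(\lambda)$ is invertible. The Schur determinant formula gives
\begin{align}
\det\begin{bmatrix}\mM_{\rm r}&-\mX(\lambda)\\-\mX^\sharp(\lambda)&\mM_{\rm c}\end{bmatrix}
=\det\mM_{\rm r}(\lambda)\,\det\bigl(\mM_{\rm c}(\lambda)-\mX^\sharp(\lambda)\mM_{\rm r}(\lambda)^{-1}\mX(\lambda)\bigr).
\end{align}
Since $\det\mM_{\rm r}(\lambda)\neq0$, the block matrix is singular exactly when \eqref{eqn:weighted-ihara-bass-block-determinant} holds.

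\emph{Main obstacle.} The only delicate point is the spectral reduction in Step 1. One must check that the non-crossing edges form an invariant "sink" block, so that they add only zero eigenvalues. One must also check that $\lambda\neq0$ rules out the excluded case $\lambda^2=|\widetilde{\emH}_{uv}|^2=0$ for zero entries, so that Lemma~\ref{lem:general-ihara-bass} is applicable. The remaining steps are direct bookkeeping.
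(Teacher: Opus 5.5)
Your proposal is correct and matches the paper's proof step for step. You identify the vertex matrix of Lemma~\ref{lem:general-ihara-bass} for $\widetilde{\mH}$ in block form, transfer the eigenvalue criterion using the equality of the nonzero spectra of $\mB(\widetilde{\mH})$ and $\widetilde{\mB}$ from Remark~\ref{rem:nonbacktracking-definition-bipartite-block-linearization}, and finish with the determinant identity of Lemma~\ref{lem:schur-complement}; your additional checks (that the non-crossing columns of $\mB(\widetilde{\mH})$ vanish, and that $\lambda\neq0$ handles the zero entries in the hypothesis of Lemma~\ref{lem:general-ihara-bass}) simply make explicit what the paper leaves implicit.
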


\begin{proof}[Proof of Lemma~\ref{lem:bipartite-ihara-bass}]
The nonzero entries of $\widetilde{\mH}$ lie in its two rectangular
off-diagonal blocks.  Substituting those entries in
\eqref{eqn:weighted-ihara-bass-matrix} gives
\eqref{eqn:block-ihara-entries}.  By
Remark~\ref{rem:nonbacktracking-definition-bipartite-block-linearization},
$\mB(\widetilde{\mH})$ and $\widetilde{\mB}$ have the same
nonzero spectrum, so Lemma~\ref{lem:general-ihara-bass} proves the
first assertion. When $\mM_{\rm r}(\lambda)$ is invertible,
\eqref{eqn:weighted-ihara-bass-block-determinant} follows from the
determinant identity in Lemma~\ref{lem:schur-complement}.
\end{proof}

The previous argument uses determinant tests to detect singularity, but positivity is needed for quadratic-form bounds. As the parameter increases, the Hermitian matrix approaches the identity; loss of positivity as it decreases signals a nonbacktracking eigenvalue via the Ihara--Bass identity, which cannot occur beyond the spectral radius. Recall the row and column variance bounds in \eqref{eqn:row-column-var-min}--\eqref{eqn:row-column-var-max}.

\begin{lemma}[Loewner orders]
\label{lem:weighted-ihara-bass-positivity}
With the notation above, the following orders hold.
\begin{enumerate}[label=\textup{(\roman*)},leftmargin=2.5em,itemsep=0.6em]
\item If $\mH$ is Hermitian and
$\lambda>\max\{\rho(\mB),\|\mH\|_{1\to\infty}\}$ is real, then
\begin{align}
 \mM(\lambda)-\mH(\lambda)\succ0.
 \label{eqn:weighted-ihara-bass-positivity}
\end{align}
\item For $\mX\in\C^{n\times m}$, write
$\rowvarmax=\max_i\sum_j|\emX_{ij}|^2$.
If $\lambda=\ii\beta$ with
$\beta>\max\{\rho(\widetilde{\mB}),\sqrt{\rowvarmax}\}$, then
\begin{subequations}
\begin{align}
 \mM_{\rm r}(\lambda)
 \succeq(1-\rowvarmax/\beta^2)\id_n &\succ0,
 \label{eqn:weighted-ihara-bass-imaginary-row-positivity}\\
 \mM_{\rm c}(\lambda)
   -\mX^{\sharp}(\lambda)\mM_{\rm r}(\lambda)^{-1}
       \mX(\lambda)&\succ0.
 \label{eqn:weighted-ihara-bass-imaginary-positivity}
\end{align}
\end{subequations}
\end{enumerate}
\end{lemma}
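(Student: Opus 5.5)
Both parts rest on a continuity (homotopy) argument along a ray in the spectral parameter. As the parameter goes to infinity the vertex matrix tends to the identity. It stays Hermitian along the ray. It can lose positive definiteness only by passing through a singular point, and by Lemma~\ref{lem:general-ihara-bass} or Lemma~\ref{lem:bipartite-ihara-bass} such a point would be a nonbacktracking eigenvalue, which cannot lie beyond the spectral radius.

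\emph{Part (i).} Take real $t>\lambda_0:=\max\{\rho(\mB),\|\mH\|_{1\to\infty}\}$. Then $t^2-|\emH_{uv}|^2>0$ for all $u,v$, so every entry of $\mM(t)$ and $\mH(t)$ is a continuous function of $t$ on $(\lambda_0,\infty)$.
\begin{itemize}
\item Hermitian: $\mH(t)$ is Hermitian because $\emH_{vu}=\overline{\emH_{uv}}$ and the denominators are real. $\mM(t)$ is real diagonal.
\item Limit: $\mM(t)\to\id$ and $\mH(t)=O(1/t)$ entrywise, so $\mM(t)-\mH(t)\succ0$ for all large $t$.
\item No crossing: if positivity failed at some $t_1>\lambda_0$, continuity of the smallest eigenvalue would give a $t_2\in[t_1,\infty)$ with $\det(\mM(t_2)-\mH(t_2))=0$. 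The hypotheses of Lemma~\ref{lem:general-ihara-bass} hold there ($t_2\neq0$, $t_2^2\neq|\emH_{uv}|^2$), so $t_2$ would be an eigenvalue of $\mB$ with $|t_2|>\rho(\mB)$, a contradiction.
\end{itemize}

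\emph{Part (ii), row bound \eqref{eqn:weighted-ihara-bass-imaginary-row-positivity}.} Set $\lambda=\ii\beta$, so $\lambda^2=-\beta^2$.
\begin{itemize}
\item The diagonal entries are $\emM_{\rm r,ii}=1-\sum_j|\emX_{ij}|^2/(\beta^2+|\emX_{ij}|^2)\ge 1-\sum_j|\emX_{ij}|^2/\beta^2\ge1-\rowvarmax/\beta^2$.
\item This lower bound is positive because $\beta>\sqrt{\rowvarmax}$.
\item $\mM_{\rm c}(\ii\beta)$ is likewise real diagonal.
\end{itemize}

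\emph{Part (ii), Schur complement \eqref{eqn:weighted-ihara-bass-imaginary-positivity}.} We have $\emX_{ij}(\ii\beta)=-\ii\beta\emX_{ij}/(\beta^2+|\emX_{ij}|^2)=:-\ii\mY_{ij}$ and $\mX^\sharp(\ii\beta)=-\ii\mY^*$. Hence
\[
\mX^\sharp\mM_{\rm r}^{-1}\mX=(-\ii)^2\mY^*\mM_{\rm r}^{-1}\mY=-\mY^*\mM_{\rm r}^{-1}\mY .
\]
So the Schur complement equals $\mS(\beta):=\mM_{\rm c}+\mY^*\mM_{\rm r}^{-1}\mY$, which is Hermitian. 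The plan is then:
\begin{itemize}
\item Positivity of $\mM_{\rm r}$: for every $\beta'\ge\beta$ we have $\beta'>\sqrt{\rowvarmax}$, so $\mM_{\rm r}(\ii\beta')\succ0$ and $\mS(\beta')$ is well defined and continuous on $[\beta,\infty)$.
\item Limit: $\mS(\beta')\to\id_m$ as $\beta'\to\infty$, since $\mM_{\rm c}\to\id$ and $\mY=O(1/\beta')$.
\item No crossing: if $\mS$ were singular at some $\beta'\ge\beta$, then Lemma~\ref{lem:bipartite-ihara-bass}(ii) would make $\ii\beta'$ an eigenvalue of $\widetilde{\mB}$. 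Its modulus $\beta'>\rho(\widetilde{\mB})$ is impossible. The condition $\lambda^2\neq|\emX_{ij}|^2$ holds automatically, since $-\beta'^2<0$.
\end{itemize}
Continuity then gives $\mS(\beta)\succ0$.

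The main obstacle is realizing that the Schur complement is Hermitian on the imaginary axis. This only works because the two $-\ii$ factors combine into a $+$ sign, so one must check the conjugation in $\mX^\sharp$ carefully. The other point needing care is keeping $\mM_{\rm r}$ invertible along the whole ray, which is exactly why the hypothesis $\beta>\sqrt{\rowvarmax}$ is imposed.
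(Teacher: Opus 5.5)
Your proposal is correct and follows the paper's proof essentially step for step. The shared structure is:
\begin{itemize}
\item a continuity argument along the real ray for (i) and along the imaginary ray for (ii), with the matrices tending to the identity at infinity;
\item nonsingularity beyond the spectral radius, supplied by the Ihara--Bass determinant criteria;
\item the diagonal bound for $\mM_{\rm r}(\ii\beta)$;
\item the identity $\mX^\sharp(\ii\beta)=-\mX(\ii\beta)^*$, which makes the Schur complement Hermitian.
\end{itemize}
Your explicit intermediate-value step and your checks that $\lambda\neq0$ and $\lambda^2\neq|\emX_{ij}|^2$ only spell out what the paper states briefly.
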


\begin{proof}[Proof of Lemma~\ref{lem:weighted-ihara-bass-positivity}]
Following the continuity argument in
\cite[Proposition~6.2]{dumitriu2025singular}, we show in each case
that the relevant Hermitian matrix tends to the identity and is
nonsingular throughout the stated interval.  Its eigenvalues are
then positive for large parameters and cannot cross zero.

For \textup{(i)}, the denominators in
\eqref{eqn:weighted-ihara-bass-matrix} are positive for
$t>\max\{\rho(\mB),\|\mH\|_{1\to\infty}\}$, so
$\mM(t)-\mH(t)$ is Hermitian and continuous on this interval.
The same formulas yield, as $t\to\infty$, in operator norm,
\[
 \mM(t)-\mH(t)=\id_N-t^{-1}\mH+O(t^{-2}).
\]
Moreover, Lemma~\ref{lem:general-ihara-bass}
shows that this matrix is nonsingular throughout the interval,
since $t>\rho(\mB)$.  The continuity argument therefore proves
\eqref{eqn:weighted-ihara-bass-positivity}.

For \textup{(ii)}, by taking $\beta>\sqrt{\rowvarmax}$,  \eqref{eqn:weighted-ihara-bass-imaginary-row-positivity} is proved by the diagonal formula \eqref{eqn:rectangular-M(lambda)}, since
\[
 \emM_{{\rm r},ii}(\ii\beta)
 =1-\sum_j|\emX_{ij}|^2(\beta^2+|\emX_{ij}|^2)^{-1}
 \geq1-\rowvarmax\beta^{-2}>0,
\]
ensuring the existence and continuity of $[\mM_{\rm r}(\ii\beta)]^{-1}$ on this interval. Since $\mX^\sharp(\ii\beta)=-\mX(\ii\beta)^*$,
\[
 \begin{aligned}
 \mS(\beta)
 &=\mM_{\rm c}(\ii\beta)
   +\mX(\ii\beta)^*[\mM_{\rm r}(\ii\beta)]^{-1}\mX(\ii\beta)
 \end{aligned}
\]
is Hermitian and continuous.  The diagonal and entry formulas yield, as $\beta\to\infty$, in operator norm,
\[
 \mM_{\rm c}(\ii\beta)=\id_m+O(\beta^{-2}),\qquad
 [\mM_{\rm r}(\ii\beta)]^{-1}=\id_n+O(\beta^{-2}),\qquad
 \mX(\ii\beta)=O(\beta^{-1}).
\]
Hence $\mS(\beta)=\id_m+O(\beta^{-2})$ as
$\beta\to\infty$.
Finally, for
$\beta>\max\{\rho(\widetilde{\mB}),\sqrt{\rowvarmax}\}$,
there are no poles at $\lambda=\ii\beta$, and
Lemma~\ref{lem:bipartite-ihara-bass}\textup{(ii)} implies that
$\mS(\beta)$ is nonsingular because
$|\ii\beta|>\rho(\widetilde{\mB})$.
The same continuity argument gives $\mS(\beta)\succ0$ throughout
this interval, proving
\eqref{eqn:weighted-ihara-bass-imaginary-positivity}.
\end{proof}

\subsubsection{Loewner orders for quadratic forms}
We now convert Lemma~\ref{lem:weighted-ihara-bass-positivity} into inequalities for the original block linearization $\widetilde{\mH}$ in \eqref{eqn:block-linearization}. Define the diagonal matrices $\mQ_{\rm r}\coloneqq\diag(\mQ_{{\rm r},ii})_{i\in[n]}$ and $\mQ_{\rm c}\coloneqq\diag(\mQ_{{\rm c},jj})_{j\in[m]}$ by
\begin{align}
      \mQ&\coloneqq
      \diag(\mQ_{\rm r},\mQ_{\rm c}),
      & \mQ_{\rm r, ii}
 &\coloneqq
       \sum_j|\emX_{ij}|^2,
 &\mQ_{\rm c, jj}
 &\coloneqq
       \sum_i|\emX_{ij}|^2. \label{eqn:rectangular-mS}
\end{align}
We denote $\overline{q_{\rm r}}\coloneqq\|\mQ_{\rm r}\|$,
$\overline{q_{\rm c}}\coloneqq\|\mQ_{\rm c}\|$, and
$\underline{q_{\rm c}}\coloneqq\min_j(\mQ_{\rm c})_{jj}$.
In particular, for any $\vf=(\vx,\vy)\in\C^n\oplus\C^m$,
\[
 \vf^*\mQ \vf
 =\sum_{i,j}|\emX_{ij}|^2\bigl(|x_i|^2+|y_j|^2\bigr).
\]
\begin{proposition}[Upper bound for Hermitian linearization]
\label{prop:rectangular-upper-quadratic-form}
Define the nonnegative diagonal error matrix
\begin{align}
 \mE_{\rm R}(t)
 \coloneqq\frac{\|\mX\|_{1\to\infty}}{t(t-\|\mX\|_{1\to\infty})}\mQ,
 \label{eqn:rectangular-upper-form-error}
\end{align}
where $t>\max\{\rho(\widetilde{\mB}),\|\mX\|_{1\to\infty}\}$. Then
the following inequality holds:
\begin{align}
 \widetilde{\mH}
 \preceq t\id_N+t^{-1}\mQ+\mE_{\rm R}(t).
 \label{eqn:block-upper-quadratic-form}
\end{align}
\end{proposition}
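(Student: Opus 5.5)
Apply Lemma~\ref{lem:weighted-ihara-bass-positivity}\textup{(i)} to the Hermitian matrix $\widetilde{\mH}$ at the real parameter $t$. By Remark~\ref{rem:nonbacktracking-definition-bipartite-block-linearization}, $\rho(\mB(\widetilde{\mH}))=\rho(\widetilde{\mB})$, and clearly $\|\widetilde{\mH}\|_{1\to\infty}=\|\mX\|_{1\to\infty}$. So the hypothesis $t>\max\{\rho(\widetilde{\mB}),\|\mX\|_{1\to\infty}\}$ gives
\[
 \mM(t)-\widetilde{\mH}(t)\succ0.
\]
By \eqref{eqn:block-ihara-entries}, this reads $\widetilde{\mH}(t)\prec\mM(t)$, where $\widetilde{\mH}(t)$ has entries $t\widetilde{\emH}_{uv}/(t^2-|\widetilde{\emH}_{uv}|^2)$. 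Everything then reduces to comparing $\widetilde{\mH}(t)$ with $t^{-1}\widetilde{\mH}$ and $\mM(t)$ with $\id+t^{-2}\mQ+t^{-1}\mE_{\rm R}(t)$. Multiplying by $t>0$ should then yield \eqref{eqn:block-upper-quadratic-form}.

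\textbf{Step 1: the diagonal matrix.} Write $a=\|\mX\|_{1\to\infty}$. Entrywise,
\[
 \frac{|\emX_{ij}|^2}{t^2-|\emX_{ij}|^2}
 \leq\frac{|\emX_{ij}|^2}{t^2}+\frac{|\emX_{ij}|^4}{t^2(t^2-a^2)}.
\]
Bounding $|\emX_{ij}|^4\leq a^2|\emX_{ij}|^2$ gives
\[
 \mM(t)\preceq\id+\Bigl(\frac1{t^2}+\frac{a^2}{t^2(t^2-a^2)}\Bigr)\mQ .
\]
One checks that $a^2/(t(t^2-a^2))\leq a/(t(t-a))$, since $a/(t+a)\leq1$. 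Hence $t\mM(t)\preceq t\id+t^{-1}\mQ+\tfrac12\mE_{\rm R}(t)$ or so, with room to spare.

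\textbf{Step 2: the off-diagonal matrix (the main obstacle).} We need a lower Loewner bound for $t\widetilde{\mH}(t)$ in terms of $\widetilde{\mH}$. Write
\[
 t\widetilde{\mH}(t)=\widetilde{\mH}+\mD,
 \qquad
 \mD_{uv}=\widetilde{\emH}_{uv}\,\frac{|\widetilde{\emH}_{uv}|^2}{t^2-|\widetilde{\emH}_{uv}|^2}.
\]
Then $\mD$ is Hermitian and supported on the off-diagonal blocks, and its entries are bounded by $|\widetilde{\emH}_{uv}|^2\,a/(t^2-a^2)$. To control $\mD$ by diagonal matrices, use the Schur test or AM-GM on the quadratic form:
\[
 |\vf^*\mD\vf|\leq\sum_{i,j}|D_{ij}|\,2|x_i||y_j|
 \leq\frac{a}{t^2-a^2}\sum_{i,j}|\emX_{ij}|^2\bigl(|x_i|^2+|y_j|^2\bigr)
 =\frac{a}{t^2-a^2}\,\vf^*\mQ\vf .
\]
This gives $-\mD\preceq\frac{a}{t^2-a^2}\mQ$.

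\textbf{Step 3: combining.} From the two steps,
\[
 \widetilde{\mH}=t\widetilde{\mH}(t)-\mD\prec t\mM(t)-\mD
 \preceq t\id+t^{-1}\mQ+\Bigl(\frac{a^2}{t(t^2-a^2)}+\frac{a}{t^2-a^2}\Bigr)\mQ .
\]
The coefficient simplifies exactly:
\[
 \frac{a^2+at}{t(t^2-a^2)}=\frac{a}{t(t-a)}.
\]
This is exactly $\mE_{\rm R}(t)$, proving \eqref{eqn:block-upper-quadratic-form}. The only delicate point is the sign and AM-GM bookkeeping in Step~2, together with the observation that the error terms sum exactly to the stated coefficient.
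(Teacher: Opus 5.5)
Your proposal is correct and is essentially the paper's proof. The paper likewise sets $\widetilde{\mT}=t\widetilde{\mH}(t)$, bounds $\widetilde{\mH}-\widetilde{\mT}$ in absolute value by $\|\mX\|_{1\to\infty}(t^2-\|\mX\|_{1\to\infty}^2)^{-1}\mQ$ using $2|x_iy_j|\leq|x_i|^2+|y_j|^2$, and bounds $t\mM(t)-t\id_N-t^{-1}\mQ$ by $\|\mX\|_{1\to\infty}^2t^{-1}(t^2-\|\mX\|_{1\to\infty}^2)^{-1}\mQ$. It then invokes Lemma~\ref{lem:weighted-ihara-bass-positivity}\textup{(i)} for $\widetilde{\mT}\preceq t\mM(t)$ and obtains the same exact simplification of the coefficient (your aside about ``$\tfrac12\mE_{\rm R}(t)$ or so'' is loose but harmless, since Step~3 keeps the exact coefficient).
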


\begin{proof}[Proof of Proposition~\ref{prop:rectangular-upper-quadratic-form}]
Fix $t>\max\{\rho(\widetilde{\mB}),\|\mX\|_{1\to\infty}\}$.
Let $\mT=t\mX(t)$, so that
$\emT_{ij}=t^2\emX_{ij}(t^2-|\emX_{ij}|^2)^{-1}$, and denote its
Hermitian linearization by $\widetilde{\mT}$.  Write
$\mM(t)=\diag(\mM_{\rm r}(t),\mM_{\rm c}(t))$.
Observe that
\begin{align}
 \widetilde{\mH}-t\id_N-t^{-1}\mQ
 &=(\widetilde{\mH}-\widetilde{\mT})
   +(t\mM(t)-t\id_N-t^{-1}\mQ)
   +(\widetilde{\mT}-t\mM(t)).
 \label{eqn:rectangular-real-form-decomposition}
\end{align}
Below, we will bound the first two terms separately, then use the real-parameter Ihara--Bass order to show that the last term is nonpositive.
First, the entrywise deformation satisfies
\[
 |\emT_{ij}-\emX_{ij}|
 =|\emX_{ij}|^3(t^2-|\emX_{ij}|^2)^{-1}
 \leq\|\mX\|_{1\to\infty}|\emX_{ij}|^2
             (t^2-\|\mX\|_{1\to\infty}^2)^{-1}.
\]
For any $\vf=(\vx,\vy)\in\C^n\oplus\C^m$, the inequality
$2|x_i y_j|\leq|x_i|^2+|y_j|^2$ therefore gives
\begin{align}
 \bigl|\vf^*(\widetilde{\mH}-\widetilde{\mT})\vf\bigr|
 &\leq2\sum_{i,j}|\emX_{ij}-\emT_{ij}|\,|x_i|\,|y_j|
 \nonumber\\
 &\leq\|\mX\|_{1\to\infty}(t^2-\|\mX\|_{1\to\infty}^2)^{-1}
       \sum_{i,j}|\emX_{ij}|^2(|x_i|^2+|y_j|^2)
 \nonumber\\
 &=\|\mX\|_{1\to\infty}(t^2-\|\mX\|_{1\to\infty}^2)^{-1}
       \vf^*\mQ\vf.
 \label{eqn:rectangular-real-deformation-form-error}
\end{align}
Second, the diagonal formulas give
\[
 \begin{aligned}
 0\leq t\emM_{{\rm r},ii}(t)-t-t^{-1}\mQ_{{\rm r},ii}
 &=t^{-1}\sum_j|\emX_{ij}|^4(t^2-|\emX_{ij}|^2)^{-1}\\
 &\leq\|\mX\|_{1\to\infty}^2t^{-1}
       (t^2-\|\mX\|_{1\to\infty}^2)^{-1}\mQ_{{\rm r},ii}.
 \end{aligned}
\]
The same calculation holds for columns.  Since both blocks are
diagonal, these entrywise inequalities imply
\begin{align}
 0\preceq t\mM(t)-t\id_N-t^{-1}\mQ
 \preceq\|\mX\|_{1\to\infty}^2t^{-1}
       (t^2-\|\mX\|_{1\to\infty}^2)^{-1}\mQ.
 \label{eqn:rectangular-real-block-error}
\end{align}

Finally, since $t \geq \max\{ \rho(\widetilde{\mB}), \|\mX\|_{1\to\infty}\}$, Lemma~\ref{lem:weighted-ihara-bass-positivity}\textup{(i)}
and \eqref{eqn:block-ihara-entries}, multiplied by $t$, lead to $\widetilde{\mT}\preceq t\mM(t)$. With this, substituting \eqref{eqn:rectangular-real-deformation-form-error} and
\eqref{eqn:rectangular-real-block-error} into \eqref{eqn:rectangular-real-form-decomposition} yields
\[
 \begin{aligned}
 \widetilde{\mH}
 &\preceq t\id_N+t^{-1}\mQ
   +\bigl(\|\mX\|_{1\to\infty}
          +t^{-1}\|\mX\|_{1\to\infty}^2\bigr)
      (t^2-\|\mX\|_{1\to\infty}^2)^{-1}\mQ\\
 &=t\id_N+t^{-1}\mQ
   +\|\mX\|_{1\to\infty}
       \bigl[t(t-\|\mX\|_{1\to\infty})\bigr]^{-1}\mQ
 =t\id_N+t^{-1}\mQ+\mE_{\rm R}(t),
 \end{aligned}
\]
which completes the proof of \eqref{eqn:block-upper-quadratic-form}.
\end{proof}

To obtain the lower bound, we consider a purely imaginary parameter $\lambda = \ii\beta$. Using the identity $\mX^\sharp(\ii\beta) = -\mX(\ii\beta)^*$, the Schur-complement order in \eqref{eqn:weighted-ihara-bass-imaginary-positivity} yields a lower bound for a weighted Gram matrix. The next proposition translates this lower bound into a quadratic form in $\mX$, employing diagonal approximations for the row and column sums of second moments. Additionally, a positive row margin ensures the inverse weight remains uniformly bounded. While similar approaches can be found in \cite{brito2022spectral, dumitriu2024extreme, dumitriu2025singular}, our result is more general, accommodating a broader range of parameters and error terms.

\begin{proposition}[Lower bound for quadratic-form]
\label{prop:rectangular-lower-quadratic-form}
Fix $0<\eta<1$, $t\in[\eta,\eta^{-1}]$, and $u\in[0,1]$.
Let $\mW_{\rm r}\in\R^{n\times n}$ and
$\mW_{\rm c}\in\R^{m\times m}$ be nonnegative diagonal matrices. Assume that for some $\varepsilon_{\rm r},\varepsilon_{\rm c}\geq0$, the following hold:
\begin{subequations}
\label{eqn:rectangular-lower-form-hypotheses}
\begin{alignat}{3}
  \|\mQ_{\rm r}-\mW_{\rm r}\|&\leq\varepsilon_{\rm r},\qquad
  &\|\mQ_{\rm c}-\mW_{\rm c}\|&\leq\varepsilon_{\rm c},\qquad
  & \id_n-t^{-2}\mW_{\rm r}&\succeq\eta\id_n,
  \label{eqn:rectangular-lower-form-approximation}\\
  \varepsilon_{\rm r}&\leq\tfrac12\eta t^2, \qquad
 &\max\{\overline{q_{\rm r}}, \overline{q_{\rm c}}\}&\leq\eta^{-1},\qquad
 &\rho(\widetilde{\mB})&\leq t+u.
 \label{eqn:rectangular-lower-form-bounds}
\end{alignat}
\end{subequations}
Define the positive-definite matrix
\[
 \mK(t)\coloneqq(\id_n-t^{-2}\mW_{\rm r})^{-1}.
\]
Then the following inequality holds:
\begin{align}
  \mX^*\mK(t)\mX
  \succeq\mW_{\rm c}-t^2\id_m-\mE_{\rm L},
  \label{eqn:block-lower-quadratic-form}
 \end{align}
where $\mE_{\rm L}$ is the nonnegative diagonal error matrix defined by
\[
 \mE_{\rm L}\coloneqq
 C_\eta\bigl(\|\mX\|_{1\to\infty}+u
                   +\varepsilon_{\rm r}+\varepsilon_{\rm c}\bigr)\id_m
\]
and $C_\eta>0$ is a constant depending only on $\eta$.
\end{proposition}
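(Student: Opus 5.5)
The plan is to apply Lemma~\ref{lem:weighted-ihara-bass-positivity}\textup{(ii)} at the imaginary parameter $\lambda=\ii\beta$ with $\beta$ slightly above $t+u$. We then expand the Schur complement $\mS(\beta)\succeq 0$ to second order in $\beta^{-1}$, and finally replace $\beta$ by $t$ and $\mQ_{\rm r},\mQ_{\rm c}$ by $\mW_{\rm r},\mW_{\rm c}$, losing only the errors collected in $\mE_{\rm L}$.

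\emph{Step 1 (admissible parameter).} From $\id_n-t^{-2}\mW_{\rm r}\succeq\eta\id_n$ and $\|\mQ_{\rm r}-\mW_{\rm r}\|\leq\varepsilon_{\rm r}\leq\tfrac12\eta t^2$ we get $\overline{q_{\rm r}}\leq(1-\eta/2)t^2<\beta^2$ for every $\beta\geq t$. Take $\beta>t+u\geq\rho(\widetilde{\mB})$. Then Lemma~\ref{lem:weighted-ihara-bass-positivity}\textup{(ii)} gives
\[
 \mS(\beta)=\mM_{\rm c}(\ii\beta)+\mX(\ii\beta)^*[\mM_{\rm r}(\ii\beta)]^{-1}\mX(\ii\beta)\succ0 ,
\]
together with $\mM_{\rm r}(\ii\beta)\succeq(\eta/2)\id_n$. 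We will let $\beta\downarrow t+u$ at the end by continuity, which yields a non-strict inequality.

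\emph{Step 2 (expansion).} The entries satisfy $\emX_{ij}(\ii\beta)=-\ii\beta\emX_{ij}/(\beta^2+|\emX_{ij}|^2)$. Hence $\beta\,\mX(\ii\beta)=-\ii(\mX-\mR)$ with $|\emR_{ij}|\leq|\emX_{ij}|^3/\beta^2$. The diagonal blocks satisfy
\[
 \mM_{\rm c}(\ii\beta)\preceq\id_m-\beta^{-2}\mQ_{\rm c}+C_\eta\|\mX\|_{1\to\infty}^2\id_m ,
 \qquad
 \mM_{\rm r}(\ii\beta)\succeq\id_n-\beta^{-2}\mQ_{\rm r}-C_\eta\|\mX\|_{1\to\infty}^2\id_n ,
\]
using $\overline{q_{\rm r}},\overline{q_{\rm c}}\leq\eta^{-1}$ and $\beta\geq\eta$. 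Multiplying $\mS(\beta)\succeq0$ by $\beta^2$ gives
\[
 (\mX-\mR)^*\widetilde{\mK}(\mX-\mR)\succeq\mQ_{\rm c}-\beta^2\id_m-C_\eta\|\mX\|_{1\to\infty}^2\id_m ,
 \qquad
 \widetilde{\mK}\coloneqq[\mM_{\rm r}(\ii\beta)]^{-1}.
\]
To remove $\mR$, we use $(a-b)^*\widetilde{\mK}(a-b)\leq(1+\theta)a^*\widetilde{\mK}a+(1+\theta^{-1})b^*\widetilde{\mK}b$ with $\theta=\|\mX\|_{1\to\infty}$. A Schur/Cauchy--Schwarz bound gives
\[
 \|\mR\vy\|^2\leq\max_i\Bigl(\sum_j|\emR_{ij}|\Bigr)\max_j\Bigl(\sum_i|\emR_{ij}|\Bigr)\|\vy\|^2\leq C_\eta\|\mX\|_{1\to\infty}^2\|\vy\|^2 ,
\]
since both row and column sums of $|\emR_{ij}|$ are at most $\|\mX\|_{1\to\infty}\eta^{-1}\beta^{-2}$. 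Combined with $\|\widetilde{\mK}\|\leq 2/\eta$, the $\mR$-terms contribute $O_\eta(\|\mX\|_{1\to\infty})$. The factor $(1+\theta)$ is harmless because the right-hand side $\mQ_{\rm c}-\beta^2$ is bounded by $C_\eta$.

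\emph{Step 3 (replacing weights).} This is the step I expect to be the main obstacle. A naive additive comparison of $\mX^*\widetilde{\mK}\mX$ with $\mX^*\mK(t)\mX$ needs a bound on $\|\mX\|$, which is not available. Instead I would compare the diagonal weights multiplicatively. Since $\beta\geq t$, $\mQ_{\rm r}\preceq\mW_{\rm r}+\varepsilon_{\rm r}\id_n$, and $\id_n-t^{-2}\mW_{\rm r}\succeq\eta\id_n$, we have
\[
 \id_n-\beta^{-2}\mQ_{\rm r}-C_\eta\|\mX\|_{1\to\infty}^2\id_n
 \succeq\bigl(1-C_\eta(\varepsilon_{\rm r}+\|\mX\|_{1\to\infty}^2)\bigr)(\id_n-t^{-2}\mW_{\rm r}).
\]
This entrywise comparison of diagonals gives $\widetilde{\mK}\preceq(1+C_\eta(\varepsilon_{\rm r}+\|\mX\|_{1\to\infty}))\mK(t)$ when the error is small. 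When the error is not small, the claim is trivial after enlarging $C_\eta$, since $\mW_{\rm c}-t^2\id_m\preceq C_\eta\id_m$. Therefore
\[
 \mX^*\mK(t)\mX\succeq\bigl(1-C_\eta(\varepsilon_{\rm r}+\|\mX\|_{1\to\infty})\bigr)\bigl(\mQ_{\rm c}-\beta^2\id_m-C_\eta\|\mX\|_{1\to\infty}\id_m\bigr).
\]
Finally we use $\mQ_{\rm c}\succeq\mW_{\rm c}-\varepsilon_{\rm c}\id_m$ and $\beta^2\to(t+u)^2\leq t^2+C_\eta u$, and absorb the multiplicative factor using the boundedness of $\mQ_{\rm c}-\beta^2$. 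This yields \eqref{eqn:block-lower-quadratic-form} with $\mE_{\rm L}=C_\eta(\|\mX\|_{1\to\infty}+u+\varepsilon_{\rm r}+\varepsilon_{\rm c})\id_m$.
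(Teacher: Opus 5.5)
Your proof is correct, but it handles the perturbation step differently from the paper.

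\textbf{Common starting point.} Both arguments use the Schur-complement positivity of Lemma~\ref{lem:weighted-ihara-bass-positivity}\textup{(ii)} at $\lambda=\ii\beta$ and then let $\beta\downarrow t+u$.

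\textbf{The paper's route.} It proves a two-sided additive bound
$\|\mX^*\mK(t)\mX-\mT^*[\mM_{\rm r}(\ii\beta)]^{-1}\mT\|\leq C_\eta(\|\mX\|_{1\to\infty}+\varepsilon_{\rm r}+\beta-t)$.
This needs the a priori estimate $\|\mX\|\leq C_\eta$. Contrary to your remark, that estimate \emph{is} available. The paper gets it from Proposition~\ref{prop:rectangular-upper-quadratic-form} at the real parameter $\|\mX\|_{1\to\infty}+t+u+1$, using the hypothesis $\rho(\widetilde{\mB})\leq t+u$ a second time.

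\textbf{Your route.} You use only one-sided comparisons that survive congruence by $\mX$.
\begin{itemize}
\item The weighted Cauchy--Schwarz split with $\theta=\|\mX\|_{1\to\infty}$ removes $\mR$.
\item The diagonal inequality $\mM_{\rm r}(\ii\beta)\succeq(1-\delta)(\id_n-t^{-2}\mW_{\rm r})$ uses only $\beta\geq t$, so no $\beta-t$ error appears on the row side.
\item That inequality gives $\mX^*[\mM_{\rm r}(\ii\beta)]^{-1}\mX\preceq(1+2\delta)\mX^*\mK(t)\mX$ with no bound on $\|\mX\|$ needed.
\item The multiplicative losses are absorbed because $\mQ_{\rm c}-\beta^2\id_m$ has norm at most $C_\eta$. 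A separate trivial case covers large $\delta$.
\end{itemize}

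\textbf{Comparison.} Your route is more self-contained, since it uses only the imaginary-axis Ihara--Bass order. The paper's route is more mechanical and yields a stronger two-sided estimate.

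\textbf{Two bookkeeping points.}
\begin{itemize}
\item In the trivial case you only have $\mW_{\rm c}\preceq(\eta^{-1}+\varepsilon_{\rm c})\id_m$, not $C_\eta\id_m$. This is harmless, because $\varepsilon_{\rm c}$ enters $\mE_{\rm L}$ with coefficient $C_\eta\geq1$.
\item The matrix $\mQ_{\rm c}-\beta^2\id_m$ may be indefinite, so you should not bound the factor $(1+\delta)^{-1}$ below by $1-\delta$. Instead write $(1+\delta)^{-1}=1-\delta/(1+\delta)$, or use $(1+\delta)^{-1}D\succeq D-\delta\|D\|\id_m$ for diagonal $D$.
\end{itemize}
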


\begin{proof}[Proof of Proposition~\ref{prop:rectangular-lower-quadratic-form}]
Let $\mT=\ii\beta\mX(\ii\beta)$, then $\emT_{ij}=\beta^2\emX_{ij}/(\beta^2+|\emX_{ij}|^2)$. Observe that
\begin{align}
  & \|\mX^*\mK(t)\mX-\mT^*[\mM_{\rm r}(\ii\beta)]^{-1}\mT\|\\
  \leq & \|\mX^*\mK(t)\mX - \mX^*[\mM_{\rm r}(\ii\beta)]^{-1}\mX\| + \|\mX^*[\mM_{\rm r}(\ii\beta)]^{-1}\mX - \mT^*[\mM_{\rm r}(\ii\beta)]^{-1}\mT\| \\
  \leq & \|\mX\|^2\|\mK(t)-[\mM_{\rm r}(\ii\beta)]^{-1}\|
  +\|[\mM_{\rm r}(\ii\beta)]^{-1}\| \big(\|\mX\|+\|\mT\|\big)\|\mX-\mT\|. \label{eqn:rectangular-weighted-gram-error}
  %\leq &C_\eta\bigl(\|\mX\|_{1\to\infty}+\varepsilon_{\rm r}+\beta-t\bigr).
 \end{align}
We bound the factors on the right-hand side of
\eqref{eqn:rectangular-weighted-gram-error} in turn, throughout taking
$t+u<\beta\leq t+u+1$.

First, \eqref{eqn:rectangular-lower-form-bounds} implies that
$\|\mX\|_{1\to\infty}^2\leq\overline{q_{\rm r}}\leq\eta^{-1}$ and
$\|\mQ\|\leq\eta^{-1}$. By applying
Proposition~\ref{prop:rectangular-upper-quadratic-form} at
$\|\mX\|_{1\to\infty}+(t+u+1)$, we obtain
\[
 \|\mX\|=\lambda_{\max}(\widetilde{\mH})
 \leq\|\mX\|_{1\to\infty}+(t+u+1)+(t+u+1)^{-1}\|\mQ\|
 \leq C_\eta.
\]
Since $|\emT_{ij}-\emX_{ij}|\leq\|\mX\|_{1\to\infty}\beta^{-2}|\emX_{ij}|^2$, the Schur test, Lemma~\ref{lem:schur_test}, implies
\begin{align}
 \|\mT-\mX\|\leq
 \|\mX\|_{1\to\infty}\sqrt{\overline{q_{\rm r}}\,\overline{q_{\rm c}}}/{\beta^2} \leq C_\eta\|\mX\|_{1\to\infty}.
 \label{eqn:block-imaginary-deformation-error}
\end{align}
where the last inequality follows from $\beta\geq\eta$ and \eqref{eqn:rectangular-lower-form-bounds}. Hence $\|\mT\|\leq\|\mX\|+\|\mT-\mX\|\leq C_\eta$.

Second, the row conditions in \eqref{eqn:rectangular-lower-form-approximation} and \eqref{eqn:rectangular-lower-form-bounds} imply
\[
 \mQ_{\rm r}\preceq\mW_{\rm r}+\varepsilon_{\rm r}\id_n
 \preceq(1-\eta/2)t^2\id_n.
\]
Since $\beta>t$, the diagonal formulas give
\begin{align}
 \mM_{\rm r}(\ii\beta)
 \succeq\id_n-\beta^{-2}\mQ_{\rm r}\succeq (\eta/2)\id_n,
 \label{eqn:block-imaginary-row-positivity}
\end{align}
hence $\|[\mM_{\rm r}(\ii\beta)]^{-1}\|\leq2\eta^{-1}$ and $\|\mK(t)\|\leq\eta^{-1}$.
For $q\in\{\mathrm r,\mathrm c\}$, define the diagonal remainder matrices by
\[
 \mR_q(\beta)\coloneqq\mM_q(\ii\beta)-\id+\beta^{-2}\mQ_q.
\]
Since $|\emX_{ij}|^2\leq\|\mX\|_{1\to\infty}^2$ and
$\beta^2(\beta^2+|\emX_{ij}|^2)\geq\beta^4$, we have
\[
 \begin{aligned}
 0\leq [\mR_{\rm r}(\beta)]_{ii}
 &=\sum_j\left(\frac{|\emX_{ij}|^2}{\beta^2}
       -\frac{|\emX_{ij}|^2}{\beta^2+|\emX_{ij}|^2}\right)
 =\sum_j\frac{|\emX_{ij}|^4}
       {\beta^2(\beta^2+|\emX_{ij}|^2)} \leq\|\mX\|_{1\to\infty}^2\beta^{-4} \mQ_{\rm r, ii}.
 \end{aligned}
\]
The same calculation holds for the column remainder. Then for $q\in\{\mathrm r,\mathrm c\}$, we have
\[
 0\preceq\mR_q(\beta)
 \preceq\|\mX\|_{1\to\infty}^2\beta^{-4}\mQ_q,
 \qquad
 \|\mR_q(\beta)\|
 \leq\|\mX\|_{1\to\infty}^2\beta^{-4}\|\mQ_q\|.
\]
Using \eqref{eqn:rectangular-lower-form-approximation},
\eqref{eqn:rectangular-lower-form-bounds}, and
$0\leq t^{-2}-\beta^{-2}\leq2\eta^{-3}(\beta-t)$, we therefore have
\begin{align}
 &\|\mM_q(\ii\beta)-(\id-t^{-2}\mW_q)\|
 =\|\mR_q(\beta)-t^{-2}(\mQ_q-\mW_q)
                    +(t^{-2}-\beta^{-2})\mQ_q\|
 \nonumber\\
 \leq & \|\mR_q(\beta)\|+t^{-2}\varepsilon_q
                    +(t^{-2}-\beta^{-2})\|\mQ_q\|
\leq C_\eta\bigl(\varepsilon_q
                   +\|\mX\|_{1\to\infty}^2+\beta-t\bigr).
 \label{eqn:rectangular-imaginary-block-error}
\end{align}
Since $\mK(t)^{-1}=\id_n-t^{-2}\mW_{\rm r}$, the resolvent
identity and \eqref{eqn:rectangular-imaginary-block-error} with
$q=\mathrm r$ give
 \begin{align}
 \|\mK(t)-[\mM_{\rm r}(\ii\beta)]^{-1}\| \leq  & \|\mK(t)\|\cdot \|\mM_{\rm r}(\ii\beta)-[\mK(t)]^{-1}\| \cdot \|[\mM_{\rm r}(\ii\beta)]^{-1}\| \\
 \leq & 2\eta^{-2}\|\mM_{\rm r}(\ii\beta)-[\mK(t)]^{-1}\|
 \leq  C_\eta\bigl(\varepsilon_{\rm r}                  +\|\mX\|_{1\to\infty}^2+\beta-t\bigr).
 \end{align}
Substituting these bounds into
\eqref{eqn:rectangular-weighted-gram-error} and using
$\|\mX\|_{1\to\infty}^2\leq\eta^{-1/2}\|\mX\|_{1\to\infty}$ yields
\begin{align}
 \|\mX^*\mK(t)\mX-\mT^*[\mM_{\rm r}(\ii\beta)]^{-1}\mT\|
 \leq C_\eta\bigl(\|\mX\|_{1\to\infty}
                         +\varepsilon_{\rm r}+\beta-t\bigr).
 \label{eqn:rectangular-weighted-gram-error-bound}
\end{align}
Finally, since $\beta>t+u\geq\rho(\widetilde{\mB})$ and $\beta^2>\overline{q_{\rm r}}$, \eqref{eqn:weighted-ihara-bass-imaginary-positivity} gives
\begin{align}
 \mT^*[\mM_{\rm r}(\ii\beta)]^{-1}\mT
 &\succeq-\beta^2\mM_{\rm c}(\ii\beta)
 =\mQ_{\rm c}-\beta^2\id_m-\beta^2\mR_{\rm c}(\beta).
 \label{eqn:block-exact-weighted-lower}
\end{align}
Applying \eqref{eqn:rectangular-imaginary-block-error} with $q=\mathrm c$ and using $\beta^2-t^2\leq C_\eta(\beta-t)$, we have
\begin{align}
 \|-\beta^2\mM_{\rm c}(\ii\beta)
                  -(\mW_{\rm c}-t^2\id_m)\|
 &\leq t^2\|\mM_{\rm c}(\ii\beta)
                         -(\id_m-t^{-2}\mW_{\rm c})\|
       +(\beta^2-t^2)\|\mM_{\rm c}(\ii\beta)\|
 \nonumber\\
 &\leq C_\eta\bigl(\varepsilon_{\rm c}
                 +\|\mX\|_{1\to\infty}^2+\beta-t\bigr),
 \label{eqn:rectangular-imaginary-block-errors}
\end{align}
where we used $\|\mM_{\rm c}(\ii\beta)\|\leq1+\beta^{-2}\|\mQ_{\rm c}\| \leq C_\eta$. Combining \eqref{eqn:rectangular-weighted-gram-error-bound},
\eqref{eqn:block-exact-weighted-lower}, and
\eqref{eqn:rectangular-imaginary-block-errors} yields
\[
 \mX^*\mK(t)\mX
 \succeq\mW_{\rm c}-t^2\id_m
   -C_\eta\bigl(\|\mX\|_{1\to\infty}
       +\varepsilon_{\rm r}+\varepsilon_{\rm c}+\beta-t\bigr)\id_m,
\]
uniformly for $t+u<\beta\leq t+u+1$. Taking the limit as $\beta$ approaches $t+u$ establishes \eqref{eqn:block-lower-quadratic-form}.
\end{proof}

\subsection{Proof of Theorem~\ref{thm:rectangular-largest-singular-value}}
\label{subsec:singular-value-proofs}
We first derive singular-value comparisons for restrictions of a parent
matrix from the Ihara--Bass inequalities.

\begin{lemma}[Singular-value comparisons for restrictions of a parent]
\label{lem:rectangular-singular-value-comparisons}
Let $\mX\in\C^{n\times m}$ be nonempty. For nonempty
$I\subset[n]$ and $J\subset[m]$, define the selected parent variances by
\begin{align}
 \overline{q_{\rm r}}(I)&=\max_{i\in I}\sum_{j=1}^m|\emX_{ij}|^2,
 &
 \overline{q_{\rm c}}(J)&=\max_{j\in J}\sum_{i=1}^n|\emX_{ij}|^2,
 &
 \underline{q_{\rm c}}(J)&=\min_{j\in J}\sum_{i=1}^n|\emX_{ij}|^2.
\end{align}
\begin{enumerate}[label=\textup{(\roman*)},leftmargin=2.5em,itemsep=0.6em]
\item For every
$t>\max\{\rho(\widetilde{\mB}(\mX)),\|\mX\|_{1\to\infty}\}$,
\begin{align}
 \sigma_{\max}(\mX_{I,J})
 &\leq t\sqrt{\left(1+\frac{\overline{q_{\rm r}}(I)}{t^2-\|\mX\|_{1\to\infty}^2}\right)
                    \left(1+\frac{\overline{q_{\rm c}}(J)}{t^2-\|\mX\|_{1\to\infty}^2}\right)}
+\frac{\|\mX\|_{1\to\infty}\sqrt{\overline{q_{\rm r}}(I)\,\overline{q_{\rm c}}(J)}}
       {t^2-\|\mX\|_{1\to\infty}^2}.
 \label{eqn:block-upper-comparison}
\end{align}
\item If $|J|\leq n$, then for every
$\beta>\max\{\rho(\widetilde{\mB}(\mX)),\sqrt{\overline{q_{\rm r}}([n])}\}$,
\begin{align}
 \sigma_{\min}(\mX_{[n],J})
 &\geq\Biggl[\sqrt{\left[\Big(\beta^2-\overline{q_{\rm r}}([n])\Big)
             \left(\frac{\underline{q_{\rm c}}(J)}{\beta^2+\|\mX\|_{1\to\infty}^2}-1\right)
                    \right]_+}
- \frac{\|\mX\|_{1\to\infty}\sqrt{\overline{q_{\rm r}}([n])\cdot \overline{q_{\rm c}}(J)}}
       {\beta^2+\|\mX\|_{1\to\infty}^2}\Biggr]_+.
 \label{eqn:block-lower-comparison}
\end{align}
If $\mX_{I^c,J}=0$ and $|I|\geq|J|$, the same bound holds for
$\sigma_{\min}(\mX_{I,J})$.
\end{enumerate}
%The sets may depend arbitrarily on the entries of $\mX$.
\end{lemma}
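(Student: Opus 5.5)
Both parts come from restricting the Ihara--Bass Loewner orders of Lemma~\ref{lem:weighted-ihara-bass-positivity} to principal submatrices indexed by $I$ and $J$. Each part then has two steps: bound the singular values of the deformed matrix $\mT$, and pass back to $\mX$ with Weyl's inequality and the Schur test (Lemma~\ref{lem:schur_test}). Throughout write $w\coloneqq\|\mX\|_{1\to\infty}$.

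For \textup{(i)}, fix $t>\max\{\rho(\widetilde{\mB}(\mX)),w\}$ and set $\mT=t\mX(t)$. Lemma~\ref{lem:weighted-ihara-bass-positivity}\textup{(i)}, multiplied by $t$ and read through the block form \eqref{eqn:block-ihara-entries}, gives
\[
\begin{bmatrix} t\mM_{\rm r}(t)&-\mT\\-\mT^*&t\mM_{\rm c}(t)\end{bmatrix}\succ0.
\]
Its principal submatrix on rows $I$ and columns $J$ is also positive definite.
\begin{itemize}
\item For a block matrix $\begin{bmatrix}\mD_1&-\mT\\-\mT^*&\mD_2\end{bmatrix}$ with positive diagonal blocks, positive definiteness is equivalent to $\|\mD_1^{-1/2}\mT\mD_2^{-1/2}\|<1$. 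Hence $\|\mT_{I,J}\|\le\max_{i\in I}(t\emM_{{\rm r},ii})^{1/2}\max_{j\in J}(t\emM_{{\rm c},jj})^{1/2}$.
\item Since $t^2-|\emX_{ij}|^2\ge t^2-w^2$, we have $t\emM_{{\rm r},ii}(t)\le t\bigl(1+\overline{q_{\rm r}}(I)/(t^2-w^2)\bigr)$ for $i\in I$, and similarly for columns in $J$. This produces the first term of \eqref{eqn:block-upper-comparison}.
\item The difference has entries $|\emX_{ij}-\emT_{ij}|=|\emX_{ij}|^3/(t^2-|\emX_{ij}|^2)\le w|\emX_{ij}|^2/(t^2-w^2)$. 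On $I\times J$, row sums of $|\emX_{ij}|^2$ are at most $\overline{q_{\rm r}}(I)$ and column sums at most $\overline{q_{\rm c}}(J)$. The Schur test then gives $\|(\mX-\mT)_{I,J}\|\le w\sqrt{\overline{q_{\rm r}}(I)\overline{q_{\rm c}}(J)}/(t^2-w^2)$, which is the second term.
\end{itemize}
The triangle inequality finishes \textup{(i)}.

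For \textup{(ii)}, take $\beta$ as stated and $\mT=\ii\beta\mX(\ii\beta)$, whose entries are $\beta^2\emX_{ij}/(\beta^2+|\emX_{ij}|^2)$.
\begin{itemize}
\item The identity $\mX(\ii\beta)=-\ii\mT/\beta$ and \eqref{eqn:weighted-ihara-bass-imaginary-positivity} give $\mT^*\mM_{\rm r}(\ii\beta)^{-1}\mT\succ-\beta^2\mM_{\rm c}(\ii\beta)$.
\item From \eqref{eqn:weighted-ihara-bass-imaginary-row-positivity}, $\mM_{\rm r}(\ii\beta)\succeq(1-\overline{q_{\rm r}}([n])/\beta^2)\id_n$. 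Hence $\mM_{\rm r}^{-1}\preceq\beta^2(\beta^2-\overline{q_{\rm r}}([n]))^{-1}\id_n$, and so $\mT^*\mT\succeq-(\beta^2-\overline{q_{\rm r}}([n]))\mM_{\rm c}(\ii\beta)$.
\item For $j\in J$, using $|\emX_{ij}|\le w$, we have $-\emM_{{\rm c},jj}(\ii\beta)\ge\underline{q_{\rm c}}(J)/(\beta^2+w^2)-1$.
\item Take the principal $J\times J$ block; this is the Gram matrix of $\mT_{[n],J}$. Because $|J|\le n$, its smallest eigenvalue is $\sigma_{\min}(\mT_{[n],J})^2$, giving the square-root term.
\item Since $w^2\le\overline{q_{\rm r}}([n])<\beta^2$ and $s\mapsto s/(\beta^2+s^2)$ is increasing on $[0,\beta]$, we get $|\emX_{ij}-\emT_{ij}|\le w|\emX_{ij}|^2/(\beta^2+w^2)$. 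The Schur test on $[n]\times J$ and Weyl's inequality then subtract the stated error.
\end{itemize}
For the final claim, if $\mX_{I^c,J}=0$ then $\mX_{[n],J}$ is $\mX_{I,J}$ padded with zero rows. The two matrices have the same singular values, and $|I|\ge|J|$ makes the $\sigma_{\min}$ values coincide.

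The main delicacy is bookkeeping rather than any deep step. One must order the Loewner inequalities correctly, including the sign flip from $\mX^\sharp(\ii\beta)=-\mX(\ii\beta)^*$, and ensure every variance is bounded using only parent row sums over $I$ or $[n]$ and column sums over $J$. The reduction $\mM_{\rm r}^{-1}\preceq c\,\id$ in \textup{(ii)} deliberately costs a full factor; this is the step to recheck.
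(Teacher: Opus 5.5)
Your proposal is correct and follows the paper's proof essentially step by step. You compress the Ihara--Bass Loewner orders to $I\sqcup J$ in (i) and to $J$ in (ii), apply the same diagonal bounds on $\mM_{\rm r}$ and $\mM_{\rm c}$, and use the same Schur-test plus Weyl deformation step (including the monotonicity of $a\mapsto a/(\beta^2+a^2)$). The only cosmetic difference is in (i): you phrase positivity of the compressed block matrix as $\|\mD_1^{-1/2}\mT\mD_2^{-1/2}\|<1$, whereas the paper uses the Schur complement with $[\mM_{\rm r}(t)_{I,I}]^{-1}\succeq\|\mM_{\rm r}(t)_{I,I}\|^{-1}\id$; both yield the identical bound.
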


\begin{proof}[Proof of Lemma~\ref{lem:rectangular-singular-value-comparisons}]
For \textup{(i)}, Lemma~\ref{lem:weighted-ihara-bass-positivity}\textup{(i)}
gives positivity of the parent block matrix in
\eqref{eqn:block-ihara-entries}. Compress it to $I\sqcup J$.
Its diagonal blocks are $\mM_{\rm r}(t)_{I,I}$ and
$\mM_{\rm c}(t)_{J,J}$, with their variance sums still taken over
all columns and rows of the parent. Since
$\mM_{\rm r}(t)_{I,I}\succeq\id_{|I|}$, the positivity criterion in
Lemma~\ref{lem:schur-complement} gives, with $\mT=t\mX(t)_{I,J}$,
\[
 \mT^*[\mM_{\rm r}(t)_{I,I}]^{-1}\mT
 \prec t^2\mM_{\rm c}(t)_{J,J}.
\]
Bounding the inverse from below by
$\|\mM_{\rm r}(t)_{I,I}\|^{-1}\id_{|I|}$ and using the diagonal
formulas yields
\begin{align}
 \|\mT\|
 &\leq t\sqrt{\|\mM_{\rm r}(t)_{I,I}\|\,
                     \|\mM_{\rm c}(t)_{J,J}\|}
\leq t\sqrt{\left(1+\frac{\overline{q_{\rm r}}(I)}{t^2-\|\mX\|_{1\to\infty}^2}\right)
\left(1+\frac{\overline{q_{\rm c}}(J)}{t^2-\|\mX\|_{1\to\infty}^2}\right)}.
 \label{eqn:block-real-norm-bound}
\end{align}
For $i\in I$ and $j\in J$, the deformation formula gives
\[
 |\emX_{ij}-\emT_{ij}|
 =\frac{|\emX_{ij}|^3}{t^2-|\emX_{ij}|^2}
 \leq\frac{\|\mX\|_{1\to\infty}|\emX_{ij}|^2}{t^2-\|\mX\|_{1\to\infty}^2}.
\]
The Schur test therefore implies
\begin{align}
 \|\mX_{I,J}-\mT\|
 &\leq\frac{\|\mX\|_{1\to\infty}\sqrt{\overline{q_{\rm r}}(I)\,
                                  \overline{q_{\rm c}}(J)}}{t^2-\|\mX\|_{1\to\infty}^2}.
 \label{eqn:block-real-entry-error}
\end{align}
The triangle inequality proves \eqref{eqn:block-upper-comparison}.

For \textup{(ii)}, let $\mT=\ii\beta\mX(\ii\beta)_{[n],J}$.
The diagonal formula gives
\[
 \mM_{\rm r}(\ii\beta)
 \succeq(1-\beta^{-2}\overline{q_{\rm r}}([n]))\id_n\succ0,
 \qquad
 [\mM_{\rm r}(\ii\beta)]^{-1}
 \preceq(1-\beta^{-2}\overline{q_{\rm r}}([n]))^{-1}\id_n.
\]
Compressing the parent weighted Gram inequality
\eqref{eqn:block-exact-weighted-lower} to $J$ yields
\[
 \mT^*[\mM_{\rm r}(\ii\beta)]^{-1}\mT
 \succeq-\beta^2\mM_{\rm c}(\ii\beta)_{J,J}.
\]
For each $j\in J$,
\[
 -[\mM_{\rm c}(\ii\beta)]_{jj}
 =\sum_i\frac{|\emX_{ij}|^2}{\beta^2+|\emX_{ij}|^2}-1
 \geq\frac{\underline{q_{\rm c}}(J)}{\beta^2+\|\mX\|_{1\to\infty}^2}-1.
\]
Combining these bounds gives
\[
 (1-\beta^{-2}\overline{q_{\rm r}}([n]))^{-1}\mT^*\mT
 \succeq\beta^2\left(\frac{\underline{q_{\rm c}}(J)}{\beta^2+\|\mX\|_{1\to\infty}^2}-1\right)
       \id_{|J|}.
\]
Since $n\geq|J|$, taking the least eigenvalue and using
$\mT^*\mT\succeq0$ proves
\begin{align}
 \sigma_{\min}(\mT)^2
 &\geq\left[(\beta^2-\overline{q_{\rm r}}([n]))
       \left(\frac{\underline{q_{\rm c}}(J)}{\beta^2+\|\mX\|_{1\to\infty}^2}-1\right)\right]_+.
 \label{eqn:block-imaginary-singular-value-bound}
\end{align}
For the deformation error, $\beta>\sqrt{\overline{q_{\rm r}}([n])}\geq\|\mX\|_{1\to\infty}$,
so $a\mapsto a/(\beta^2+a^2)$ is increasing on $[0,\|\mX\|_{1\to\infty}]$.
Consequently, for $i\in[n]$ and $j\in J$,
\[
 |\emX_{ij}-\emT_{ij}|
 =\frac{|\emX_{ij}|^3}{\beta^2+|\emX_{ij}|^2}
 \leq\frac{\|\mX\|_{1\to\infty}|\emX_{ij}|^2}{\beta^2+\|\mX\|_{1\to\infty}^2}.
\]
The Schur test gives
\begin{align}
 \|\mX_{[n],J}-\mT\|
 &\leq\frac{\|\mX\|_{1\to\infty}\sqrt{\overline{q_{\rm r}}([n])\,
                                  \overline{q_{\rm c}}(J)}}{\beta^2+\|\mX\|_{1\to\infty}^2}.
 \label{eqn:block-imaginary-entry-error}
\end{align}
By Lemma~\ref{lem:weyl} and nonnegativity of singular values,
\[
 \sigma_{\min}(\mX_{[n],J})
 \geq[\sigma_{\min}(\mT)-\|\mX_{[n],J}-\mT\|]_+,
\]
which proves \eqref{eqn:block-lower-comparison}.
If $\mX_{I^c,J}=0$, then
$\mX_{[n],J}^*\mX_{[n],J}=\mX_{I,J}^*\mX_{I,J}$, so the
least singular values agree when $|I|\geq|J|$.
All steps are pointwise, allowing arbitrary dependence of the selected
sets on the parent entries.
\end{proof}

Now we are ready to prove Theorem~\ref{thm:rectangular-largest-singular-value}. We  first apply the deterministic comparison to the full centered matrix $\mX$ and optimize the deterministic bound. We then control the selected row and column variances using the cutoffs and Bennett's inequality. Finally, the nonbacktracking radius estimate Proposition~\ref{prop:nonbacktracking-radius-bounds}\ref{item:nonbacktracking-bipartite} yields the threshold $\sigma_{\rm R}$. The probability estimates are only involved in the last two steps.

\begin{proof}[Proof of Theorem~\ref{thm:rectangular-largest-singular-value}]
\emph{Step 1: Deterministic comparison.}
If either active set is empty, the norm is zero and the bound is
immediate. Otherwise, suppose, for some $u_{\rm r},u_{\rm c}\geq0$, that
\begin{align}
 \max_{i\in I_\star}\sum_{j=1}^m|\emX_{ij}|^2&\leq u_{\rm r}^2,
 &\max_{j\in J_\star}\sum_{i=1}^n|\emX_{ij}|^2&\leq u_{\rm c}^2.
 \label{eqn:rectangular-parent-variance-envelope}
\end{align}
For $r>\rho(\widetilde{\mB}(\mX))$ and $a,b\geq0$, define the function
\[
 F_r(a,b)\coloneqq\inf_{t\geq r}\sqrt{t^2+a+b+ab/t^2}.
\]
Differentiating in $t^2$ gives the optimizer
$\max\{r,(ab)^{1/4}\}$ and
\begin{align}
 F_r(a,b)&=
 \begin{cases}
  \sqrt a+\sqrt b,&ab\geq r^4,\\
  \sqrt{(r^2+a)(r^2+b)}/r,&ab<r^4.
 \end{cases}
 \label{eqn:rectangular-upper-optimization}
\end{align}
We choose $t_*=\max\{r,\sqrt{u_{\rm r}u_{\rm c}}\}$ and $\lambda=\sqrt{t_*^2+d^{-1}}$. Since $\|\mX\|_{1\to\infty}\leq d^{-1/2}$, we have
\[
 \lambda>\max\{\rho(\widetilde{\mB}(\mX)),\|\mX\|_{1\to\infty}\},\qquad
 \lambda^2-\|\mX\|_{1\to\infty}^2\geq t_*^2,\qquad
 u_{\rm r}u_{\rm c}/t_*^2\leq1.
\]
Applying \eqref{eqn:block-upper-comparison} with $t=\lambda$, $\overline{q_{\rm r}}(I) \leq u_{\rm r}^2$ and $\overline{q_{\rm c}}(J) \leq u_{\rm c}^2$, together with preceding inequalities, we obtain
\begin{align}
 \|\mX^{(\tau)}\|
 &\leq\lambda\sqrt{
       \left(1+\frac{u_{\rm r}^2}{t_*^2}\right)
       \left(1+\frac{u_{\rm c}^2}{t_*^2}\right)}
       +d^{-1/2}
\leq\frac{\lambda}{t_*}
       \sqrt{t_*^2+u_{\rm r}^2+u_{\rm c}^2
                    +\frac{u_{\rm r}^2u_{\rm c}^2}{t_*^2}}
       +d^{-1/2}
 \nonumber\\
 &=\sqrt{1+d^{-1}/t_*^2}\,\,
       F_r(u_{\rm r}^2,u_{\rm c}^2)+d^{-1/2}.
 \label{eqn:rectangular-parent-optimized-bound}
\end{align}

\smallskip\noindent
\emph{Step 2: Variance bounds.}
We now control $u_{\rm c}$ and $u_{\rm r}$. Let $h(t)=(1+t)\log(1+t)-t$. For $q\in\{\rm r,c\}$, define
\begin{align}
 a_q&\coloneqq\overline v_q\,
       h^{-1}\bigl(4\log(N)/(d\overline v_q)\bigr).
 \label{eqn:rectangular-bennett-variance-increments}
\end{align}
For each column, the centered summands
$d|\emX_{ij}|^2-p_{ij}(1-p_{ij})$ are bounded by $1$ in absolute
value and have total variance at most $d\colvarmax$.
Bennett's inequality, Lemma~\ref{lem:Bennett}, therefore gives
\begin{align}
 \P\left(\sum_{i=1}^n|\emX_{ij}|^2>\colvarmax+a_{\rm c}\right)
 &\leq\exp\left[-d\colvarmax\,
                    h(a_{\rm c}/\colvarmax)\right]=N^{-4}.
 \label{eqn:rectangular-largest-variance-tails}
\end{align}
The row analogue and a union bound give
$\P(\mathcal E_{\rm en}^{\rm c})\leq N^{-3}$ for the event
\[
 \mathcal E_{\rm en}\coloneqq\left\{
 \max_{i\in[n]}\sum_{j=1}^m|\emX_{ij}|^2\leq\rowvarmax+a_{\rm r},
 \quad
 \max_{j\in[m]}\sum_{i=1}^n|\emX_{ij}|^2\leq\colvarmax+a_{\rm c}
 \right\}.
\]
Every retained vertex also passed the original upper-degree check.
Using $(\widetilde{\emA}_{ij}-p_{ij})^2\leq
\widetilde{\emA}_{ij}+p_{ij}^2$ and the uniform sparsity bound, on $\mathcal E_{\rm en}$, we have
\begin{align}
 u_q^2&\coloneqq\min\{U_q/d,\overline v_q+a_q\}+C_0p_\star,
 \qquad q\in\{\rm r,c\}.
 \label{eqn:rectangular-largest-variance-cutoff}
\end{align}
Let $s=\sqrt{\log(N)/d}$ and $g_q=[U_q-\overline d_q]_+/d$, then $\varepsilon_N=\min\{g_{\rm r}+g_{\rm c},s\}$.
Using $h(t)\geq t^2/[2(1+t/3)]$, we have
\begin{align}
 a_q&\leq\sqrt{8\overline v_q}\,s+\tfrac83s^2,
 &\sqrt{\overline v_q+a_q}&\leq\sqrt{\overline v_q}+2s.
 \label{eqn:rectangular-bennett-variance-scale}
\end{align}
Since $0\leq\overline d_q/d-\overline v_q\leq C_0p_\star$, and $g_{\rm r}+g_{\rm c}\leq s$ , we have
\begin{align}
 0\leq u_q^2-\overline{\rho_q}^{\,2}
 &\leq\min\{g_q+2C_0p_\star,a_q+C_0p_\star\}
 \leq C(\varepsilon_N+\varepsilon_N^2+p_\star).
 \label{eqn:rectangular-upper-budget-errors}
\end{align}

\smallskip\noindent
\emph{Step 3: Radius bound.}
Apply Proposition~\ref{prop:nonbacktracking-radius-bounds}~\ref{item:nonbacktracking-bipartite}
with full caps, which satisfy \eqref{eqn:rectangular-edge-upper-cutoffs}
for large $N$, zero lower cutoffs, and $\nu=2$. Then
$\P(\mathcal E_{\rm NB}^{\rm c})\leq N^{-2}$, where
\[
 \mathcal E_{\rm NB}\coloneqq
 \left\{\rho(\widetilde{\mB}(\mX))<r\right\},
 \qquad
 r\coloneqq(\rowvarmax\colvarmax)^{1/4}+(C+1)d^{-1/2}.
\]
We work on the event $\mathcal E_{\rm en}\cap\mathcal E_{\rm NB}$.
The profile bounds and
$(\rowvarmax\colvarmax)^{1/2}\geq1-p_\star$ give
$c\leq r\leq C$ and $t_*\geq c$.
By \eqref{eqn:rectangular-upper-optimization}, $F_r$ is uniformly
Lipschitz in $a,b$ on bounded sets, including zero budgets, and
$1$-Lipschitz in $r$. Thus \eqref{eqn:rectangular-upper-budget-errors}
and the definition of $\sigma_{\rm R}$ give, for $\varepsilon_N\leq1$,
\begin{align}
 F_r(u_{\rm r}^2,u_{\rm c}^2)
 &\leq\sigma_{\rm R}+C(\varepsilon_N+p_\star+d^{-1/2}).
 \label{eqn:rectangular-upper-threshold-error}
\end{align}
For $\varepsilon_N>1$, the same estimate follows from
$u_{\rm r}+u_{\rm c}\leq C\varepsilon_N$ and
$F_r(u_{\rm r}^2,u_{\rm c}^2)\leq r+u_{\rm r}+u_{\rm c}
\leq C\varepsilon_N$.
Substitution into \eqref{eqn:rectangular-parent-optimized-bound}, using
$\sqrt{1+d^{-1}/t_*^2}\leq1+C/d$,
$p_\star=o(d^{-1/2})$, and $\sigma_{\rm R}=O(1)$, proves
\eqref{eqn:largest-singular-edge} with failure probability at most
$N^{-3}+N^{-2}\leq N^{-1}$, uniformly over the admissible cutoffs.
\end{proof}

\subsection{Proof of Theorem~\ref{thm:rectangular-smallest-singular-value}}
\label{subsec:rectangular-smallest-singular-value-proof}
The proof proceeds as follows. We first reduce the desired lower bound to a deterministic comparison for the rescaled matrix $\mZ$, in terms of its row and column variances and nonbacktracking radius. We then control variances of the selected rows and columns after rescaling via concentration estimates. Finally, the proof is established via the radius estimates using Proposition~\ref{prop:nonbacktracking-radius-bounds}~\ref{item:nonbacktracking-rescaled}. The probability esimates are only involved in the last two steps.

\begin{proof}[Proof of Theorem~\ref{thm:rectangular-smallest-singular-value}]
\emph{Step 1: Reduction to the rescaled matrix $\mZ$.}
Assume that the active sets are nonempty and, for some
$l_{\rm c}>u_{\rm r}>0$,
\[
 \max_i\sum_j|\emZ_{ij}|^2\leq u_{\rm r}^2,\qquad
 l_{\rm c}^2\leq\min_{j\in J_\star}\sum_i|\emZ_{ij}|^2
 \leq\max_j\sum_i|\emZ_{ij}|^2\leq\colvarmin.
\]
Since $L_{\rm r}=0$, $\mZ$ vanishes outside the rows $I_\star$
passing the original upper cutoff. Thus $|I_\star|>|J_\star|$ since
\[
 |J_\star|l_{\rm c}^2
 \leq\sum_{i\in I_\star}\sum_{j\in J_\star}|\emZ_{ij}|^2
 \leq|I_\star|u_{\rm r}^2,
\]
Since $\mZ_{I_\star,J_\star}=\mX^{(\tau)}\diag(f_j:j\in J_\star)$, with $0<f_j\leq1$, we have
\[
 \sigma_{\min}(\mX^{(\tau)})
 \geq\sigma_{\min}(\mZ_{I_\star,J_\star})
 =\sigma_{\min}(\mZ_{[n],J_\star}).
\]
We fix $\beta>\max\{\rho(\widetilde{\mB}(\mZ)), \sqrt{u_{\rm r}l_{\rm c}}\}>u_{\rm r}$ and denote by
\[
 A_\beta=(\beta^2-u_{\rm r}^2)
                  \left(\frac{l_{\rm c}^2}{\beta^2}-1\right),
 \qquad
 B_\beta=(\beta^2-u_{\rm r}^2)
                  \left(\frac{l_{\rm c}^2}{\beta^2+d^{-1}}-1\right).
\]
By applying \eqref{eqn:block-lower-comparison} in Lemma~\ref{lem:rectangular-singular-value-comparisons} and the facts that $\|\mZ\|_{1\to\infty}\leq d^{-1/2}$ and $\beta^2\geq u_{\rm r}l_{\rm c}$, we have
\[
 \sigma_{\min}(\mX^{(\tau)})
 \geq\sqrt{[B_\beta]_+}
       -\beta^{-2}d^{-1/2}u_{\rm r}\sqrt{\colvarmin}
 \geq\sqrt{[B_\beta]_+}-Cd^{-1/2}.
\]
Moreover, $0\leq A_\beta-B_\beta \leq l_{\rm c}(u_{\rm r}d)^{-1} = O(d^{-1})$. Using $|\sqrt{[x]_+}-\sqrt{[y]_+}|\leq\sqrt{|x-y|}$, we obtain
\[
 \sigma_{\min}(\mX^{(\tau)})
 \geq\sqrt{[A_\beta]_+}-Cd^{-1/2}.
\]
Completing the square and using $\beta\geq\sqrt{u_{\rm r}l_{\rm c}}$, we have
\[
\begin{aligned}
 A_\beta
 &=l_{\rm c}^2+u_{\rm r}^2-\beta^2
                     -\frac{u_{\rm r}^2l_{\rm c}^2}{\beta^2}
 =(l_{\rm c}-u_{\rm r})^2
       -\left(\beta-\frac{u_{\rm r}l_{\rm c}}{\beta}\right)^2,\\
 0\leq\beta-\frac{u_{\rm r}l_{\rm c}}{\beta}
 &=(\beta-\sqrt{u_{\rm r}l_{\rm c}})
       \left(1+\frac{\sqrt{u_{\rm r}l_{\rm c}}}{\beta}\right)
 \leq2(\beta-\sqrt{u_{\rm r}l_{\rm c}}).
\end{aligned}
\]
Therefore $\sqrt{[x^2-y^2]_+}\geq x-y$ for $x,y\geq0$ yields
\[
 \sigma_{\min}(\mX^{(\tau)})
 \geq l_{\rm c}-u_{\rm r}
       -2(\beta-\sqrt{u_{\rm r}l_{\rm c}})-Cd^{-1/2}.
\]
Letting $\beta\downarrow
\max\{\rho(\widetilde{\mB}(\mZ)),\sqrt{u_{\rm r}l_{\rm c}}\}$
now gives
\begin{align}
 \sigma_{\min}(\mX^{(\tau)})
 &\geq l_{\rm c}-u_{\rm r}
 -2\left[\rho(\widetilde{\mB}(\mZ))
                   -\sqrt{u_{\rm r}l_{\rm c}}\right]_+
 -Cd^{-1/2}.
 \label{eqn:rectangular-rescaled-singular-transfer}
\end{align}

\smallskip\noindent
\emph{Step 2: Establish $l_{\rm c}>u_{\rm r}$.}
We only consider the case $\rowvarmax<\colvarmin$; otherwise the lower bound in \eqref{eqn:smallest-singular-edge} is trivial. Recall $\rowvarmax,\colvarmin\asymp1$ and $p_\star=O(d/N)=o(d^{-1/2})$. In the regularized case, define
\[
 \mathcal E_{\rm en}\coloneqq\left\{
 I_\star\ne\varnothing,\quad |J_\star|\geq cm,\quad
 \max_{i\in[n]}\sum_{j=1}^m|\emX_{ij}|^2
       \leq\rowvarmax+a_{\rm r}\right\}.
\]
Since $1\leq L_{\rm c}\leq\underline{\dcol}$ for large $N$ by
\eqref{eqn:rectangular-edge-lower-cutoff}, Lemma~\ref{lem:rectangular-column-retention} and the preceding Bennett estimate give $\P(\mathcal E_{\rm en}^{\rm c})\leq Cd^{-3}+e^{-cm}+N^{-3}$. On $\mathcal E_{\rm en}$, the variance bounds in Step~1 hold with
\begin{align}
 u_{\rm r}^2&=\min\{U_{\rm r}/d,\rowvarmax+a_{\rm r}\}+C_0p_\star,
 &l_{\rm c}^2&=(1-p_\star)^2L_{\rm c}/d,
 \label{eqn:rectangular-bennett-lower-amplitudes}
\end{align}
where $a_{\rm r}$ is from \eqref{eqn:rectangular-bennett-variance-increments}.
Indeed, column contraction decreases row variances and preserves
the bound $Q_j\geq l_{\rm c}^2$, since
$l_{\rm c}^2\leq(1-p_\star)\underline{\dcol}/d\leq\colvarmin$.

In the unthresholded case, $I_\star=[n]$, $J_\star=[m]$, and
$\varepsilon_N=\sqrt{\log(N)/d}$. Define instead
\[
 \mathcal E_{\rm en}\coloneqq\left\{
 \max_{i\in[n]}\sum_{j=1}^m|\emX_{ij}|^2\leq\rowvarmax+C\varepsilon_N,
 \quad
 \min_{j\in[m]}\sum_{i=1}^n|\emX_{ij}|^2\geq\colvarmin-C\varepsilon_N
 \right\}.
\]
Since $d\geq c_{\rm deg}\log(N)$ in \eqref{eqn:rectangular-lower-edge-sparsity}, Bennett's inequality yields $\P(\mathcal E_{\rm en}^{\rm c})\leq2N^{-3}$. On the event $\mathcal E_{\rm en}$, the variance bounds in Step~1 hold with
\begin{align}
 u_{\rm r}^2&=\rowvarmax+C\varepsilon_N,
 &l_{\rm c}^2&=[\colvarmin-C\varepsilon_N]_+,
 \label{eqn:unthresholded-rectangular-variance-event}
\end{align}
In both cases the cutoff and concentration estimates yield
\begin{align}
 |u_{\rm r}-\sqrt{\rowvarmax}|+
 |l_{\rm c}-\sqrt{\colvarmin}|
 &\leq C(\varepsilon_N+p_\star).
 \label{eqn:rectangular-coarse-amplitude-errors}
\end{align}
Here the upper-cutoff margins give $\sqrt{\log d/d}\leq C\varepsilon_N$,
and $\sqrt v-\sqrt{[v-t]_+}\leq t/\sqrt v$ handles the positive part.
For sufficiently large $K$, whenever the target in
\eqref{eqn:smallest-singular-edge} is positive,
\[
 l_{\rm c}-u_{\rm r}
 \geq\sqrt{\colvarmin}-\sqrt{\rowvarmax}
       -C(\varepsilon_N+p_\star)>0.
\]
\smallskip\noindent
\emph{Step 3: Radius bound.}
Proposition~\ref{prop:nonbacktracking-radius-bounds}~\ref{item:nonbacktracking-rescaled}
with $\nu=2$ gives $\P(\mathcal E_{\rm NB}^{\rm c})\leq N^{-2}$ for
\[
 \mathcal E_{\rm NB}\coloneqq\left\{
 \rho(\widetilde{\mB}(\mZ))
 \leq(\rowvarmax\colvarmin)^{1/4}
       +C\bigl(\sqrt{\log d/d}+d^{-1/2}\bigr)\right\}.
\]
We work on the event $\mathcal E_{\rm en}\cap\mathcal E_{\rm NB}$.
Since $\sqrt{\log d/d}\leq C\varepsilon_N$,
\eqref{eqn:rectangular-coarse-amplitude-errors} gives
\begin{align}
 \rho(\widetilde{\mB}(\mZ))
 \leq\sqrt{u_{\rm r}l_{\rm c}}+C(\varepsilon_N+d^{-1/2}).\label{eqn:matching-upperbound-rho-B}
\end{align}
Together with \eqref{eqn:rectangular-rescaled-singular-transfer}, \eqref{eqn:rectangular-coarse-amplitude-errors} and \eqref{eqn:matching-upperbound-rho-B}, this proves \eqref{eqn:smallest-singular-edge}. The total failure probability is at most $Cd^{-3}+e^{-cm}+N^{-1}$ in the regularized case and $N^{-1}$ in the unthresholded case.
\end{proof}

\begin{remark}
\label{rem:block-radius-obstruction}
Variance separation $\rowvarmax<\colvarmin$ gives the positive leading term $\sqrt{\colvarmin}-\sqrt{\rowvarmax}$. To obtain this lower bound from \eqref{eqn:rectangular-rescaled-singular-transfer} also requires \eqref{eqn:matching-upperbound-rho-B}, as ensured by Proposition~\ref{prop:nonbacktracking-radius-bounds} \ref{item:nonbacktracking-rescaled} in our proof.
If \eqref{eqn:matching-upperbound-rho-B} does not hold, then even with variance separation and a coarse radius bound, it is not enough to guarantee the lower bound $l_{\rm c}-u_{\rm r}$. See the counterexample in Subsection~\ref{subsec:fourier-radius-counterexample} for more deails.
\end{remark}

\subsection{Proof of Theorem~\ref{thm:hermitian-edge}}
\label{subsec:eigenvalue-proofs}
We begin by establishing the Hermitian comparison result. Related methods have previously been used in \cite{benaych2020spectral, alt2021extremal, alt2021delocalization}.

\begin{proposition}[Hermitian comparison for a restriction of a parent]
\label{prop:hermitian-comparison}
Let $\mH\in\C^{N\times N}$ be Hermitian, let
$I\subset[N]$ be nonempty, and suppose
$v\geq\max_{i\in I}\sum_{j\in[N]}|\emH_{ij}|^2$.  Then
\begin{align}
 \|\mH_{I,I}\|
 &\leq\inf_{t>\rho(\mB(\mH))}(t+v/t)
       +\|\mH\|_{1\to\infty}.
 \label{eqn:hermitian-comparison}
\end{align}
\end{proposition}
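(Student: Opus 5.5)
Fix $t>\rho(\mB(\mH))$; it suffices to prove $\|\mH_{I,I}\|\leq t+v/t+\|\mH\|_{1\to\infty}$ for this $t$, and then take the infimum. Write $a\coloneqq\|\mH\|_{1\to\infty}$. The plan mirrors the proof of Lemma~\ref{lem:rectangular-singular-value-comparisons}\textup{(i)}. We use the real-parameter Loewner order of Lemma~\ref{lem:weighted-ihara-bass-positivity}\textup{(i)}, applied to both $\mH$ and $-\mH$, and then compress to $I$. Note that $\mB(-\mH)=-\mB(\mH)$, so both matrices have the same spectral radius. Also $\mM(\lambda)$ depends only on $|\emH_{uv}|$, while $\mH(\lambda)$ is odd in $\mH$. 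Hence for every $\lambda>\max\{t,a\}$ we get
\[
 -\mM(\lambda)\prec\mH(\lambda)\prec\mM(\lambda).
\]
Compressing to $I$ preserves these Hermitian orders. Multiplying by $\lambda$ gives $\pm\lambda\mH(\lambda)_{I,I}\preceq\lambda\mM(\lambda)_{I,I}$, and therefore
\[
 \|\lambda\mH(\lambda)_{I,I}\|\leq\max_{u\in I}\lambda\emM_{uu}(\lambda).
\]

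Next we control the diagonal and the deformation. For $u\in I$,
\[
 \lambda\emM_{uu}(\lambda)=\lambda+\sum_v\frac{\lambda|\emH_{uv}|^2}{\lambda^2-|\emH_{uv}|^2}\leq\lambda+\frac{\lambda v}{\lambda^2-a^2}.
\]
The entrywise deformation of $\lambda\mH(\lambda)$ from $\mH$ is
\[
 |\emH_{uv}-\lambda\emH_{uv}(\lambda)|=\frac{|\emH_{uv}|^3}{\lambda^2-|\emH_{uv}|^2}\leq\frac{a|\emH_{uv}|^2}{\lambda^2-a^2}.
\]
The Schur test (Lemma~\ref{lem:schur_test}) on the $I\times I$ block, with row sums over $I$ at most the full row sums $\leq v$, bounds the operator norm of this difference by $av/(\lambda^2-a^2)$. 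Combining the two estimates,
\[
 \|\mH_{I,I}\|\leq\lambda+\frac{(\lambda+a)v}{\lambda^2-a^2}=\lambda+\frac{v}{\lambda-a}.
\]

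The main obstacle is converting this into the stated form $t+v/t+a$, with the additive $a$ rather than a multiplicative distortion. The choice I would try first is $\lambda=t+a$, which is admissible since it exceeds $\max\{t,a\}$ when $t>0$, or after a limiting argument. This gives exactly $\lambda+v/(\lambda-a)=t+a+v/t$. If $t\leq a$ or the case $a=0$ causes trouble, I would take $\lambda\downarrow$ the threshold and use continuity. Finally, I would take the infimum over $t>\rho(\mB(\mH))$ to obtain \eqref{eqn:hermitian-comparison}.
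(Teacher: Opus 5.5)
Your proof is correct and is essentially the paper's argument. Both apply Lemma~\ref{lem:weighted-ihara-bass-positivity}\textup{(i)} to $\pm\mH$, bound the deformation $|\emH_{ij}-\lambda\emH_{ij}(\lambda)|\leq a|\emH_{ij}|^2/(\lambda^2-a^2)$ and the diagonal of $\lambda\mM(\lambda)$, and set $\lambda=t+a$. The only cosmetic difference is the order of steps: the paper first proves the full-matrix order $\pm\mH\preceq\lambda\id_N+(\lambda-a)^{-1}\mV$ (with $\mV$ the diagonal matrix of row sums of $|\emH_{ij}|^2$, using $2|x_ix_j|\leq|x_i|^2+|x_j|^2$) and then compresses to $I$, whereas you compress first and use the Schur test.

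Your closing hedging is unnecessary. The lemma only requires $\lambda>\max\{\rho(\mB(\mH)),a\}$, and $\lambda=t+a$ satisfies this directly because $t>\rho(\mB(\mH))\geq0$, so no case distinction or limiting argument is needed.
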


\begin{proof}[Proof of Proposition~\ref{prop:hermitian-comparison}]
Fix $\lambda>\max\{\rho(\mB(\mH)),\|\mH\|_{1\to\infty}\}$ and set
\[
 \mV\coloneqq\diag\left(\sum_j|\emH_{ij}|^2:i\in[N]\right).
\]
Since $\mB(-\mH)=-\mB(\mH)$,
Lemma~\ref{lem:weighted-ihara-bass-positivity}\textup{(i)} applied
to $\pm\mH$ gives $\mM(\lambda)\mp\mH(\lambda)\succ0$.
Moreover, \eqref{eqn:weighted-ihara-bass-matrix} gives
\[
 |\lambda\emH_{ij}(\lambda)-\emH_{ij}|
 \leq\|\mH\|_{1\to\infty}|\emH_{ij}|^2
       (\lambda^2-\|\mH\|_{1\to\infty}^2)^{-1}.
\]
Summing against $|\evx_i\evx_j|$, using
$2|\evx_i\evx_j|\leq|\evx_i|^2+|\evx_j|^2$, and applying the
diagonal formula in \eqref{eqn:weighted-ihara-bass-matrix} yields
\begin{align}
 \pm\mH
 &\preceq\lambda\mM(\lambda)+\|\mH\|_{1\to\infty}
       (\lambda^2-\|\mH\|_{1\to\infty}^2)^{-1}\mV
\preceq\lambda\id_N+(\lambda-\|\mH\|_{1\to\infty})^{-1}\mV.
\end{align}
Since $\mV_{I,I}\preceq v\id_{|I|}$, the proof of \eqref{eqn:hermitian-comparison} is complete by taking $\lambda=t+\|\mH\|_{1\to\infty}$ for $t>\rho(\mB(\mH))$.
\end{proof}

Now, we are ready to prove the main result.
\begin{proof}[Proof of Theorem~\ref{thm:hermitian-edge}]
\emph{Step 1: Deterministic comparison.}
If $I_\star$ is empty, the norm estimate is immediate with operator
norm zero. Otherwise, suppose, for some $u\geq0$, that
\begin{align}
 \max_{i\in I_\star}\sum_{j=1}^N|\emH_{ij}|^2&\leq u^2.
 \label{eqn:hermitian-active-variance-envelope}
\end{align}
For $r\geq1$ and $v\geq0$, define
\[
 G_r(v)\coloneqq\inf_{t\geq r}(t+v/t)
 =\begin{cases}
  2\sqrt v,&v\geq r^2,\\
  r+v/r,&v<r^2.
 \end{cases}
\]
The optimizer is $\max\{r,\sqrt v\}$. Since
$\|\mH\|_{1\to\infty}\leq d^{-1/2}$,
Proposition~\ref{prop:hermitian-comparison}, applied to the full
parent $\mH$ and its restriction to $I_\star$, gives
\begin{align}
 \|\mH^{(\tau)}\|&\leq G_r(u^2)+d^{-1/2},
 \qquad r>\rho(\mB(\mH)).
 \label{eqn:hermitian-parent-optimized-bound}
\end{align}

\smallskip\noindent
\emph{Step 2: Variance bounds.}
Let $h(t)=(1+t)\log(1+t)-t$ and define
\begin{align}
 a&\coloneqq\hermvarmax\,
       h^{-1}\bigl(4\log(N)/(d\hermvarmax)\bigr),
 &u^2&\coloneqq\min\{U/d,\hermvarmax+a\}+C_0p_\star.
 \label{eqn:hermitian-bennett-variance-cutoff}
\end{align}
For each fixed $i$, the centered summands
$d|\emH_{ij}|^2-p_{ij}(1-p_{ij})$ are independent, bounded by $1$
in absolute value, and have total variance at most $d\hermvarmax$.
Bennett's inequality, as in \eqref{eqn:rectangular-largest-variance-tails},
and a union bound give an event $\mathcal E_{\rm en}$ such that
\[
 \max_i\sum_j|\emH_{ij}|^2\leq\hermvarmax+a,
 \qquad \P(\mathcal E_{\rm en}^{\rm c})\leq N^{-3}.
\]
Every retained vertex passed the original upper-degree check, so
$(\emA_{ij}-p_{ij})^2\leq\emA_{ij}+p_{ij}^2$ and
$\sum_jp_{ij}^2/d\leq C_0p_\star$ also give the bound
$U/d+C_0p_\star$. Hence \eqref{eqn:hermitian-active-variance-envelope}
holds on $\mathcal E_{\rm en}$ with the above choice of $u$.
We write $g=[U-d]_+/d$ and $s=\sqrt{\log(N)/d}$, hence
$\varepsilon_N=\min\{g,s\}$. Since
$1-p_\star\leq\hermvarmax\leq1$ and
$a\leq C(s+s^2)$ by \eqref{eqn:rectangular-bennett-variance-scale},
\begin{align}
 0\leq u^2-\overline\rho^{\,2}
 &\leq\min\{g+(C_0+1)p_\star,a+C_0p_\star\}
 \leq C(\varepsilon_N+\varepsilon_N^2+p_\star).
 \label{eqn:hermitian-upper-budget-error}
\end{align}

\smallskip\noindent
\emph{Step 3: Radius bound.}
Apply Proposition~\ref{prop:nonbacktracking-radius-bounds}~\ref{item:nonbacktracking-hermitian} to the original centered matrix $\mH$ with $\nu=2$. This gives an event $\mathcal E_{\rm NB}$ with $\P(\mathcal E_{\rm NB}^{\rm c})\leq N^{-2}$ on which
\[
 \rho(\mB(\mH))<r\coloneqq1+(C+1)d^{-1/2}.
\]
We work on the event $\mathcal E_{\rm en}\cap\mathcal E_{\rm NB}$. The function $G_r(v)$ is $1$-Lipschitz in each variable for $r\geq1$, $v\geq0$, and $G_1(\overline\rho^{\,2})=\lambda_{\rm H}$. Thus \eqref{eqn:hermitian-upper-budget-error} gives, for $\varepsilon_N\leq1$,
\[
 G_r(u^2)\leq\lambda_{\rm H}
       +C(\varepsilon_N+p_\star+d^{-1/2}).
\]
For $\varepsilon_N>1$, the same estimate follows from
$u\leq C\varepsilon_N$ and $G_r(u^2)\leq r+2u\leq C\varepsilon_N$.
Substituting into \eqref{eqn:hermitian-parent-optimized-bound} and
using $p_\star=O(d/N)=o(d^{-1/2})$ proves the norm bound
\eqref{eqn:hermitian-edge-probability}, with failure probability
\begin{align}
 \P((\mathcal E_{\rm en}\cap\mathcal E_{\rm NB})^{\rm c})
 &\leq N^{-3}+N^{-2}\leq N^{-1},
 \label{eqn:hermitian-failure-probability}
\end{align}
uniformly over the admissible cutoffs.
\end{proof}

\subsection{Proof of Proposition~\ref{prop:nonbacktracking-radius-bounds}}
\label{subsec:trace-to-radius}
To establish the proof, we bound trace moments of high powers of the
nonbacktracking matrices and apply Markov's inequality. We first
summarize the relevant estimates for the bipartite degree-cutoff model,
column rescaling,
and the original Hermitian parent.

\begin{lemma}[Trace moments for nonbacktracking radii]
\label{lem:uniform-trace-consequence}
There exist constants $c,C,C_e>0$ such that, for all sufficiently large $N$,
the following holds. The constants depend only on the fixed
hypothesis constants, including the corresponding profile constant
$C_0$.
\begin{enumerate}[label=\textup{(\roman*)},ref=\textup{(\roman*)},leftmargin=2.5em,itemsep=0.6em]
\item \emph{Bipartite model.}
Assume the hypotheses of
Proposition~\ref{prop:nonbacktracking-radius-bounds}\ref{item:nonbacktracking-bipartite},
and choose $R=(\rowvarmax\colvarmax)^{1/4}$.
Then, for
$\widetilde{\mB}=\widetilde{\mB}(\widehat{\mX})$ and every integer
$3\leq\ell\leq\lfloor c\sqrt{d}\log(N)\rfloor$,
\begin{align}
 \E\Tr\widetilde{\mB}^{\ell}(\widetilde{\mB}^{\ell})^*
 &\leq C\ell^7d\,nm\,R^{2\ell}
       \exp(C_e\ell d^{-3}).
 \label{eqn:uniform-bipartite-trace-consequence}
\end{align}

\item \emph{Rescaled bipartite model.}
Assume the hypotheses of
Proposition~\ref{prop:nonbacktracking-radius-bounds}\ref{item:nonbacktracking-rescaled},
and choose $R=(\rowvarmax\colvarmin)^{1/4}$.
For $\widetilde{\mB}=\widetilde{\mB}(\mZ)$, with $\mZ$ defined in
\eqref{eqn:rescaled-column-matrix}, and every integer
$3\leq\ell\leq\lfloor c\sqrt d\log(N)\rfloor$,
\begin{align}
 \E\Tr\widetilde{\mB}^{\ell}(\widetilde{\mB}^{\ell})^*
 &\leq C\ell^7d\,nm\,R^{2\ell}
       \exp\!\left(C_e\ell\sqrt{\log d/d}\right).
 \label{eqn:uniform-rescaled-trace-consequence}
\end{align}

\item \emph{Hermitian parent matrix.}
Assume the hypotheses of
Proposition~\ref{prop:nonbacktracking-radius-bounds}\ref{item:nonbacktracking-hermitian}.
Then, for
$\mB=\mB(\mH)$ and every integer
$3\leq\ell\leq\lfloor c\sqrt d\log(N)\rfloor$,
\begin{align}
 \E\Tr\mB^\ell(\mB^\ell)^*
 &\leq C\ell^7dN^2.
 \label{eqn:uniform-hermitian-trace-consequence}
\end{align}
If, in addition, \eqref{eqn:rectangular-lower-edge-sparsity} holds,
then the exponential factor in \textup{(i)}
converges uniformly to one over the stated range of $\ell$, and may
therefore be absorbed into $C$.
\end{enumerate}
\end{lemma}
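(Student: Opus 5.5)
The plan is the standard nonbacktracking moment method of \cite{benaych2020spectral} and \cite{dumitriu2024extreme}. Independence is used only through a small number of graph-moment estimates. I expand
\[
 \Tr\widetilde{\mB}^{\ell}(\widetilde{\mB}^{\ell})^*
 =\sum_{\gamma}\prod_{s}\widehat{\emX}_{\gamma_s}\prod_{s}\overline{\widehat{\emX}_{\gamma'_s}},
\]
a sum over pairs of nonbacktracking walks $\gamma,\gamma'$ of length $\ell$. They are glued at the endpoints into a closed path of length $2\ell$ on the host $\gK_{n,m}$, or on $\gK_N$ for part (iii). Each such path defines a multigraph $\gU$ with $v$ vertices, $e$ distinct edges, and excess $g=e-v+1$. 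Paths are grouped into equivalence classes under vertex relabeling. By the standard counting (Lemma~\ref{lem:general-nonbacktracking-trace-bounds} style), the number of shapes with excess $g$ is at most $\ell^{C(g+1)}$. The number of labelings is at most $n^{v_1}m^{v_2}$, with the bipartite balance giving the $nm$ prefactor and the $R$-weights.

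The probabilistic input is a uniform bound on $\E\prod_{f\in\gE(\gU)}\widehat{\emX}_f^{k_f}\overline{\widehat{\emX}_f}^{k'_f}$ for a fixed labeled core $\CoreCode$. This bound is formulated as the graph moment conditions of Subsection~\ref{subsec:moment-assumptions}.

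For edges traversed at least twice, I use $\E|\emX_f|^k\le C/(Nd^{(k-2)/2})$. I then sum the variances along tree parts of $\gU$, one row and one column at a time. This produces factors $\rowvarmax$ and $\colvarmax$ alternately, hence $R^{2\ell}$ after accounting for the bipartite alternation. Cycles cost a factor $N^{-1}$ per unit of excess, which is offset by the $\ell^{Cg}$ counting as long as $\ell\le c\sqrt d\log N$ and $d^6(\log N)^3\le N^{1-\delta_{\rm NB}}$.

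The \textbf{main obstacle} is the edges traversed exactly once. In the independent case these kill the expectation by centering. After the projection $\mP_{\rm r}\mX\mP_{\rm c}$, or the rescaling $f_j$, the entries are no longer independent or centered. My plan for this is as follows.
\begin{enumerate}
\item Condition on the entries outside the edge set of $\gU$, and write each retention indicator as a function of the few path entries incident to each vertex.
\item Replace the degree indicators by their values computed without the path edges. Because path degrees are at most $O(\ell)$, changing them alters the event $\{\text{degree}\le U\}$ only when the outside degree lies within $O(\ell)$ of $U_{\rm r}$ or $U_{\rm c}$.
\item Bound that exceptional probability by Bennett's inequality, using the separation $5\sqrt{C_0d\log d}$ in \eqref{eqn:rectangular-edge-upper-cutoffs}. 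It yields a factor like $d^{-3}$ per singly-traversed edge.
\end{enumerate}
After this replacement, an edge traversed once has conditional mean $O(p_\star)$ rather than zero, again a negligible factor. Summed over the at most $\ell$ such edges, this produces the $\exp(C_e\ell d^{-3})$ in part (i).

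For part (ii), the same scheme applies to the rescaling factor $f_j$, which depends on $Q_j$. The fluctuation of $Q_j$ about its mean is $O(\sqrt{\log d/d})$ except on an event of probability $d^{-3}$. Replacing $f_j$ by its deterministic cap $\min\{1,\sqrt{\colvarmin/\E Q_j}\}$ up to a factor $1+O(\sqrt{\log d/d})$ per column visit gives the term $\exp(C_e\ell\sqrt{\log d/d})$. It also replaces $\colvarmax$ by $\colvarmin$ in the column-variance sums, giving $R=(\rowvarmax\colvarmin)^{1/4}$.

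Part (iii) is the independent centered case of \cite{benaych2020spectral}, with $\hermvarmax\le1$. There, singly-traversed edges vanish exactly and no exponential factor arises.

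The final remark follows from the assumption $d\ge c_{\rm deg}\log N$. Under it, $\ell d^{-3}\le c\sqrt d\log N\,d^{-3}\to0$ uniformly over the range of $\ell$, so the factor $\exp(C_e\ell d^{-3})$ tends to one and can be absorbed into $C$.
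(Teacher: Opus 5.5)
Your architecture matches the paper's: trace expansion, verification of the graph moment conditions \ref{eqn:bipartite-graph-condition} and \ref{eqn:hermitian-graph-condition}, then Lemma~\ref{lem:general-nonbacktracking-trace-bounds} with the rescaling $\widetilde{\mB}(\widehat{\mX})=R\,\widetilde{\mB}(\widehat{\mX}/R)$. Part (iii) and the closing remark are fine. The gap is in the singleton words for (i) and (ii), which is where all the dependence enters.

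\textbf{Part (i).} A singleton class can have $2^{e_1(\xi)}d^{\abs{\gE(\gG_\xi)}-\ell}$ as large as $(4d)^{\ell-1}$ already at genus one (see \eqref{eqn:singleton-extremal-example}). So the moment bound must gain a factor that decays linearly in the singleton-chain lengths, namely $d^{-\varsigma(\xi)}$, about $d^{-1}$ per two consecutive singleton edges.

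Your step 2 replaces all survival indicators at once by path-free versions $S^{(0)}$. Then $\abs{\E[SW]}\le\E[\abs{W}\,\abs{S-S^{(0)}}]$, and $S\neq S^{(0)}$ requires only \emph{one} exceptional vertex. You therefore get a single small factor for the whole word, not one per singleton edge, and that cannot beat $d^{e_1(\xi)/2}$.

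Worse, a window of width $O(\ell)$ around the cap is useless here. The margin in (i) may be only $5\sqrt{C_0d\log d}$, with $d$ possibly bounded, while $\ell$ runs up to $c\sqrt d\log N$. The window can then contain the mean degree, and Bennett gives nothing.

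What is missing is the cover expansion $S=\prod_v(1-B_v)=\sum_T(-1)^{\abs{T}}\prod_{v\in T}B_v$.
\begin{itemize}
\item By centering and independence, every $T$ avoiding both endpoints of some singleton edge contributes zero, so only vertex covers of the singleton edges survive.
\item Under the tilted law, a selected chain-internal vertex has support degree exactly $2$ and costs $\alpha_{\ell,{\rm B}}\le d^{-3}$. Core-vertex degrees are charged to $\widetilde\Gamma_\ell^{2(\abs{\gE(\gG_\xi)}-\abs{\gS_{\{\xi_0,\xi_\ell\}}(\gG_\xi)})}$.
\item A cover of a singleton chain of length $k$ must use at least $\lfloor(k-1)/2\rfloor$ internal vertices. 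Via \eqref{eqn:bipartite-upper-cap-condition} this yields $(32\alpha_{\ell,{\rm B}})^{\varsigma(\xi)}\le(R^2d)^{-\varsigma(\xi)}$.
\end{itemize}
The factor $\exp(C_e\ell d^{-3})$ is $(1+\alpha_{\ell,{\rm B}})^{2\ell}$, coming from optionally selecting the other internal vertices. It is not an ``$O(p_\star)$ conditional mean''; after your replacement, a singleton factor has mean exactly zero.

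\textbf{Part (ii).} This fails for the same reason. Bounding $f_j$ by a deterministic cap up to a factor $1+O(\sqrt{\log d/d})$ is an absolute-value estimate, which destroys the singleton cancellation. Even read as an additive error, $O(\sqrt{\log d/d})$ per column is far from the $O(1/d)$ needed to offset the extra weight $d$ of the two singleton edges at an internal column.

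The paper instead integrates singleton edges by finite differences, $\E[(A-p)F(A)]=p(1-p)(F(1)-F(0))$. It uses that $g(x)=\min\{1,\colvarmin/x\}$ has first and mixed second differences of order $1/d$ for increments at most $1/d$, even across the clipping point. This is combined with joint lower tails for the $Q_j$ and a boundary-row accounting for the row cutoffs $U_i$, which also depend on word edges.

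\textbf{Counting.} Your claim of ``at most $\ell^{C(g+1)}$ shapes of excess $g$'' is false without weights. Core walks can have length up to $2\ell$, so for $g\ge2$ their number is exponential in $\ell$. Only the weighted sum $\sum_t(C(g+1)/\sqrt d)^t$ is controlled, and this is where $\ell\le c\sqrt d\log N$ enters, through Lemma~\ref{lem:roadmap-common-genus-sum}.
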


\begin{remark}[General trace bounds]
\label{rem:model-trace-general-version}
In Section~\ref{sec:trace-estimates},
Lemma~\ref{lem:general-nonbacktracking-trace-bounds} proves a more
general version for matrices satisfying graph moment assumptions.
The estimates above follow using the moment verifications in
Section~\ref{sec:model-moment-verification} and the independence and
centering of the original Hermitian upper-triangular entries.
\end{remark}

\begin{proof}[Proof of Lemma~\ref{lem:uniform-trace-consequence}]
The proof has two steps: verify the hypotheses of
Lemma~\ref{lem:general-nonbacktracking-trace-bounds} using
Section~\ref{sec:model-moment-verification}, then apply its trace bounds.

\smallskip\noindent
\emph{Step 1: Verify the hypotheses.}
Use sparsity scale $s=R^2d$ for
$\widehat{\mX}/R$ and $\mZ/R$, with $R$ as stated in
\textup{(i)} and \textup{(ii)}, and use $s=d$ for $\mH$.
The uniform sparsity bound \eqref{eqn:rectangular-uniform-sparsity-bound}
gives $\rowvarmax\leq C_{\rm sp}\sqrt{m/n}$ and
$\colvarmax\leq C_{\rm sp}\sqrt{n/m}$.
Since $\colvarmin\leq\colvarmax$, all three normalizing factors satisfy
\begin{align}
 0<r\leq R\leq R_*\coloneqq\max\{1,\sqrt{C_{\rm sp}}\}
 \label{eqn:normalization-radius-bounds}
\end{align}
for some fixed $r>0$, including $R=1$ in the Hermitian case.
Thus $s\asymp d$.

Under \eqref{eqn:rectangular-edge-upper-cutoffs}, with zero lower
cutoffs, Lemma~\ref{lem:bipartite-cutoff-full-trace-verification}
verifies the graph moment condition
\ref{eqn:bipartite-graph-condition} for $\widehat{\mX}/R$.
For column rescaling, the corresponding bounds
are supplied by Lemma~\ref{lem:rescaled-column-moment-verification}.
For the original Hermitian parent $\mH$, independence and centering
make every singleton word expectation vanish. For plural words,
\eqref{eqn:shared-bernoulli-centered-moments} and
Lemma~\ref{lem:weighted-variance-label-sum}\textup{(ii)}, with weights
$w_{uv}=p_{uv}(1-p_{uv})/d$, verify
\ref{eqn:hermitian-graph-condition} with
$K_\ell=\Gamma_\ell=1$ and $\chi_\ell$ from
\eqref{eqn:hermitian-verification-chi}. Indeed, these weights have
row sums at most one and maximum at most $C_0/N$, so
$\chi_\ell\leq C_0$.
For sufficiently small $c>0$, with $c\leq b_{\rm tr}r$ in the
bipartite degree-cutoff case, these verifications apply throughout
$3\leq\ell\leq\lfloor c\sqrt d\log(N)\rfloor$.
Writing $K_\ell,\chi_\ell,\Gamma_\ell$ for the applicable
parameters, with tildes in the bipartite cases, they give
\[
 \chi_\ell,\Gamma_\ell\leq C,\qquad
 K_\ell\leq
 \begin{cases}
  C\exp(C_e\ell d^{-3}),&\text{in \textup{(i)}},\\
  C\exp\!\left(C_e\ell\sqrt{\log d/d}\right),&\text{in \textup{(ii)}},\\
  1,&\text{in \textup{(iii)}}.
 \end{cases}
\]

It remains to check the numerical hypotheses of the general trace
lemma. Let $c_0,d_0$ be its constants and increase $C_{\rm NB}$ so
that $s\geq r^2C_{\rm NB}\geq d_0$.
The bounded graph parameters and $s\asymp d$ give, uniformly
over the stated range,
\[
 \frac{C_{\rm enum}\chi_\ell\ell^3\Gamma_\ell^6s^{9/2}}{N}
 \leq Cc^3\frac{d^6(\log(N))^3}{N}
 \leq Cc^3N^{-\delta_{\rm NB}}
 \leq N^{-\delta_{\rm NB}/2}\leq\frac12
\]
for sufficiently large $N$, using the tilded parameters when
appropriate. The logarithm of the reciprocal is therefore at least
$\frac12\delta_{\rm NB}\log(N)$.
Decreasing $c$ further so that $c\leq c_0r\delta_{\rm NB}/2$
gives
\[
 \ell\leq c\sqrt d\log(N)
 \leq\frac{c_0\delta_{\rm NB}}2\sqrt s\log(N)
 \leq c_0\sqrt s\log
 \frac{N}{C_{\rm enum}\chi_\ell\ell^3\Gamma_\ell^6s^{9/2}}.
\]
This verifies \eqref{eqn:hermitian-saddle-condition} or
\eqref{eqn:bipartite-saddle-condition} at scale $s$.
These numerical conditions do not involve $K_\ell$.

\smallskip\noindent
\emph{Step 2: Apply the trace bounds.}
In all three cases, $s\asymp d$, $\ell\leq C\sqrt s\log(N)$, and
$s^6(\log(N))^3\leq C N^{1-\delta_{\rm NB}}$. Consequently,
\begin{align}
 \ell s^5/N
 \leq Cs^{11/2}\log(N)/N
 \leq C N^{-\delta_{\rm NB}}s^{-1/2}(\log(N))^{-2}
 =o(1).
\end{align}
Since $s\geq1$, the polynomial terms in the general trace bounds satisfy
\begin{align}
 \ell^3\sqrt s+\ell^8s^4/N+\ell^8s^6/N
 \leq C\ell^7s\leq C\ell^7d.
\end{align}
Applying Lemma~\ref{lem:general-nonbacktracking-trace-bounds}\textup{(ii)}
to $\widehat{\mX}/R$ and using
$\widetilde{\mB}(\widehat{\mX})
=R\widetilde{\mB}(\widehat{\mX}/R)$
gives \eqref{eqn:uniform-bipartite-trace-consequence}, with the factor
$R^{2\ell}$ from rescaling and
$\widetilde K_\ell\leq C\exp(C_e\ell d^{-3})$ from Step~1.
Applying the same part to $\mZ/R$ gives
\eqref{eqn:uniform-rescaled-trace-consequence}:
homogeneity again supplies $R^{2\ell}$, and the remaining factor is
the bound on $\widetilde K_\ell$ for the rescaled matrix from Step~1.
Finally, applying part~\textup{(i)} directly to $\mH$ at $s=d$
with $K_\ell=1$ gives
\eqref{eqn:uniform-hermitian-trace-consequence}.
Under \eqref{eqn:rectangular-lower-edge-sparsity},
$\ell d^{-3}\leq C(\log(N))d^{-5/2}=O((\log(N))^{-3/2})$, proving
the asserted uniform convergence of the exponential factor in \textup{(i)}.
\end{proof}

Now, we are ready to prove Proposition~\ref{prop:nonbacktracking-radius-bounds}.
\begin{proof}[Proof of Proposition~\ref{prop:nonbacktracking-radius-bounds}]
For every square matrix $\mC$ and integer $\ell\geq1$,
\begin{align}
 \rho(\mC) = \rho(\mC^{\ell})^{1/\ell}
 \leq \|\mC^{\ell}\|^{1/\ell}
 =\|\mC^{\ell}(\mC^{*})^{\ell}\|^{1/(2\ell)}
 \leq\big(\Tr\mC^\ell(\mC^\ell)^*\big)^{1/(2\ell)}.
 \label{eqn:radius-by-trace}
\end{align}
Fix $\nu>0$. For cases \textup{(i)} and
\textup{(iii)}, choose
\[
 (\mC,R)=
 \begin{cases}
  (\widetilde{\mB}(\widehat{\mX}),
   (\rowvarmax\colvarmax)^{1/4}),&\text{in \textup{(i)}},\\
  (\mB(\mH),1),&\text{in \textup{(iii)}}.
 \end{cases}
\]
Use the fixed bounds $0<r\leq R\leq R_*$ from
\eqref{eqn:normalization-radius-bounds}.
Take $\ell=\lfloor c\sqrt d\log(N)\rfloor$, with the constant $c$
from Lemma~\ref{lem:uniform-trace-consequence} decreased so that
$C_ec\,C_{\rm NB}^{-5/2}\leq1$.
Since $2\ell\geq c\sqrt d\log(N)$ for sufficiently large $N$,
Lemma~\ref{lem:uniform-trace-consequence},
\eqref{eqn:radius-by-trace}, and Markov's inequality give the common
bound below, using $\exp(C_e\ell d^{-3})\geq1$ in case~\textup{(iii)}:
\begin{align}
 \P\left(\rho(\mC)>R+C_\nu d^{-1/2}\right)
 &\leq C\ell^7dN^2\exp(C_e\ell d^{-3})
 \left(1+C_\nu/(R\sqrt d)\right)^{-2\ell}
 \leq N^{4-c\log(1+C_\nu/R_*)},
 \label{eqn:explicit-radius-markov}
\end{align}
where the choice of $c$ gives $\exp(C_e\ell d^{-3})\leq N$,
and $d\leq N^{1/6}$ bounds the polynomial prefactor by $N^3$.
Bernoulli's inequality gives
\[
 \left(1+C_\nu/(R\sqrt d)\right)^{\sqrt d}
 \geq1+C_\nu/R\geq1+C_\nu/R_*.
\]
Choosing $C_\nu$ so that
$c\log(1+C_\nu/R_*)\geq\nu+4$ proves
\eqref{eqn:bipartite-radius-bound} and
\eqref{eqn:hermitian-radius-bound}.

For \textup{(ii)}, choose $\mC=\widetilde{\mB}(\mZ)$,
$R=(\rowvarmax\colvarmin)^{1/4}$, and
$\eta=\sqrt{\log d/d}$. The same upper bound $R_*$ from
\eqref{eqn:normalization-radius-bounds} applies, so choose $C_\nu$ as above.
Using the same power $\ell$, apply Markov's inequality at
\[
 T_\nu=e^{C_e\eta/2}(R+C_\nu d^{-1/2}).
\]
The exponential factor in
\eqref{eqn:uniform-rescaled-trace-consequence} cancels, giving
\[
 \P\{\rho(\mC)>T_\nu\}
 \leq C\ell^7dN^2
       \left(1+C_\nu/(R\sqrt d)\right)^{-2\ell}
 \leq N^{3-c\log(1+C_\nu/R_*)}
 \leq N^{-\nu}.
\]
Since $\eta\leq1$ for sufficiently large $N$, and $R\leq R_*$,
\[
 T_\nu-R
 =R(e^{C_e\eta/2}-1)+C_\nu e^{C_e\eta/2}d^{-1/2}
 \leq C_\nu'(\eta+d^{-1/2}).
\]
Increasing the constant in the conclusion proves
\eqref{eqn:rescaled-column-radius-bound}.
\end{proof}

%%%%%%%%%%%%%%%%%%%%%%%%%%%%%%%%%%%%%%%%%%%%%%
\section{Trace Expansion and Quotient-Graph Enumeration}
\label{sec:trace-estimates}
%%%%%%%%%%%%%%%%%%%%%%%%%%%%%%%%%%%%%%%%%%%%%%

Before we proceed with detailed trace estimates, we first illustrate the advantages of using nonbacktracking matrices over the original sparse matrices.
\begin{itemize}
    \item \emph{Simpler walk enumeration.}
    In $\Tr \mA^{2\ell}$, a walk can repeatedly enter tree branches, retrace its steps, and explore other branches. These configurations contribute significantly to $\E\Tr \mA^{2\ell}$, but counting them sharply is difficult. In contrast, a nonbacktracking walk on a tree must follow a simple path without such repetitions. This leads to a more straightforward enumeration of path pairs in $\E\Tr\bigl(\mB^\ell(\mB^\ell)^*\bigr)$.
    
    \item \emph{Control of rare subgraphs.}
    At bounded degree, rare subgraphs with several interacting cycles, such as tangles in random $d$-regular graphs \cite{Fri08}, can dominate high expected moments. Nonbacktracking paths have enough structure to organize restrictions that exclude these walks and still agree with the original powers on a high-probability event. Bordenave \cite{bordenave2020new} restricts nonbacktracking walks to traversed-edge supports with at most one cycle, obtaining a matrix that equals $\mB^\ell$ with high probability for suitable $\ell\asymp\log N$.
    After removing the trivial mode, high moments of the derived weighted path matrices and the Ihara--Bass formula yield the sharp spectral bound. We note that merely replacing $\mA$ by $\mB$ does not automatically eliminate the rare-event obstruction.
\end{itemize}

In this section, we extend the independent-entry arguments of \cite[Section~5.1]{benaych2020spectral} and \cite[Section~5]{dumitriu2024extreme} to dependent entries satisfying the graph moment conditions in Subsection~\ref{subsec:moment-assumptions}.
The additional difficulty is to control singleton contributions from edges traversed exactly once: these vanish for independent centered entries but may survive under dependence.
We retain these words and combine the core enumeration with graph moment bounds that suppress long chains of singleton edges.

Throughout the section, we fix an integer $\ell\geq3$ and let $d$ be the generic sparsity parameter, which are defined in \eqref{eqn:bipartite-normalization} for the bipartite case and \eqref{eqn:hermitian-sparsity-scale} for the Hermitian case, respectively.

\subsection{Step 1: Trace expansion and admissible words}
\label{subsec:trace-step1}
Recall the host edge set $\vec{\gE}(\gG)$ in \eqref{eqn:ordered-host-edges}. We define the set of nonbacktracking sequences for ordered host edges $e,f$ as follows:
\begin{align}
 \sP_{\gG}^{\ell+2}(e,f)
 \coloneqq
 \left\{
 \begin{aligned}
  \xi=(\xi_{-1},\xi_0,\ldots,\xi_\ell)\in[N]^{\ell+2}:\quad
  &(\xi_{-1},\xi_0)=e, \quad (\xi_{t-1},\xi_t)\in\vec{\gE}(\gG),
  \; 0\leq t\leq\ell\\ 
  &(\xi_{\ell-1},\xi_\ell)=f, \quad \xi_{t-1}\neq\xi_{t+1},
  \; 0\leq t\leq\ell-1
 \end{aligned}
 \right\}.
 \label{eqn:nonbacktracking-sequences}
\end{align}
By repeatedly applying \eqref{eqn:nonbacktracking-definition-bipartite} and \eqref{eqn:nonbacktracking-definition-hermitian}, we obtain the following two equalities:
\begin{subequations}
\begin{align}
 (\mB^\ell)_{ef}
 &=
 \sum_{a_1,\ldots,a_{\ell-1}\in\vec{\gE}(\gK_N)}
 \emB_{ea_1}\cdots\emB_{a_{\ell-1}f}
 =
 \sum_{\xi\in\sP^{\ell+2}_{\gK_N}(e,f)}
 \prod_{t=1}^{\ell}\emH_{\xi_{t-1}\xi_t},
 \label{eqn:nonbacktracking-power-expansion}
 \\
 (\widetilde{\mB}^\ell)_{ef}
 &=
 \sum_{a_1,\ldots,a_{\ell-1}\in\vec{\gE}(\gK_{n,m})}
 \widetilde{\emB}_{ea_1}\cdots
 \widetilde{\emB}_{a_{\ell-1}f}
=
 \sum_{\xi\in\sP^{\ell+2}_{\gK_{n,m}}(e,f)}
 \prod_{t=1}^{\ell}\widetilde{\emH}_{\xi_{t-1}\xi_t},
 \label{eqn:nonbacktracking-power-expansion-bipartite}
\end{align}
\end{subequations}
Squaring \eqref{eqn:nonbacktracking-power-expansion} and \eqref{eqn:nonbacktracking-power-expansion-bipartite}, the Hermitian constraint and \eqref{eqn:nonbacktracking-sequences} yield the following two equalities:
\begin{subequations}
\begin{align}
 \Tr\mB^\ell(\mB^\ell)^*
 =&\,\sum_{e,f\in\vec{\gE}(\gK_N)}
   |(\mB^\ell)_{ef}|^{2}
 =\sum_{e,f\in\vec{\gE}(\gK_N)}\,\,
   \sum_{\xi^{1},\xi^{2}\in\sP^{\ell+2}_{\gK_N}(e,f)}\,\,
   \prod_{t=1}^{\ell}\emH_{\xi^{1}_{t-1}\xi^{1}_t}
\prod_{t=1}^{\ell} \overline{\emH}_{\xi^{2}_{t-1}\xi^{2}_t}\,\,,
 \nonumber\\
 =&\, \sum_{e,f\in\vec{\gE}(\gK_N)}\,\,
   \sum_{\xi^{1},\xi^{2}\in\sP^{\ell+2}_{\gK_N}(e,f)}\,\,
   \prod_{t=1}^{\ell}\emH_{\xi^{1}_{t-1}\xi^{1}_t}\emH_{\xi^{2}_{t}\xi^{2}_{t-1}}\,\,,\label{eqn:two-lifted-paths-expansion}\\
   \Tr\widetilde{\mB}^\ell(\widetilde{\mB}^\ell)^* =&\, \sum_{e,f\in\vec{\gE}(\gK_{n,m})}\,\,
   \sum_{\xi^{1},\xi^{2}\in\sP^{\ell+2}_{\gK_{n,m}}(e,f)}\,\,
   \prod_{t=1}^{\ell}\widetilde{\emH}_{\xi^{1}_{t-1}\xi^{1}_t}\widetilde{\emH}_{\xi^{2}_{t}\xi^{2}_{t-1}}\,\,,\label{eqn:two-lifted-paths-expansion-bipartite}
\end{align}
\end{subequations}
where the two sequences $\xi^{1},\xi^{2}$ share the same predecessor and the endpoint pairs
\begin{align}
 e&=(\xi^{1}_{-1},\xi^{1}_0)=(\xi^{2}_{-1},\xi^{2}_0),
 &
 f&=(\xi^{1}_{\ell-1},\xi^{1}_\ell)=(\xi^{2}_{\ell-1},\xi^{2}_\ell).
 \label{eqn:shared-terminal-pair}
\end{align}
Note that the common predecessor is summed through $e$, subject to $\xi^{1}_{-1}=\xi^{2}_{-1}$. To further simplify \eqref{eqn:two-lifted-paths-expansion} and \eqref{eqn:two-lifted-paths-expansion-bipartite}, we introduce the definition of \emph{admissible word} below.

\begin{definition}[Admissible words]
\label{def:admissible-word}
For $\gG\in\{\gK_N,\gK_{n,m}\}$, we say that $\xi=(\xi_0,\ldots,\xi_{2\ell})\in[N]^{2\ell+1}$ is \emph{$\gG$-admissible} if it satisfies the following conditions:
\begin{align}
 &\xi_0=\xi_{2\ell},\quad \xi_{\ell-1}=\xi_{\ell+1},\quad
   (\xi_{t-1},\xi_t)\in\vec{\gE}(\gG) \,\, (1\leq t\leq2\ell), \quad \xi_{t-1}\neq\xi_{t+1}\,\, \bigl(t\in[2\ell-1]\setminus\{\ell\}\bigr).
\label{eqn:admissible-word}
\end{align}
The visits at indices $0$ and $\ell$, and their vertices $\xi_0$
and $\xi_\ell$, are called the first and second \emph{splices}.
The two splice vertices may coincide.
Furthermore, we define the following sets of $\gG$-admissible words:
\begin{subequations}
\begin{align}
 \sW^{\ell}
 &\coloneqq
 \bigl\{\xi\in[N]^{2\ell+1}:
          \xi\text{ satisfies \eqref{eqn:admissible-word} with }
          \gG=\gK_N\bigr\},
 \label{eqn:admissible-hermitian}
 \\
 \widetilde{\sW}_{j}^{\ell}
 &\coloneqq
 \bigl\{\xi\in[N]^{2\ell+1}:
          \xi\text{ satisfies \eqref{eqn:admissible-word} with }
          \gG=\gK_{n,m},\
          \xi_0\in\gV_j\bigr\},
 \qquad j\in\{1,2\}.
 \label{eqn:admissible-bipartite}
\end{align}
\end{subequations}
\end{definition}
Note that in \eqref{eqn:two-lifted-paths-expansion}, gluing the two paths $\xi^{1},\xi^{2}\in\sP^{\ell+2}_{\gK_N}(e,f)$ produces an admissible word $\xi=(\xi^1_0,\ldots,\xi^1_\ell, \xi^2_{\ell-1},\ldots,\xi^2_0)$, which belongs to $\sW^\ell$ by \eqref{eqn:admissible-hermitian}. Similarly, in \eqref{eqn:two-lifted-paths-expansion-bipartite}, gluing the two paths $\xi^{1},\xi^{2}\in\sP^{\ell+2}_{\gK_{n,m}}(e,f)$ produces an admissible word $\xi=(\xi^1_0,\ldots,\xi^1_\ell, \xi^2_{\ell-1},\ldots,\xi^2_0)$, which belongs to $\widetilde{\sW}_j^\ell$ by \eqref{eqn:admissible-bipartite}, where $j\in\{1,2\}$ is determined by $\xi_0\in\gV_j$. We use Figure~\ref{fig:trace-word-gluing} to illustrate the gluing process.
\begin{figure}[htbp]
\centering
\begin{tikzpicture}[
  x=1.65cm,y=0.68cm,
  vertex/.style={circle,draw=black!60,fill=white,line width=0.6pt,
                 inner sep=1.2pt,minimum size=7mm,font=\scriptsize},
  identified/.style={draw=black!60,fill=white,line width=0.6pt,
                     rounded corners=2pt,inner xsep=4pt,inner ysep=3pt,
                     font=\scriptsize},
  splice/.style={vertex,draw=ThemeColor,line width=1.1pt,
                 fill=ThemeColor!9},
  ghost/.style={vertex,draw=black!35,fill=black!3,text=black!70},
  firstpath/.style={UCSDBlue,line width=1.15pt,
                   -{Latex[length=2.1mm,width=1.5mm]}},
  secondpath/.style={ThemeColor,line width=1.15pt,
                    -{Latex[length=2.1mm,width=1.5mm]}},
  annotation/.style={font=\scriptsize,text=black!65,align=center},
  every path/.style={line cap=round,line join=round}
]
 \node[identified,draw=ThemeColor,line width=1.1pt,fill=ThemeColor!9]
   (s0) at (0,0) {$\xi_0=\xi_{2\ell}$};
 \node[vertex,draw=UCSDBlue,line width=1.1pt,fill=UCSDBlue!9] (x1) at (1.45,0.75) {$\xi_1$};
 \node[identified,draw=UCSDBlue,line width=1.1pt,fill=UCSDBlue!9] (xm) at (3.75,0)
   {$\xi_{\ell-1}=\xi_{\ell+1}$};
 \node[splice] (sl) at (5.2,0) {$\xi_\ell$};
 \node[splice,draw=ThemeColor,line width=1.1pt,fill=ThemeColor!9] (xlast) at (1.45,-0.75) {$\xi_{2\ell-1}$};
 \node[ghost] (u) at (-1.25,0.75) {$u$};

 \draw[black!40,densely dashed,-{Latex[length=1.9mm]}] (u)--(s0);
 \draw[firstpath] (s0)--(x1);
 \draw[firstpath] (x1)--node[fill=white,inner sep=1pt] {$\cdots$} (xm);
 \draw[black!20,line width=2.8pt] (xm)--(sl);
 \draw[firstpath] (xm) to[bend left=13] (sl);
 \draw[secondpath] (sl) to[bend left=13] (xm);
 \draw[secondpath] (xm)--node[fill=white,inner sep=1pt] {$\cdots$} (xlast);
 \draw[secondpath] (xlast)--(s0);

 \node[annotation] at (-1.25,-0.25) {deleted\\predecessor};
 \node[annotation] at (0,0.95) {first splice};
 \node[annotation] at (5.2,0.95) {second splice};
 \node[font=\scriptsize,text=UCSDBlue] at (2.6,1.55)
   {first path $\xi^1$};
 \node[font=\scriptsize,text=ThemeColor] at (2.6,-1.55)
   {$\xi^2$ read backward};
 \node[annotation] at (4.48,-0.9) {shared final support edge};
\end{tikzpicture}
\caption{Trace-word gluing: follow $\xi^1$ in blue and then $\xi^2$
backward in red.  The edge from $u$ is deleted,
$\xi_{2\ell}=\xi_0$ closes the word, and
$\xi_{\ell-1}=\xi_{\ell+1}$ marks the allowed reversal at the shared
final edge.}
\label{fig:trace-word-gluing}
\end{figure}

\noindent
Conversely, cutting such a word at its two splices recovers the two paths except for their common predecessor $u=\xi^1_{-1}=\xi^2_{-1}$. The allowed predecessors can be chosen from $[N]\setminus\{\xi_1,\xi_{2\ell-1}\}$ in the Hermitian case and $\gV_{3-j}\setminus\{\xi_1,\xi_{2\ell-1}\}$ in the rectangular case. Thus their multiplicities are
\begin{subequations}\label{eqn:deleted-predecessor-count}
\begin{align}
 c(\xi)
 &\coloneqq
 N-\abs{\{\xi_1,\xi_{2\ell-1}\}},
 &&\xi\in\sW^\ell,
 \label{eqn:deleted-predecessor-number}
 \\
 \widetilde c_j(\xi)
 &\coloneqq
 \abs{\gV_{3-j}}-\abs{\{\xi_1,\xi_{2\ell-1}\}},
 &&\xi\in\widetilde{\sW}_j^\ell,
 \quad j\in\{1,2\}.
 \label{eqn:deleted-predecessor-number-bipartite}
\end{align}
\end{subequations}
Reindexing the two path sums by their glued words and taking expectations now gives
\begin{subequations}\label{eqn:exact-trace-word-expansion}
\begin{align}
 \E\Tr\mB^\ell(\mB^\ell)^*
 &=\sum_{\xi\in\sW^{\ell}}c(\xi)
   \E\prod_{j=1}^{2\ell}\emH_{\xi_{j-1}\xi_j},
 \label{eqn:exact-hermitian-trace-word-expansion}
 \\
 \E\Tr\widetilde{\mB}^{\ell}(\widetilde{\mB}^{\ell})^*
 &=\sum_{j=1}^2\sum_{\xi\in\widetilde{\sW}_{j}^{\ell}}
   \widetilde c_j(\xi)
   \E\prod_{t=1}^{2\ell}
      \widetilde{\emH}_{\xi_{t-1}\xi_t}\,\,,
 \label{eqn:exact-bipartite-trace-word-expansion}
\end{align}
\end{subequations}
where $c(\xi)$ and $\widetilde c_j(\xi)$ are defined in \eqref{eqn:deleted-predecessor-number} and \eqref{eqn:deleted-predecessor-number-bipartite}, respectively.

\begin{remark}
\label{remark:trace-word-expansion}
Unlike the independent-entry arguments in \cite[Section~5.1]{benaych2020spectral} and \cite[Section~5]{dumitriu2024extreme} where words containing a singly traversed edge were discarded due to the centered contribution vanishing, we keep those words in \eqref{eqn:admissible-hermitian} and \eqref{eqn:admissible-bipartite}, since their centered contribution need not vanish under dependence.
\end{remark}

\subsection{Step 2: Decomposition of plural and singleton contributions}
\label{subsec:trace-step2}
To further simplify \eqref{eqn:exact-hermitian-trace-word-expansion} and \eqref{eqn:exact-bipartite-trace-word-expansion}, we group admissible words into relabeling classes and state moment bounds for their plural and singleton contributions. Terms containing edges traversed exactly once may survive under dependence.  We first define the graph and word terminology used to separate them.

\paragraph{Equivalence classes and canonical representatives} An undirected multigraph $\gG=(\gV(\gG),\gE(\gG),\phi)$ consists of two finite sets: the set of vertices $\gV(\gG)$ and the set of edges $\gE(\gG)$, and a map $\phi$ from $\gE(\gG)$ to the unordered sets of one or two elements of $\gV(\gG)$. For $e \in \gE(\gG)$, the set $\phi(e)$ is the set of vertices incident to $e$. The edge $e \in \gE(\gG)$ is a loop if $|\phi(e)|= 1$. The degree $\deg(v)$ of a vertex $v \in \gV(\gG)$ is the number of edges to which it is incident, whereby a loop incident to $v$ counts twice. Moreover, the multigraph $\gG$ is called simply a graph if $\phi$ is injective, i.e. there are no multiple edges (Note that in our convention a graph may have loops.) For a graph $\gG$, we may and shall identify $\gE(\gG)$ with a set of unordered pairs of $\gV(\gG)$, simply identifying $e$ and $\phi(e)$.

A \emph{path} of length $\ell \geq 1$ in $\gG$ is a \emph{word} $w = w_0 e_1 w_1 e_2 \dots e_\ell w_\ell$ where $w_0, \dots, w_\ell \in \gV(\gG)$, $e_1, \dots, e_\ell \in \gE(\gG)$, and $\phi(e_i) = \{w_{i-1}, w_i\}$ for each $i = 1,\dots, \ell$. The length of $w$ is denoted by $|w| = \ell$. We say that $w$ is closed if $w_0 = w_\ell$. For $e \in \gE(\gG)$, the \emph{traversal multiplicity} of $e$ in $w$ is
\begin{align}
 m_e(w) \coloneqq |\{ j \in [\ell] : e_j = e \}|. \label{eqn:path-crossing-count}
\end{align}
In particular, the total number of edge traversals is $\sum_{e \in \gE(\gG)} m_e(w) = |w| = \ell$.

We now define the graph $\gG_\xi$ associated with the word $\xi$, which is connected since the path defined by the word visits every vertex. Also, $\xi$ in \eqref{eqn:admissible-hermitian} and \eqref{eqn:admissible-bipartite} is a closed path in $\gG_\xi$.

\begin{definition}[Word-associated graph]
\label{def:word-associated-graph}
Given a word $\xi=(\xi_0,\ldots,\xi_{2\ell})\in[N]^{2\ell+1}$, let $\gG_\xi = (\gV(\gG_\xi), \gE(\gG_\xi))$ denote the \emph{word-associated graph} with
\begin{align}
 \gV(\gG_\xi)
 &\coloneqq\{\xi_j:0\leq j\leq2\ell\},
 &\gE(\gG_\xi)
 &\coloneqq
 \bigl\{\{\xi_{j-1},\xi_j\}:1\leq j\leq2\ell\bigr\}.
\end{align}
Under the bipartite setting, we have $\gV(\gG_\xi) = \gV_1(\gG_\xi) \cup \gV_2(\gG_\xi)$, where $\gV_{j}(\gG_\xi) \coloneqq \gV(\gG_\xi) \cap \gV_{j}$ for $j = 1, 2$.
\end{definition}

We then identify the equivalence relation between words.
\begin{definition}[Equivalent words]
\label{def:equivalent-words}
Two words $\xi$ and $\widetilde{\xi}$ of length $2\ell$ are \emph{equivalent}, denoted by $\xi \sim \widetilde{\xi}$, if and only if there exists a bijection $\varphi: \gV(\gG_{\xi}) \to \gV(\gG_{\widetilde{\xi}})$ such that $\varphi(\xi_{j}) =\widetilde{\xi}_{j}$ for all $j = 0, 1, \ldots, 2\ell$. Under the bipartite setting, the bijection must preserve $\gV_1$ and $\gV_2$, i.e., $\varphi(\gV_i(\gG_\xi)) = \gV_i(\gG_{\widetilde{\xi}})$ for $i = 1, 2$. The \emph{equivalence class} of a word $\xi$ consists of all words equivalent to $\xi$, and is denoted by $\WordClass{\xi}$.
\end{definition}

If $\xi$ and $\widetilde\xi$ are equivalent, the bijection identifies $\gG_\xi$ with $\gG_{\widetilde\xi}$.  In particular, the two word-associated graphs have the same numbers of vertices and edges, and hence the same genus
\begin{align}
 g(\gG_\xi)
 \coloneqq \abs{\gE(\gG_\xi)}-\abs{\gV(\gG_\xi)}+1.
 \label{eqn:genus-definition}
\end{align}
Note that the genus is invariant with respect to the equivalence class, but does not determine its traversal pattern. To represent that pattern uniquely, we introduce the canonical word.

\begin{definition}[Canonical word]
\label{def:canonical-word}
Let $w = w_0 e_1 w_1 e_2 \dots e_\ell w_\ell$ be a path in a multigraph $\gG$. A pair $(\gG, w)$ is \emph{canonical} if
\begin{enumerate}
  \item $\gV(\gG) = \{w_0, w_1, \dots, w_\ell\} = [s]$ where $s\coloneqq\abs{\gV(\gG)}$;
  \item the vertices in $\gV(\gG)$ are labeled in increasing order of their first appearance in $w$, i.e., if $w_{i} \notin \{w_{0}, \dots, w_{i-1}\}$, then $w_{i} > w_{j}$ for each $0\leq j < i$.
\end{enumerate} 
Under the Hermitian setting, vertices are labeled by $1,2,\ldots$ in order of their first appearance. Under the rectangular setting, this labeling is performed separately in the two vertex classes: the $t$-th new vertex of $\gV_j$ is relabeled by $(j,t)$\footnote{For example, $(7,3,7,9,3,7)$ is labeled as $(1,2,1,3,2,1)$. If $u_1,u_2\in\gV_1$ and $v_1,v_2\in\gV_2$, then $(u_1,v_1,u_2,v_1,u_1,v_2,u_1)$ is relabeled as $((1,1),(2,1),(1,2),(2,1),(1,1),(2,2),(1,1))$, which preserves the vertex class labeling.}. The canonical word is unique up to relabeling.
\end{definition}

Canonical relabeling preserves the traversal pattern, and two words are
equivalent if and only if their canonical representatives coincide.  Hence
each equivalence class contributes exactly one canonical word.  We collect
these canonical representatives in the sets defined below.

\begin{definition}[Canonical representatives of equivalence classes]
\label{def:canonical-equivalence-classes}
Let $\xi^\circ$ denote the canonical word obtained from $\xi$ by the first-appearance relabeling. We define the quotient sets
\begin{align}
 \sQ^{\ell}
 &\coloneqq
 \bigl\{\xi^\circ:\xi\in\sW^{\ell}\bigr\},
 &
 \widetilde{\sQ}_{j}^{\ell}
 &\coloneqq
 \bigl\{\xi^\circ:
          \xi\in\widetilde{\sW}_{j}^{\ell}\bigr\},
 \qquad j\in\{1,2\},
 \label{eqn:canonical-equivalence-class-sets}
\end{align}
which contain only canonical words, with exactly one canonical word representing each equivalence class.
\end{definition}

\paragraph{Plural and singleton contributions}
Whether a word contributes to the moment depends on the presence of edges that are traversed only once. For independent, centered entries, these terms disappear. When dependencies are present, they may persist. Therefore, we distinguish these contributions from those governed by the decay associated with repeated edges.

\begin{definition}[Plural and singleton words]
\label{def:plural-and-singleton-words}
Let $\xi$ be a word with associated graph $\gG_{\xi}$.  An edge $e$ is a \emph{singleton edge} of $\xi$ if $m_e(\xi)=1$, where $m_e(\xi)$ is defined in \eqref{eqn:path-crossing-count}.
\begin{itemize}
 \item The word $\xi$ is \emph{plural} if $m_e(\xi)\geq2$ for every
 $e\in\gE(\gG_{\xi})$.
 \item The word $\xi$ is \emph{singleton} if it contains at least one singleton edge.
\end{itemize}
\end{definition}

Since traversal multiplicities do not change under relabeling, we classify an equivalence class $\WordClass{\xi}$ as plural or singleton based on its canonical representative. This allows us to define the corresponding sets of canonical representatives as follows:
\begin{subequations}
\begin{align}
 \SquarePluralClasses{\ell}
 &\coloneqq
 \{\xi\in\sQ^\ell:\xi\textnormal{ is plural}\},
 &
 \SquareSingletonClasses{\ell}
 &\coloneqq
 \{\xi\in\sQ^\ell:\xi\textnormal{ is singleton}\},
\label{eqn:square-plural-singleton-class-sets}\\
 \RectangularPluralClasses{i}{\ell}
 &\coloneqq
 \{\xi\in\widetilde{\sQ}_i^\ell:
       \xi\textnormal{ is plural}\},
 &
 \RectangularSingletonClasses{i}{\ell}
 &\coloneqq
 \{\xi\in\widetilde{\sQ}_i^\ell:
       \xi\textnormal{ is singleton}\},
 \quad i\in\{1,2\}.
 \label{eqn:bipartite-plural-singleton-class-sets}
\end{align}
\end{subequations}

We now group the trace terms by equivalence class and sum over each family.

\begin{definition}[Class moments]
\label{def:roadmap-class-moments}
For a canonical representative $\xi$ in \eqref{eqn:canonical-equivalence-class-sets}, define the moment of its equivalence class $\WordClass{\xi}$ via \eqref{eqn:square-plural-singleton-class-sets} and \eqref{eqn:bipartite-plural-singleton-class-sets} as follows:
\begin{subequations}
\begin{align}
 \GraphMoment_\ell(\WordClass{\xi})
 &\coloneqq\sum_{\eta\in\WordClass{\xi}}c(\eta)
   \E\prod_{j=1}^{2\ell}\emH_{\eta_{j-1}\eta_j},
 \quad\xi\in\sQ^\ell, \label{eqn:square-signed-class-moment}
 \\
 \widetilde{\GraphMoment}_{\ell,i}(\WordClass{\xi})
 &\coloneqq\sum_{\eta\in\WordClass{\xi}}\widetilde c_i(\eta)
   \E\prod_{j=1}^{2\ell}
      \widetilde{\emH}_{\eta_{j-1}\eta_j},
 \quad\xi\in\widetilde{\sQ}_{i}^{\ell}, \quad i\in\{1,2\}.
 \label{eqn:rectangular-signed-class-moment}
\end{align}
\end{subequations}
\end{definition}
With \eqref{eqn:square-signed-class-moment} and \eqref{eqn:rectangular-signed-class-moment}, we can define the plural and singleton contributions by
\begin{subequations}
\begin{align}
 \SquarePluralContribution
 &\coloneqq
 \sum_{\xi\in\SquarePluralClasses{\ell}}
 \GraphMoment_\ell(\WordClass{\xi}),
 &
 \SquareSingletonContribution
 &\coloneqq
 \sum_{\xi\in\SquareSingletonClasses{\ell}}
 \GraphMoment_\ell(\WordClass{\xi}).
 \label{eqn:square-T0-T1}\\
 \RectangularPluralContribution
 &\coloneqq
 \sum_{i=1}^2
 \sum_{\xi\in\RectangularPluralClasses{i}{\ell}}
 \widetilde{\GraphMoment}_{\ell,i}(\WordClass{\xi}),
 &
 \RectangularSingletonContribution
 &\coloneqq
 \sum_{i=1}^2
 \sum_{\xi\in\RectangularSingletonClasses{i}{\ell}}
 \widetilde{\GraphMoment}_{\ell,i}(\WordClass{\xi}).
 \label{eqn:rectangular-T0-T1}
\end{align}
\end{subequations}

Since the equivalence classes partition the admissible words, regrouping \eqref{eqn:exact-hermitian-trace-word-expansion} and \eqref{eqn:exact-bipartite-trace-word-expansion} by \eqref{eqn:square-T0-T1} and \eqref{eqn:rectangular-T0-T1} gives the exact decomposition
\begin{subequations}
\begin{align}
 \E\Tr\mB^\ell(\mB^\ell)^*
 &=
 \sum_{\xi\in\sQ^\ell}
 \GraphMoment_\ell(\WordClass{\xi})
 =\SquarePluralContribution+\SquareSingletonContribution.
 \label{eqn:trace-plural-singleton-split-square}\\
 \E\Tr\widetilde{\mB}^{\ell}(\widetilde{\mB}^{\ell})^*
 &=
 \sum_{i=1}^2\sum_{\xi\in\widetilde{\sQ}_i^\ell}
 \widetilde{\GraphMoment}_{\ell,i}(\WordClass{\xi})
 =\RectangularPluralContribution
  +\RectangularSingletonContribution.
 \label{eqn:trace-plural-singleton-split-rectangular}
\end{align}
\end{subequations}

\subsection{Trace estimates under general moment bound}\label{subsec:moment-assumptions}
Building on the decompositions in \eqref{eqn:trace-plural-singleton-split-square} and \eqref{eqn:trace-plural-singleton-split-rectangular}, we can derive upper bounds for the trace of nonbacktracking operators as soon as suitable bounds on the moments of relabeling classes are available—even without assuming that the entries are independent. To formulate these assumptions, we first introduce support-chain statistics, which capture the singleton contribution, and will be revisited in the Subsection \ref{subsec:trace-step3}.

\begin{definition}[Suppressible vertices]
\label{def:suppressible-vertices}
Let $\gG$ be a multigraph and let $\gD\subseteq\gV(\gG)$ be a set of
distinguished vertices.  Define the set of \emph{suppressible} vertices by
\begin{align}
 \gS_{\gD}(\gG)
 \coloneqq
 \bigl\{v\in\gV(\gG)\setminus\gD:
              \deg_{\gG}(v)=2\bigr\}.
 \label{eqn:suppressible-vertex-set}
\end{align}
\end{definition}

We group support edges into chains with suppressible interiors, counting
each chain once up to reversal.

\begin{definition}[Suppressible chains and reversal classes]
\label{def:suppressible-chains}
For $\gG,\gD$ as above, define the following two sets.
\begin{enumerate}[label=\textup{(\roman*)},leftmargin=2.2em,itemsep=0.6em]
 \item \emph{Suppressible chains.}
 The set $\sP_{\gD}(\gG)$ consists of paths
 $w=w_0e_1w_1\cdots e_t w_t$, with $t\geq1$, whose edges are pairwise
 distinct, whose internal vertices are pairwise distinct and belong to
 $\gS_{\gD}(\gG)$, and whose endpoints lie outside
 $\gS_{\gD}(\gG)$.  The endpoints may coincide.

 \item \emph{Unoriented chain classes.}
 For $w\in\sP_{\gD}(\gG)$, let
 $w^{\rm rev}\coloneqq w_t e_t w_{t-1}\cdots e_1 w_0$ be its reversal.
 Two chains $w$ and $w'$ are equivalent if $w'=w$ or $w'=w^{\rm rev}$.  Denote the equivalence class by $\ReversalClass{w}\coloneqq\{w,w^{\rm rev}\}$. We identify the set of all such equivalence classes by
 \begin{align}
  \sP_{\gD}^{\circ}(\gG)
  \coloneqq
  \bigl\{\ReversalClass{w}:w\in\sP_{\gD}(\gG)\bigr\}.
  \label{eqn:suppressible-chain-classes}
 \end{align}
\end{enumerate}
\end{definition}

For $\gG=\gG_\xi$ and $\gD=\{\xi_0,\xi_\ell\}$, nonbacktracking
forces the same traversal multiplicity along every chain.  We therefore record the singleton edge count and a statistic of the lengths of the chains they form.

\begin{definition}[Singleton-chain data]
\label{def:singleton-chain-data}
For an admissible canonical word $\xi$, define
\begin{align}
 e_1(\xi)
 &\coloneqq\abs{\{e\in\gE(\gG_\xi):m_e(\xi)=1\}},
 &
 \varsigma(\xi)
 &\coloneqq
 \sum_{\substack{
  \ReversalClass{w}\in\sP_{\{\xi_0,\xi_\ell\}}^\circ(\gG_\xi)\\
  m_e(\xi)=1\ \text{for every edge }e\text{ of }w}}
 \floor{\frac{\abs{w}-1}{2}}.
 \label{eqn:singleton-certificate-count}
\end{align}
Here, $e_1(\xi)$ counts the singleton edges and $\varsigma(\xi)$ is the total number of pairs in a maximum pairing of consecutive internal vertices on each singleton chain, counted once up to reversal. Both quantities are invariant under relabeling.
\end{definition}

\paragraph{Graph moment assumptions.}
The following assumptions bound each signed class moment using
statistics of the original word.  The support weight
$d^{\abs{\gE(\gG_\xi)}-\ell}$ is supplemented by
$d^{-\varsigma(\xi)}$ to control long singleton chains.

\begin{assumption}[Hermitian moment bound]
\label{ass:hermitian-graph-condition}
Let $\mH\in\C^{N\times N}$ be a random Hermitian matrix with zero
diagonal, and let $d\geq1$ be the sparsity parameter.  Fix deterministic
constants $K_\ell,\chi_\ell,\Gamma_\ell\geq1$, uniformly over the
relabeling classes at the $\ell$-th power.  We say that $\mH$ satisfies
$\mathsf{HM}_\ell(d,K_\ell,\chi_\ell,\Gamma_\ell)$ if, for every
$\xi\in\sQ^\ell$,
\begin{align}
 \abs{\GraphMoment_\ell(\WordClass{\xi})}
 &\leq K_\ell N^2
 \left(\frac{\chi_\ell}{N}\right)^{\abs{g(\gG_\xi)}}
\times
 \begin{cases}
  d^{\abs{\gE(\gG_\xi)}-\ell},&\xi\in\SquarePluralClasses{\ell},\\[0.8em]
  \Gamma_\ell^{2\left(\abs{\gE(\gG_\xi)}-
    \abs{\gS_{\{\xi_0,\xi_\ell\}}(\gG_\xi)}\right)}
  d^{\abs{\gE(\gG_\xi)}-\ell-\varsigma(\xi)},&\xi\in\SquareSingletonClasses{\ell}.
 \end{cases}
 \tag*{\(\mathsf{(HM)}_\ell\)}
 \label{eqn:hermitian-graph-condition}
\end{align}
\end{assumption}

\begin{assumption}[Bipartite moment bound]
\label{ass:bipartite-graph-condition}
Let $\mX\in\C^{n\times m}$ be a random rectangular matrix, with
$N=n+m$, and let $\widetilde{\mH}$ be its Hermitian block linearization
in \eqref{eqn:block-linearization}.  Let $d\geq1$ be the sparsity
parameter, and fix deterministic constants
$\widetilde K_\ell,\widetilde\chi_\ell,\widetilde\Gamma_\ell\geq1$
uniformly over both first-splice colors and all relabeling classes at
the $\ell$-th power.  We say that $\widetilde{\mH}$ satisfies
$\mathsf{BM}_\ell(d,\widetilde K_\ell,\widetilde\chi_\ell,
\widetilde\Gamma_\ell)$ if, for every $i\in\{1,2\}$ and
$\xi\in\widetilde{\sQ}_i^\ell$,
\begin{align}
 \abs{\widetilde{\GraphMoment}_{\ell,i}(\WordClass{\xi})}
 &\leq\widetilde K_\ell nm
 \left(\frac{\widetilde\chi_\ell}{N}\right)^{\abs{g(\gG_\xi)}}
\times
 \begin{cases}
  d^{\abs{\gE(\gG_\xi)}-\ell},&\xi\in\RectangularPluralClasses{i}{\ell},\\[0.8em]
  \widetilde\Gamma_\ell^{2\left(\abs{\gE(\gG_\xi)}-
    \abs{\gS_{\{\xi_0,\xi_\ell\}}(\gG_\xi)}\right)}
    d^{\abs{\gE(\gG_\xi)}-\ell -\varsigma(\xi)},&\xi\in\RectangularSingletonClasses{i}{\ell}.
 \end{cases}
 \tag*{\(\mathsf{(BM)}_\ell\)}
 \label{eqn:bipartite-graph-condition}
\end{align}
\end{assumption}

We now state desired trace bounds for non-backtracking matrices.

\begin{lemma}[Nonbacktracking trace bounds]
\label{lem:general-nonbacktracking-trace-bounds}
There are numerical constants $c,C>0$, $d_0\geq1$ and some sufficiently large $C_{\rm enum}\geq 1$ such that the following holds for every integer $\ell\geq3$ and $d\geq d_0$.
\begin{enumerate}[label=\textup{(\roman*)},leftmargin=2.5em,itemsep=0.8em]
 \item \emph{Hermitian case.} If $\mH$ satisfies
 Assumption~\ref{ass:hermitian-graph-condition} and
 \begin{align}
  \frac{C_{\rm enum}\chi_\ell\ell^3\Gamma_\ell^6d^{9/2}}{N}
  &\leq\frac12,
  &\ell&\leq c\sqrt d\log
  \frac{N}{C_{\rm enum}\chi_\ell\ell^3\Gamma_\ell^6d^{9/2}},
  \label{eqn:hermitian-saddle-condition}
 \end{align}
 then $\mB=\mB(\mH)$ satisfies
 \begin{align}
  \E\Tr\mB^\ell(\mB^\ell)^*
  \leq CK_\ell N^2\left(
    \ell^3\sqrt d+\frac{\chi_\ell\ell^8d^4}{N}
    +\frac{\chi_\ell\Gamma_\ell^8\ell^8d^6}{N}
  \right).
  \label{eqn:general-hermitian-trace-bound}
 \end{align}

 \item \emph{Bipartite case.} Suppose that Assumption~\ref{ass:bipartite-graph-condition} holds, the aspect ratio obeys \eqref{eqn:proportional-block-regime}, and
 \begin{align}
  \frac{C_{\rm enum}\widetilde\chi_\ell\ell^3
        \widetilde\Gamma_\ell^6d^{9/2}}{N}
  &\leq\frac12,
  &\ell&\leq c\sqrt d\log
  \frac{N}{C_{\rm enum}\widetilde\chi_\ell\ell^3
             \widetilde\Gamma_\ell^6d^{9/2}},
  \label{eqn:bipartite-saddle-condition}
 \end{align}
 then the nonbacktracking matrix $\widetilde{\mB}$ satisfies
 \begin{align}
  \E\Tr\widetilde{\mB}^{\ell}(\widetilde{\mB}^{\ell})^*
  \leq C_{\ratio}\widetilde K_\ell nm\left(
    \ell^3\sqrt d+\frac{\widetilde\chi_\ell\ell^8d^4}{N}
    +\frac{\widetilde\chi_\ell\widetilde\Gamma_\ell^8\ell^8d^6}{N}
  \right).
  \label{eqn:general-bipartite-trace-bound}
 \end{align}
\end{enumerate}
\end{lemma}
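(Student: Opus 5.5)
The plan is to start from the exact decompositions \eqref{eqn:trace-plural-singleton-split-square} and \eqref{eqn:trace-plural-singleton-split-rectangular}, which give $\E\Tr\mB^\ell(\mB^\ell)^*=\SquarePluralContribution+\SquareSingletonContribution$. We bound each class moment by \ref{eqn:hermitian-graph-condition} (respectively \ref{eqn:bipartite-graph-condition}) and then sum over canonical representatives. Because the moment bound depends only on a few statistics of $\gG_\xi$, the whole task reduces to counting canonical admissible words. Specifically, we count them by genus $g=g(\gG_\xi)$, by edge number $\abs{\gE(\gG_\xi)}$, and, for singleton words, by the singleton data $e_1(\xi)$, $\varsigma(\xi)$ and the number of non-suppressible edges. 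The bipartite case is handled identically. The only changes are that $N^2$ is replaced by $nm$ and that the vertex-class labelling in Definition~\ref{def:canonical-word} is respected; this costs a factor $C_\ratio$ through \eqref{eqn:proportional-block-regime}.

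\emph{Plural words.} We follow \cite[Section~5.1]{benaych2020spectral}. Every edge is traversed at least twice, so $\abs{\gE}\leq\ell$, and the weight is $d^{\abs{\gE}-\ell}(\chi_\ell/N)^g$. The key structural fact is that a nonbacktracking closed word with two splices decomposes $\gG_\xi$ into at most $O(g+1)$ maximal chains between branch points and the splices $\{\xi_0,\xi_\ell\}$. The word is then determined by the kernel multigraph, which has $O(g)$ vertices and edges, by the chain lengths, and by the traversal order. This gives at most $(C\ell)^{C(g+1)}$ canonical words for each $g$ and each excess $\ell-\abs{\gE}$. For $g=0$ the support is a tree; nonbacktracking forces it to be a path, possibly with a tail, so each edge is traversed exactly twice and $\abs{\gE}=\ell$. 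Summing over the $O(\ell^3)$ splice and tail configurations, together with the $\sqrt d$ loss coming from the boundary terms where $\abs{\gE}=\ell-1$ is allowed at the reversal edge, produces the term $\ell^3\sqrt d$. For $g\geq1$ each extra edge multiplicity costs $d^{-1}$, while each unit of genus costs $C\chi_\ell\ell^{C}/N$. The $g=1$ term gives $\chi_\ell\ell^8d^4/N$. The terms with $g\geq2$ form a geometric series, which is summable under the first inequality in \eqref{eqn:hermitian-saddle-condition}.

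\emph{Singleton words.} These are the new part, and I expect them to be the main obstacle. Long singleton chains carry no $d^{-1}$ gain from repetition, so their count is not automatically small. Nonbacktracking forces constant multiplicity along each suppressible chain, so singleton edges come in whole chains, each of multiplicity one. Such chains must appear in cycles of the support, and hence $g\geq1$. I would contract every singleton chain of length $t$ to its kernel edge. The choice of lengths then contributes a factor at most $\sum_t d^{-\lfloor (t-1)/2\rfloor}\leq C$ per chain; this is exactly what the factor $d^{-\varsigma(\xi)}$ in the assumption is designed to pay for. The remaining contraction data has $\abs{\gE}-\abs{\gS}=O(g+1)$ kernel edges, which accounts for the factor $\Gamma_\ell^{2(\cdot)}$. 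Bounding this factor by $\Gamma_\ell^{C g}$ and tracking the exponents should give $\chi_\ell\Gamma_\ell^8\ell^8d^6/N$ at $g=1$, with the worst configuration being short singleton chains costing $d^{\abs{\gE}-\ell}$ with $\abs{\gE}$ close to $\ell+O(1)$. The terms with $g\geq2$ again form a series with ratio $C_{\rm enum}\chi_\ell\ell^3\Gamma_\ell^6d^{9/2}/N\leq\tfrac12$.

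The second condition in \eqref{eqn:hermitian-saddle-condition}, namely $\ell\leq c\sqrt d\log(\cdot)$, is used to control the entropy of the chain-length choices. Distributing $\ell$ traversals among $O(g)$ chains produces factors of size $\ell^{Cg}$. When these are weighed against the $d^{-1/2}$ per-step gains from excess edges, the optimal balance (the saddle) is summable precisely under this condition. The delicate point I anticipate is ensuring that $\varsigma$, which pairs only interior vertices of singleton chains, compensates for odd-length chains and chains incident to the splices. I would handle these by treating the splice-adjacent chains separately, which costs at most an additional $d^{1/2}$ each and is absorbed into the stated polynomial $d^6$.
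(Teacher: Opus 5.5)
Your architecture matches the paper's: split into plural and singleton classes, compress the non-splice degree-two chains into a core, count cores, walks and chain lengths, and sum over genus. As written, however, the argument does not close, because two of the quantitative mechanisms are misidentified.

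First, the plural count. Your claim of at most $(C\ell)^{C(g+1)}$ canonical words for each genus and each excess $\ell-|\gE(\gG_\xi)|$ is false. The traversal order is a dart walk $\zeta$ on the core. For plural words its length $|\zeta|=\sum_f m_f$ is only constrained by $|\zeta|\le 2|\gE(\gU)|+2(\ell-|\gE(\gG_\xi)|)$. Already for a core with two loops at a splice, the number of such walks grows exponentially in the excess. The correct bookkeeping is $d^{|\gE(\gG_\xi)|-\ell}=d^{-\frac12\sum_f k_f(m_f-2)}\le d^{|\gE(\gU)|-|\zeta|/2}$. So the classes with core size $a$ and walk length $t$ contribute about $(C(g+1)/\sqrt d)^{t}(C\ell d)^{a}$ in total. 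Once $g\gtrsim\sqrt d$ the ratio $C(g+1)/\sqrt d$ exceeds one, so the genus sum is not a geometric series. You must bound $\sum_g\omega^g\sum_{t\le2\ell}(C(g+1)/\sqrt d)^t\le C\ell^2\omega$ by maximizing the concave exponent $-g\log(1/\omega)+2\ell\log(C(g+1)/\sqrt d)$. This is where $\ell\le c\sqrt d\log(1/\omega)$ enters. The chain-length entropy $\binom{2\ell}{a}\le(6\ell/a)^a$, to which you attribute the second condition, is instead absorbed into $\omega$ and handled by the first condition. To match the stated hypotheses you also need the sharp bound $|\gE(\gU)|\le3g+1$, which follows from degree at least three at non-splice core vertices. ``$O(g)$'' is not enough, because the per-genus factor must be exactly $\chi_\ell\ell^3d^3/N$, respectively $\chi_\ell\ell^3\Gamma_\ell^6d^{9/2}/N$.

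Second, the singleton penalty does not pay for the choice of chain lengths. Since $|\gE(\gG_\xi)|-\ell=\frac12\sum_f k_f(2-m_f)$, a singleton chain of length $k$ contributes $d^{k/2}$. This is genuine growth, not merely a missing gain; in the genus-one cycle example it reaches $d^{\ell-1}$. The factor $d^{-\varsigma(\xi)}$ is exactly what cancels this growth. It leaves $d^{k/2-\lfloor(k-1)/2\rfloor}\in\{d^{1/2},d\}$ per singleton chain, for every $k$, including odd chains and chains at the splices. This gives $d^{|\gE(\gG_\xi)|-\ell-\varsigma(\xi)}\le d^{3|\gE(\gU)|/2-|\zeta|/2}$. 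After this cancellation the weight does not decay in $k$, so there is no convergent series $\sum_t d^{-\lfloor(t-1)/2\rfloor}$, and the lengths must still be paid by $\binom{2\ell}{a}$. If you spend $d^{-\varsigma}$ on length entropy as proposed, you use it twice, and $d^{|\gE(\gG_\xi)|-\ell}$ is left uncontrolled. With the correct inequality and $a\le3g+1$ you get $(C\ell\Gamma_\ell^2d^{3/2})^a\le C\ell\Gamma_\ell^2d^{3/2}(C\ell^3\Gamma_\ell^6d^{9/2})^g$. Combined with the $\ell^2\omega$ from the genus sum, this yields $\chi_\ell\Gamma_\ell^8\ell^8d^6/N$, with no separate treatment of splice-adjacent chains needed.

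A minor point: at genus zero there is exactly one class. It is the path of length $\ell$ between distinct splices, traversed there and back, with no tail and weight one. So $\ell^3\sqrt d$ is just slack; it does not come from a ``$\sqrt d$ loss'' at the reversal edge.
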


\paragraph{Proof Outline for Lemma~\ref{lem:general-nonbacktracking-trace-bounds}.}
In Step~3, we replace each maximal nonsplice degree-two chain by a
single edge, preserving its endpoints and recording its length.
This preserves the genus and leaves a graph whose size is controlled
by the genus.  We count the resulting graphs and walks after relabeling
their vertices in order of first appearance.  Steps~4 and~5 use these
counts and the graph moment conditions to bound the plural and
singleton contributions.  Step~6 sums the positive-genus terms under
the displayed numerical conditions and includes the genus-zero plural
contribution, proving both trace bounds.

\begin{remark}[Genus summation and admissible powers]
\label{rem:trace-genus-power-range}
The expansion of $\Tr\mB^\ell(\mB^\ell)^*$ pairs nonbacktracking paths whose joined words allow reversals only at two marked splice visits; tree-supported words therefore traverse a single path once in each direction.
Retaining the splices and suppressing other degree-two vertices leaves a core with at most $2g+2$ vertices and $3g+1$ edges for support genus $g$; see \eqref{eqn:core-size}.
Enumeration reduces to the core, its traversal pattern, and its chain lengths.
The conditions \eqref{eqn:hermitian-saddle-condition} and \eqref{eqn:bipartite-saddle-condition} balance the growth of core walks against the suppression of positive-genus contributions in \eqref{eqn:roadmap-genus-sum}.
For our models, \eqref{eqn:rectangular-edge-sparsity-range} permits $3\leq\ell\leq c\sqrt d\log(N)$ by Lemma~\ref{lem:uniform-trace-consequence}.
Taking $\ell\asymp\sqrt d\log(N)$ makes the polynomial prefactor in $N$ contribute $O((\log(N))/\ell)=O(d^{-1/2})$ to the radius bound obtained through \eqref{eqn:radius-by-trace} and Markov's inequality.
Larger powers require sharper genus estimates in the present argument, together with moment bounds valid at those powers.
\end{remark}

\subsection{Step 3: Reduction to a compressed multigraph.} \label{subsec:trace-step3}

The support graph $\gG_{\xi}$ may contain degree-two chains whose
lengths are $O(\ell)$ and are not controlled by the genus.  As in
\cite{benaych2020spectral}, we replace each chain from
$\sP_{\{\xi_0,\xi_\ell\}}^{\circ}(\gG_\xi)$ in
\eqref{eqn:suppressible-chain-classes} by a single weighted edge
with the same endpoints.  This preserves the genus and, as proved
below, leaves $O(g(\gG_\xi)+1)$ vertices and edges.  The walk on
that graph can then be counted separately from the chain lengths.

\begin{definition}[Compressed multigraph]
\label{def:compressed-multigraph}
Let $\xi$ be a canonical admissible word of length $2\ell$ from quotient sets \eqref{eqn:canonical-equivalence-class-sets}. The compressed multigraph $\widehat{\gG}_{\xi}$ obtained from $\gG_\xi$ has
\begin{align}
 \gV(\widehat{\gG}_{\xi})
 &\coloneqq
 \gV(\gG_\xi)\setminus
 \gS_{\{\xi_0,\xi_\ell\}}(\gG_\xi),
 &
 \gE(\widehat{\gG}_{\xi})
 &\coloneqq
 \sP_{\{\xi_0,\xi_\ell\}}^{\circ}(\gG_\xi).
 \label{eqn:compressed-multigraph}
\end{align}
For an edge $\ReversalClass{w}\in\gE(\widehat{\gG}_{\xi})$, where
$w=w_0e_1w_1\cdots e_tw_t$, its endpoints and weight are
\begin{align}
 \widehat\phi(\ReversalClass{w})
 &\coloneqq\{w_0,w_t\},
 &
 \widehat k_{\ReversalClass{w}}
 &\coloneqq t=\abs{w}.
 \label{eqn:compressed-edge-incidence-weight}
\end{align}
Thus, each chain class becomes a weighted edge; parallel edges and loops may arise.
\end{definition}

\noindent
We illustrate the construction with an example shown in Figure~\ref{fig:quotient-graph-process}. Consider the canonical admissible word
\begin{align}
 \xi=(1,2,3,4,5,6,3,7,8,9,10,11,10,9,12,13,3,6,5,4,3,2,1),
 \qquad \ell=11.
\end{align}
Its splice vertices are $1$ and $11$, and its branch vertices are $3$ and $9$.  All other vertices are suppressible. The chains $3$--$7$--$8$--$9$ and $3$--$13$--$12$--$9$ become parallel edges of weight three, while the cycle $3$--$4$--$5$--$6$--$3$ becomes a loop of weight four.  The two chains joining the splices to the branch vertices have weight two.

\begin{figure}[!ht]
\centering
\begin{tikzpicture}[
  x=1.03cm,y=0.72cm,
  vertex/.style={circle,draw=black,fill=white,inner sep=1.3pt,
                 minimum size=5mm,font=\scriptsize},
  suppressible/.style={vertex,fill=black!8,draw=black!55},
  retained/.style={vertex,draw=UCSDBlue,very thick,fill=UCSDBlue!5},
  splice/.style={vertex,draw=ThemeColor,very thick,fill=ThemeColor!8},
  coreedge/.style={very thick,UCSDBlue},
  edgelabel/.style={font=\scriptsize,fill=white,inner sep=1pt},
  every node/.style={font=\scriptsize}
]
 \node at (3.15,2.65) {Support $\gG_\xi$};
 \node[splice]       (l1)  at (0,0) {$1$};
 \node[suppressible] (l2)  at (0.75,0) {$2$};
 \node[retained]     (l3)  at (1.5,0) {$3$};
 \node[suppressible] (l4)  at (0.9,1.05) {$4$};
 \node[suppressible] (l5)  at (1.5,1.9) {$5$};
 \node[suppressible] (l6)  at (2.1,1.05) {$6$};
 \node[suppressible] (l7)  at (2.65,0.75) {$7$};
 \node[suppressible] (l8)  at (3.65,0.75) {$8$};
 \node[retained]     (l9)  at (4.8,0) {$9$};
 \node[suppressible] (l10) at (5.55,0) {$10$};
 \node[splice]       (l11) at (6.3,0) {$11$};
 \node[suppressible] (l12) at (3.65,-0.85) {$12$};
 \node[suppressible] (l13) at (2.65,-0.85) {$13$};
 \draw[thick] (l1)--(l2)--(l3);
 \draw[thick] (l3)--(l4)--(l5)--(l6)--(l3);
 \draw[thick] (l3)--(l7)--(l8)--(l9);
 \draw[thick] (l9)--(l10)--(l11);
 \draw[thick] (l9)--(l12)--(l13)--(l3);

 \draw[very thick,-{Latex[length=2.6mm]},black!55]
   (6.75,0)--(7.45,0);

 \node at (10.1,2.65) {compressed multigraph $\widehat{\gG}_\xi$};
 \node[splice]   (r1)  at (7.9,0) {$1$};
 \node[retained] (r3)  at (9.1,0) {$3$};
 \node[retained] (r9)  at (11.15,0) {$9$};
 \node[splice]   (r11) at (12.35,0) {$11$};
 \draw[coreedge] (r1)--node[edgelabel,above=3pt] {$f_1\,(2)$} (r3);
 \draw[coreedge] (r3) edge[loop above,min distance=8mm]
   node[edgelabel,above=3pt] {$f_2\,(4)$} (r3);
 \draw[coreedge] (r3) to[bend left=35]
   node[edgelabel,above=3pt] {$f_3\,(3)$} (r9);
 \draw[coreedge] (r9)--node[edgelabel,above=3pt] {$f_4\,(2)$} (r11);
 \draw[coreedge] (r3) to[bend right=35]
   node[edgelabel,below=3pt] {$f_5\,(3)$} (r9);
\end{tikzpicture}
\caption{Suppression of degree-two chains.  Red splice vertices and blue branch vertices are retained with their original labels; gray vertices are compressed.  Each edge $f_i$ on the right is labeled by its chain length $k_{f_{i}}$ in parentheses. Here, $\gG_\xi$ has $13$ vertices and $14$ edges, while $\widehat{\gG}_{\xi}$ has $4$ vertices and $5$ edges, both with genus $g(\gG_\xi) = g(\widehat{\gG}_{\xi}) = 2$.}
\label{fig:quotient-graph-process}
\end{figure}

At each suppressible vertex, the nonbacktracking condition requires the walk to continue along a new edge, so $\xi$ breaks into consecutive paths in $\sP_{\{\xi_0,\xi_\ell\}}^{\circ}(\gG_\xi)$. Replacing each such path with a single edge in $\widehat{\gG}_{\xi}$ yields a rooted closed walk $\widehat\xi$ on $\widehat{\gG}_{\xi}$, and we associate to $\xi$ the triple
\begin{align}
 \bigl(\widehat{\gG}_{\xi},\widehat\xi,
       (\widehat k_f)_{f\in\gE(\widehat{\gG}_{\xi})}\bigr).
 \label{eqn:compressed-multigraph-triple}
\end{align}
For instance, in Figure~\ref{fig:quotient-graph-process}, this gives
\begin{align}
 \widehat\xi
 =1\,f_1\,3\,f_2\,3\,f_3\,9\,f_4\,11\,f_4\,9\,f_5\,3\,f_2\,3\,f_1\,1,
\end{align}
where $f_2$ is traversed twice in opposite directions. The word is singleton because each support edge in the chains represented by $f_3$ and $f_5$ is traversed once. The induced walk records both the edge sequence (with orientation) and the vertices. In the rectangular setting, vertex colors and chain-length parities encode the original bipartition, though the compressed multigraph itself may not be bipartite.

The compressed multigraph need not yet be canonical in the sense of Definition~\ref{def:canonical-word}. We therefore relabel it together with its induced walk.

\begin{definition}[Canonical core]
\label{def:canonical-core}
Consider the triple in \eqref{eqn:compressed-multigraph-triple} obtained from a canonical admissible word $\xi$. Define $\tau$ by numbering vertices in order of their first appearance in $\widehat\xi$, so that the $t$-th vertex receives label $t$ in the square setting and label $(j,t)$ in class $j\in \{1, 2\}$ in the bipartite setting.

Apply $\tau$ to $\widehat{\gG}_\xi$ and every vertex visit of $\widehat\xi$, and label the edges in order of their first traversal. The resulting multigraph $\gU$ is the \emph{canonical core} (or simply the \emph{core}) of $\xi$. Let $\zeta$ denote the induced dart\footnote{A \emph{dart} is an edge together with a chosen orientation.  Every edge has two darts; for a loop, these remain distinct even though their initial and terminal vertices coincide.  A \emph{dart walk} records the vertices visited and the dart traversed at each step.  It therefore distinguishes parallel edges and the two directions of traversal of a loop.  These directions specify how to traverse the corresponding compressed chain when reconstructing the original word.} walk on $\gU$, rooted at the visit corresponding to the first splice $\xi_0$.  It is closed, so $\zeta_{\abs{\zeta}}=\zeta_0$. For each $f\in\gE(\gU)$, let $\ReversalClass{w}\in\gE(\widehat{\gG}_\xi)$ be the unique edge relabeled as $f$, and define
\begin{align}
 k_f\coloneqq\widehat k_{\ReversalClass{w}}=\abs{w}.
 \label{eqn:canonical-core-chain-length}
\end{align}
Thus $k_f$ is the number of original support edges represented by $f$, independently of the orientation of the chain.  Let $r$ denote the index of the visit of $\zeta$ corresponding to the second splice $\xi_\ell$. Together, these data define the canonical core map
\begin{align}
 \xi\longmapsto\CoreCode.
 \label{eqn:canonical-core-map}
\end{align}
\end{definition}

The following lemma records the basic properties of the map \eqref{eqn:canonical-core-map}. It is the version of \cite[Lemma~5.7]{benaych2020spectral} applicable to the larger family of admissible words considered here.

\begin{lemma}[Basic properties of the canonical core map]
\label{lem:normalized-core-properties}
Let $\xi\in\sQ^\ell$ in the square setting or $\xi\in\widetilde{\sQ}_i^\ell$ in the rectangular setting.  For $f\in\gE(\gU)$, write $m_f=m_f(\zeta)$ for its traversal multiplicity from \eqref{eqn:path-crossing-count}, counting both orientations. The map \eqref{eqn:canonical-core-map} has the following properties.
\begin{enumerate}[label=\textup{(\roman*)},leftmargin=2.2em,itemsep=0.7em]
 \item \emph{Core structure and genus.}
 The walk $\zeta$ is a rooted closed dart walk that visits every
 vertex and every edge of $\gU$. It is canonical in the sense of Definition~\ref{def:canonical-word}. In particular,
 $\zeta_0=\zeta_{\abs{\zeta}}=1$ in the square setting and
 $\zeta_0=\zeta_{\abs{\zeta}}=(i,1)$ in the rectangular setting.

\item Every vertex in $\gV(\gU)\setminus\{\zeta_0,\zeta_{r}\}$ has degree at least three. Each of the two splice vertices $\zeta_0$ and $\zeta_{r}$ has degree at least one; they are allowed to coincide.

\item \emph{Splice recovery and reconstruction.}
If $f_t$ is the underlying edge of the $t$-th dart traversed by $\zeta$, then the marked visit is the unique index $r$ satisfying
\begin{align}
 \sum_{t=1}^{r}k_{f_t}=\ell.
 \label{eqn:core-mark-recovery}
\end{align}
The code $\CoreCode$ uniquely determines $\xi$. The map
\eqref{eqn:canonical-core-map} is injective on \eqref{eqn:canonical-equivalence-class-sets}.

 \item \emph{Chain lengths and multiplicities.}
 For each $f\in\gE(\gU)$, the chain length $k_f$ is a positive integer,
 and every support edge in the chain represented by $f$ has traversal
 multiplicity $m_f\geq1$.
 In the rectangular setting, the endpoints of $f$ have opposite colors
 if and only if $k_f$ is odd.

 \item Expanding every weighted core edge into its chain gives
 \begin{align}
 \begin{aligned}
  \abs{\gE(\gG_{\xi})}
  &=\sum_{f\in\gE(\gU)}k_f,
  &
  \abs{\gV(\gG_{\xi})}
  &=\abs{\gV(\gU)}
    +\sum_{f\in\gE(\gU)}(k_f-1),
  \\
  2\ell
  &=\sum_{f\in\gE(\gU)}k_fm_f,
  &
  \abs{\zeta}
  &=\sum_{f\in\gE(\gU)}m_f
 \end{aligned}
 \label{eqn:core-identities}
 \end{align}
 In particular, suppressing the internal vertices gives
 \begin{align}
  \abs{\gE(\gU)}
  &=\abs{\gE(\gG_\xi)}
    -\abs{\gS_{\{\xi_0,\xi_\ell\}}(\gG_\xi)}.
  \label{eqn:core-support-edge-count}
 \end{align}

 \item Suppression and canonical relabeling preserve the genus defined in \eqref{eqn:genus-definition}:
 \begin{align}
  g(\gG_{\xi})
  =g(\widehat{\gG}_{\xi})
  =g(\gU) \geq 0.
  \label{eqn:core-genus-preservation}
 \end{align}
 The genus is nonnegative since all three graphs are connected. 
\end{enumerate}
\end{lemma}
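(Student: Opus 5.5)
I would prove the six properties directly from the construction of $\widehat{\gG}_\xi$, in the order (ii), (iv), (v), (vi), (i), (iii). The order matters because reconstruction in (iii) uses the identities collected in (iv) and (v).

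\emph{Structural facts (ii) and (iv).} Every vertex of $\gV(\widehat{\gG}_\xi)$ is either a splice or has $\deg_{\gG_\xi}\neq2$. A vertex of degree one in $\gG_\xi$ that is not a splice is impossible: the walk would have to enter and immediately leave along the same edge, which is a backtrack. By \eqref{eqn:admissible-word}, backtracks are forbidden except at the visits of index $0$ and $\ell$. Hence every nonsplice vertex retained in the core has degree at least three. Suppression does not change degrees of retained vertices, because each chain class contributes one edge-end at each of its endpoints, exactly as its first and last support edges did. Splices have degree at least one because $\gG_\xi$ is connected and has at least one edge. For (iv), I would show that a nonbacktracking visit through a suppressible vertex must leave along the other incident edge. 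Consequently, whenever the walk enters a chain, it traverses the whole chain consecutively. This gives $k_f\geq1$ and shows that every support edge in the chain has the same multiplicity $m_f$. The parity claim follows because $\gK_{n,m}$ alternates colors along each support edge.

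\emph{Counting identities and genus, (v)--(vi), and canonicity, (i).} Summing $k_f$ over chain classes counts each support edge exactly once, since the chains partition $\gE(\gG_\xi)$. Each chain also carries $k_f-1$ interior suppressible vertices. Together these give the two support identities. The identity $2\ell=\sum_f k_fm_f$ comes from the uniform multiplicity along chains. The identity $\abs{\zeta}=\sum_f m_f$ holds because each chain traversal is one dart step. Subtracting the vertex identity from the edge identity gives \eqref{eqn:core-support-edge-count} and $g(\gG_\xi)=g(\widehat{\gG}_\xi)$. Relabeling is a graph isomorphism, so $g$ is preserved in passing to $\gU$. Nonnegativity of the genus follows from connectivity, which suppression preserves. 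For (i), $\zeta$ is closed because $\xi_{2\ell}=\xi_0$. It covers $\gU$ because $\xi$ covers $\gG_\xi$. It is canonical by construction of $\tau$, with root label $1$ or $(i,1)$.

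\emph{Reconstruction (iii).} The $\ell$-th step of $\xi$ ends at the splice $\xi_\ell$, which is a retained vertex, so it ends a chain traversal. The prefix length after $r$ darts is $\sum_{t\leq r}k_{f_t}$, and these partial sums are strictly increasing. Hence exactly one $r$ satisfies \eqref{eqn:core-mark-recovery}. To reconstruct $\xi$, I would replace each dart of $\zeta$ by a path of $k_f$ steps. Interior vertices receive fresh labels on the first traversal of $f$; on later traversals the same interior vertices are reused, in reversed order when the dart is reversed. The result is then relabeled by first appearance, which recovers the canonical $\xi$ uniquely. The main obstacle is this last step. Dart orientation must correctly encode the direction of traversal, including for loops, where both darts share an endpoint. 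In addition, $\zeta$ alone forgets where the splice sits; $r$ restores it, and the splice determines which reversal is permitted. Once this encoding is checked, injectivity on \eqref{eqn:canonical-equivalence-class-sets} follows immediately.
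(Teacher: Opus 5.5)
Your proposal is correct and follows essentially the same argument as the paper: nonbacktracking forces every visit to a suppressible vertex to continue along the chain, which rules out nonsplice leaves and gives uniform multiplicity on each chain; the expansion identities and genus preservation come from counting edges, interior vertices and steps per chain; and reconstruction subdivides each core edge oriented by its first dart, with $r$ recovered from the strictly increasing partial sums of the $k_{f_t}$. Your reordering, which proves (iv) before (i) and (iii), only makes explicit a dependence that the paper's proof of (i) uses implicitly. In the rectangular case, the reconstruction should also say that interior vertices take alternating colors along each chain and are labeled separately within each class, as the paper does; this is already implicit in your parity remark in (iv).
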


The following estimate for the canonical core graph will be used throughout the enumeration.
\begin{lemma}[Canonical core map]
\label{lem:reduced-graph-code}
The following holds for $\CoreCode$ arising from \eqref{eqn:canonical-core-map}.
\begin{enumerate}[label=\textup{(\roman*)},leftmargin=2.2em,itemsep=0.7em]
 \item \emph{Core size.} Every core satisfies
  \begin{align}
   \abs{\gE(\gU)}
   &\leq3g(\gU)+1,
   &\abs{\gV(\gU)}
   &\leq2g(\gU)+2.
   \label{eqn:core-size}
  \end{align}
 \item \emph{Core-walk data.} For fixed $\abs{\gE(\gU)}$,
 $\abs{\gV(\gU)}$, and $\abs{\zeta}$, the number of possible triples
 $(\gU,\zeta,r)$ is at most
  \begin{align}
   C\ell\abs{\gV(\gU)}
   \bigl(C\abs{\gE(\gU)}\bigr)^{\abs{\zeta}}
   \bigl(C\abs{\gV(\gU)}\bigr)^{\abs{\gE(\gU)}},
   \label{eqn:core-walk-count}
  \end{align}
  where $C\geq1$ is a universal constant.
 \item \emph{Chain lengths.} For each fixed triple $(\gU,\zeta,r)$,
 the number of positive integer families $(k_f)_{f\in\gE(\gU)}$
 completing it to a canonical core code $\CoreCode$ is at most
  \begin{align}
   \binom{2\ell}{\abs{\gE(\gU)}}
   \leq
   \left(\frac{6\ell}{\abs{\gE(\gU)}}\right)^{\abs{\gE(\gU)}}.
   \label{eqn:chain-length-count}
  \end{align}
\end{enumerate}
\end{lemma}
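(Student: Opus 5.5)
We prove the three parts in order, using only Lemma~\ref{lem:normalized-core-properties}.

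\emph{Core size (i).} By Lemma~\ref{lem:normalized-core-properties}(ii), every vertex of $\gU$ other than the splices $\zeta_0,\zeta_r$ has degree at least three, and the splices have degree at least one. The handshake identity then gives
\[
2\abs{\gE(\gU)}=\sum_v\deg(v)\geq3\bigl(\abs{\gV(\gU)}-2\bigr)+2=3\abs{\gV(\gU)}-4.
\]
If the splices coincide, the same bound holds with room to spare. Substituting $\abs{\gV(\gU)}=\abs{\gE(\gU)}+1-g$ from the genus definition \eqref{eqn:genus-definition}, together with \eqref{eqn:core-genus-preservation}, yields $2\abs{\gE}\geq3\abs{\gE}+3-3g-4$, that is, $\abs{\gE(\gU)}\leq3g+1$. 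Then $\abs{\gV(\gU)}=\abs{\gE(\gU)}+1-g\leq2g+2$.

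\emph{Core-walk data (ii).} We encode $(\gU,\zeta,r)$ by reading the canonical dart walk step by step. By Lemma~\ref{lem:normalized-core-properties}(i), $\zeta$ is canonical and visits every edge. At each step, the walk either (a) traverses a new edge or (b) traverses an already-seen edge. In case (a), the step is determined by whether the endpoint is a new vertex (whose label is forced by first appearance) or an old vertex (at most $\abs{\gV(\gU)}$ choices). In case (b), the step is determined by choosing the edge and its orientation (at most $2\abs{\gE(\gU)}$ choices).

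Case (a) occurs exactly $\abs{\gE(\gU)}$ times. Choosing which of the $\abs{\zeta}$ steps are new-edge steps costs at most $2^{\abs{\zeta}}$. New-edge steps contribute at most $(C\abs{\gV(\gU)})^{\abs{\gE(\gU)}}$ in total, and old-edge steps contribute at most $(2\abs{\gE(\gU)})^{\abs{\zeta}}$. The vertex colours in the bipartite case are forced by the canonical labels $(j,t)$ once a class bit is recorded per new vertex, which is absorbed into the constant $C$. This accounts for all steps of $\zeta$. Since the walk determines $\gU$, the graph costs nothing extra.

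Finally, $r$ is a visit index; bounding its choices by $C\ell\abs{\gV(\gU)}$ is generous, and indeed \eqref{eqn:core-mark-recovery} even forces $r$ once the $k_f$ are known. The main care is needed for loops and parallel edges, which is why we use darts; this only affects constants.

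\emph{Chain lengths (iii).} By \eqref{eqn:core-identities}, $\sum_f k_fm_f=2\ell$ with $m_f\geq1$, so $\sum_f k_f\leq2\ell$ with each $k_f\geq1$. The number of positive integer vectors of length $E=\abs{\gE(\gU)}$ with sum at most $2\ell$ equals the number of $E$-subsets of $[2\ell]$, via partial sums. This gives the bound $\binom{2\ell}{E}\leq(2e\ell/E)^E\leq(6\ell/E)^E$.
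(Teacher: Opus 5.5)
Your proposal is correct and follows the paper's proof essentially step by step. Part (i) uses the same handshake bound $2\abs{\gE(\gU)}\geq3\abs{\gV(\gU)}-4$ together with the genus identity, part (ii) encodes the dart walk step by step and charges endpoint choices only at first traversals, and part (iii) uses the same stars-and-bars count $\binom{2\ell}{\abs{\gE(\gU)}}\leq(2\mathrm{e}\ell/\abs{\gE(\gU)})^{\abs{\gE(\gU)}}$. The only difference is bookkeeping in (ii): the paper avoids your $2^{\abs{\zeta}}$ selector for new-edge steps by choosing, at every step, an edge number (numbered in order of first traversal) and an orientation, and both give the stated bound after adjusting $C$.
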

According to Lemmas~\ref{lem:normalized-core-properties} and \ref{lem:reduced-graph-code}, any class of genus $g(\gG_{\xi})$ can be described by a canonical core consisting of $O(g(\gG_{\xi})+1)$ edges and vertices. This representation also involves a rooted dart walk with a specified second-splice visit, as well as one positive chain length assigned to each core edge.

\subsection{Step 4: Plural contributions}
\label{subsec:trace-step4}
Assume the plural line of \ref{eqn:hermitian-graph-condition} or
\ref{eqn:bipartite-graph-condition} in the respective setting, with
$d\geq1$.  To bound the plural sums in \eqref{eqn:square-T0-T1} and
\eqref{eqn:rectangular-T0-T1}, we count the canonical classes at each
genus, convert the support weights into core weights, then sum the
class moments.

\paragraph{Classes to be counted.}
Each term defining $\SquarePluralContribution$ represents one equivalence
class $\WordClass{\xi}$.  Assigning distinct labels in $[N]$ to the
vertices of $\gG_\xi$ gives
$\abs{\WordClass{\xi}}=
\prod_{t=0}^{\abs{\gV(\gG_\xi)}-1}(N-t)$.
These labeled words and their coefficients $c(\eta)$ are already summed
in $\GraphMoment_\ell(\WordClass{\xi})$, allowing their moments to depend
on the labels.  The bipartite class moment similarly includes all
injective labelings within each vertex class.  We therefore count the
canonical representatives in the outer sums, with one representative
for each equivalence class.

\paragraph{Counting classes at fixed genus.}
The genus is nonnegative due to connectivity of the word graph. For a plural word, there are at most $\ell$ support edges since every support edge is traversed at least twice. Then
\begin{align}
 0&\leq g(\gG_\xi)\leq\abs{\gE(\gG_\xi)}\leq\ell,
 &
 2\abs{\gE(\gU)}&\leq\abs{\zeta}
 =\sum_fm_f\leq\sum_fk_fm_f=2\ell,
 \label{eqn:roadmap-plural-genus-range}
\end{align}
where the last inequality holds due to $m_f\geq2$ and $k_f\geq1$. Since $g(\gU)=g(\gG_\xi)$, the core edge and vertex counts of every class of genus $g$ belong to
\begin{align}
 \sA_g\coloneqq
 \{(a,v)\in\N^2:1\leq a\leq3g+1,\quad
                  v=a-g+1,\quad1\leq v\leq2g+2\},
 \label{eqn:core-size-index-set}
\end{align}
where the inequalities follow from \eqref{eqn:core-size}.

Fix positive integers $a,v,t$, denoting the core edge count, vertex
count, and walk length, respectively. Let $\sC_\ell(a,v,t)$ be the set of complete codes $\CoreCode$ arising from canonical admissible words
$\xi\in\sQ^\ell$ with
$\abs{\gE(\gU)}=a$, $\abs{\gV(\gU)}=v$, and $\abs{\zeta}=t$.
By injectivity of \eqref{eqn:canonical-core-map},
$\abs{\sC_\ell(a,v,t)}$ equals the number of canonical admissible
classes with these three sizes.  In particular, it bounds the number
of plural classes with these sizes.

To count $\sC_\ell(a,v,t)$, first choose the triple $(\gU,\zeta,r)$.
Equation~\eqref{eqn:core-walk-count} gives at most
$C\ell v(Ca)^t(Cv)^a$ choices.  For each fixed triple, the
multiplicities $m_f$ are fixed, and every chain-length completion
satisfies
$\sum_fk_f\leq\sum_fk_fm_f=2\ell$, since $m_f\geq1$.
There are $\binom{2\ell}{a}$ positive integer families with
$\sum_fk_f\leq2\ell$, by the stars-and-bars count.  Dropping the
remaining admissibility constraints therefore gives the bound
\eqref{eqn:chain-length-count} for each triple.  Summing the number
of completions over all triples yields
\begin{align}
 \abs{\sC_\ell(a,v,t)}
 &\leq C\ell v(Ca)^t(Cv)^a\binom{2\ell}{a}
 \leq C\ell v(Ca)^t(Cv)^a
       \left(\frac{6\ell}{a}\right)^a.
 \label{eqn:trace-overview-code-count}
\end{align}
The same argument applies to the complete codes arising from $\widetilde{\sQ}_i^\ell$ at each fixed first-splice color $i$.  This count includes both plural and singleton classes; restricting to either family only decreases it.

\paragraph{From class counts to core weights.}
The plural moment bounds assign the support weight
$d^{\abs{\gE(\gG_\xi)}-\ell}$ to each class.
Since $m_f\geq2$, $k_f\geq1$, and $d\geq1$,
\eqref{eqn:core-identities} gives
\begin{align}
 d^{\abs{\gE(\gG_\xi)}-\ell}
 =d^{-\frac12\sum_f k_f(m_f-2)}
 \leq d^{-\frac12\sum_f(m_f-2)}
 =d^{\abs{\gE(\gU)}-\abs{\zeta}/2}.
 \label{eqn:roadmap-plural-chain-decay}
\end{align}
For fixed $a,v,t$, this upper bound is $d^{a-t/2}$, independently
of the chain lengths.  Hence multiplying
\eqref{eqn:trace-overview-code-count} by $d^{a-t/2}$ bounds the sum
of the support weights over plural classes with these sizes.

We now sum over the core sizes and walk lengths allowed at genus $g$.
Choose $C_0\geq1$ large enough for the bounds in
Lemma~\ref{lem:reduced-graph-code}. For $s\geq1$, define the weighted
count at genus $g$ by
\begin{align}
 \Theta_\ell^{\rm p}(g;s)
 \coloneqq
 \sum_{(a,v)\in\sA_g}\sum_{t=1}^{2\ell}
 \ell v(C_0a)^t(C_0v)^a
 \left(\frac{6\ell}{a}\right)^a s^{a-t/2},
 \label{eqn:genus-weight-definition}
\end{align}
where $t$ enumerates the walk length, with its actual range $2a\leq t\leq2\ell$ given by \eqref{eqn:roadmap-plural-genus-range}; including the nonnegative terms with $t<2a$ only enlarges the count. By \eqref{eqn:trace-overview-code-count} and \eqref{eqn:roadmap-plural-chain-decay},
\begin{align}
 \sum_{\substack{\xi\in\SquarePluralClasses{\ell}\\
                  g(\gG_\xi)=g}}
 d^{\abs{\gE(\gG_\xi)}-\ell}
 \leq C\Theta_\ell^{\rm p}(g;d),
 \qquad 0\leq g\leq\ell.
 \label{eqn:plural-weighted-class-count}
\end{align}
At $d=1$, the left-hand side counts the plural equivalence classes of genus $g$. Summing over $0\leq g\leq\ell$ therefore counts the terms in the plural sum in \eqref{eqn:square-T0-T1}. The same bound holds for $\RectangularPluralClasses{i}{\ell}$ at each fixed first-splice color $i$.

\paragraph{Summing the class moments.}
The range \eqref{eqn:roadmap-plural-genus-range} limits the genus sum to
$0\leq g\leq\ell$.  Applying the triangle inequality, the plural line of
\ref{eqn:hermitian-graph-condition}, and
\eqref{eqn:plural-weighted-class-count} with $s=d$ gives
\begin{subequations}
\begin{align}
 |\SquarePluralContribution|
 &\leq\sum_{\xi\in\SquarePluralClasses{\ell}}
       \abs{\GraphMoment_\ell(\WordClass{\xi})}
\leq K_\ell N^2
   \sum_{g=0}^{\ell}\left(\frac{\chi_\ell}N\right)^g
   \sum_{\substack{\xi\in\SquarePluralClasses{\ell}\\
                    g(\gG_\xi)=g}}
       d^{\abs{\gE(\gG_\xi)}-\ell}
 \nonumber\\
 &\leq CK_\ell N^2
   \sum_{g=0}^{\ell}\left(\frac{\chi_\ell}N\right)^g
       \Theta_\ell^{\rm p}(g;d).
 \label{eqn:roadmap-hermitian-plural-core-sum}
\end{align}
Applying the same argument to each first-splice color and summing over
$i\in\{1,2\}$ gives
\begin{align}
 |\RectangularPluralContribution|
 &\leq C\widetilde K_\ell nm
   \sum_{g=0}^{\ell}\left(\frac{\widetilde\chi_\ell}N\right)^g
       \Theta_\ell^{\rm p}(g;d).
 \label{eqn:roadmap-bipartite-plural-core-sum}
\end{align}
\end{subequations}
The factor two for the possible first-splice colors is absorbed in $C$.
\paragraph{Bounding the weighted counts.}
To estimate $\Theta_\ell^{\rm p}(g;s)$, collect the walk-length sum in
\begin{align}
 \Psi_\ell(x)\coloneqq\sum_{t=1}^{2\ell}x^t
 =\begin{cases}
  x(1-x^{2\ell})/(1-x),&x\neq1,\\
  2\ell,&x=1,
 \end{cases}
 \qquad x\geq0.
 \label{eqn:plural-finite-geometric-factor}
\end{align}
The following lemma then bounds the remaining sum over core sizes.

\begin{lemma}[Plural weight as a function of genus]
\label{lem:plural-genus-weight}
For $s\geq1$ and $\ell\geq3$,
\begin{align}
 \Theta_\ell^{\rm p}(0;s)
 &\leq C\ell^3s\,\Psi_\ell(C/\sqrt s),
 \label{eqn:roadmap-plural-genus-zero}
 \\
 \Theta_\ell^{\rm p}(g;s)
 &\leq C\ell^3s(C\ell^3s^3)^g
      \Psi_\ell\!\left(\frac{C(g+1)}{\sqrt s}\right),
 \qquad g\geq1.
 \label{eqn:roadmap-plural-fixed-genus-bound}
\end{align}
\end{lemma}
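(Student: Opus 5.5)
The plan is to bound $\Theta_\ell^{\rm p}(g;s)$ term by term over the index set $\sA_g$ in \eqref{eqn:core-size-index-set}, then collect the walk-length sum into $\Psi_\ell$. First, for fixed $(a,v)\in\sA_g$, factor the summand as
\[
 \ell v(C_0v)^a\Bigl(\frac{6\ell}{a}\Bigr)^a s^{a}\sum_{t=1}^{2\ell}\Bigl(\frac{C_0a}{\sqrt s}\Bigr)^t
 =\ell v(C_0v)^a\Bigl(\frac{6\ell}{a}\Bigr)^a s^{a}\,\Psi_\ell\!\Bigl(\frac{C_0a}{\sqrt s}\Bigr).
\]
Because $\Psi_\ell$ is nondecreasing in $x\geq0$ and $a\leq3g+1\leq 4(g+1)$, we may replace the argument by $C(g+1)/\sqrt s$ uniformly in $(a,v)$. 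This step is where the $\Psi_\ell$ factor in both \eqref{eqn:roadmap-plural-genus-zero} and \eqref{eqn:roadmap-plural-fixed-genus-bound} comes from.

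Second, the prefactor. Since $v=a-g+1\leq 2g+2$ and $a\leq 3g+1$, we have $(C_0v)^a(6/a)^a\leq (C_0(2g+2)\cdot 6/a)^a$. For $a\geq 1$ we would bound $v/a\leq (a+1)/a\leq 2$ (note $v\leq a+1$ because $g\geq0$), which gives $(C_0v\cdot6/a)^a\leq (12C_0)^a\leq C^{g+1}$. What remains is $\ell v\,\ell^a s^a$, with $v\leq 2g+2$ and $a\leq 3g+1$. Because $s,\ell\geq1$, this is at most $\ell(2g+2)\,\ell^{3g+1}s^{3g+1}=(2g+2)\ell^2 s(\ell^3 s^3)^g$. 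The factor $(2g+2)C^{g+1}$ gets absorbed into $C\cdot C^g$, leaving $C\ell^2 s(C\ell^3s^3)^g$, which is even better than the required $\ell^3$. The number of pairs $(a,v)$ in $\sA_g$ is at most $3g+1$, since $v$ is determined by $a$. That multiplicity is again absorbed into $C^g$ for $g\geq1$, or handled by the spare factor of $\ell$ if we prefer to keep $\ell^3$ as in the statement.

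Third, the genus-zero case. Here $\sA_0$ forces $a\leq1$ and $v=a+1$, so $(a,v)=(1,2)$ is the only pair. The sum then reduces to $2\ell(2C_0)(6\ell)s\Psi_\ell(C_0/\sqrt s)\leq C\ell^2 s\Psi_\ell(C/\sqrt s)$, which gives \eqref{eqn:roadmap-plural-genus-zero}.

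The only delicate point is the bookkeeping that turns $(C_0v)^a(6\ell/a)^a$ into something geometric in $g$ without an $a^a$ or $g^g$ loss. This works because the $a^{-a}$ from the chain-length count cancels the $v^a$ from the core count, using $v\leq a+1$. If that cancellation failed, we would instead pick up a factor $(g+1)^{g}$, which would still be summable against $(\chi_\ell/N)^g$ but would not match the stated form. So I expect this step to be the main thing to check carefully. Everything else is monotonicity of $\Psi_\ell$ and using $s,\ell\geq1$ to raise exponents up to their maxima.
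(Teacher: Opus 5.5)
Your proposal is correct and follows essentially the same argument as the paper. You cancel $v^a$ against the $a^{-a}$ from the chain-length count, raise all $a$-exponents to $3g+1$ using $s,\ell\geq1$, and bound the walk base by $C(g+1)/\sqrt s$ via monotonicity of $\Psi_\ell$. You then absorb $|\sA_g|$ and $v\leq2g+2$ into $C^g$ and obtain $\ell^2$ in place of $\ell^3$. The only cosmetic difference is that you use $v\leq a+1$, which holds for all $g\geq0$, where the paper uses $v\leq a$ for $g\geq1$.
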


Finally, applying Lemma~\ref{lem:plural-genus-weight} to
\eqref{eqn:roadmap-hermitian-plural-core-sum} and
\eqref{eqn:roadmap-bipartite-plural-core-sum}, and taking $C_{\rm enum}$
large enough to absorb the constants raised to $g$, yields
\begin{subequations}
\begin{align}
 |\SquarePluralContribution|
 &\leq CK_\ell N^2\ell^3d
 \left[
  \Psi_\ell(C/\sqrt d)
  +\sum_{g=1}^{\ell}
       \left(\frac{C_{\rm enum}\chi_\ell\ell^3d^3}{N}\right)^g
       \Psi_\ell\!\left(\frac{C(g+1)}{\sqrt d}\right)
 \right],
 \label{eqn:roadmap-hermitian-plural-genus-sum}
 \\
 |\RectangularPluralContribution|
 &\leq C\widetilde K_\ell nm\ell^3d
 \left[
  \Psi_\ell(C/\sqrt{d})
  +\sum_{g=1}^{\ell}
       \left(\frac{C_{\rm enum}\widetilde\chi_\ell\ell^3d^3}{N}\right)^g
       \Psi_\ell\!\left(\frac{C(g+1)}{\sqrt{d}}\right)
 \right].
 \label{eqn:roadmap-bipartite-plural-genus-sum}
\end{align}
\end{subequations}

\subsection{Step 5: Singleton contributions}
\label{subsec:trace-step5}
Consider the singleton line from \ref{eqn:hermitian-graph-condition} or \ref{eqn:bipartite-graph-condition}, as appropriate, with $\ell\geq3$ and $d\geq1$. To estimate the singleton sums in \eqref{eqn:square-T0-T1} and \eqref{eqn:rectangular-T0-T1}, we proceed by counting the canonical classes at each genus, translating support weights to core weights, and then summing the corresponding class moments. The chain penalty factors present in the singleton moment bounds ensure that the influence of long singleton chains is properly controlled.

\paragraph{Classes to be counted.}
Each term defining $\SquareSingletonContribution$ represents one
equivalence class $\WordClass{\xi}$ with
$\xi\in\SquareSingletonClasses{\ell}$.  The class moment
$\GraphMoment_\ell(\WordClass{\xi})$ already sums all injective
labelings and their coefficients $c(\eta)$.  The bipartite class
moment similarly includes the injective labelings within each vertex
class.  We therefore count the canonical representatives in the
outer sums, with one representative for each singleton equivalence
class.
\paragraph{Counting classes at fixed genus.}
Singleton classes have positive genus, as recorded in the following Lemma.
\begin{lemma}[Positive genus of a singleton class]
\label{lem:singleton-positive-genus}
Every singleton class satisfies $g(\gG_\xi)\geq1$.
\end{lemma}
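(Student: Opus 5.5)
We argue by contraposition: we show that a genus-zero admissible word is plural, so every singleton word has positive genus. Suppose $g(\gG_\xi)=0$. Since $\gG_\xi$ is connected, it is then a tree. The word $\xi$ is a closed walk $\xi_0,\ldots,\xi_{2\ell}$ on this tree. By \eqref{eqn:admissible-word}, immediate reversals $\xi_{t-1}=\xi_{t+1}$ are forbidden except at $t=\ell$, and the closing index $t=2\ell$ (equivalently $0$) is not constrained either, since only $t\in[2\ell-1]\setminus\{\ell\}$ is restricted. So it suffices to show that every edge of a tree-supported walk of this form is traversed exactly twice, once in each direction. That gives $m_e(\xi)=2$ for all $e$, so $\xi$ is plural.

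To prove this, cut $\xi$ at the two splice visits. The segments $\xi^1=(\xi_0,\ldots,\xi_\ell)$ and $\xi^2=(\xi_\ell,\ldots,\xi_{2\ell})$ are each nonbacktracking walks of length $\ell$ in $\gG_\xi$. A nonbacktracking walk on a tree never revisits an edge, and in fact never revisits a vertex. The reason is that a return to an earlier vertex would close a cycle in the union of traversed edges, unless the walk reverses along the edge it just used, which is excluded. So each segment is the unique geodesic path between its endpoints $\xi_0$ and $\xi_\ell$ in the tree. Both segments join the same pair of splice vertices, $\xi^1$ from $\xi_0$ to $\xi_\ell$ and $\xi^2$ from $\xi_\ell$ back to $\xi_0=\xi_{2\ell}$. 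By uniqueness of tree paths, $\xi^2$ is exactly the reversal of $\xi^1$.

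Consequently the edge set of $\gG_\xi$ is the set of edges on this single path, and each edge is traversed once by $\xi^1$ and once by $\xi^2$. In particular $m_e(\xi)=2\geq2$ for every $e\in\gE(\gG_\xi)$, so $\xi$ is plural. The consistency constraints are automatically met: $\xi_{\ell-1}=\xi_{\ell+1}$ is the allowed reversal at the second splice, and at the first splice the walk closes via $\xi_{2\ell-1}=\xi_1$, which is also allowed. Because plurality and genus are invariant under relabeling, this holds for the whole class, so every class in $\SquareSingletonClasses{\ell}$ or $\RectangularSingletonClasses{i}{\ell}$ has $g(\gG_\xi)\geq1$. The bipartite case is identical, since the argument uses only the tree structure.

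The only delicate point is the claim that a nonbacktracking walk on a tree is a geodesic. We would prove it by induction on the walk length via distance from the start. Suppose the distance from $\xi_0$ stopped increasing at some step $t$. Then the walk would step back toward $\xi_0$, and in a tree the only neighbour of $\xi_t$ closer to $\xi_0$ is $\xi_{t-1}$, so this step would be a forbidden reversal. Note that $\ell\geq3$ guarantees that $\xi^1$ and $\xi^2$ have positive length, so no degenerate cases arise.
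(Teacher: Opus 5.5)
Your proof is correct, but it reaches the conclusion by a different route from the paper's.

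The paper argues structurally from degrees. A degree-one nonsplice vertex would force an immediate reversal, so every leaf of the tree support is a splice vertex. The leaf identity $\#\{\textnormal{leaves}\}=2+\sum_{\deg(u)\geq3}(\deg(u)-2)$ then shows that the support is the path joining two distinct splices, and each half traverses that path once. The case of coincident splices is handled separately: one half would then be a closed nonbacktracking walk of positive length in a tree, which is impossible.

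You instead prove a fact about the walk itself: a nonbacktracking walk on a tree is a geodesic. Distance from the start goes up by exactly one at each step, because adjacent tree vertices differ in distance by exactly one and the unique closer neighbour of $\xi_t$ is $\xi_{t-1}$. You then apply this to both halves.

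What your route buys:
\begin{itemize}
\item It gives $\mathrm{dist}(\xi_0,\xi_\ell)=\ell>0$ directly, so the coincident-splice case is vacuous and needs no separate treatment.
\item Uniqueness of tree paths then identifies the second half as the reversal of the first. The paper's step ``each of the two nonbacktracking halves traverses this path once'' tacitly relies on this same fact.
\end{itemize}
What the paper's route buys: its leaf-counting argument needs only the local degree constraint, and it matches the degree-counting style used elsewhere (core size, weighted label sums).

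One imprecision. Your first justification, that a return to an earlier vertex ``would close a cycle unless the walk reverses along the edge it just used,'' is not accurate as stated. In a tree, the forced reversal occurs at the farthest point of the excursion, not at the moment of return. The distance induction in your final paragraph is the rigorous version and is all you need.
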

Every support edge and core edge is traversed at least once.  Together
with $k_f\geq1$, this gives
\begin{align}
 1 \leq g(\gG_\xi)&\leq\abs{\gE(\gG_\xi)}\leq2\ell,
 \qquad
 &\abs{\gE(\gU)}&\leq\abs{\zeta}
 =\sum_fm_f\leq\sum_fk_fm_f=2\ell.
 \label{eqn:singleton-genus-range}
\end{align}
 As $g(\gU)=g(\gG_\xi)$, the core's numbers of edges and vertices are elements of $\sA_g$ as defined in \eqref{eqn:core-size-index-set}.

Let $\sC_\ell^{\rm s}(a,v,t)$ be the subset of $\sC_\ell(a,v,t)$ for singleton classes $\xi\in\SquareSingletonClasses{\ell}$ with $|\gE(\gU)|=a$, $|\gV(\gU)|=v$, and $|\zeta|=t$. By injectivity of \eqref{eqn:canonical-core-map}, $|\sC_\ell^{\rm s}(a,v,t)|$ counts such classes.
To bound this, first choose the triple $(\gU, \zeta, r)$ and then assign chain lengths. By \eqref{eqn:core-walk-count}, there are at most $C\ell v(Ca)^t(Cv)^a$ possible triples. For each, valid chain lengthings number at most $\binom{2\ell}{a}\leq(6\ell/a)^a$ (see \eqref{eqn:chain-length-count}). Thus,
\begin{align}
 \abs{\sC_\ell^{\rm s}(a,v,t)}
 &\leq\abs{\sC_\ell(a,v,t)}
 \leq C\ell v(Ca)^t(Cv)^a
       \left(\frac{6\ell}{a}\right)^a.
 \label{eqn:singleton-code-count}
\end{align}
This is the complete-code bound \eqref{eqn:trace-overview-code-count}
restricted to singleton classes.  The same argument applies to the
codes arising from $\RectangularSingletonClasses{i}{\ell}$ at each
fixed first-splice color $i$.

\paragraph{From class counts to core weights.}
For a canonical singleton word $\xi$ with core code $\CoreCode$, let
\begin{align}
 \setS(\xi)&\coloneqq\{f\in\gE(\gU):m_f=1\},
 &a_{\setS}&\coloneqq|\setS(\xi)|.
\end{align}
The chain-multiplicity property in
Lemma~\ref{lem:normalized-core-properties} identifies the original-word
statistics from Definition~\ref{def:singleton-chain-data} as
\begin{align}
 e_1(\xi)&=\sum_{f\in\setS(\xi)}k_f,
 &\varsigma(\xi)&=
   \sum_{f\in\setS(\xi)}\floor{\frac{k_f-1}{2}}.
 \label{eqn:singleton-core-identities}
\end{align}
Since $m_{f}\geq 2$ for $f \notin \setS(\xi)$, the support weight satisfies
\begin{align}
 d^{\abs{\gE(\gG_\xi)}-\ell}
 &=d^{e_1(\xi)/2}
   \prod_{f\notin\setS(\xi)}d^{k_f(2-m_f)/2}
 \leq d^{e_1(\xi)/2}.
 \label{eqn:singleton-chain-charge-split}
\end{align}
Thus long singleton chains can lead to the weight growing even at fixed genus. For example, consider a cycle of length $2\ell-2$ starting at $\xi_0$, with $u$ the opposite vertex, as shown in Figure~\ref{fig:singleton-cycle-compression}. Attach a new vertex $w=\xi_\ell$ via the pendant\footnote{It is incident to the leaf $w$.} edge $\{u,w\}$. Each arc from $\xi_0$ to $u$ has length $\ell-1$, so traversing an arc and then $\{u,w\}$ yields a nonbacktracking path of length $\ell$ between the splices. Cycle edges are traversed once, while $\{u,w\}$ is traversed twice. Thus,
\begin{align}
 g(\gG_\xi)=1,\quad
 \abs{\gE(\gG_\xi)}=2\ell-1,\quad e_1(\xi)=2\ell-2,\quad
 2^{e_1(\xi)}d^{\abs{\gE(\gG_\xi)}-\ell}=(4d)^{\ell-1}.
 \label{eqn:singleton-extremal-example}
\end{align}
After compression, the two cycle arcs become parallel core edges
between $\xi_0$ and $u$, and the pendant edge remains the third core
edge.  The two arc edges are each traversed once and the pendant edge
twice, so $\abs{\zeta}=1+1+2=4$.  Thus the unpenalized weight
$d^{\ell-1}$ exceeds the plural core weight $d$ when $d>1$.

\begin{figure}[h]
\centering
\begin{tikzpicture}[
  x=1.03cm,y=0.72cm,
  vertex/.style={circle,draw=black,fill=white,inner sep=1.3pt,
                 minimum size=5mm,font=\scriptsize},
  suppressible/.style={vertex,fill=black!8,draw=black!55},
  retained/.style={vertex,draw=UCSDBlue,very thick,fill=UCSDBlue!5},
  splice/.style={vertex,draw=ThemeColor,very thick,fill=ThemeColor!8},
  coreedge/.style={very thick,UCSDBlue},
  edgelabel/.style={font=\scriptsize,fill=white,inner sep=1pt},
  every node/.style={font=\scriptsize}
]
 \node at (2.9,2) {Support $\gG_\xi$};
 \node[splice]       (l0) at (0,0) {$\xi_0$};
 \node[suppressible] (la) at (0.9,1.2) {};
 \node[suppressible] (lb) at (3.6,1.2) {};
 \node[suppressible] (lc) at (0.9,-1.2) {};
 \node[suppressible] (ld) at (3.6,-1.2) {};
 \node[inner sep=2pt] (lt) at (2.25,1.2) {$\cdots$};
 \node[inner sep=2pt] (ls) at (2.25,-1.2) {$\cdots$};
 \node[retained]     (lu) at (4.5,0) {$u$};
 \node[splice]       (lw) at (5.8,0) {$w$};
 \draw[thick] (l0)--(la)--(lt);
 \draw[thick] (lt)--(lb)--(lu);
 \draw[thick] (l0)--(lc)--(ls);
 \draw[thick] (ls)--(ld)--(lu);
 \draw[thick] (lu)--(lw);
 \node[edgelabel] at (2.25,0.65) {$\ell-1$ edges};
 \node[edgelabel] at (2.25,-0.65) {$\ell-1$ edges};

 \draw[very thick,-{Latex[length=2.6mm]},black!55]
   (6.4,0)--(7.25,0);

 \node at (10.1,2) {compressed multigraph $\widehat{\gG}_\xi$};
 \node[splice]   (r0) at (7.9,0) {$\xi_0$};
 \node[retained] (ru) at (10.25,0) {$u$};
 \node[splice]   (rw) at (12.35,0) {$w$};
 \draw[coreedge] (r0) to[bend left=35]
   node[edgelabel,above=3pt] {$f_1\,(\ell-1)$} (ru);
 \draw[coreedge] (ru)--node[edgelabel,above=3pt] {$f_2\,(1)$} (rw);
 \draw[coreedge] (r0) to[bend right=35]
   node[edgelabel,below=3pt] {$f_3\,(\ell-1)$} (ru);
\end{tikzpicture}
\caption{Compression of the cycle example. Red splice vertices $\xi_0$ and $w=\xi_\ell$ and the blue branch vertex $u$ are retained; gray arc vertices are suppressed. Edge labels on the right give chain lengths in parentheses. The two parallel edges are each traversed once, while the pendant edge is traversed twice. Both graphs have genus one, and $\abs{\zeta}=4$.}
\label{fig:singleton-cycle-compression}
\end{figure}

The penalty $d^{-\varsigma(\xi)}$ compensates for this growth, as proved in the following lemma.

\begin{lemma}[Compression of the singleton weight]
\label{lem:singleton-chain-compression}
For $d\geq1$, every canonical singleton word $\xi$ of length $2\ell$
with core code $\CoreCode$ satisfies
\begin{align}
 d^{\abs{\gE(\gG_\xi)}-\ell-\varsigma(\xi)}
 \leq d^{3\abs{\gE(\gU)}/2-\abs{\zeta}/2}.
 \label{eqn:singleton-certificate-core-bound}
\end{align}
\end{lemma}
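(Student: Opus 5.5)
The plan is to rewrite both exponents edge by edge over the core, using the identities in Lemma~\ref{lem:normalized-core-properties}\textup{(v)}, and then to compare term by term. By \eqref{eqn:core-identities}, $\abs{\gE(\gG_\xi)}=\sum_f k_f$ and $2\ell=\sum_f k_fm_f$. Hence
\[
 \abs{\gE(\gG_\xi)}-\ell=-\tfrac12\sum_{f\in\gE(\gU)}k_f(m_f-2).
\]
Likewise, $\abs{\zeta}=\sum_f m_f$ gives
\[
 \tfrac32\abs{\gE(\gU)}-\tfrac12\abs{\zeta}=\tfrac12\sum_{f\in\gE(\gU)}(3-m_f).
\]
By \eqref{eqn:singleton-core-identities}, $\varsigma(\xi)=\sum_{f\in\setS(\xi)}\floor{(k_f-1)/2}$. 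Since $d\geq1$, it therefore suffices to prove the exponent inequality
\[
 -\tfrac12 k_f(m_f-2)-\floor{\tfrac{k_f-1}{2}}\indi{m_f=1}\leq\tfrac12(3-m_f)
\]
for each core edge $f$ separately, and then sum over $f\in\gE(\gU)$.

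The verification splits into two cases according to $m_f$. The inequality needs $k_f\geq1$, which holds by Lemma~\ref{lem:normalized-core-properties}\textup{(iv)}.

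\emph{Case $m_f\geq2$.} The left side is $-\tfrac12k_f(m_f-2)\leq0$. When $m_f\leq3$ the right side is nonnegative, so the inequality holds. When $m_f\geq4$, we use $k_f\geq1$: the left side is at most $-\tfrac12(m_f-2)$, which is below $\tfrac12(3-m_f)=-\tfrac12(m_f-3)$. In fact the bound holds with a unit of slack per such edge.

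\emph{Case $m_f=1$ (the singleton edges).} The left side becomes $\tfrac12k_f-\floor{(k_f-1)/2}$ and the right side is $1$. For odd $k_f=2j+1$ the left side is $j+\tfrac12-j=\tfrac12\leq1$. For even $k_f=2j$ it is $j-(j-1)=1\leq1$. So the chain penalty exactly cancels the growth $d^{k_f/2}$ from \eqref{eqn:singleton-chain-charge-split}, up to a constant budget of one per singleton core edge.

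Summing the per-edge inequalities over $f\in\gE(\gU)$ and exponentiating with base $d\geq1$ gives \eqref{eqn:singleton-certificate-core-bound}.

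The only delicate point is making sure $\varsigma(\xi)$ matches the core-level sum, with each singleton chain counted exactly once up to reversal and with the correct length $k_f$. This is exactly \eqref{eqn:singleton-core-identities}, which we take from the chain-multiplicity property in Lemma~\ref{lem:normalized-core-properties}\textup{(iv)}: every support edge in a chain shares the core edge's multiplicity, so singleton chains in Definition~\ref{def:singleton-chain-data} are precisely the chains of core edges with $m_f=1$. With that identification in hand, the argument is purely arithmetic. The factor $\tfrac32$ (instead of $1$) in front of $\abs{\gE(\gU)}$ is where the even-length singleton chains force the loss, so no sharper bound should be expected from this route.
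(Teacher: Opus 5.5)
Your proof is correct and follows the paper's argument. Both proofs factor $d^{\abs{\gE(\gG_\xi)}-\ell}$ edgewise over the core via \eqref{eqn:core-identities}, and both use $k/2-\floor{(k-1)/2}\le 1$ on singleton chains and $k_f(2-m_f)\le 2-m_f$ on repeated chains. The paper only differs in passing through the slightly sharper exponent $\abs{\gE(\gU)}+a_{\setS}/2-\abs{\zeta}/2$ before using $a_{\setS}\le\abs{\gE(\gU)}$, and your per-edge slack for $m_f\geq2$ is at least one half in the exponent rather than a full unit, which does not affect the argument.
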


Apply Lemma~\ref{lem:singleton-chain-compression} with $d=s\geq1$.
Together with \eqref{eqn:core-support-edge-count}, it shows that the
full singleton weight of a class with fixed sizes $a,v,t$ satisfies,
for $\Gamma\geq1$,
\begin{align}
 \Gamma^{2\left(\abs{\gE(\gG_\xi)}-
       \abs{\gS_{\{\xi_0,\xi_\ell\}}(\gG_\xi)}\right)}
 s^{\abs{\gE(\gG_\xi)}-\ell-\varsigma(\xi)}
 &\leq\Gamma^{2a}s^{3a/2-t/2}.
\end{align}
This upper bound is independent of the chain lengths.  Hence
multiplying \eqref{eqn:singleton-code-count} by
$\Gamma^{2a}s^{3a/2-t/2}$ bounds the sum of the singleton weights
over all classes with these sizes.

We now sum over the core sizes and walk lengths allowed at genus $g$.
Using the same $C_0$ as in \eqref{eqn:genus-weight-definition}, define
the weighted count, for $s,\Gamma\geq1$, by
\begin{align}
 \Theta_\ell^{\rm s}(g;s,\Gamma)
 &\coloneqq\sum_{(a,v)\in\sA_g}\sum_{t=1}^{2\ell}
   \ell v(C_0a)^t(C_0v)^a
   \left(\frac{6\ell}{a}\right)^a
   \Gamma^{2a}s^{3a/2-t/2}.
 \label{eqn:singleton-genus-weight-definition}
\end{align}
Here $a,v,t$ enumerate the core edge count, vertex count, and walk length.
Equation~\eqref{eqn:singleton-genus-range} gives $a\leq t\leq2\ell$;
allowing $t<a$ and relaxing the chain-length constraints only enlarges
the sum.
By the weight bound above and \eqref{eqn:singleton-code-count},
\begin{align}
 \sum_{\substack{\xi\in\SquareSingletonClasses{\ell}\\
                  g(\gG_\xi)=g}}
 \Gamma^{2\left(\abs{\gE(\gG_\xi)}-
       \abs{\gS_{\{\xi_0,\xi_\ell\}}(\gG_\xi)}\right)}
 s^{\abs{\gE(\gG_\xi)}-\ell-\varsigma(\xi)}
 &\leq
 \sum_{\substack{\xi\in\SquareSingletonClasses{\ell}\\
                  g(\gG_\xi)=g}}
 \Gamma^{2\abs{\gE(\gU)}}
 s^{3\abs{\gE(\gU)}/2-\abs{\zeta}/2}
 \nonumber\\
 &\leq C\Theta_\ell^{\rm s}(g;s,\Gamma),
 \qquad 1\leq g\leq2\ell.
 \label{eqn:singleton-weighted-class-count}
\end{align}
At $s=\Gamma=1$, the left-hand side counts the singleton equivalence
classes of genus $g$.  Summing over $1\leq g\leq2\ell$ therefore
counts the terms in the singleton sum in \eqref{eqn:square-T0-T1}.
The same bound holds for
$\RectangularSingletonClasses{i}{\ell}$ at each fixed first-splice color.

\paragraph{Summing the class moments.}
Combining \eqref{eqn:singleton-certificate-core-bound} with the edge-count
identity \eqref{eqn:core-support-edge-count} converts the singleton
moment assumptions into
\begin{align}
 |\GraphMoment_\ell(\WordClass{\xi})|
 &\leq K_\ell N^2
   \left(\frac{\chi_\ell}{N}\right)^{g(\gG_\xi)}
   \Gamma_\ell^{2\abs{\gE(\gU)}}
   d^{3\abs{\gE(\gU)}/2-\abs{\zeta}/2},
 \nonumber\\
 |\widetilde{\GraphMoment}_{\ell,i}(\WordClass{\xi})|
 &\leq\widetilde K_\ell nm
   \left(\frac{\widetilde\chi_\ell}{N}\right)^{g(\gG_\xi)}
   \widetilde\Gamma_\ell^{2\abs{\gE(\gU)}}
   d^{3\abs{\gE(\gU)}/2-\abs{\zeta}/2}.
 \label{eqn:singleton-compressed-class-moments}
\end{align}

Lemma~\ref{lem:singleton-positive-genus} and
\eqref{eqn:singleton-genus-range} restrict the genus sum to
$1\leq g\leq2\ell$.  Applying the triangle inequality, the singleton
class-moment bound in \eqref{eqn:singleton-compressed-class-moments}, and
\eqref{eqn:singleton-weighted-class-count} with
$(s,\Gamma)=(d,\Gamma_\ell)$ gives
\begin{subequations}
\begin{align}
 |\SquareSingletonContribution|
 &\leq\sum_{\xi\in\SquareSingletonClasses{\ell}}
       \abs{\GraphMoment_\ell(\WordClass{\xi})}
 \leq CK_\ell N^2
   \sum_{g=1}^{2\ell}\left(\frac{\chi_\ell}N\right)^g
   \Theta_\ell^{\rm s}(g;d,\Gamma_\ell).
 \label{eqn:roadmap-hermitian-singleton-core-sum}
\end{align}
Applying the same argument at each first-splice color with
$(s,\Gamma)=(d,\widetilde\Gamma_\ell)$ and summing over
$i\in\{1,2\}$ gives
\begin{align}
 |\RectangularSingletonContribution|
 &\leq C\widetilde K_\ell nm
   \sum_{g=1}^{2\ell}\left(\frac{\widetilde\chi_\ell}N\right)^g
   \Theta_\ell^{\rm s}(g;d,\widetilde\Gamma_\ell).
 \label{eqn:roadmap-bipartite-singleton-core-sum}
\end{align}
\end{subequations}
The factor two for the first-splice colors is absorbed in $C$.

\paragraph{Bounding the weighted counts.}
The remaining size and walk-length sums are bounded by the next lemma,
whose proof is deferred to Subsection~\ref{sec:trace-combinatorial-proofs}.

\begin{lemma}[Singleton weight as a function of genus]
\label{lem:singleton-genus-weight}
For $s,\Gamma\geq1$, $\ell\geq3$, and $g\geq1$,
\begin{align}
 \Theta_\ell^{\rm s}(g;s,\Gamma)
 &\leq C\ell^3\Gamma^2s^{3/2}
       (C\ell^3\Gamma^6s^{9/2})^g
       \Psi_\ell\!\left(\frac{C(g+1)}{\sqrt s}\right),
 \label{eqn:singleton-fixed-genus-bound}
\end{align}
where $\Psi_\ell$ is defined in
\eqref{eqn:plural-finite-geometric-factor}.
\end{lemma}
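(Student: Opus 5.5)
The bound is a direct estimate of the finite sum \eqref{eqn:singleton-genus-weight-definition}. I would handle it exactly as the plural analogue Lemma~\ref{lem:plural-genus-weight}, but now carry the extra factor $\Gamma^{2a}s^{a/2}$, which is where the exponents $\Gamma^6 s^{9/2}$ per genus come from. Fix $g\geq1$. For $(a,v)\in\sA_g$ we have $v=a-g+1\leq 2g+2$ and $a\leq 3g+1$. The first step is to factor the $t$-sum out of the summand:
\[
\sum_{t=1}^{2\ell}(C_0a)^t s^{-t/2}=\Psi_\ell(C_0a/\sqrt s)\leq\Psi_\ell\!\left(\frac{C(g+1)}{\sqrt s}\right),
\]
using $a\leq 3g+1\leq 4(g+1)$ and the monotonicity of $\Psi_\ell$ in $x\geq0$. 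This leaves the sum over $a$ only.

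The second step bounds the remaining $a$-dependent factor
\[
\ell v\,(C_0v)^a\Bigl(\frac{6\ell}{a}\Bigr)^a\Gamma^{2a}s^{3a/2}.
\]
The naive bound $(C_0v)^a\leq (C(g+1))^{a}$ would produce a superexponential $g^{3g}$ factor, so I would not use it. Instead I would pair $v^a$ against $a^{-a}$. Since $v\leq a+1$ (because $g\geq1$ gives $v=a-g+1\leq a$), we get $(C_0v/a)^a\leq C^a$. Hence the summand is at most
\[
C\ell(g+1)\,(C\ell\,\Gamma^2 s^{3/2})^{a}.
\]
Because $C\ell\Gamma^2s^{3/2}\geq1$, this is increasing in $a$. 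Therefore the largest term is the one with $a=3g+1$, which gives
\[
(C\ell\Gamma^2s^{3/2})^{3g+1}=C\ell\Gamma^2s^{3/2}\,(C\ell^3\Gamma^6s^{9/2})^g .
\]
The number of admissible $a$ is at most $3g+1$. Together with the prefactor $\ell v\leq C\ell(g+1)$, this costs a polynomial factor $(g+1)^2$, which I absorb as $C^g$ into the base $(C\ell^3\Gamma^6s^{9/2})^g$. The resulting prefactor is $C\ell^2\Gamma^2s^{3/2}$, within the claimed $C\ell^3\Gamma^2s^{3/2}$.

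The main point needing care is the absorption of $v^a$. I must verify that $v/a$ is bounded on $\sA_g$ even for small $a$, say $a=1$ with $g=1$, where $v=1$. This holds since $v\leq a$ whenever $g\geq1$, so there is no hidden factor like $g^{a}$. I also need to check that the constants from Lemma~\ref{lem:reduced-graph-code} enter only as $C^a\leq C^{3g+1}$. With that, multiplying the $a$-sum bound by $\Psi_\ell(C(g+1)/\sqrt s)$ yields \eqref{eqn:singleton-fixed-genus-bound}.
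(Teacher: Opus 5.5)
Your proposal is correct and matches the paper's proof: the paper also uses $v\leq a$ on $\sA_g$ (for $g\geq1$) to cancel $(C_0v)^a(6\ell/a)^a$ into $(C\ell)^a$, bounds $(C\ell\Gamma^2s^{3/2})^a$ by its value at $a=3g+1$, and bounds the walk factor by $(C(g+1)/\sqrt s)^t$. It then absorbs the polynomial-in-$g$ count $\sum_{(a,v)\in\sA_g}\ell v\leq C\ell\,4^g$ into the constant raised to $g$, and likewise remarks that the prefactor $\ell^2$ already suffices.
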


Finally, applying Lemma~\ref{lem:singleton-genus-weight} to
\eqref{eqn:roadmap-hermitian-singleton-core-sum} and
\eqref{eqn:roadmap-bipartite-singleton-core-sum}, and taking
$C_{\rm enum}$ large enough to absorb the constants raised to $g$, yields
\begin{subequations}
\begin{align}
 |\SquareSingletonContribution|
 &\leq CK_\ell N^2\ell^3\Gamma_\ell^2d^{3/2}
       \sum_{g=1}^{2\ell}
       \left(\frac{C_{\rm enum}\chi_\ell\ell^3\Gamma_\ell^6d^{9/2}}{N}\right)^g
       \Psi_\ell\!\left(\frac{C(g+1)}{\sqrt d}\right),
 \label{eqn:roadmap-hermitian-singleton-genus-sum}
 \\
 |\RectangularSingletonContribution|
 &\leq C\widetilde K_\ell nm\ell^3
       \widetilde\Gamma_\ell^2d^{3/2}
       \sum_{g=1}^{2\ell}
       \left(\frac{C_{\rm enum}\widetilde\chi_\ell\ell^3
                    \widetilde\Gamma_\ell^6d^{9/2}}{N}\right)^g
       \Psi_\ell\!\left(\frac{C(g+1)}{\sqrt{d}}\right).
 \label{eqn:roadmap-bipartite-singleton-genus-sum}
\end{align}
\end{subequations}

\subsection{Step 6: Genus summation and conclusion}
\label{subsec:trace-step6}

The positive-genus terms in the plural bounds
\eqref{eqn:roadmap-hermitian-plural-genus-sum}--
\eqref{eqn:roadmap-bipartite-plural-genus-sum} and singleton bounds
\eqref{eqn:roadmap-hermitian-singleton-genus-sum}--
\eqref{eqn:roadmap-bipartite-singleton-genus-sum} are controlled by
the common series
\begin{align}
 \sS_\ell(\EnumParameter;s)
 \coloneqq\sum_{g=1}^{2\ell}\EnumParameter^g
   \sum_{t=1}^{2\ell}
      \left(\frac{C_0(g+1)}{\sqrt s}\right)^t,
\label{eqn:roadmap-genus-sum}
\end{align}
where $C_0$ dominates the constants in their geometric factors.

\begin{lemma}[Common genus sum]
\label{lem:roadmap-common-genus-sum}
There are numerical constants $c,C>0$ and $d_0\geq1$ such that
\begin{align}
      \sS_\ell(\EnumParameter;s)\leq C\ell^2\EnumParameter,
      \label{eqn:roadmap-genus-sum-bound}
\end{align}
whenever $s\geq d_0$, $0<\EnumParameter\leq1/2$, and $\ell\leq c\sqrt s\log(\EnumParameter^{-1})$.
\end{lemma}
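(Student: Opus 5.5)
The plan is to split the double sum in \eqref{eqn:roadmap-genus-sum} by the size of the ratio $x_g\coloneqq C_0(g+1)/\sqrt s$. Genera with $x_g\le 1/2$ contribute only a convergent geometric factor. Genera with $x_g>1/2$ have inner sum at most $2\ell x_g^{2\ell}$, and the factor $\EnumParameter^g$ must dominate it. Throughout, $\ell\leq c\sqrt s\log(\EnumParameter^{-1})$ is used in the form $\EnumParameter\le e^{-\ell/(c\sqrt s)}$.

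\emph{Small genus.} Suppose $g+1\le \sqrt s/(2C_0)$, so $x_g\le1/2$. Then $\sum_{t=1}^{2\ell}x_g^t\le 2x_g\le 1$, and these terms sum to at most $\sum_{g\ge1}\EnumParameter^g\le 2\EnumParameter$ because $\EnumParameter\le 1/2$. This part already fits within $C\ell^2\EnumParameter$.

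\emph{Large genus.} Suppose $g+1>\sqrt s/(2C_0)$, so $x_g>1/2$. We bound the inner sum crudely by $2\ell\max\{1,x_g\}^{2\ell}\le 2\ell\,(2x_g)^{2\ell}$, which gives the term bound
\[
 2\ell\,\EnumParameter^{g}\bigl(2C_0(g+1)/\sqrt s\bigr)^{2\ell}.
\]
We write $\EnumParameter^g=\EnumParameter\cdot\EnumParameter^{g-1}$ and use $\EnumParameter^{g-1}\le e^{-(g-1)\ell/(c\sqrt s)}$. It then suffices to show
\[
 \sup_{g+1>\sqrt s/(2C_0)}\exp\Bigl(-\frac{(g-1)\ell}{c\sqrt s}+2\ell\log\frac{2C_0(g+1)}{\sqrt s}\Bigr)\le 1
\]
with room to spare, so that summing over at most $2\ell$ genera costs only another factor $\ell$. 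Set $y=(g+1)/\sqrt s\ge 1/(2C_0)$. Since $s\ge d_0$ is large, the exponent divided by $\ell$ is roughly $-y/c+2\log(2C_0y)+2/(c\sqrt s)$. For $c$ small, $y/c$ beats $2\log(2C_0 y)+O(1)$ uniformly over $y\ge 1/(2C_0)$, because $\log$ grows slower than linear and $1/c$ can be taken huge. This makes the large-genus terms at most $2\ell\EnumParameter e^{-\ell}$, and the total over $g\le 2\ell$ is at most $C\ell^2\EnumParameter$.

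\emph{Main obstacle.} The delicate point is the boundary region $y\approx 1/(2C_0)$ with $g$ small. There $(g-1)/\sqrt s$ can be tiny, for instance when $g=1$ while $\sqrt s$ is only of order $C_0$, and the linear gain disappears. The fix is to use $d_0$. If $d_0>(4C_0)^2$, then $x_1\le 2C_0/\sqrt s<1/2$, so $g=1$ and the first few genera always fall into the small-genus case. Any large-genus $g$ then satisfies $g-1\ge (g+1)/3$, which restores the linear term $-y\ell/(3c)$. The constants are fixed in the order $C_0$, then $d_0$, then $c$, and a final constant $C$ absorbs the factor $\ell$ from the $t$-sum and the factor $2\ell$ from the $g$-sum.
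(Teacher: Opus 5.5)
Your proof is correct, and it takes a different route from the paper. The paper does not split the genera by the size of $x_g=C_0(g+1)/\sqrt s$. Instead it bounds every inner sum uniformly by $\sum_{t=1}^{2\ell}x^t\le 2\ell(1+x^{2\ell})$, so the constant part contributes $4\ell\omega$ immediately.

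For the remaining part, the paper keeps the full factor $\omega^g=e^{-Lg}$ with $L=\log(\omega^{-1})$. It then maximizes the concave function $\Phi(g)=-Lg+2\ell\log(C_0(g+1)/\sqrt s)$ exactly at its stationary point $g_*=2\ell/L-1$, with two cases:
\begin{itemize}
\item if $g_*\le 1$, then $\Phi\le\Phi(1)\le -L$ once $s\ge 4C_0^2$;
\item if $g_*>1$, then $\ell>L$, and the length condition with $c\le 1/(2C_0)$ gives $\Phi(g_*)\le L-2\ell\le -L$.
\end{itemize}
Thus every term is at most $\omega$, and the total is $O(\ell^2\omega)$. Because the whole factor $e^{-Lg}$ is kept, the $g=1$ boundary you flag never causes trouble there.

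In your version, you peel off one $\omega$ and replace $\omega^{g-1}$ by $e^{-(g-1)\ell/(c\sqrt s)}$. This throws away the linear gain at $g=1$, which is why you need $d_0>16C_0^2$ to push $g=1$ into the small-genus regime. Your fix is sound. Every large genus then has $g\ge 2$, hence $g-1\ge (g+1)/3$. With $z=2C_0(g+1)/\sqrt s>1$ and $c\le 1/(12C_0)$, one gets $-z/(6C_0c)+2\log z\le -2$ on $z\ge 1$, so those terms are exponentially small in $\ell$.

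Your split also buys something. Treating the small-genus inner sums as convergent geometric series gives $2x_g$ rather than the paper's crude $2\ell$. As a result your argument actually yields $\sum_g\omega^g\sum_t x_g^t\le 2\omega+4\ell^2e^{-2\ell}\omega\le C\omega$, which is stronger than the stated $C\ell^2\omega$. The paper's saddle-point computation is shorter and needs no genus threshold.
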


\begin{proof}[Proof of Lemma~\ref{lem:general-nonbacktracking-trace-bounds}]
      We prove the bound for the Hermitian case. The proof for the bipartite case is similar. Choose $c,d_0$ as in Lemma~\ref{lem:roadmap-common-genus-sum}, increasing $d_0$ so that $C_0/\sqrt{d_0}\leq1/2$.
\begin{align}
 \EnumParameter_{\rm p}
 &\coloneqq\frac{C_{\rm enum}\chi_\ell\ell^3d^3}{N},
 &\EnumParameter_{\rm s}
 &\coloneqq\frac{C_{\rm enum}\chi_\ell\ell^3\Gamma_\ell^6d^{9/2}}{N}.
\end{align}
Since $d,\Gamma_\ell\geq1$, condition~\eqref{eqn:hermitian-saddle-condition}
gives $0<\EnumParameter_{\rm p}\leq\EnumParameter_{\rm s}\leq1/2$
and the required length bound for both ratios.  Hence
Lemma~\ref{lem:roadmap-common-genus-sum} gives
\begin{align}
 \sS_\ell(\EnumParameter;d)
 &\leq C\ell^2\EnumParameter,
 \qquad \EnumParameter\in\{\EnumParameter_{\rm p},\EnumParameter_{\rm s}\}.
 \label{eqn:roadmap-two-genus-sums}
\end{align}
The genus-zero term satisfies
\begin{align}
 \sum_{t=1}^{2\ell}\left(\frac{C_0}{\sqrt d}\right)^t
 \leq\frac C{\sqrt d}.
 \label{eqn:roadmap-genus-zero-geometric-sum}
\end{align}

Substituting \eqref{eqn:roadmap-two-genus-sums} and
\eqref{eqn:roadmap-genus-zero-geometric-sum} into the Hermitian genus sums
\eqref{eqn:roadmap-hermitian-plural-genus-sum} and
\eqref{eqn:roadmap-hermitian-singleton-genus-sum}, and extending the
plural genus range to $2\ell$, gives by
\eqref{eqn:trace-plural-singleton-split-square}
\begin{align}
 \E\Tr\mB^\ell(\mB^\ell)^*
 &\leq |\SquarePluralContribution|+|\SquareSingletonContribution|
\leq CK_\ell N^2\left(
    \ell^3\sqrt d
    +\frac{\chi_\ell\ell^8d^4}{N}
    +\frac{\chi_\ell\Gamma_\ell^8\ell^8d^6}{N}
    \right).
\end{align}

This proves \eqref{eqn:general-hermitian-trace-bound}.
The same argument with tilded parameters and prefactor $nm$, using
\eqref{eqn:bipartite-saddle-condition} and
\eqref{eqn:trace-plural-singleton-split-rectangular}, proves
\eqref{eqn:general-bipartite-trace-bound}.
\end{proof}

%%%%%%%%%%%%%%%%%%%%%%%%%%%%%%%%%%%%%%%%%%%%%%
\subsection{Proofs of the technical lemmas}
\label{sec:trace-combinatorial-proofs}
%%%%%%%%%%%%%%%%%%%%%%%%%%%%%%%%%%%%%%%%%%%%%%

We collect the deferred proofs for the canonical core code, the plural
and singleton genus weights, the singleton-chain compression and
positive-genus lemmas, and the common genus-sum estimate.

\begin{proof}[Proof of Lemma~\ref{lem:normalized-core-properties}]
\leavevmode

\paragraph{(i)}
\emph{Canonical walk.}
By construction, replacing each chain traversal in $\xi$ by its
corresponding dart gives the rooted closed walk $\zeta$.  Because $\xi$
visits every support edge, $\zeta$ visits every core edge and vertex.
The relabeling by $\tau$ puts the surviving vertices in first-appearance
order, separately in the two color classes in the rectangular setting.
The stated root labels follow from rooting the walk at the first splice.

\paragraph{(ii)}
\emph{Vertex degrees.}
Maximality of the chains leaves no nonsplice vertex of degree two.  A
nonsplice vertex cannot have degree one either, since the word would have
to reverse its incident edge there, contrary to the nonbacktracking
condition.  Thus every nonsplice core vertex has degree at least three.
Connectedness gives degree at least one at each splice vertex, whether or
not the two splices coincide.

\paragraph{(iii)}
\emph{Splice recovery and reconstruction.}
Let $f_t$ be the underlying edge of the $t$-th dart of $\zeta$.
Because the second splice is retained, the partial sums of
$(k_{f_t})_{t=1}^{\abs{\zeta}}$ reach $\ell$ at the marked visit $r$.
They are strictly increasing since every chain has positive length, so
\eqref{eqn:core-mark-recovery} determines $r$ uniquely.

Starting from $\CoreCode$, subdivide each core edge $f$ into a fixed
chain of $k_f$ support edges.  Orient this chain by the first dart of
$f$ in $\zeta$, and replace every later occurrence of a dart of $f$ by the
corresponding forward or reverse traversal of the same chain.  The two
orientations of a loop distinguish its two possible traversals.  The root
and the recorded visit $r$ identify the two splices.  Finally, label the
vertices of the expanded walk in order
of first appearance.  In the rectangular setting, retain the core colors,
alternate the colors along each expanded chain, and assign labels
separately within the two vertex classes.  This reconstructs a unique
canonical word $\xi$, proving injectivity.

\paragraph{(iv)}
\emph{Chain lengths and multiplicities.}
Once a nonbacktracking word enters an internal degree-two vertex of a
chain, the edge by which it arrived is forbidden and only the next edge
of the chain remains available.  Every visit therefore traverses the
entire chain, so all its support edges have traversal multiplicity $m_f$.
Each chain is nonempty and each core edge is visited, giving $k_f\geq1$
and $m_f\geq1$.

In the rectangular setting, colors alternate along every support edge of
a chain, so its endpoints have opposite colors if and only if its length
is odd.

\paragraph{(v)}
\emph{Expansion identities.}
The internal vertices belonging to distinct maximal chains are disjoint.
Replacing a core edge $f$ by its chain adds $k_f$ support edges and
$k_f-1$ internal vertices.  Each traversal of $f$ contributes $k_f$
steps to the expanded word.  Summing these contributions proves
\eqref{eqn:core-identities}.  The internal vertices of these chains
are exactly $\gS_{\{\xi_0,\xi_\ell\}}(\gG_\xi)$, so their number is
$\sum_f(k_f-1)=\abs{\gE(\gG_\xi)}-\abs{\gE(\gU)}$.
This proves \eqref{eqn:core-support-edge-count}, also for loop chains
and coincident splices.

\paragraph{(vi)}
\emph{Genus preservation.}
Suppressing one internal degree-two vertex removes one vertex and one
edge, and canonical relabeling changes neither count.  Hence both
operations preserve the genus, proving
\eqref{eqn:core-genus-preservation}.\qedhere
\end{proof}

\begin{proof}[Proof of Lemma~\ref{lem:reduced-graph-code}]
\leavevmode

\paragraph{(i)}
By Lemma~\ref{lem:normalized-core-properties}, every nonsplice core vertex
has degree at least three, each splice vertex has degree at least one, and
$g(\gU)=g(\gG_{\xi})$.  Thus the
handshaking identity gives, in the distinct-splice case,
\begin{align}
 2\abs{\gE(\gU)}
 =\sum_{u\in\gV(\gU)}\deg_{\gU}(u)
 \geq3\bigl(\abs{\gV(\gU)}-2\bigr)+2
 =3\abs{\gV(\gU)}-4.
 \label{eqn:roadmap-core-degree-count}
\end{align}
If the splice vertices coincide, the sharper bound
$2\abs{\gE(\gU)}\geq3\abs{\gV(\gU)}-2$ holds, so
\eqref{eqn:roadmap-core-degree-count} is valid in both cases.  Since
\eqref{eqn:core-genus-preservation} gives
$\abs{\gV(\gU)}=\abs{\gE(\gU)}-\abs{g(\gG_\xi)}+1$, we obtain
\begin{align}
 \abs{\gE(\gU)}&\leq3\abs{g(\gG_\xi)}+1,
 &\abs{\gV(\gU)}&\leq2\abs{g(\gG_\xi)}+2,
\end{align}
which proves \eqref{eqn:core-size}.

\paragraph{(ii)}
To count the triples $(\gU,\zeta,r)$, number the core edges
in order of first traversal.
At each of the $\abs{\zeta}$ steps, choose one of the
$\abs{\gE(\gU)}$ edge numbers and one of its two orientations; this costs
at most $\bigl(2\abs{\gE(\gU)}\bigr)^{\abs{\zeta}}$.
Only the first traversal of an edge can reveal a new endpoint.
For each such traversal there are at most $2\abs{\gV(\gU)}$ choices
for the normal endpoint labels, contributing
$\bigl(2\abs{\gV(\gU)}\bigr)^{\abs{\gE(\gU)}}$.
Finally, allowing independent choices of the root, the color of the first
splice in the rectangular case, and the marked visit $r$ costs at most
$C\ell\abs{\gV(\gU)}$.  Choosing $r$ independently only enlarges the
count, since valid codes require compatibility with the chain lengths
through \eqref{eqn:core-mark-recovery}.  Multiplying these
factors proves \eqref{eqn:core-walk-count}.

\paragraph{(iii)}
Fix the core-walk data $(\gU,\zeta,r)$ and count their completions
by chain lengths.  The weighted identity in
\eqref{eqn:core-identities} and $m_f\geq1$ imply
\begin{align}
 \sum_{f\in\gE(\gU)}k_f
 \leq\sum_{f\in\gE(\gU)}k_fm_f=2\ell.
\end{align}
We relax the weighted constraint to this inequality.  Stars and bars then
gives
\begin{align}
 \#\{(k_f):k_f\geq1,\ \textstyle\sum_fk_f\leq2\ell\}
 &\leq\sum_{t=\abs{\gE(\gU)}}^{2\ell}
       \binom{t-1}{\abs{\gE(\gU)}-1}
 =\binom{2\ell}{\abs{\gE(\gU)}}
 \nonumber\\
 &\leq\left(\frac{2\mathrm e\ell}{\abs{\gE(\gU)}}\right)^{\abs{\gE(\gU)}}
 \leq\left(\frac{6\ell}{\abs{\gE(\gU)}}\right)^{\abs{\gE(\gU)}}.
 \label{eqn:roadmap-chain-length-calculation}
\end{align}
This is an upper bound because we relaxed the true weighted identity; in
particular, we did not replace it by the generally false equality
$\sum_fk_f=2\ell$.  Equations \eqref{eqn:chain-length-count} and
\eqref{eqn:roadmap-chain-length-calculation} give the third assertion.
\end{proof}

\begin{proof}[Proof of Lemma~\ref{lem:plural-genus-weight}]
For $g=0$, the index set contains only $(a,v)=(1,2)$.
Substitution in \eqref{eqn:genus-weight-definition} gives the first
inequality with $\ell^2$ in place of $\ell^3$.
For $g\geq1$ and $(a,v)\in\sA_g$, the definition gives
\begin{align}
 g\leq a\leq3g+1,\qquad
 v=a-g+1\leq a,\qquad v\leq2g+2,\qquad
 |\sA_g|\leq2g+2.
 \label{eqn:roadmap-plural-core-ranges}
\end{align}
In each summand, combine the chain count with the factors raised to the
number of core edges:
\begin{align}
 (C_0v)^a\left(\frac{6\ell}{a}\right)^a s^a
 &\leq(C\ell s)^a
 \leq(C\ell s)^{3g+1}
 \leq C\ell s(C\ell^3s^3)^g.
 \label{eqn:plural-core-power-elimination}
\end{align}
The remaining walk factor is at most
$(C(g+1)/\sqrt s)^t$.  Finally,
\begin{align}
 \sum_{(a,v)\in\sA_g}\ell v
 \leq\ell(2g+2)^2\leq C\ell\,4^g.
\end{align}
Absorb $4^g$ into the constant raised to $g$ and sum over $t$ using
$\Psi_\ell$.  This proves the positive-genus bound with an $\ell^2$
prefactor, which we enlarge to $\ell^3$ in both cases.
\end{proof}

\begin{proof}[Proof of Lemma~\ref{lem:singleton-genus-weight}]
The definition of $\sA_g$ in \eqref{eqn:core-size-index-set} gives
$v\leq a\leq3g+1$ and $|\sA_g|,v\leq2g+2$.  Consequently,
\begin{align}
 (C_0v)^a\left(\frac{6\ell}{a}\right)^a\Gamma^{2a}s^{3a/2}
 &\leq(C\ell\Gamma^2s^{3/2})^a
 \leq C\ell\Gamma^2s^{3/2}
       (C\ell^3\Gamma^6s^{9/2})^g.
\end{align}
The remaining walk factor is bounded by
$(C(g+1)/\sqrt s)^t$, and
$\sum_{(a,v)\in\sA_g}\ell v\leq C\ell\,4^g$.
Absorb $4^g$ into the constant raised to $g$ and sum over $t$ to obtain
$\Psi_\ell$.  This proves the displayed bound, even with $\ell^2$
in place of its prefactor $\ell^3$.
\end{proof}

\begin{proof}[Proof of Lemma~\ref{lem:singleton-chain-compression}]
The core identities give the exact factorization
\begin{align}
 d^{\abs{\gE(\gG_\xi)}-\ell}
 =d^{\frac12\sum_{f\in\gE(\gU)}k_f(2-m_f)}.
 \label{eqn:certificate-chain-factorization}
\end{align}
By \eqref{eqn:singleton-core-identities}, a singleton chain of length
$k$ contributes $d^{k/2-\floor{(k-1)/2}}$ after the penalty is included.
For both parities,
\begin{align}
 d^{k/2-\floor{(k-1)/2}}
 =\begin{cases}
   d,&k\textnormal{ even},\\
   d^{1/2},&k\textnormal{ odd}
  \end{cases}
 \leq d.
 \label{eqn:roadmap-one-singleton-chain-payment}
\end{align}
For every repeated chain, $m_f\geq2$ and $k_f\geq1$, so
\begin{align}
 d^{k_f(2-m_f)/2}\leq d^{(2-m_f)/2}.
 \label{eqn:certificate-plural-payment}
\end{align}
Multiplying these bounds and using
$\abs{\zeta}=\sum_fm_f$ yields
\begin{align}
 d^{\abs{\gE(\gG_\xi)}-\ell-\varsigma(\xi)}
 &\leq d^{a_{\setS}+\frac12\sum_{f\notin\setS(\xi)}(2-m_f)}
  =d^{\abs{\gE(\gU)}+a_{\setS}/2-\abs{\zeta}/2}
 \leq d^{3\abs{\gE(\gU)}/2-\abs{\zeta}/2},
 \label{eqn:roadmap-total-singleton-payment}
\end{align}
where the last step uses $a_{\setS}\leq\abs{\gE(\gU)}$ and $d\geq1$.
This proves \eqref{eqn:singleton-certificate-core-bound} in both hosts.
\end{proof}

\begin{proof}[Proof of Lemma~\ref{lem:singleton-positive-genus}]
Suppose that a singleton word had $\abs{g(\gG_\xi)}=0$.  Its connected support graph would be
a tree.  Every support vertex other than the splice vertices has degree at
least two: visiting a degree-one nonsplice vertex would force an immediate
reversal.

Assume first that the splice vertices are distinct.  Every leaf is then a
splice vertex.  The elementary tree identity
\begin{align}
 \#\{\textnormal{leaves}\}
 =2+\sum_{\deg(u)\geq3}(\deg(u)-2)
\end{align}
shows that the two splices are the only leaves and that no vertex has
degree at least three.  The support is therefore the unique path joining
the splices.  Each of the two nonbacktracking halves traverses this path
once, so every support edge has multiplicity two, contradicting the presence
of a singleton edge.  If the splice vertices coincide, either half would be a
positive-length closed nonbacktracking walk in a tree, which is impossible.
Thus every singleton class has $\abs{g(\gG_\xi)}\geq1$.
\end{proof}

\begin{proof}[Proof of Lemma~\ref{lem:roadmap-common-genus-sum}]
We denote $L_{\EnumParameter}\coloneqq\log(\EnumParameter^{-1})$.  Since $\EnumParameter\leq1/2$, we have
$L_{\EnumParameter}\geq\log2$.  For every $x\geq0$,
\begin{align}
 \sum_{t=1}^{2\ell}x^t\leq2\ell(1+x^{2\ell}).
\end{align}
Using $\EnumParameter^g=e^{-L_{\EnumParameter}g}$ in \eqref{eqn:roadmap-genus-sum}, we obtain
\begin{align}
 \sS_\ell(\EnumParameter;s)
 &\leq2\ell\sum_{g=1}^{2\ell}e^{-L_{\EnumParameter}g}
   +2\ell\sum_{g=1}^{2\ell}\exp\{\Phi_{\EnumParameter}(g)\}
 \leq4\ell \EnumParameter
   +2\ell\sum_{g=1}^{2\ell}\exp\{\Phi_{\EnumParameter}(g)\},
 \label{eqn:roadmap-genus-sum-reduction}
\end{align}
where we define $\Phi_{\EnumParameter}(g)$ as follows:
\begin{align}
 \Phi_{\EnumParameter}(g)
 \coloneqq-L_{\EnumParameter}g+2\ell
   \log\left(\frac{C_0(g+1)}{\sqrt s}\right).
\end{align}
As a function of real $g>-1$, $\Phi_{\EnumParameter}$ is strictly concave and has its
unique stationary point at
\begin{align}
 g_*\coloneqq\frac{2\ell}{L_{\EnumParameter}}-1.
\end{align}
If $g_*\leq1$, then $\Phi_{\EnumParameter}$ decreases on $[1,\infty)$.  After increasing
$d_0$ if necessary,
\begin{align}
 \Phi_{\EnumParameter}(g)
 \leq\Phi_{\EnumParameter}(1)
 =-L_{\EnumParameter}+2\ell\log\left(\frac{2C_0}{\sqrt s}\right)
 \leq-L_{\EnumParameter}.
\end{align}
If $g_*>1$, then $\ell>L_{\EnumParameter}$.  The length assumption
$\ell\leq c\sqrt s\,L_{\EnumParameter}$ gives
\begin{align}
 \frac{2C_0\ell}{L_{\EnumParameter}\sqrt s}\leq2C_0c\leq1
\end{align}
once $c$ is chosen sufficiently small.  Evaluation at the global maximum
then yields
\begin{align}
 \Phi_{\EnumParameter}(g_*)
 &=L_{\EnumParameter}-2\ell
   +2\ell\log\left(\frac{2C_0\ell}{L_{\EnumParameter}\sqrt s}\right)
 \leq L_{\EnumParameter}-2\ell
 \leq-L_{\EnumParameter}.
\end{align}
Thus $\Phi_{\EnumParameter}(g)\leq-L_{\EnumParameter}$ for every integer
$1\leq g\leq2\ell$ in both cases.  Substitution into
\eqref{eqn:roadmap-genus-sum-reduction} gives
\begin{align}
 \sS_\ell(\EnumParameter;s)
 \leq4\ell \EnumParameter+4\ell^2\EnumParameter
 \leq C\ell^2\EnumParameter,
\end{align}
as claimed.
\end{proof}

\section{Future Directions}
\label{sec:future-directions}
We conclude this paper by discussing some future directions.

\paragraph{Non-Hermitian sparse random matrices}
Upper bound on spectral radius of non-Hermitian sparse random matrices with independent entries was established in \cite{benaych2020spectral} as well. It would be interesting to explore how thresholding affects the spectral radius in the non-Hermitian setting, similar to the results in Theorems~\ref{thm:rectangular-largest-singular-value} and~\ref{thm:hermitian-edge} for Hermitian matrices. It would also be valuable to investigate how the trace estimates developed in Section~\ref{sec:trace-estimates} can be extended to the non-Hermitian setting.

\paragraph{Hypergraph extensions}
Let $\bm{\mathcal{A}}\in \R^{\otimes \ell}$ be the adjacency tensor of a hypergraph $\gH$ for some $\ell \geq 3$. The adjacency matrix $\mA$ of $\gH$ is formed by contracting $\bm{\mathcal{A}}$, so that $\emA_{ij}$ equals the number of hyperedges containing the vertex pair $(i,j)$. Consequently, the entries of $\mA$ are dependent in an essential way.
In \cite{au2023spectral}, it was demonstrated that the joint spectral distribution of these contracted tensor ensembles can be closely approximated by a semicircular family. The works \cite{dumitriu2025partial, dumitriu2026optimal} established concentration results for regularized adjacency matrices of sparse hypergraphs, though without explicit sharp norm bounds. Since both the nonbacktracking matrix $\mB$ of $\gH$ and the related Ihara--Bass formula have been developed in \cite{stephan2022sparse}, it would be of interest to derive sharp norm bounds for these objects using techniques analogous to those in Theorems~\ref{thm:rectangular-largest-singular-value} and~\ref{thm:hermitian-edge}.

\section*{Acknowledgements}
The author would like to thank Prof. Ioana Dumitriu for stimulating discussions and her valuable feedback on the early version of this manuscript.

\printbibliography

\newpage
\appendix
\section{Numerical Experiments}
\label{sec:numerical-experiments}

\subsection{Experiments for the bipartite model}

We consider the homogeneous bipartite Erd\H{o}s--R\'enyi graph $\gG(n,m,p)$ with parameters
\[
 n/m=16,\qquad N=n+m\in\{850,1700,3400,6800\},\qquad
 d=0.4\log N,\qquad p=d/\sqrt{mn}.
\]
The three panels of Figure~\ref{fig:numerical-bipartite-edges} present the smallest and largest singular values of the centered active blocks in the three cases: $(U_{\rm r},U_{\rm c},L_{\rm c})=(m,n,0)$ (no thresholding), $(U_{\rm r},U_{\rm c},L_{\rm c})
=(\lfloor1.5d\rfloor,\lfloor5d\rfloor,0)$, and $(U_{\rm r},U_{\rm c},L_{\rm c})=(m,n,\lceil3d\rceil)$, respectively. The lower row cutoff is $L_{\rm r}=0$ throughout. At each size, we sample 20 independent graphs and apply all three restrictions to each graph using Algorithm~\ref{alg:row-column-projection}, retaining the original centering and normalization in
\eqref{eqn:compressed-rectangular-matrix}. Circles show individual trials and solid curves show their means. Dashed curves show the leading thresholds of Theorems~\ref{thm:rectangular-largest-singular-value}
and~\ref{thm:rectangular-smallest-singular-value}, which in all panels are
\[
 \sigma_{\rm L}=\tfrac32\sqrt{1-p},\qquad
 \sigma_{\rm R}=\tfrac52\sqrt{1-p}.
\]
These left-edge examples explore parameters beyond the sufficient
hypotheses of Theorem~\ref{thm:rectangular-smallest-singular-value}:
the coefficient $0.4$ in $d=0.4\log N$ is not chosen to satisfy its
sufficiently large logarithmic sparsity requirement, and the lower
cutoffs do not satisfy \eqref{eqn:rectangular-edge-lower-cutoff}
asymptotically. Even $L_{\rm c}=\lceil3d\rceil$ gives
$(1-p)^2L_{\rm c}/d\to3$, whereas
$\colvarmin=4(1-p)\to4$.
\begin{figure}[H]
\centering
\includegraphics[width=0.99\linewidth]{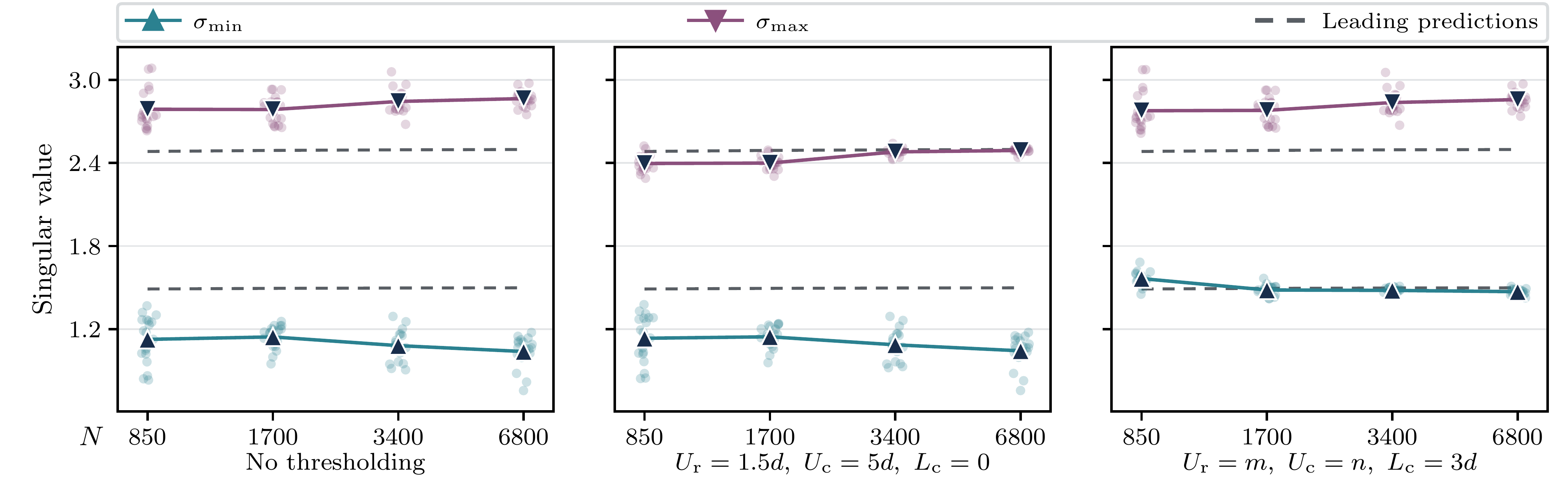}
\caption{Bipartite singular-value edges across graph sizes under
three degree restrictions.}
\label{fig:numerical-bipartite-edges}
\end{figure}

Figure~\ref{fig:numerical-bipartite-nonbacktracking-spectra} uses
one graph from the same model with $n=3200$, $m=200$,
$d=0.4\log3400$, and $p=d/\sqrt{mn}$.
The first two panels use the same cutoffs as the edge experiment;
the third uses $(U_{\rm r},U_{\rm c},L_{\rm c})
=(m,n,\lceil4d\rceil)$, with $L_{\rm r}=0$ throughout.
The nonbacktracking matrices are formed from the uncentered
adjacency, with the original normalization by $\sqrt d$ as in
\eqref{eqn:nonbacktracking-definition-bipartite}.
Every eigenvalue location, including zeros from the host completion,
is plotted.  Hollow markers indicate the repeated eigenvalues
in all three panels:
\begin{align}
 z=0,
 \qquad
 z=\pm d^{-1/2},
 \qquad
 z=\pm\ii d^{-1/2}.
\end{align}
Stars mark nonzero extrema on the real and imaginary axes.
The unit circle is shown as a reference.

\begin{figure}[H]
\centering
\includegraphics[width=0.99\linewidth]{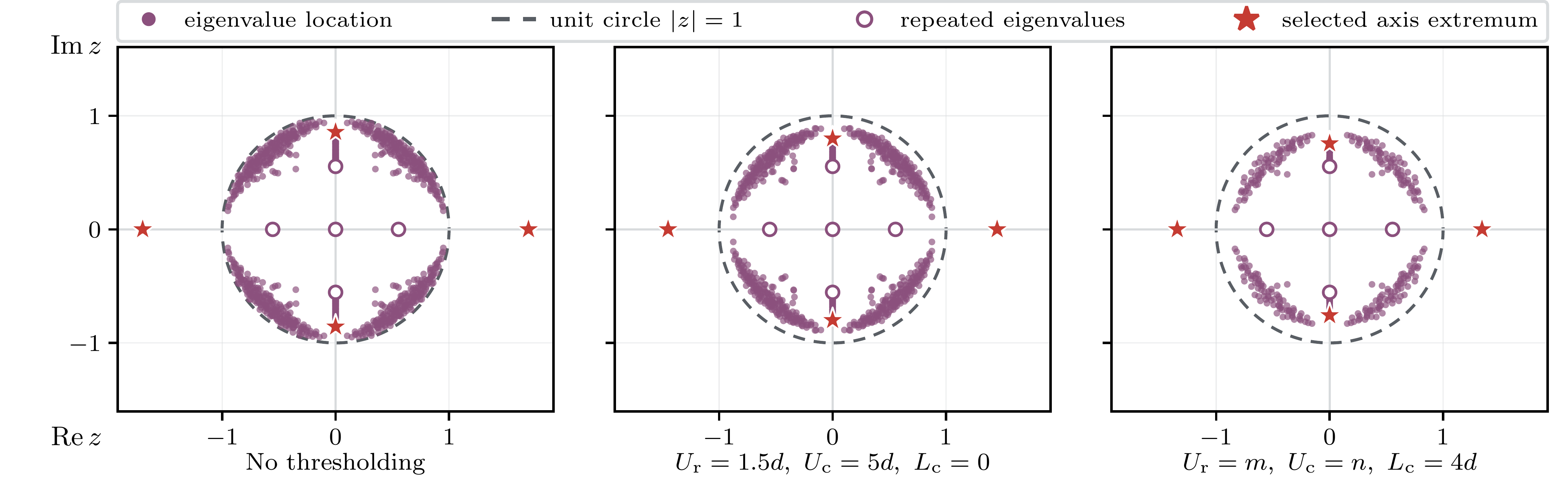}
\caption{Bipartite nonbacktracking spectra at $N=3400$ under
three degree restrictions.}
\label{fig:numerical-bipartite-nonbacktracking-spectra}
\end{figure}

\subsection{Experiments for the Hermitian model}

We consider the homogeneous undirected Erd\H{o}s--R\'enyi graph
$\gG(N,p)$ without self-loops, with parameters
\[
 N\in\{1000,2000,4000,8000\},\qquad
 d=\log N,\qquad p=d/(N-1).
\]
The three panels of Figure~\ref{fig:numerical-hermitian-edge} present
the smallest and largest eigenvalues of the centered active matrices
in the three cases: $U=N-1$ (no thresholding), $U=\lfloor2d\rfloor$,
and $U=\lfloor1.5d\rfloor$, respectively. The lower cutoff is $L=0$
throughout. At each size, we sample 20 independent graphs and apply
all three restrictions to each graph using
Algorithm~\ref{alg:row-column-projection}, retaining the original
centering and normalization in \eqref{eqn:compressed-hermitian-matrix}.
Circles show individual trials and solid curves show their means.
Dashed curves show the leading thresholds of
Theorem~\ref{thm:hermitian-edge}, which in all panels are
\[
 \pm(2-p).
\]
\begin{figure}[H]
\centering
\includegraphics[width=0.99\linewidth]{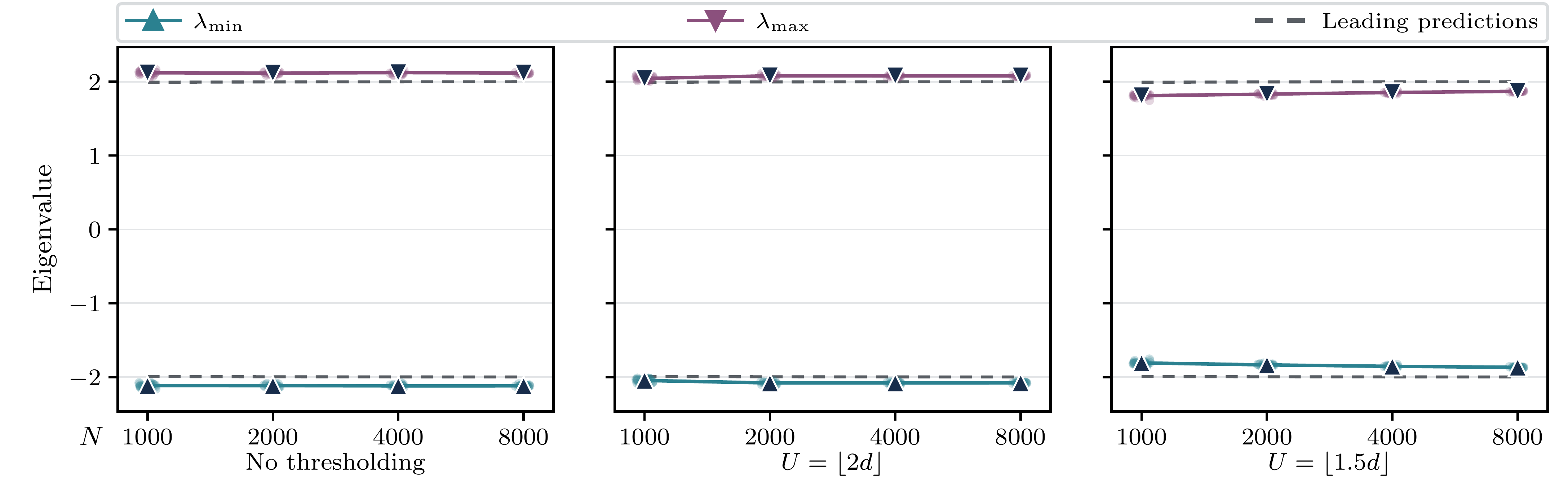}
\caption{Hermitian spectral edges across graph sizes under
three upper degree cutoffs.}
\label{fig:numerical-hermitian-edge}
\end{figure}

Figure~\ref{fig:numerical-er-nonbacktracking-spectra} uses
one graph from the same model with $N=800$, $d=\log800$,
and $p=d/(N-1)$.
The panels use the same three upper cutoffs as the edge experiment,
with $L=0$ throughout.
The nonbacktracking matrices are formed from the uncentered
adjacency, with the original normalization by $\sqrt d$ as in
\eqref{eqn:nonbacktracking-definition-hermitian}.
Every eigenvalue location, including zeros from the host completion,
is plotted.  Hollow markers indicate the repeated eigenvalues
in all three panels:
\begin{align}
 z=0,
 \qquad
 z=\pm d^{-1/2}.
\end{align}
Stars mark only the positive real-axis extreme in each panel.
The unit circle is shown as a reference.

\begin{figure}[H]
\centering
\includegraphics[width=0.99\linewidth]{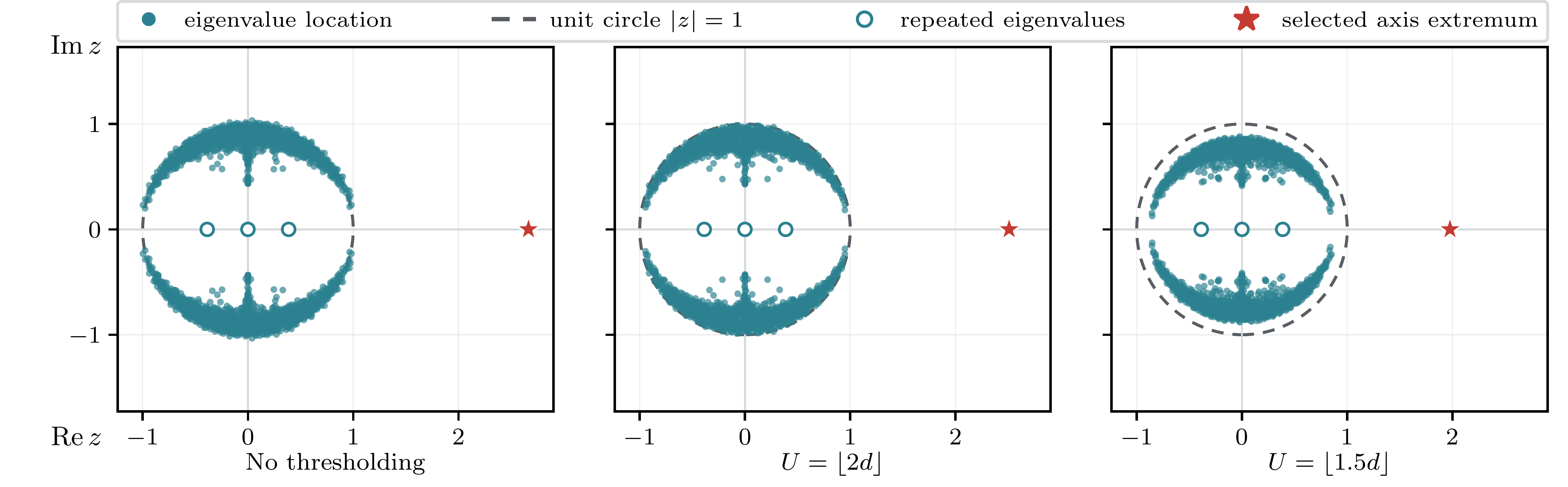}
\caption{Hermitian nonbacktracking spectra at $N=800$ under
three upper degree cutoffs.}
\label{fig:numerical-er-nonbacktracking-spectra}
\end{figure}

\subsection{Variance separation without a matching radius bound}
\label{subsec:fourier-radius-counterexample}

We illustrate Remark~\ref{rem:block-radius-obstruction} with a
deterministic family having $l_{\rm c}>u_{\rm r}$ and vanishing
entries, but whose nonbacktracking radius stays above the optimizer
$\sqrt{u_{\rm r}l_{\rm c}}$ and whose smallest singular value is below
$l_{\rm c}-u_{\rm r}$. For each $k$,
take $\mX\in\C^{16k\times2k}$ to be the tensor product of the
normalized $k\times k$ Fourier matrix with the $16\times2$ matrix
whose first column is constantly $1/\sqrt8$ and whose second column
has eight copies each of $(3+4\ii)/(5\sqrt8)$ and
$(3-4\ii)/(5\sqrt8)$.  The latter matrix has Gram matrix
$\left[\begin{smallmatrix}2&6/5\\6/5&2\end{smallmatrix}\right]$.
Hence the row and column squared norms are $u_{\rm r}^2=1/4$
and $l_{\rm c}^2=2$, respectively, while all entries have modulus
$(8k)^{-1/2}\to0$. The squared singular values are $4/5$ and $16/5$,
so variance separation holds but
\[
 \sigma_{\min}(\mX)=\sqrt{4/5}
 <l_{\rm c}-u_{\rm r}=\sqrt2-1/2.
\]

By Lemma~\ref{lem:bipartite-ihara-bass}, the nonpole eigenvalues
of $\widetilde{\mB}$ are the singular parameters of the vertex
matrix in \eqref{eqn:block-ihara-entries}.  Singular-value
decomposition therefore gives either
$\lambda^2=-1/4+(8k)^{-1}$ or
\[
 \bigl(\lambda^2+1/4-(8k)^{-1}\bigr)
 \bigl(\lambda^2+2-(8k)^{-1}\bigr)
 -\lambda^2\sigma^2=0,
 \qquad \sigma^2\in\{4/5,16/5\}.
\]
For $\sigma^2=4/5$, the larger-modulus negative root in $\lambda^2$ is
\[
 \lambda^2=\frac1{8k}-\frac{29+\sqrt{41+160/k}}{40}.
\]
It gives nonpole eigenvalues, since poles have $\lambda^2=(8k)^{-1}>0$.
The two limiting polynomials are
$\lambda^4+(29/20)\lambda^2+1/2$ and
$\lambda^4-(19/20)\lambda^2+1/2$.
Comparing their roots, the remaining branch, and the poles yields
\[
 \sqrt{u_{\rm r}l_{\rm c}}=2^{-1/4}
 <\lim_{k\to\infty}\rho(\widetilde{\mB})
 =\sqrt{(29+\sqrt{41})/40}<1.
\]
Thus $\rho(\widetilde{\mB})<1$ holds for large $k$, while every
admissible $\beta>\rho(\widetilde{\mB})$ stays separated from
the optimizer $\sqrt{u_{\rm r}l_{\rm c}}$.
Variance separation therefore needs a matching radius bound to
yield the edge $l_{\rm c}-u_{\rm r}$ through the comparison.
The column variances are already equal, so rescaling them to their
minimum leaves this example unchanged. Multiplying by an independent
common Rademacher sign centers the entries without changing these
quantities, but does not give the independence of the graph model.

%%%%%%%%%%%%%%%%%%%%%%%%%%%%%%%%%%%%%%%%%%%%%%%%%%%%%%%%%%%%%%%%%%%%%%
%%%%%%%%%%%%%%%%%  Degree Cutoffs and Vertex Retention %%%%%%%%%%%%%%%
%%%%%%%%%%%%%%%%%%%%%%%%%%%%%%%%%%%%%%%%%%%%%%%%%%%%%%%%%%%%%%%%%%%%%%

\section{Degree Cutoffs and Vertex Retention}
\label{sec:regularization-thresholds}

We state the cutoff regimes used in the spectral estimates and bound
the number of vertices removed by
Algorithm~\ref{alg:row-column-projection}. The common setup below
specifies the sparsity ranges, cutoff margins, and deletion bound.
The two subsections then treat the bipartite and Hermitian models,
including the degree-tail estimates used later in the moment arguments.
All model notation and normalizations are those of
Section~\ref{sec:introduction}. We use the concentration inequalities
collected in Section~\ref{sec:technical-lemmas}.
For the Hermitian model, write $\underline d\coloneqq\min_{u\in[N]}\mu_u$
for the minimum expected degree, which enters the lower-cutoff margins.

\begin{assumption}\label{assumption:cutoff-sparsity-range}
Suppose that the following holds for some fixed constants $c_{\rm deg}>0$ and $0<\delta_0<1$
\begin{subequations}
\begin{align}
 c_{\rm deg}\log N\leq d, & \qquad &
 d^{\,6}(\log N)^3\leq N^{1-\delta_0},
 \label{eqn:bipartite-cutoff-sparsity-range}\\
 c_{\rm deg}\log N\leq d, & \qquad &
 d^{\,6}(\log N)^3\leq N^{1-\delta_0}.
 \label{eqn:hermitian-cutoff-sparsity-range}
\end{align}
\end{subequations}
\end{assumption}
These ranges are used for the retention estimates in this section and
the smallest singular-value result in Section~\ref{sec:introduction}.
The largest singular-value and Hermitian eigenvalue bounds require
only the weaker sparsity conditions stated locally in their subsections,
as do parts~\ref{item:nonbacktracking-bipartite}
and~\ref{item:nonbacktracking-hermitian}
of Proposition~\ref{prop:nonbacktracking-radius-bounds}.
Recall that $\Gamma \geq 1$ is the fixed upper bound for $\ratio$ in the Assumption \ref{assumption:proportional-dimension-regime}. For the cutoff estimates, we choose
\begin{align}
 C_0=
 \begin{cases}
  \max\{1,C_{\rm sp}(\sqrt\Gamma+\Gamma^{-1/2})\},
       &\text{in the bipartite model},\\
  \max\{1,C_{\rm sp}\},&\text{in the Hermitian model}.
 \end{cases}
 \label{eqn:cutoff-profile-constant}
\end{align}
where $C_{\rm sp}$ is the constant in Assumptions~ \ref{assumption:rectangular-uniform-sparsity} and \ref{ass:uniform-Hermitian-sparsity}, respectively. In particular, the expected degree is at most $C_0d$ or $C_0d$ for every vertex in the bipartite model or Hermitian model, respectively.

\paragraph{Cutoff regimes.} We consider two different cutoff regimes. In the first regime, only upper cutoffs are performed while the lower thresholds are zero. In the second regime, both lower and upper cutoffs are performed.
\begin{enumerate}[label=\textup{(\Alph*)},leftmargin=2.5em,itemsep=0.6em]
\item\label{item:cutoff-upper-only}
\emph{Upper cutoffs only.}  In the bipartite model,
$L_{\rm r}=L_{\rm c}=0$, and in the Hermitian model, $L=0$.
The upper margins in the bipartite model satisfy
\eqref{eqn:rectangular-edge-upper-cutoffs}.
In the Hermitian model, the corresponding condition is
\begin{align}
 U-d\geq5\sqrt{C_0d\log d}.
 \label{eqn:hermitian-cutoff-upper-only}
\end{align}
\item\label{item:cutoff-strong-band}
\emph{Strong bands.}  The nonvacuous margins in the bipartite model
satisfy
\begin{align}
 \underline{\drow}-L_{\rm r}&\geq w_N(d),
 &U_{\rm r}-\overline{\drow}&\geq w_N(d),\label{eqn:cutoff-strong-band-row}\\
 \underline{\dcol}-L_{\rm c}&\geq w_N(d),
 &U_{\rm c}-\overline{\dcol}&\geq w_N(d).
 \label{eqn:cutoff-strong-band-column}
\end{align}
In the Hermitian model, they satisfy
\begin{align}
 \underline d-L\geq w_N(d),\qquad U-d\geq w_N(d).
 \label{eqn:hermitian-cutoff-strong-band}
\end{align}
The deterministic functions $j_N(d)$, $r_N(d)$, and $w_N(d)$ are defined as
\begin{align}
j_N(d)&=\lceil\sqrt d\log N\rceil,&r_N(d)&=\log(2N)+j_N(d)\log(4d),\\
w_N(d)&=2j_N(d)+\sqrt{2C_0d\,r_N(d)}+\tfrac23r_N(d).      \label{eqn:cutoff-strong-width}
\end{align}
\end{enumerate}
By maintaining a gap between the lower cutoffs and the minimum expected degrees in \eqref{eqn:cutoff-strong-band-row}, \eqref{eqn:cutoff-strong-band-column}, and \eqref{eqn:hermitian-cutoff-strong-band}, we accommodate degree fluctuations and thus guarantee that both sets retained after cutoff are nonempty with high probability.

\paragraph{Cutoff margins and deletion bounds.}
We now derive theoretical estimates on the sizes of the active sets by quantifying the number of vertices removed by Algorithm~\ref{alg:row-column-projection}. These estimates rely on the separation between the cutoff values and the extremal expected degrees, which we measure using normalized margins defined for both the bipartite and Hermitian models:
\begin{align}
 \varepsilon_{\rm B}
 &\coloneqq \frac{1}{d}\min \left\{ d,\,\,
 \underline{\drow}-L_{\rm r},\,\,
 U_{\rm r}-\overline{\drow},\,\,
 \underline{\dcol}-L_{\rm c},\,\,
 U_{\rm c}-\overline{\dcol}\right\},\qquad
 &\varepsilon_{\rm H}
 &\coloneqq
 \frac{1}{d}\min\left\{\underline{d} - L,\,\, U - d\right\}.
 \label{eqn:model-retention-margins}
\end{align}
Suppose $d>0$, $0<\varepsilon\leq 1$, and $k$ is an integer with $2\leq k\leq N$ and $2eC_0k<\varepsilon N$. Define $q = \lceil k/2 \rceil$ and $c_0 = (16C_0 + 8/3)^{-1}$. Then, we define the deletion bound by
\begin{align}
 \mathfrak{D}_N(k;d,\varepsilon)
 \coloneqq \min\biggl\{1,\quad
 \exp\left[q\log\frac{2eN}{q}-c_0\varepsilon^2dq\right]
 +
 \exp\left[k\log\frac{eN}{k}
 -\frac{\varepsilon dk}{4}
       \log\frac{\varepsilon N}{2eC_0k}\right]\biggr\}.
 \label{eqn:cutoff-deletion-bound}
\end{align}
In particular, $\mathfrak{D}_N(k;d,\varepsilon)\to0$ as $N\to\infty$ whenever $k\ll \varepsilon N$ and $\log(N/k) \ll\varepsilon^2d$.
The two subsections below apply this deletion bound with
$(d,\varepsilon)=(d,\varepsilon_{\rm B})$ and
$(d,\varepsilon_{\rm H})$, respectively, and prove the resulting
lower bounds on the cardinalities of the active sets.

\subsection{Bipartite model}
\label{subsec:bipartite-cutoff-retention}

We first state the deletion and retention bounds, then derive the
degree estimates used in their proofs.

\begin{lemma}[Bipartite deletion bounds and retained dimensions]
\label{lem:bipartite-cutoff-deletion-bounds}
\label{item:deletion-bipartite}
Assume \eqref{eqn:rectangular-uniform-sparsity-bound} and
$\varepsilon_{\rm B}>0$.  For every integer $k$ in the
range of \eqref{eqn:cutoff-deletion-bound} with
$\varepsilon=\varepsilon_{\rm B}$,
\begin{align}
 \P\bigl((n-|I_\star|)+(m-|J_\star|)\geq k\bigr)
 &\leq\mathfrak D_N(k;d,\varepsilon_{\rm B}).
 \label{eqn:model-cutoff-deletion-tail}
\end{align}
In particular, with probability at least
$1-\mathfrak D_N(k;d,\varepsilon_{\rm B})$, both
$|I_\star|>n-k$ and $|J_\star|>m-k$.
Under \eqref{eqn:bipartite-cutoff-sparsity-range}, the respective cutoff
conditions give, for all sufficiently large $N$,
\begin{subequations}
\begin{align}
 \P\bigl((n-|I_\star|)+(m-|J_\star|)>Nd^{-2}\bigr)
 &\leq d^{-2}
 &&\text{under \ref{item:cutoff-upper-only}},
 \label{eqn:upper-only-deletion-fraction}\\
 \P\bigl(I_\star\ne[n]\ \text{or}\ J_\star\ne[m]\bigr)
 &\leq(4d)^{-j_N(d)}
 &&\text{under \ref{item:cutoff-strong-band}}.
 \label{eqn:strong-band-no-deletion}
\end{align}
\end{subequations}
Under the condition \eqref{eqn:proportional-block-regime}, either \ref{item:cutoff-upper-only} or \ref{item:cutoff-strong-band} implies
\begin{align}
 |I_\star|/n\longrightarrow1,\qquad |J_\star|/m\longrightarrow1,
 \qquad |I_\star|/|J_\star|=\ratio(1+o_\P(1)).
 \label{eqn:cutoff-active-dimensions}
\end{align}
The ratio is evaluated on $J_\star\ne\varnothing$, an event whose
probability tends to one because $|J_\star|/m\to1$.
\end{lemma}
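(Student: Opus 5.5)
The plan is a peeling (certificate) argument for the deletion cascade of Algorithm~\ref{alg:row-column-projection}. If at least $k$ vertices are deleted in total, I consider the first $k$ deleted vertices in order of removal, as a set $S$ with $|S|=k$ (rows and columns together). Each $v\in S$ is removed because its degree into the currently active set violates its band. Two cases arise. In the first, the degree exceeds $U$. The degree into the active set is at most the full degree $D_v$, so $D_v>U\geq\overline{d}+\varepsilon_{\rm B}d$ is a large upward deviation that can be certified using the original graph alone. In the second, the degree is below $L$. Then either $D_v<\underline d-\varepsilon_{\rm B}d/2$ (a downward deviation), or $v$ has lost at least $\varepsilon_{\rm B}d/2$ neighbours to previously deleted vertices, which all lie in $S$. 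Hence on the deletion event, one of two things holds. Either at least $q=\lceil k/2\rceil$ vertices of $S$ have degree deviating from their mean by at least $\varepsilon_{\rm B}d/2$, or the induced bipartite subgraph on $S$ has at least $\varepsilon_{\rm B}dk/4$ edges. The two terms of $\mathfrak D_N$ correspond to these two alternatives.

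For the first alternative I would union bound over sets $T$ of size $q$, giving $\binom{n+m}{q}\leq(2eN/q)^q$ choices, and bound the probability that every vertex of $T$ deviates. Degrees of distinct vertices are dependent only through edges inside $T$. So I would use the degrees into $T^c$, which are independent across $T$ on the same side, and treat the same-side independence (rows among rows, columns among columns) separately. Bernstein (Lemma~\ref{lem:Bennett}), with variance at most $C_0d$ and increments bounded by $1$, gives a per-vertex tail $\exp(-c_0\varepsilon^2 d)$, with $c_0=(16C_0+8/3)^{-1}$ matching the constants $\sqrt{2C_0 d r}+\tfrac23 r$ at deviation $\varepsilon d/2$ minus a small internal correction. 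The internal-edge correction is absorbed into the second alternative. This yields the first exponential term. For the second alternative, I union bound over $S$ with $\binom{N}{k}\leq(eN/k)^k$. The number of internal edges is a sum of independent Bernoullis with mean at most $C_0dk^2/N$ by \eqref{eqn:rectangular-uniform-sparsity-bound}. The Chernoff bound $\P(\mathrm{Bin}\geq t)\leq(e\mu/t)^t$ with $t=\varepsilon dk/4$ gives $\exp[-\tfrac{\varepsilon dk}{4}\log\tfrac{\varepsilon N}{eC_0k\cdot\text{const}}]$, matching the second term under $2eC_0k<\varepsilon N$. This proves \eqref{eqn:model-cutoff-deletion-tail}.

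For \eqref{eqn:upper-only-deletion-fraction}, there are no lower cutoffs, so there is no cascade, and deletions are exactly the vertices with $D_v>U$. I would argue directly. By Bennett with margin $5\sqrt{C_0d\log d}$, each vertex is deleted with probability at most $d^{-12}$ or so. The expected number of deletions is then at most $Nd^{-12}$, and Markov's inequality gives $\P(\#>Nd^{-2})\leq d^{-10}\leq d^{-2}$. Here I use that the margin is only $\sqrt{d\log d}$, so $\varepsilon_{\rm B}$ may be too small for the general bound to apply. For \eqref{eqn:strong-band-no-deletion}, the first deletion round removes nothing if every vertex satisfies $|D_v-\mu_v|<w_N(d)$. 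Bernstein at deviation $\sqrt{2C_0dr_N}+\tfrac23r_N$ gives a per-vertex failure probability of $e^{-r_N}=(2N)^{-1}(4d)^{-j_N}$, and a union bound over $N$ vertices finishes this case. The extra $2j_N$ in $w_N$ is slack.

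For \eqref{eqn:cutoff-active-dimensions}, I combine these bounds: deletions total $o(N)$ in probability, and $n\asymp m\asymp N$, so $|I_\star|/n$ and $|J_\star|/m$ tend to $1$ and the ratio follows. I expect the main obstacle to be the decoupling in the first term. Making "$q$ vertices all have deviating degrees" genuinely a product of independent tails requires splitting each degree into its parts inside and outside $T$ and charging the inside parts to the internal-edge event. Getting the constant $c_0$ exactly right there is the delicate step.
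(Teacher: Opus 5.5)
Your architecture is the same as the paper's proof. You split deletions into buffered initial violations and cascade losses. You charge each lost edge to its later endpoint, so the first $k$ deletions span at least $\varepsilon_{\rm B}dk/4$ edges. The second term comes from Chernoff with a union over $k$-sets, the upper-cap case has no cascade and finishes by Markov, and strong bands use a union of Bennett tails.

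The gap is the first term of $\mathfrak D_N$, which you flag as the main obstacle but do not resolve. Passing to degrees into $T^{\rm c}$ leaves the internal degrees of the upward-deviating vertices to be subtracted. Your plan to absorb this correction into the second alternative does not go through. That alternative is the event $e(S)\geq\varepsilon_{\rm B}dk/4$ for the $k$-set $S$. An obstruction from internal edges of the $q$-set $T\subseteq S$ only forces $e(T)$ above a smaller multiple of $\varepsilon_{\rm B}dq$, namely whatever the rate $c_0$ can tolerate. You would need a third union-bound event with its own constants, so you would not obtain the two-term $\mathfrak D_N$ of \eqref{eqn:cutoff-deletion-bound}. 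Also, $c_0=(16C_0+8/3)^{-1}$ does not come from the strong-band width $\sqrt{2C_0dr_N(d)}+\tfrac23r_N(d)$. A per-vertex Bennett bound with $K=1$, $V=C_0d$ at deviation $\varepsilon_{\rm B}d/2$ has rate $(8C_0+4\varepsilon_{\rm B}/3)^{-1}$; the extra factor of two in $c_0$ comes from a different device.

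The missing idea is to avoid decoupling altogether by linearizing with signs. Fix a $q$-set and signs $\sigma_i,\tau_j\in\{-1,0,1\}$, nonzero exactly on the set and chosen by the direction of each violation. If every vertex of the set deviates by at least $\varepsilon_{\rm B}d/2$ in its prescribed direction, then
\[
 \sum_i\sigma_i(D_i^{\rm r}-\mu_i^{\rm r})+\sum_j\tau_j(D_j^{\rm c}-\mu_j^{\rm c})
 =\sum_{i,j}(\sigma_i+\tau_j)(\widetilde{\emA}_{ij}-p_{ij})
 \geq\varepsilon_{\rm B}dq/2.
\]
The right side is a sum of independent edge variables bounded by $2$. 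The inequality $(\sigma_i+\tau_j)^2\leq2\sigma_i^2+2\tau_j^2$ bounds its variance by $2C_0dq$, so edges shared by two selected vertices cost nothing extra. Bennett with $K=2$, $V=2C_0dq$, $t=\varepsilon_{\rm B}dq/2$ gives $\exp[-\varepsilon_{\rm B}^2dq/(16C_0+8\varepsilon_{\rm B}/3)]\leq e^{-c_0\varepsilon_{\rm B}^2dq}$. There are at most $\binom{N}{q}2^q\leq(2eN/q)^q$ signed sets, which yields exactly the first term.

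There are two smaller slips. In the second term, if $S$ has $a$ rows and $b$ columns, use $ab\leq k^2/4$, so the mean edge count is at most $C_0dk^2/(4N)$. Your bound $C_0dk^2/N$ puts $4eC_0k/(\varepsilon_{\rm B}N)$ rather than $2eC_0k/(\varepsilon_{\rm B}N)$ inside the logarithm. Under strong bands, each vertex has two-sided failure probability $2e^{-r_N(d)}$. The union over these $2N$ tails gives exactly $2Ne^{-r_N(d)}=(4d)^{-j_N(d)}$.
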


\paragraph{Degree estimates.}
We record the degree estimates used for retention here and for
moment verification in Section~\ref{sec:model-moment-verification}.  Write $D_i^{\rm r}=\sum_j\widetilde{\emA}_{ij}$ and
$D_j^{\rm c}=\sum_i\widetilde{\emA}_{ij}$ for the initial degrees.
Using the support events in \eqref{eqn:row-column-support-events}, let
\begin{align}
 \widetilde\Omega
 &=\bigcap_i\sR_i(\widetilde{\mA})
   \cap\bigcap_j\sC_j(\widetilde{\mA}).
 \label{eqn:bipartite-verification-band-event}
\end{align}
For a deterministic set of edges $\sF\subseteq[n]\times[m]$ and a
binary array $\mZ=(\emZ_{ij})_{(i,j)\in\sF}\in\{0,1\}^{\sF}$,
let $\widetilde{\mA}^{\sF\leftarrow\mZ}$ denote the adjacency matrix
obtained by replacing the entries indexed by $\sF$ with their prescribed
values in $\mZ$.  More precisely,
\begin{align}
 \bigl(\widetilde{\mA}^{\sF\leftarrow\mZ}\bigr)_{ij}
 &\coloneqq
 \begin{cases}
  \emZ_{ij}, &(i,j)\in\sF,\\
  \widetilde{\emA}_{ij}, &(i,j)\notin\sF.
 \end{cases}
 \label{eqn:bipartite-forced-adjacency}
\end{align}
Thus $\emZ_{ij}=1$ forces an edge to be present and $\emZ_{ij}=0$
forces it to be absent.  Every unforced entry remains unchanged and
retains its original independent Bernoulli law.  The array $\mZ$ is
indexed only by $\sF$; it does not specify values for the other entries.

For example, take $n=3$, $m=2$, $\sF=\{(1,2),(3,1)\}$, and
prescribe $\emZ_{12}=0$, $\emZ_{31}=1$.  For the realization below,
\eqref{eqn:bipartite-forced-adjacency} gives
\begin{align}
 \widetilde{\mA}
 &=\begin{pmatrix}0&1\\1&0\\0&1\end{pmatrix},
 &\widetilde{\mA}^{\sF\leftarrow\mZ}
 &=\begin{pmatrix}0&0\\1&0\\1&1\end{pmatrix}.
\end{align}
Only the entries $(1,2)$ and $(3,1)$ have been replaced.

For integers $\ell\geq0$, define the largest degree-band failure
probability over all assignments to at most $2\ell$ edges by
\begin{align}
 \varepsilon_{\ell,{\rm B}}
 &\coloneqq\max_{\substack{\sF\subseteq[n]\times[m],\ |\sF|\leq2\ell\\
                            \mZ\in\{0,1\}^{\sF}}}
    \P\bigl(\widetilde{\mA}^{\sF\leftarrow\mZ}
                            \notin\widetilde\Omega\bigr).
 \label{eqn:bipartite-forced-band-probability}
\end{align}
For upper cutoffs, define the single-vertex tail envelope and the
correction used for joint degree events by
\begin{align}
 q_{\theta,{\rm B}}
 &\coloneqq\max\left\{
   \max_{i:U_{\rm r}<m}
     e^{-\theta(U_{\rm r}+1)+(e^\theta-1)\mu_i^{\rm r}},
   \max_{j:U_{\rm c}<n}
     e^{-\theta(U_{\rm c}+1)+(e^\theta-1)\mu_j^{\rm c}}
            \right\},\label{eqn:bipartite-q-theta-B}\\
 \alpha_{\ell,{\rm B}}
 &\coloneqq e^{2\theta}q_{\theta,{\rm B}}
   \exp\!\left[\tfrac12(e^\theta-1)^2p_\star(2\ell+1)\right],
 \qquad\theta>0.
 \label{eqn:bipartite-alpha-l-B}
\end{align}
Empty maxima are zero.  We use $x^0=1$ for $x\geq0$ in the word
estimates below.

\begin{lemma}[Bipartite degree tails]
\label{lem:bipartite-degree-tail-control}
The initial degrees and forced-band probabilities satisfy the following
bounds.
\begin{enumerate}[label=\textup{(\roman*)},leftmargin=2.2em,itemsep=0.6em]
\item For every $\theta>0$ and every row $i$ and column $j$,
\begin{align}
 \P(D_i^{\rm r}>U_{\rm r})&\leq q_{\theta,{\rm B}},
 &\P(D_j^{\rm c}>U_{\rm c})&\leq q_{\theta,{\rm B}}.
 \label{eqn:bipartite-upper-cap-tail-probabilities}
\end{align}
\item If every nonvacuous row and column margin is at least $2\ell+h$,
where $\ell\geq0$ is an integer and $h>0$, then
\begin{align}
 \varepsilon_{\ell,{\rm B}}
 &\leq\min\left\{1,2N\exp\!\left[
 -\frac{h^2}{2(\max\{\overline{\drow},\overline{\dcol}\}+h/3)}
                             \right]\right\}.
 \label{eqn:bipartite-band-margin-tail}
\end{align}
\end{enumerate}
\end{lemma}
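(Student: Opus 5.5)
Part (i) is a Chernoff bound for a sum of independent Bernoulli variables. Fix a row $i$ with $U_{\rm r}<m$; the case $U_{\rm r}=m$ is vacuous. By Markov's inequality applied to $e^{\theta D_i^{\rm r}}$,
\[
\P(D_i^{\rm r}\geq U_{\rm r}+1)\leq e^{-\theta(U_{\rm r}+1)}\prod_j\bigl(1+p_{ij}(e^\theta-1)\bigr).
\]
Then $1+x\leq e^x$ bounds the product by $\exp[(e^\theta-1)\mu_i^{\rm r}]$, which is one of the terms in the maximum $q_{\theta,{\rm B}}$. Columns are handled identically.

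For part (ii), the plan is to reduce the forced-band failure to a union of single-vertex degree deviations with a shrunken margin. Fix $\sF$ with $|\sF|\leq2\ell$ and any $\mZ\in\{0,1\}^{\sF}$. Write $D_i^{{\rm r},\sF}$ for the degree of row $i$ in $\widetilde{\mA}^{\sF\leftarrow\mZ}$, and define the unforced partial degree $D_i^{{\rm r},\circ}=\sum_{j:(i,j)\notin\sF}\widetilde{\emA}_{ij}$. Forcing changes the degree of a given row by at most the number of forced entries in that row, so
\[
|D_i^{{\rm r},\sF}-D_i^{{\rm r},\circ}|\leq 2\ell \quad\text{and}\quad 0\leq \mu_i^{\rm r}-\E D_i^{{\rm r},\circ}\leq 2\ell p_\star.
\]
Rather than carry the $p_\star$ term separately, I will compare with the full unforced degree $D_i^{\rm r}$. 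The modified array differs from $\widetilde{\mA}$ only on $\sF$, so $|D_i^{{\rm r},\sF}-D_i^{\rm r}|\leq2\ell$ holds deterministically. Now suppose the band event $\sR_i$ fails in the modified matrix along a nonvacuous side, say $D_i^{{\rm r},\sF}>U_{\rm r}$. Then $D_i^{\rm r}>U_{\rm r}-2\ell\geq\overline{\drow}+h\geq\mu_i^{\rm r}+h$. Similarly, failure of the lower cutoff forces $D_i^{\rm r}<\mu_i^{\rm r}-h$. Vacuous sides, such as $L_{\rm r}=0$ or $U_{\rm r}=m$, can never fail. The original $D_i^{\rm r}$ does not depend on $\mZ$, so this argument makes the maximum over $(\sF,\mZ)$ harmless.

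Next, Bernstein's inequality applies to $D_i^{\rm r}-\mu_i^{\rm r}$, a sum of independent centered variables bounded by $1$ with variance at most $\mu_i^{\rm r}\leq\max\{\overline{\drow},\overline{\dcol}\}$. This gives
\[
\P(|D_i^{\rm r}-\mu_i^{\rm r}|>h)\leq 2\exp\!\left[-\frac{h^2}{2(\max\{\overline{\drow},\overline{\dcol}\}+h/3)}\right].
\]
The same bound holds for columns. A union bound over the $n+m=N$ vertices produces the factor $2N$ in \eqref{eqn:bipartite-band-margin-tail}, and the minimum with $1$ is trivial.

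The only delicate step is making sure the deterministic comparison with the unforced degree absorbs the forced entries uniformly over all $(\sF,\mZ)$, including entries forced to $1$ outside the original support. The bound $|D^{\sF}-D|\leq|\sF|\leq 2\ell$ does this without conditioning, which is exactly why the hypothesis is stated as margin $\geq2\ell+h$.
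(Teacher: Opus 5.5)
Your proposal is correct and follows the paper's proof. In (i) you use the exponential-moment Chernoff bound at threshold $U_{\rm r}+1$ (resp.\ $U_{\rm c}+1$), with full caps giving probability zero. In (ii) you use the deterministic shift $|D^{\sF}-D|\leq|\sF|\leq2\ell$, which reduces any forced-band failure, uniformly in $(\sF,\mZ)$, to an original degree deviating from its mean by more than $h$, and then the quadratic Bennett tail with $K=1$, $V=\max\{\overline{\drow},\overline{\dcol}\}$ and a union bound over $2N$ one-sided tails. The detour through the partial degree $D_i^{{\rm r},\circ}$ is unnecessary but harmless.
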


\begin{proof}[Proof of Lemma~\ref{lem:bipartite-degree-tail-control}]
Apply the exponential form of Lemma~\ref{lem:Chernoff} to each
initial degree with threshold $U_{\rm r}+1$ or $U_{\rm c}+1$.
Full caps cannot be exceeded, so this gives the first assertion.

Forcing at most $2\ell$ edges changes every degree by at most
$2\ell$. Under the stated margins, any forced-band violation thus
requires an original degree to deviate from its mean by more than $h$.
Apply Lemma~\ref{lem:Bennett}, in the form
\eqref{eqn:bennett-quadratic-tail}, with $K=1$ and variance budget
$V=\max\{\overline{\drow},\overline{\dcol}\}$.
A union bound over at most $2N$ tails, uniform in the forced entries,
proves \eqref{eqn:bipartite-band-margin-tail}.
\end{proof}

\paragraph{Deletion and retention proofs.}
A lower cutoff can cause further deletions after the first round.
The following argument controls these losses throughout the algorithm.

\begin{proof}[Proof of Lemma~\ref{lem:bipartite-cutoff-deletion-bounds}]
A deletion either starts from an atypical initial degree or is caused by
neighbors lost in earlier rounds.  We bound these two possibilities
separately, using the two terms of
$\mathfrak D_N(k;d,\varepsilon_{\rm B})$.
Fix an admissible $k$ and choose $q=\lceil k/2\rceil$.
Let the buffered initial violations be
\begin{align}
 Q={}&\bigl(\{i:L_{\rm r}>0,\ D_i^{\rm r}
                  <L_{\rm r}+\varepsilon_{\rm B}d/2\}
          \cup\{i:U_{\rm r}<m,\ D_i^{\rm r}>U_{\rm r}\}\bigr)
 \nonumber\\[-2pt]
 &\sqcup\bigl(\{j:L_{\rm c}>0,\ D_j^{\rm c}
                  <L_{\rm c}+\varepsilon_{\rm B}d/2\}
          \cup\{j:U_{\rm c}<n,\ D_j^{\rm c}>U_{\rm c}\}\bigr).
 \label{eqn:retention-buffered-seeds}
\end{align}
Every vertex of $Q$ has a signed degree deviation of at least
$\varepsilon_{\rm B}d/2$.  For a fixed set of $q$ vertices,
choose signs $\sigma_i,\tau_j\in\{-1,0,1\}$, nonzero exactly on that
set.  Their signed degree sum is
\[
 \sum_i\sigma_i(D_i^{\rm r}-\mu_i^{\rm r})
 +\sum_j\tau_j(D_j^{\rm c}-\mu_j^{\rm c})
 =\sum_{i,j}(\sigma_i+\tau_j)(\widetilde{\emA}_{ij}-p_{ij}).
\]
The summands are independent, bounded by two, and have total
variance at most
\[
 \sum_{i,j}(\sigma_i+\tau_j)^2p_{ij}(1-p_{ij})
 \leq2\sum_i\sigma_i^2\mu_i^{\rm r}
       +2\sum_j\tau_j^2\mu_j^{\rm c}
 \leq2C_0d q.
\]
Lemma~\ref{lem:Bennett}, with $K=2$, $V=2C_0d q$,
and $t=\varepsilon_{\rm B}d q/2$, bounds this signed
upper tail by $e^{-c_0\varepsilon_{\rm B}^2d q}$.
The union over at most $(2eN/q)^q$ signed sets gives the first term
of \eqref{eqn:cutoff-deletion-bound}.

Now suppose that at least $k$ vertices are deleted and $|Q|<q$.
List $Q$ first, including any vertices of $Q$ that survive, followed by
all deleted vertices outside $Q$ in round order.  Among the first $k$
vertices, at least $k-q+1\geq k/2$ lie outside $Q$.  Each could only be
deleted through its lower cutoff, having lost more than
$\varepsilon_{\rm B}d/2$ neighbors in strictly earlier
rounds.  Charge each lost edge to its later endpoint.  The first $k$
vertices thus induce at least $\varepsilon_{\rm B}d k/4$
edges, with no edge charged twice.  Edges between vertices deleted in
the same round are never counted as earlier losses.

For a fixed $k$-vertex set $S$, the induced-edge count is a Bernoulli
sum with mean at most $C_0d k^2/(2N)$.
Since $2eC_0k<\varepsilon_{\rm B}N$, its mean is smaller than
$t=\varepsilon_{\rm B}d k/4$. Lemma~\ref{lem:Chernoff}
therefore gives
\[
 \P\bigl(e(S)\geq\varepsilon_{\rm B}d k/4\bigr)
 \leq\left(\frac{2eC_0k}{\varepsilon_{\rm B}N}
                                        \right)^{\varepsilon_{\rm B}d k/4}.
\]
A union over $\binom Nk\leq(eN/k)^k$ sets proves the second term.
Combining the two events and bounding their probability by one gives
\eqref{eqn:model-cutoff-deletion-tail}.  If the total loss is less
than $k$, each part loses fewer than $k$ vertices, which proves the
stated finite bounds on $|I_\star|$ and $|J_\star|$.

For upper cutoffs alone, the first deletion round is terminal because
degrees only decrease.  Choose
$\theta=\sqrt{\log d/(C_0d)}$.
Using \eqref{eqn:rectangular-edge-upper-cutoffs} in the exponential bound from
Lemma~\ref{lem:Chernoff}, with $e^\theta-1-\theta\leq\theta^2$, gives
\begin{align}
 q_{\theta,{\rm B}}
 &\leq\exp\!\left[-5\theta\sqrt{C_0d\log d}
                          +C_0d\theta^2\right]
 =d^{-4}.
 \label{eqn:upper-only-retention-tail}
\end{align}
The expected total deletion is at most $Nd^{-4}$;
Markov's inequality proves \eqref{eqn:upper-only-deletion-fraction}.
For strong bands, apply \eqref{eqn:bipartite-band-margin-tail} with
no forced edges and
$h=\sqrt{2C_0d\,r_N(d)}+\tfrac23r_N(d)$.
The failure probability is at most
$2Ne^{-r_N(d)}=(4d)^{-j_N(d)}$;
outside this event every initial degree meets its band, so no vertex
is deleted.  This proves \eqref{eqn:strong-band-no-deletion}.
In either cutoff regime the total deletion is $o_\P(N)$.
Since $n$ and $m$ are proportional to $N$, dividing the two losses by
their dimensions proves \eqref{eqn:cutoff-active-dimensions}.
\end{proof}

For the smallest singular-value theorem, we also need nonempty active sets
when the lower column cutoff is close to the minimum expected degree.
The next lemma uses $L_{\rm r}=0$ to control the surviving columns
without requiring a positive lower-degree margin.

\begin{lemma}[Retention below the minimum expected column degree]
\label{lem:rectangular-column-retention}
Assume \eqref{eqn:proportional-block-regime},
\eqref{eqn:rectangular-uniform-sparsity-bound},
\eqref{eqn:bipartite-cutoff-sparsity-range}, and
\eqref{eqn:rectangular-edge-upper-cutoffs}, with $L_{\rm r}=0$ and
$1\leq L_{\rm c}\leq\underline{\dcol}$.
There are constants $c,C>0$, depending only on the fixed hypothesis
constants, such that, for all sufficiently large $N$,
\begin{align}
 \P\bigl(I_\star=\varnothing\ \text{or}\ |J_\star|<cm\bigr)
 &\leq e^{-cm}+Cd^{-3}.
 \label{eqn:rectangular-column-retention}
\end{align}
\end{lemma}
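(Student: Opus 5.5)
Since $L_{\rm r}=0$, rows are deleted only through the upper cap, and the first round removes them once and for all. The dangerous mechanism is the lower column cutoff. Because $L_{\rm c}\le\underline{\dcol}$ gives no margin, a constant fraction of columns may fail in the first round, so I do not aim for near-full retention. Instead I will show that a positive fraction of columns are \emph{robust}: they survive any loss of rows that can occur.

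First, I control the rows. Let $R^{\rm u}=\{i: D^{\rm r}_i>U_{\rm r}\}$ be the rows removed by the upper cap, and let $R^{\rm c}=\{j:D^{\rm c}_j>U_{\rm c}\}$ be the columns removed by their upper cap. By \eqref{eqn:upper-only-retention-tail}, with the same choice $\theta=\sqrt{\log d/(C_0d)}$, each vertex lies in these sets with probability at most $d^{-4}$. Markov's inequality then gives $|R^{\rm u}|+|R^{\rm c}|\le Nd^{-3}$ except with probability $Cd^{-1}$. That is too weak, so I will use a higher moment or a Chernoff bound on $|R^{\rm u}|$, a sum of independent indicators, to get $|R^{\rm u}|\le Nd^{-3}$ with probability $1-e^{-cN d^{-3}}$. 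Under \eqref{eqn:bipartite-cutoff-sparsity-range} this probability is at least $1-e^{-cm}$. For columns, the $R^{\rm c}$ count is not independent of the row counts, but a single Markov bound at threshold $m/10$ costs only $Cd^{-4}$, which is absorbed in $Cd^{-3}$. On this event, $I_\star\supseteq[n]\setminus R^{\rm u}$ is nonempty, since rows are never deleted after the first round. Rows are never deleted because of lower thresholds ($L_{\rm r}=0$), and the remaining rows pass the upper check in the first round, with degrees only decreasing afterwards.

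Second, I define robust columns. Call column $j$ robust if $j\notin R^{\rm c}$ and its degree into $[n]\setminus R^{\rm u}$ is at least $L_{\rm c}$. The surviving row set is exactly $[n]\setminus R^{\rm u}$, and the column check only removes columns, never rows. So every robust column passes every round, and $J_\star\supseteq\{\text{robust}\}$; there is no cascade because rows are never removed for lack of columns. To avoid dependence on the random set $R^{\rm u}$, I use the sufficient condition $\sum_i \widetilde{\emA}_{ij}\indi{i\notin R^{\rm u}}\ge D^{\rm c}_j-|\{i\in R^{\rm u}: \widetilde{\emA}_{ij}=1\}|$. Each column has degree $D_j^{\rm c}$, roughly Poisson with mean $\mu_j^{\rm c}\ge\underline{\dcol}\ge L_{\rm c}$. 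By the central limit theorem / Paley--Zygmund applied to a Binomial with mean at least $L_{\rm c}$ and $d\to\infty$, we get $\P(D^{\rm c}_j\ge L_{\rm c}+1)\ge c_1>0$, indeed $\ge 1/2-o(1)$ when $L_{\rm c}\le\mu_j^{\rm c}$. Since the entries of different columns are independent, Hoeffding applied to $\sum_j\indi{D^{\rm c}_j\ge L_{\rm c}+1}$ gives at least $c_1m/2$ such columns, except with probability $e^{-cm}$.

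Third, I remove the columns spoiled by deleted rows. A column with $D_j^{\rm c}\ge L_{\rm c}+1$ can fail only if it has at least two edges into $R^{\rm u}$ beyond slack, or lies in $R^{\rm c}$. The number of edges between $R^{\rm u}$ and all columns is $\sum_{i\in R^{\rm u}}D_i^{\rm r}$. Its expectation is at most $n\,\E[D_i^{\rm r}\indi{D_i^{\rm r}>U_{\rm r}}]\le Cnd\cdot d^{-4}$, so by Markov it is at most $c_1m/8$ except with probability $Cd^{-3}$. Each spoiled column consumes at least one such edge, so at most $c_1 m/8$ columns are spoiled, and at most $m/10$ lie in $R^{\rm c}$. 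Adjusting constants (taking $c_1$ near $1/2$ so that $m/10$ is dominated), I get $|J_\star|\ge cm$.

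\textbf{Main obstacle.} The delicate step is the slack count: with $D_j^{\rm c}\ge L_{\rm c}+1$ we only tolerate one lost edge. I would instead use $D^{\rm c}_j\ge L_{\rm c}+s$ for a fixed constant $s$; this still has probability bounded below, since the Binomial fluctuations are of order $\sqrt d\gg s$. Then at most $(\#\text{edges into }R^{\rm u})/s$ columns can be spoiled, and the probability bookkeeping yields \eqref{eqn:rectangular-column-retention}.
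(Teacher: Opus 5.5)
Your argument is correct and is essentially the paper's. You show that order-$m$ columns have original degree in $[L_{\rm c},U_{\rm c}]$; the paper gets $\P(D_j^{\rm c}>\mu_j^{\rm c})\geq1/16$ from a fourth-moment/Cauchy--Schwarz bound where you use a CLT. You then bound the number $R$ of edges incident to upper-capped rows via $\E R\leq Cmd^{-3}$ (condition on the edge and apply Chernoff to the rest of its row) and Markov. Finally, rows are deleted only in the first round, so at least $W-R$ of the $W$ candidate columns survive.

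Two simplifications. No slack $s$ is needed, since an unspoiled candidate keeps its degree exactly. Step~1 can also be dropped, because $I_\star\neq\varnothing$ follows from $J_\star\neq\varnothing$ and $L_{\rm c}\geq1$. Dropping it removes your backwards claim that $1-e^{-cNd^{-3}}\geq1-e^{-cm}$, which fails since $Nd^{-3}\ll m$; that term is $\leq Cd^{-3}$ anyway, as $d\leq N^{1/6}$.
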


\begin{proof}[Proof of Lemma~\ref{lem:rectangular-column-retention}]
Let $D_i^{\rm r},D_j^{\rm c}$ be the original degrees.  We first find many columns whose original degrees meet both cutoffs, and then bound how many of them lose a neighbor to an upper row cutoff.

The profile assumptions give $p_\star=o(1)$ and
$\underline{\dcol}\geq d/C_0$.  Hence
$v_j\coloneqq\Var(D_j^{\rm c})\geq(1-p_\star)\mu_j^{\rm c}\geq1$
for large $N$.  For $X_j=D_j^{\rm c}-\mu_j^{\rm c}$, independence
of the Bernoulli summands gives
\begin{align}
 \E X_j^4\leq3v_j^2+v_j\leq4v_j^2.
\end{align}
H\"older's inequality and $\E X_j=0$ imply
\begin{align}
 \E|X_j|\geq\frac{v_j^{3/2}}{(\E X_j^4)^{1/2}}
 \geq\frac{\sqrt{v_j}}2,
 \qquad
 \E(X_j)_+=\tfrac12\E|X_j|\geq\frac{\sqrt{v_j}}4.
\end{align}
By Cauchy--Schwarz,
$\E(X_j)_+\leq\sqrt{v_j\P(X_j>0)}$, so
$\P(D_j^{\rm c}>\mu_j^{\rm c})\geq1/16$.
Since $L_{\rm c}\leq\mu_j^{\rm c}$ and
\eqref{eqn:upper-only-retention-tail} bounds the upper violation by
$d^{-4}$, each column meets its original band with probability at
least $1/32$ for large $N$.
The column degrees are independent. If $W$ counts these columns,
then $\E W\geq m/32$. The lower-tail form of
Lemma~\ref{lem:Chernoff}, with $\delta=1/2$, gives
\begin{align}
 \P(W<m/64)\leq e^{-m/256}.
\end{align}

Let $R$ count all edges incident to rows removed by their upper
cutoff.  If $U_{\rm r}=m$, then $R=0$.  Otherwise, with
$\theta=\sqrt{\log d/(C_0d)}$, condition on
$\widetilde{\emA}_{ij}=1$ and apply Lemma~\ref{lem:Chernoff}
to the remaining independent row entries to obtain
\begin{align}
 \E\bigl[\widetilde{\emA}_{ij}
                 \indi{D_i^{\rm r}>U_{\rm r}}\bigr]
 &\leq p_{ij}\exp\{-\theta U_{\rm r}
                      +(e^\theta-1)(\mu_i^{\rm r}-p_{ij})\}
 \leq e^\theta q_{\theta,{\rm B}}p_{ij}.
\end{align}
Since $q_{\theta,{\rm B}}\leq d^{-4}$ and
$\sum_{i,j}p_{ij}\leq C_0dm$, this yields
$\E R\leq eC_0md^{-3}$ for large $N$.  Markov's inequality gives
\begin{align}
 \P(R\geq m/128)\leq128eC_0d^{-3}.
\end{align}
At most $R$ of the $W$ candidate columns have a neighbor in a removed
row.  Every other candidate retains all its neighbors, so its degree
still lies in $[L_{\rm c},U_{\rm c}]$.  Since $L_{\rm r}=0$, retained
rows cannot be deleted after column removal, and these columns survive
every subsequent round.  On $W\geq m/64$ and $R<m/128$ we therefore
have $|J_\star|\geq W-R>m/128$.
Because $L_{\rm c}\geq1$, a surviving column also ensures
$I_\star\ne\varnothing$.
A union bound proves the assertion, for example with $c=1/256$
and $C=128eC_0$.
\end{proof}

\paragraph{Retained variances.}
For $x\in\{0,1\}$ and $0\leq p\leq p_\star$,
\begin{align}
 (1-p_\star)^2x\leq(x-p)^2\leq x+p^2.
 \label{eqn:bernoulli-variance-comparison}
\end{align}
Summing these inequalities along active rows and columns, and using
their terminal degree bounds, gives
\begin{align}
 \sum_{j\in J_\star}|\emX^{(\tau)}_{ij}|^2
 &\leq\frac{U_{\rm r}+\sum_jp_{ij}^2}{d},
 &\frac{(1-p_\star)^2L_{\rm c}}{d}
 &\leq\sum_{i\in I_\star}|\emX^{(\tau)}_{ij}|^2
 \leq\frac{U_{\rm c}+\sum_ip_{ij}^2}{d}
 \label{eqn:cutoff-active-column-variances}
\end{align}
for every $i\in I_\star$ and $j\in J_\star$.

\subsection{Hermitian model}
\label{subsec:hermitian-cutoff-retention}

The symmetric model has one active vertex set and a common degree
band. Its retention bounds follow from the same two mechanisms:
initial degree deviations and losses of neighbors in earlier rounds.

\begin{lemma}[Hermitian deletion bounds and retained dimension]
\label{lem:hermitian-cutoff-deletion-bounds}
\label{item:deletion-hermitian}
Assume \eqref{eqn:uniform-sparsity-bound} and
$\varepsilon_{\rm H}>0$.  For every integer $k$ in the
range of \eqref{eqn:cutoff-deletion-bound} with
$\varepsilon=\varepsilon_{\rm H}$,
\begin{align}
 \P\bigl(N-|I_\star|\geq k\bigr)
 &\leq\mathfrak D_N(k;d,\varepsilon_{\rm H}).
 \label{eqn:hermitian-cutoff-deletion-tail}
\end{align}
Thus $|I_\star|>N-k$ with probability at least
$1-\mathfrak D_N(k;d,\varepsilon_{\rm H})$.
Under \eqref{eqn:hermitian-cutoff-sparsity-range}, for all sufficiently large $N$,
\begin{subequations}
\begin{align}
 \P\bigl(N-|I_\star|>Nd^{-2}\bigr)&\leq d^{-2}
 &&\text{under \ref{item:cutoff-upper-only}},
 \label{eqn:Hermitian-upper-only-deletion-fraction}\\
 \P\bigl(I_\star\ne[N]\bigr)&\leq(4d)^{-j_N(d)}
 &&\text{under \ref{item:cutoff-strong-band}}.
 \label{eqn:Hermitian-strong-band-no-deletion}
\end{align}
\end{subequations}
In either case $|I_\star|/N\to1$.
\end{lemma}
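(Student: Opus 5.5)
The plan is to transfer the proof of Lemma~\ref{lem:bipartite-cutoff-deletion-bounds} to the symmetric setting with a single active set $I_\star$ and the common band $[L,U]$. A vertex deleted by Algorithm~\ref{alg:row-column-projection} either has an atypical initial degree or has lost many neighbours in earlier rounds. These two mechanisms will produce the two terms of $\mathfrak D_N(k;d,\varepsilon_{\rm H})$.

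\emph{Buffered seeds.} Fix an admissible $k$ and set $q=\lceil k/2\rceil$. Define the seed set
\[
Q=\{u: L>0,\ D_u<L+\varepsilon_{\rm H}d/2\}\cup\{u: U<N-1,\ D_u>U\}.
\]
Each $u\in Q$ deviates from $\mu_u$ by at least $\varepsilon_{\rm H}d/2$ in a signed sense. Fix a $q$-set with signs $\sigma_u\in\{-1,0,1\}$. Then
\[
\sum_u\sigma_u(D_u-\mu_u)=\sum_{u<v}(\sigma_u+\sigma_v)(\emA_{uv}-p_{uv}).
\]
This is a sum of independent terms, each bounded by $2$, with total variance at most $2\sum_u\sigma_u^2\mu_u\leq 2C_0dq$. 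Bennett's inequality (Lemma~\ref{lem:Bennett}) gives the tail $e^{-c_0\varepsilon_{\rm H}^2dq}$. A union bound over at most $(2eN/q)^q$ signed sets yields the first term.

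\emph{Cascade.} Suppose $|Q|<q$ and at least $k$ vertices are deleted. List $Q$ first, then the deleted vertices in round order, and keep the first $k$ entries. At least $k/2$ of these lie outside $Q$. Each such vertex was removed by the lower cutoff after losing more than $\varepsilon_{\rm H}d/2$ neighbours in strictly earlier rounds, since upper violations can only come from initial degrees. Charge each lost edge to its later endpoint, so that no edge is charged twice. The $k$-set then spans at least $\varepsilon_{\rm H}dk/4$ edges. Its induced edge count has mean at most $C_0dk^2/(2N)$. The Chernoff bound (Lemma~\ref{lem:Chernoff}), together with $2eC_0k<\varepsilon_{\rm H}N$ and a union over $(eN/k)^k$ sets, gives the second term and hence \eqref{eqn:hermitian-cutoff-deletion-tail}.

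\emph{Regimes.} Under \ref{item:cutoff-upper-only}, take $\theta=\sqrt{\log d/(C_0d)}$. Then \eqref{eqn:hermitian-cutoff-upper-only} and the exponential Chernoff bound give $\P(D_u>U)\leq d^{-4}$. Only one round occurs, since degrees only decrease, so $\E(N-|I_\star|)\leq Nd^{-4}$ and Markov's inequality gives \eqref{eqn:Hermitian-upper-only-deletion-fraction}. Under \ref{item:cutoff-strong-band}, apply Bennett with $h=\sqrt{2C_0d\,r_N(d)}+\tfrac23 r_N(d)\leq w_N(d)$ and take a union over $N$ vertices and two tails. This bounds the probability that some initial degree leaves its band by $2Ne^{-r_N(d)}=(4d)^{-j_N(d)}$; off this event no vertex is deleted. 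Both bounds tend to zero, so $|I_\star|/N\to1$.

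The main obstacle is the cascade charging step: one must check that same-round deletions are never counted and that the symmetric edges (one variable per unordered pair) keep the induced-edge mean bound valid.
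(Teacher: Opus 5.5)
Your proposal is correct and follows the paper's proof essentially step for step. The paper uses the same buffered seed set $Q$ with Bennett's inequality on signed degree sums for the first term of $\mathfrak D_N$, and the same round-ordered charging of lost edges to a $k$-vertex set, with a Chernoff bound and a union over such sets, for the second term. The two cutoff regimes are likewise handled as you do, by the $\theta=\sqrt{\log d/(C_0d)}$ Chernoff estimate plus Markov's inequality and by Bennett's inequality with the union over $2N$ tails.
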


\paragraph{Degree estimates.}
For the probability estimates, write $D_u=\sum_v\emA_{uv}$ for the
original degree.  For the common band $[L,U]$, define the event
that every original degree lies in the band by
\begin{align}
 \Omega\coloneqq\Omega(\mA;L,U)
 &=\bigcap_{u\in[N]}\{L\leq D_u\leq U\}
  =\bigcap_u\sR_u(\mA)=\bigcap_u\sC_u(\mA).
 \label{eqn:hermitian-support-event}
\end{align}
On the event in \eqref{eqn:hermitian-support-event}, the projection
algorithm removes no vertices.
We also need to control its failure probability after fixing a small
number of edges.  The forcing operation replaces both symmetric
entries corresponding to each unordered edge in $\sF\subseteq\binom{[N]}2$, using the same
prescribed value from $\mZ\in\{0,1\}^{\sF}$ for both entries.
For an integer $\ell\geq0$, define
\begin{align}
 \varepsilon_{\ell,{\rm H}}
 &\coloneqq\max_{\substack{\sF\subseteq\binom{[N]}2,\ |\sF|\leq2\ell\\
                    \mZ\in\{0,1\}^{\sF}}}
       \P(\mA^{\sF\leftarrow\mZ}\notin\Omega).
 \label{eqn:hermitian-forced-band-probability}
\end{align}
The upper-tail parameters are
\begin{align}
 q_{\theta,{\rm H}}
 &\coloneqq\max_{u:U<N-1}
    \exp\{-\theta(U+1)+(e^\theta-1)\mu_u\},\nonumber\\
 \alpha_{\ell,{\rm H}}
 &\coloneqq e^{2\theta}q_{\theta,{\rm H}}
    \exp\left[\tfrac12(e^\theta-1)^2p_\star(2\ell+1)\right],
 \qquad\theta>0,
 \label{eqn:hermitian-upper-cap-parameters}
\end{align}
with an empty maximum equal to zero.

\begin{lemma}[Hermitian degree tails]
\label{lem:hermitian-degree-tail-control}
The initial degrees and forced-band probabilities satisfy the following
bounds.
\begin{enumerate}[label=\textup{(\roman*)},leftmargin=2.2em,itemsep=0.6em]
\item For every $\theta>0$ and every vertex $u$,
\begin{align}
 \P(D_u>U)&\leq q_{\theta,{\rm H}}.
 \label{eqn:hermitian-verification-cap-probability}
\end{align}
\item If every nonvacuous margin is at least $2\ell+h$, where
$\ell\geq0$ is an integer and $h>0$, then
\begin{align}
 \varepsilon_{\ell,{\rm H}}
 &\leq\min\left\{1,2N\exp\left[-\frac{h^2}{2(d+h/3)}\right]\right\}.
 \label{eqn:hermitian-verification-margin-tail}
\end{align}
\end{enumerate}
\end{lemma}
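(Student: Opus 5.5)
The plan is to follow the proof of Lemma~\ref{lem:bipartite-degree-tail-control} almost verbatim, with the bipartite row and column degrees replaced by the single family of symmetric degrees $D_u=\sum_v\emA_{uv}$. For part (i), fix $u$ with $U<N-1$. Full caps $U=N-1$ cannot be exceeded, since $D_u\leq N-1$ because the diagonal vanishes, and this is why those vertices are excluded from the maximum defining $q_{\theta,{\rm H}}$. The degree $D_u$ is a sum of the independent Bernoulli variables $\emA_{uv}\sim\Ber(p_{uv})$, $v\neq u$. I would apply the exponential Chernoff form in Lemma~\ref{lem:Chernoff}, that is, Markov's inequality on $e^{\theta D_u}$ combined with $\E e^{\theta\emA_{uv}}=1+p_{uv}(e^\theta-1)\leq\exp\{p_{uv}(e^\theta-1)\}$. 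Since $D_u$ is integer valued, $\{D_u>U\}=\{D_u\geq U+1\}$, and this gives
\[
\P(D_u>U)\leq\exp\{-\theta(U+1)+(e^\theta-1)\mu_u\}\leq q_{\theta,{\rm H}}.
\]

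For part (ii), fix an admissible forcing pair $(\sF,\mZ)$ with $|\sF|\leq2\ell$. Forcing an unordered edge $\{x,y\}$ changes only the entries $(x,y)$ and $(y,x)$, so the degree of each vertex changes by at most one per forced edge incident to it. This is the one point where symmetry needs a check: each forced edge touches two vertices, but every individual degree still moves by at most $|\sF|\leq2\ell$. Let $D_u'$ denote the forced degree, so $|D_u'-D_u|\leq2\ell$.

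Now suppose the forced matrix lies outside $\Omega$. Then some $u$ has $D_u'<L$ or $D_u'>U$, where only nonvacuous sides count (if $L=0$ or $U=N-1$, that side can never fail). The margins give $\underline d-L\geq2\ell+h$ and $U-d\geq2\ell+h$, and we have $L\leq\underline d-2\ell-h\leq\mu_u-2\ell-h$ and $U\geq d+2\ell+h\geq\mu_u+2\ell+h$. It follows that $|D_u-\mu_u|>h$ for the original degree. This event does not depend on $(\sF,\mZ)$.

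To bound its probability, I would apply Bennett's inequality in the quadratic form \eqref{eqn:bennett-quadratic-tail} to $D_u-\mu_u=\sum_v(\emA_{uv}-p_{uv})$. The summands are independent, bounded by $K=1$, and have total variance at most $\mu_u\leq d$. Each tail is therefore at most $\exp[-h^2/(2(d+h/3))]$. A union bound over the $N$ vertices and the two sides gives the factor $2N$, and the minimum with $1$ is trivial. Because the bound is uniform in $(\sF,\mZ)$, taking the maximum in \eqref{eqn:hermitian-forced-band-probability} proves \eqref{eqn:hermitian-verification-margin-tail}.

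There is no real obstacle. The only care needed is the bookkeeping of the symmetric forcing (each unordered edge is forced in both entries but counted once in $D_u$) and the convention that vacuous sides are ignored.
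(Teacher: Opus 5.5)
Your proposal is correct and follows the paper's own argument. Part (i) is the exponential Chernoff bound at threshold $U+1$, with full caps handled by $D_u\leq N-1$. Part (ii) uses the at-most-$2\ell$ degree shift and the margins to reduce any forced-band violation to a deviation $|D_u-\mu_u|>h$ of an original degree, which is then bounded by Bennett's quadratic tail with $K=1$, $V=d$ and a union bound over $2N$ tails.
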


\begin{proof}[Proof of Lemma~\ref{lem:hermitian-degree-tail-control}]
For each vertex, Lemma~\ref{lem:Chernoff} applied to its independent
incident edges gives the first assertion. Forcing at most $2\ell$
unordered edges changes every degree by at most $2\ell$.
Thus the stated margins reduce each forced-band violation to an
original degree deviation of more than $h$. Lemma~\ref{lem:Bennett},
in the form \eqref{eqn:bennett-quadratic-tail} with $K=1$ and $V=d$,
and a union bound over at most $2N$ tails give the second assertion.
\end{proof}

\paragraph{Deletion and retention proof.}
\begin{proof}[Proof of Lemma~\ref{lem:hermitian-cutoff-deletion-bounds}]
Use the seed-and-cascade argument of
Lemma~\ref{lem:bipartite-cutoff-deletion-bounds}, with
$q=\lceil k/2\rceil$ and
\[
 Q=\{u:L>0,\ D_u<L+\varepsilon_{\rm H}d/2\}
   \cup\{u:U<N-1,\ D_u>U\}.
\]
For signs supported on $q$ vertices, the independent-edge sum is
\begin{align}
 \sum_u\sigma_u(D_u-\mu_u)
 &=\sum_{u<v}(\sigma_u+\sigma_v)(\emA_{uv}-p_{uv}),\nonumber\\
 \Var\!\left(\sum_u\sigma_uD_u\right)
 &\leq2\sum_u\sigma_u^2\mu_u\leq2C_0dq.\nonumber
\end{align}
Thus Lemma~\ref{lem:Bennett}, with $K=2$ and $V=2C_0dq$,
and the same union over signed sets give the first term of
$\mathfrak D_N(k;d,\varepsilon_{\rm H})$.
If $|Q|<q$ and at least $k$ vertices are deleted, the same ordering
of deletions produces a $k$-vertex set spanning at least
$\varepsilon_{\rm H}dk/4$ edges. Its expected edge count is at most
$p_\star\binom{k}{2}\leq C_0dk^2/(2N)$, so
Lemma~\ref{lem:Chernoff} and a union over these sets give the second
term. This proves \eqref{eqn:hermitian-cutoff-deletion-tail} and the
finite retained-dimension bound.

For upper cutoffs, substitute $\theta=\sqrt{\log d/(C_0d)}$ into
\eqref{eqn:hermitian-upper-cap-parameters}. The margin
\eqref{eqn:hermitian-cutoff-upper-only} gives, exactly as in
\eqref{eqn:upper-only-retention-tail},
\begin{align}
 q_{\theta,{\rm H}}\leq d^{-4}.
 \label{eqn:hermitian-upper-only-retention-tail}
\end{align}
Only the first round deletes vertices, hence
$\E(N-|I_\star|)\leq Nd^{-4}$; Markov's inequality proves
\eqref{eqn:Hermitian-upper-only-deletion-fraction}.
For strong bands, apply \eqref{eqn:hermitian-verification-margin-tail}
with $\ell=0$ and
$h=\sqrt{2C_0d\,r_N(d)}+\tfrac23r_N(d)$.
The failure probability is at most
$2Ne^{-r_N(d)}=(4d)^{-j_N(d)}$, proving
\eqref{eqn:Hermitian-strong-band-no-deletion}.
Both estimates imply $|I_\star|/N\to1$.
\end{proof}

\paragraph{Retained variances.}
For every active row $u\in I_\star$, summing the upper inequality in
\eqref{eqn:bernoulli-variance-comparison} and using the terminal degree
bound gives
\begin{align}
 \sum_{v\in I_\star}|\emH^{(\tau)}_{uv}|^2
 &\leq\frac{U+\sum_vp_{uv}^2}{d}.
 \label{eqn:cutoff-active-hermitian-variances}
\end{align}

\section{Verification of Graph Moment Conditions}
\label{sec:model-moment-verification}

We verify the graph moment conditions used in
Lemma~\ref{lem:uniform-trace-consequence} and their extension to
Hermitian degree cutoffs. After recording scalar
moment bounds and a common weighted label sum, we treat bipartite
degree cutoffs, column rescaling, and Hermitian degree
cutoffs. Each verification identifies the
moment parameters and the range of powers for which they hold.
The degree-tail estimates are those of
Section~\ref{sec:regularization-thresholds}. Expectations remain under
the original law $\P$, with the random projections and column factors
inside the word products.

\Needspace{7\baselineskip}
\subsection{Common tools}
\label{subsec:graph-moment-common-tools}

For $B\sim\Ber(p)$, $|B-p|\leq1$ and
\begin{align}
 \E|B-p|&=2p(1-p),
 &\E|B-p|^k
 &=p(1-p)\bigl((1-p)^{k-1}+p^{k-1}\bigr)
 \leq p(1-p),\qquad k\geq2.
 \label{eqn:shared-bernoulli-centered-moments}
\end{align}

\begin{lemma}[Weighted label sums]
\label{lem:weighted-variance-label-sum}
Let $\xi$ be an admissible word and write
$g=g(\gG_\xi)$.
\begin{enumerate}[label=\textup{(\roman*)},leftmargin=2.2em,itemsep=0.6em]
\item In the bipartite setting, let $w_{ij}\geq0$, $N=n+m$, and
suppose the deterministic row and column budgets satisfy
\[
 \max_i\sum_jw_{ij}\leq\rowvarmax,\qquad
 \max_j\sum_iw_{ij}\leq\colvarmax,\qquad
 \rowvarmax\colvarmax\leq1.
\]
Here $\rowvarmax,\colvarmax$ denote budgets for these weights.
Set
\[
 M=\max\{1,\rowvarmax,\colvarmax\},\qquad
 \chi=\max\{1,NM\max_{i,j}w_{ij}\}.
\]
For either first-splice color $j\in\{1,2\}$,
\[
 \sum_{\varphi}\widetilde c_j(\varphi(\xi))
       \prod_{e\in\gE(\gG_\xi)}w_{\varphi(e)}
 \leq Mnm\left(\frac{\chi}{N}\right)^g,
\]
where the sum is over injective part-preserving maps into the
ambient vertex sets and $w_{\varphi(e)}$ uses the row--column
ordering of the edge endpoints.
\item In the Hermitian setting, let $w_{uv}=w_{vu}\geq0$,
$w_{uu}=0$, and $\max_u\sum_vw_{uv}\leq1$.
With $\chi=\max\{1,N\max_{u,v}w_{uv}\}$, one has
\[
 \sum_{\varphi}c(\varphi(\xi))
       \prod_{e\in\gE(\gG_\xi)}w_{\varphi(e)}
 \leq N^2\left(\frac{\chi}{N}\right)^g,
\]
where the sum is over injective maps into $[N]$.
\end{enumerate}
\end{lemma}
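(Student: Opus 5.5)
We prove the statement by a spanning-tree peeling argument, the standard route for weighted label sums in the style of \cite[Section~5]{benaych2020spectral}. Fix a spanning tree $\gT$ of $\gG_\xi$. Since $\gG_\xi$ is connected, exactly $g=\abs{\gE(\gG_\xi)}-\abs{\gV(\gG_\xi)}+1$ edges lie outside $\gT$. For each such excess edge $e$ we use the crude bound $w_{\varphi(e)}\leq\max w$. This leaves
\[
 (\max w)^{g}\sum_{\varphi}c(\varphi(\xi))\prod_{e\in\gE(\gT)}w_{\varphi(e)}.
\]
The prefactor $c$ (or $\widetilde c_j$) is at most $N$ (respectively $\abs{\gV_{3-j}}$). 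We also drop injectivity of $\varphi$, which only enlarges the sum since all weights are nonnegative.

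Next we sum over the tree labels. Root $\gT$ at the first splice $\xi_0$ and sum out the labels of the leaves one at a time, each time using a row-sum bound. In the Hermitian case each leaf contributes $\sum_v w_{uv}\leq1$. After all non-root vertices are summed, only the root remains, and its $N$ choices together with the factor $c\leq N$ give $N^2$. In the bipartite case a leaf in $\gV_2$ attached to a row vertex contributes at most $\rowvarmax$, and a leaf in $\gV_1$ contributes at most $\colvarmax$. The root gives $\abs{\gV_j}$ and the predecessor factor gives $\abs{\gV_{3-j}}$, so together they contribute $nm$.

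It remains to collect the weights. Multiplying the excess edges and the tree together gives
\[
 N^2\bigl(N\max w\bigr)^{g}N^{-g}\leq N^2(\chi/N)^g
\]
in the Hermitian case. In the bipartite case we write $\max w=(NM\max w)/(NM)\leq\chi/(NM)$, which yields $(\chi/N)^gM^{-g}$ from the excess edges.

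\textbf{Main obstacle: the bipartite tree product.} The product of row and column budgets along the tree is $\rowvarmax^{a}\colvarmax^{b}$, where $a$ and $b$ count tree edges whose child lies in $\gV_2$, respectively $\gV_1$. This product is not obviously bounded, because only $\rowvarmax\colvarmax\leq1$ is assumed and the two exponents may differ. Our first attempt exploits the alternating structure of the bipartite tree. Pair each edge into a $\gV_2$-child with an edge into a $\gV_1$-child where possible; each such pair costs at most $\rowvarmax\colvarmax\leq1$. The unpaired surplus, $\abs{a-b}$ edges, would be bounded by $M^{\abs{a-b}}$. We then need $M^{\abs{a-b}}$ to be absorbed by the spare factor $M^{-g}$ coming from the excess edges, together with a single leftover factor $M$. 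If this absorption fails, the fallback is to choose $\gT$ adapted to $\xi$: a nonbacktracking closed word alternates colors, so one can hope to choose the tree, or the order in which leaves are peeled, so that the number of vertices of each color differs by at most $O(g+1)$, which would again give the bound $\le M^{g+1}$. Combining this with the excess-edge factor $M^{-g}$ would then give the claimed $Mnm(\chi/N)^g$.
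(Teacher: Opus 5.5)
\emph{Verdict: genuine gap in part (i).} Your argument follows the paper's proof up to its last step. You root a spanning tree at $\xi_0$, bound the $g$ nontree weights by $\max w\le\chi/(NM)$, drop injectivity, peel leaves, and pair budgets so that $\rowvarmax^a\colvarmax^b\le M^{|a-b|}$. The Hermitian part is complete. The bipartite conclusion, however, requires $|a-b|\le g+1$, which you leave as a hope, and your fallback cannot supply it.

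\textbf{Why the fallback fails.}
\begin{itemize}
\item In a spanning tree rooted at $\xi_0\in\gV_1$, every non-root vertex is the child of exactly one tree edge. So $a=\abs{\gV_2(\gG_\xi)}$ and $b=\abs{\gV_1(\gG_\xi)}-1$ for every choice of tree and every peeling order. There is nothing to optimize.
\item A bound of order $O(g+1)$ would not suffice. You need exactly $|a-b|\le g+1$; otherwise the surplus powers of $M$ are not absorbed by $M^{-g}$ together with one leftover factor $M$.
\item The inequality is false for general connected bipartite graphs: a star with $k$ leaves has $g=0$ and color imbalance $k-1$. So it must come from the nonbacktracking structure, which your argument never uses at this step. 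Color alternation along the word is just bipartiteness.
\end{itemize}

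\textbf{The missing idea is a degree count.}
\begin{enumerate}
\item For $t\notin\{0,\ell,2\ell\}$, the condition $\xi_{t-1}\neq\xi_{t+1}$ gives each vertex other than the splices $\xi_0,\xi_\ell$ two distinct neighbours, so it has degree at least two in $\gG_\xi$. Splice vertices have degree at least one.
\item With $\xi_0\in\gV_1$, at most two splices lie in $\gV_1$ and at most one in $\gV_2$.
\item Every edge has exactly one endpoint in each class. Hence $\abs{\gE(\gG_\xi)}=\sum_{v\in\gV_1(\gG_\xi)}\deg v\geq2\abs{\gV_1(\gG_\xi)}-2$, and likewise $\abs{\gE(\gG_\xi)}\geq2\abs{\gV_2(\gG_\xi)}-1$.
\item Substitute $\abs{\gE(\gG_\xi)}=\abs{\gV_1(\gG_\xi)}+\abs{\gV_2(\gG_\xi)}+g-1$. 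This gives $-g\le a-b\le g+1$, hence $|a-b|\le g+1$, which is exactly what the absorption needs.
\item The case $\xi_0\in\gV_2$ follows by interchanging the parts.
\end{enumerate}
This is the argument the paper uses to close the bipartite bound.
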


\begin{proof}
For \textup{(i)}, first take $\xi_0\in\gV_1$ and root a spanning
tree of $\gG_\xi$ at $\xi_0$.  There are $g$ nontree edges.
Bound their weights by $\max_{i,j}w_{ij}$ and the predecessor
multiplicity by $m$.  Dropping injectivity can only increase the
remaining nonnegative sum.  Eliminate tree leaves successively:
a child in $\gV_2$ costs at most $\rowvarmax$, and a child in
$\gV_1$ costs at most $\colvarmax$.  Summing the root over its
$n$ possible labels gives
\begin{align}
 \sum_{\varphi}\widetilde c_1(\varphi(\xi))
       \prod_{e\in\gE(\gG_\xi)}w_{\varphi(e)}
 &\leq nm\bigl(\max_{i,j}w_{ij}\bigr)^g
       \rowvarmax^{\abs{\gV_2(\gG_\xi)}}
       \colvarmax^{\abs{\gV_1(\gG_\xi)}-1}.
 \label{eqn:plural-bipartite-tree-sum}
\end{align}
Every leaf of the support graph is a splice vertex, since a visit
to any other leaf would force an immediate reversal.  There are
at most two splice vertices in $\gV_1$ and at most one in
$\gV_2$.  Counting degrees in each part therefore gives
\[
 \abs{\gE(\gG_\xi)}\geq2\abs{\gV_1(\gG_\xi)}-2,
 \qquad
 \abs{\gE(\gG_\xi)}\geq2\abs{\gV_2(\gG_\xi)}-1.
\]
Together with the definition of $g$, these inequalities imply
\begin{align}
 \left|\abs{\gV_2(\gG_\xi)}-\abs{\gV_1(\gG_\xi)}+1\right|
 &\leq g+1.
 \label{eqn:part-balance-roadmap}
\end{align}
Pairing row and column factors in
\eqref{eqn:plural-bipartite-tree-sum} costs at most one per pair
because $\rowvarmax\colvarmax\leq1$; each unpaired factor is at
most $M$.  By \eqref{eqn:part-balance-roadmap}, their product is
at most $M^{g+1}$.  The definition of $\chi$ proves the claim.
Interchanging the parts gives the same bound when
$\xi_0\in\gV_2$.

For \textup{(ii)}, root a spanning tree at $\xi_0$ and bound each
nontree edge weight by $\chi/N$.  The root label and predecessor
contribute at most $N^2$, and each leaf elimination costs at most
one.  This proves the Hermitian bound.
\end{proof}

\Needspace{7\baselineskip}
\subsection{Bipartite degree cutoffs}
\label{subsec:bipartite-moment-verification}

\paragraph{Entry moments.}
\begin{lemma}[Bipartite entry moment bounds]
\label{lem:projected-rectangular-sparse-bounds}
Under \eqref{eqn:rectangular-uniform-sparsity-bound}, the zero-padded
matrix $\widehat{\mX}$ satisfies Definition~\ref{def:sparse-random-matrix}
at scale $d$ with moment constant $2C_0$.
\end{lemma}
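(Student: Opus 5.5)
The plan is to verify the three inequalities in \eqref{eqn:sparse-variance-budgets} entrywise, pointwise in the realization. Write $\widehat{\emX}_{ij}=\indi{i\in I_\star}\indi{j\in J_\star}\emX_{ij}$, where $\emX_{ij}=(\widetilde{\emA}_{ij}-p_{ij})/\sqrt d$. The projection indicators take values in $\{0,1\}$, so
\[
 |\widehat{\emX}_{ij}|\leq|\emX_{ij}|
\]
holds for every realization, regardless of how the active sets $I_\star,J_\star$ depend on the entries. This is the whole point of the argument. The dependence introduced by Algorithm~\ref{alg:row-column-projection} never has to be analyzed, because each bound in Definition~\ref{def:sparse-random-matrix} is monotone in $|\emX_{ij}|$. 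Every estimate for $\widehat{\mX}$ therefore follows from the same estimate for the unprojected centered Bernoulli entry.

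The steps are as follows. First, the almost sure bound: $|\widetilde{\emA}_{ij}-p_{ij}|\leq1$ gives $|\widehat{\emX}_{ij}|\leq d^{-1/2}\leq 2C_0d^{-1/2}$, since $C_0\geq1$ by \eqref{eqn:cutoff-profile-constant}. Second, the first moment: by \eqref{eqn:shared-bernoulli-centered-moments}, $\E|\widehat{\emX}_{ij}|\leq 2p_{ij}/\sqrt d$. Assumption~\ref{assumption:rectangular-uniform-sparsity} gives $p_{ij}\leq C_{\rm sp}d/\sqrt{nm}$. Third, the $k$-th moments for $k\geq2$: again by \eqref{eqn:shared-bernoulli-centered-moments},
\[
 \E|\widehat{\emX}_{ij}|^k\leq d^{-k/2}p_{ij}\leq C_{\rm sp}\,d^{1-k/2}/\sqrt{nm}.
\]
Fourth, convert $1/\sqrt{nm}$ into $1/N$. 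Under \eqref{eqn:proportional-block-regime} we have $n=\gamma m$, so
\[
 N/\sqrt{nm}=(1+\gamma)/\sqrt\gamma=\sqrt\gamma+\gamma^{-1/2}\leq\sqrt\Gamma+\Gamma^{-1/2},
\]
since $1\leq\gamma\leq\Gamma$. This gives $p_{ij}\leq C_{\rm sp}(\sqrt\Gamma+\Gamma^{-1/2})d/N\leq C_0d/N$. Substituting this yields the first-moment bound $2C_0\sqrt d/N$ and the $k$-th moment bound $C_0/(Nd^{(k-2)/2})\leq 2C_0/(Nd^{(k-2)/2})$.

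The only step that needs care, which I would not call an obstacle, is the last one. It is where the constant $C_0$ from \eqref{eqn:cutoff-profile-constant} is matched to the aspect-ratio conversion. The factor $2$ in the moment constant is needed only for the first-moment identity $\E|B-p|=2p(1-p)$. I would also remark explicitly that no independence or conditioning on the active sets is used, so the lemma holds uniformly over all admissible cutoffs.
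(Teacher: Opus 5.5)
Your proposal is correct and follows the paper's proof. Both arguments use the pointwise domination $|\widehat{\emX}_{ij}|\leq|\emX_{ij}|$ from the projection, then the centered Bernoulli moments \eqref{eqn:shared-bernoulli-centered-moments}, then $p_{ij}\leq C_0d/N$. Your derivation of the last inequality via $N/\sqrt{nm}=\sqrt\gamma+\gamma^{-1/2}\leq\sqrt\Gamma+\Gamma^{-1/2}$ only writes out a step the paper leaves implicit in its definition of $C_0$.
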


\begin{proof}
Projection decreases each absolute entry, so
$|\widehat{\emX}_{ij}|\leq d^{-1/2}$.
By \eqref{eqn:shared-bernoulli-centered-moments}, for every $k\geq1$,
\[
 \E|\widehat{\emX}_{ij}|^k
 \leq\frac{2p_{ij}}{d^{k/2}}
 \leq\frac{2C_0}{Nd^{(k-2)/2}}.
\]
\end{proof}

\paragraph{graph moment verification.}
The projections in $\widehat{\mX}=\mP_{\rm r}\mX\mP_{\rm c}$
depend on the sampled graph, so the projected entries need not be
independent or centered.  Plural words are controlled by absolute
moments and Lemma~\ref{lem:weighted-variance-label-sum}; singleton
words require additional degree-tail estimates.

Choose $R=(\rowvarmax\colvarmax)^{1/4}$. We verify
\ref{eqn:bipartite-graph-condition} for $\widehat{\mX}/R$
at scale $R^2d$. The weights
\[
 w_{ij}=\frac{p_{ij}(1-p_{ij})}{R^2d}
\]
have row and column budgets $\rowvarmax/R^2$ and
$\colvarmax/R^2$, whose product is one. Degree-tail estimates
continue to use the original scale $d$.

For general bands, the event $\widetilde\Omega$ in
\eqref{eqn:bipartite-verification-band-event} ensures that no deletion
starts.  The forced failure probability
$\varepsilon_{\ell,{\rm B}}$ in
\eqref{eqn:bipartite-forced-band-probability} controls its complement
after prescribing any at most $2\ell$ support edges.  We require
\begin{align}
 \varepsilon_{\ell,{\rm B}}\leq(4R^2d)^{-\ell}.
 \label{eqn:bipartite-forced-band-condition}
\end{align}
This gives the singleton-chain decay even with positive lower
cutoffs and deletion cascades, under the original graph law.

For upper-only cutoffs, survival is determined by the initial degree
checks: subsequent degrees can only decrease.  We instead use joint
violations at vertices incident to singleton edges, requiring
\begin{align}
 L_{\rm r}=L_{\rm c}=0,\qquad
 32R^2d\,\alpha_{\ell,{\rm B}}\leq1.
 \label{eqn:bipartite-upper-cap-condition}
\end{align}
Here $\alpha_{\ell,{\rm B}}$ is defined in
\eqref{eqn:bipartite-alpha-l-B}, with corrections for support edges
and edges shared by the degree events.  This criterion remains useful
when forcing $2\ell$ edges could itself violate a cap, making the
general-band failure probability equal to one.

In both cases, use the variance-label parameter
\begin{align}
 \widetilde\chi_\ell
 &\coloneqq\max\left\{1,
 \frac{N\max\{\rowvarmax,\colvarmax\}
                  \max_{i,j}p_{ij}(1-p_{ij})}
             {R^4d}\right\}.
 \label{eqn:plural-projected-bipartite-parameters}
\end{align}

\begin{lemma}[Bipartite graph moment verification]
\label{lem:bipartite-graph-verification}
Assume \eqref{eqn:rectangular-uniform-sparsity-bound},
$d\geq1$, $R=(\rowvarmax\colvarmax)^{1/4}$, and $R^2d\geq1$.
Let $\ell\geq3$ be an integer. For $\widehat{\mX}/R$ and its
block linearization, both lines of
\ref{eqn:bipartite-graph-condition} hold at sparsity scale
$R^2d$, uniformly over the canonical classes and both
first-splice colors, under either of the following conditions.
In both cases use $\widetilde\chi_\ell$ from
\eqref{eqn:plural-projected-bipartite-parameters}.
\begin{enumerate}[label=\textup{(\roman*)},leftmargin=2.2em,itemsep=0.6em]
\item\label{item:bipartite-graph-general-bands}
Under \eqref{eqn:bipartite-forced-band-condition}, take
\begin{align}
 \widetilde K_\ell
 &=\frac{\max\{\rowvarmax,\colvarmax\}}
              {R^2},
 &\widetilde\Gamma_\ell&=2.
 \label{eqn:bipartite-general-band-verification}
\end{align}
\item\label{item:bipartite-graph-upper-caps}
Fix $\theta>0$ and evaluate $q_{\theta,{\rm B}}$ and
$\alpha_{\ell,{\rm B}}$ at this same value of $\theta$ using
\eqref{eqn:bipartite-alpha-l-B}.  If
\eqref{eqn:bipartite-upper-cap-condition} holds for this choice, take
\begin{align}
 \widetilde K_\ell
 &=\frac{2\max\{\rowvarmax,\colvarmax\}}
              {R^2}
              e^{2\ell\alpha_{\ell,{\rm B}}},
 &\widetilde\Gamma_\ell&=4e^\theta.
 \label{eqn:bipartite-upper-cap-verification}
\end{align}
Here $\theta$ is the Chernoff exponential parameter in the
degree-tail bounds \eqref{eqn:bipartite-upper-cap-tail-probabilities}. For example, when $d>1$ and the upper margins satisfy \eqref{eqn:rectangular-edge-upper-cutoffs}, the choice $\theta=\sqrt{\log d/(C_0d)}$ gives $q_{\theta,{\rm B}}\leq d^{-4}$, as shown in \eqref{eqn:upper-only-retention-tail}.

\end{enumerate}
\end{lemma}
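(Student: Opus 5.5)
Both lines of \ref{eqn:bipartite-graph-condition} will be verified class by class, for $\widehat{\mX}/R$ at scale $s=R^2d$. Fix a canonical $\xi\in\widetilde{\sQ}_i^\ell$ and write the class moment as a sum over injective part-preserving labelings $\varphi$ of
\[
 \widetilde c_i(\varphi(\xi))\,R^{-2\ell}\,\E\prod_{e\in\gE(\gG_\xi)}(\widehat{\emX}_{\varphi(e)})^{m_e},
\]
noting that $\widehat{\emX}_{ij}=\indi{i\in I_\star,\,j\in J_\star}\emX_{ij}$. Everything reduces to bounding this expectation by a product of edge weights $w_{\varphi(e)}=p(1-p)/(R^2d)$ times the correct power of $s$. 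Lemma~\ref{lem:weighted-variance-label-sum}\textup{(i)} then performs the label sum. With budgets $\rowvarmax/R^2,\colvarmax/R^2$ (product one), it gives the factor $Mnm(\widetilde\chi_\ell/N)^g$ with $M=\max\{\rowvarmax,\colvarmax\}/R^2$ and $\widetilde\chi_\ell$ as in \eqref{eqn:plural-projected-bipartite-parameters}. This accounts for $\widetilde K_\ell$ up to the extra factors.

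\emph{Plural line.} Drop the projections, since $|\widehat{\emX}_{ij}|\leq|\emX_{ij}|$, and use independence of the original entries together with \eqref{eqn:shared-bernoulli-centered-moments}. This gives
\[
 \E\prod_e|\emX_{\varphi(e)}|^{m_e}\leq\prod_e p_{\varphi(e)}(1-p_{\varphi(e)})\,d^{-m_e/2}.
\]
Dividing by $R^{2\ell}$ and using $\sum_e m_e=2\ell$ turns this into $\prod_e w_{\varphi(e)}\, s^{|\gE|-\ell}$. This holds in both cases \ref{item:bipartite-graph-general-bands} and \ref{item:bipartite-graph-upper-caps}, with no use of cutoffs.

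\emph{Singleton line.} The singleton edges carry centered factors $\widetilde{\emA}_{ij}-p_{ij}$ whose mean vanishes except through the dependence introduced by the indicators. For each singleton edge $e=(i,j)$, decompose the retention indicator of the word into its value on the good event (no deletion) plus a failure term. On the good event all indicators equal one. The product then factorizes over independent entries, and the centered singleton factors kill it.

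In case \ref{item:bipartite-graph-general-bands}, condition on the values of the at most $2\ell$ support edges. This conditioning is exactly the forcing in \eqref{eqn:bipartite-forced-band-probability}, so the failure probability is at most $\varepsilon_{\ell,\rm B}\leq(4s)^{-\ell}$. Each singleton factor is bounded by $|\widetilde{\emA}-p|\leq\widetilde{\emA}+p$, which expands into $2^{e_1}$ terms, each of weight $p$. The gain $(4s)^{-\ell}$ then has to pay for the lost $d^{-1/2}$ per singleton edge. Since $e_1\leq 2\ell$, the product $(4s)^{-\ell}s^{e_1/2}2^{e_1}$ is at most $1$, which more than pays the deficit $s^{\varsigma}$ along singleton chains. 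The factor $2^{e_1}$ is absorbed into $\widetilde\Gamma_\ell^{2(|\gE|-|\gS|)}$ with $\widetilde\Gamma_\ell=2$, after chains are grouped by core edge.

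In case \ref{item:bipartite-graph-upper-caps}, survival depends only on the initial row and column degrees. The failure event is therefore a union over the endpoints of singleton edges of the cap violations $\{D>U\}$. Expanding by inclusion–exclusion, each endpoint that carries a violation costs a Chernoff factor computed with the forced word edges removed. This is where $e^{2\theta}q_{\theta,\rm B}$ arises, with the $\exp[\tfrac12(e^\theta-1)^2p_\star(2\ell+1)]$ correction for edges shared between degree events. Violations that hit only rows or only columns along a singleton chain give geometric series controlled by $32s\alpha_{\ell,\rm B}\leq1$.

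I expect the main obstacle to be the combinatorics of case \ref{item:bipartite-graph-upper-caps}. The difficulty is showing that, along a singleton chain of length $k$, at least $\lfloor(k-1)/2\rfloor$ of the internal vertices must carry a violation (or be paired with one) for the term not to vanish by centering. This would yield the $s^{-\varsigma}$ factor. The plan is to pair consecutive internal vertices: an internal vertex whose two singleton edges both see no violation can be integrated out to zero, so every consecutive pair must contain a violated endpoint. Each violated endpoint contributes $\alpha_{\ell,\rm B}\leq(32s)^{-1}$, and this is what produces the $s^{-\varsigma}$ factor. The remaining per-core-edge constants are absorbed into $\widetilde\Gamma_\ell=4e^\theta$, and the product of $(1+\alpha)$ factors over at most $2\ell$ vertices is absorbed into $e^{2\ell\alpha_{\ell,\rm B}}$ in $\widetilde K_\ell$.
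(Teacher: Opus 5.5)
Your proposal is correct and follows the paper's proof essentially step for step. For plural words you use absolute moments plus Lemma~\ref{lem:weighted-variance-label-sum}. For general bands you use the cancellation $\E[SW]=\E[(S-1)W]$ and condition on the support values against $\varepsilon_{\ell,{\rm B}}$. For upper caps you use inclusion--exclusion over vertex covers of the singleton edges, the forced-edge Chernoff bound giving $e^{2\theta}q_{\theta,{\rm B}}$ with the shared-edge correction, and the pairing of consecutive internal vertices along singleton chains.

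Two bookkeeping corrections are needed. First, use the exact $\E|\widetilde{\emA}_{ij}-p_{ij}|=2p_{ij}(1-p_{ij})$ rather than the bound $\widetilde{\emA}_{ij}+p_{ij}$, so that the label-sum weights $p(1-p)/(R^2d)$ are matched. Second, in the upper-cap case $2^{e_1(\xi)}$ is not a per-core-edge constant. Split it as $4^{\varsigma(\xi)}\,4^{\abs{\gE(\gG_\xi)}-\abs{\gS_{\{\xi_0,\xi_\ell\}}(\gG_\xi)}}$, and combine $4^{\varsigma(\xi)}$ with the cover count $2^{k-1}\le8^{\lfloor(k-1)/2\rfloor}$ on each singleton chain of length $k\ge3$ to get $(32\alpha_{\ell,{\rm B}})^{\varsigma(\xi)}$; this is exactly where the constant $32$ comes from.
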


\begin{proof}[Proof of Lemma~\ref{lem:bipartite-graph-verification}]
Fix a canonical word $\xi$ and an injective, part-preserving
labeling $\varphi$ of its support.  Summing over these labelings
enumerates its relabeling class exactly once.  Write
\[
 W=(R^2d)^{-\ell}
   \prod_{e\in\gE(\gG_\xi)}
       (\widetilde{\emA}_{\varphi(e)}-p_{\varphi(e)})^{m_e(\xi)},
 \qquad
 S=\indi{\text{all vertices of }\varphi(\gG_\xi)\text{ survive}}.
\]
The projected word product is $SW$, since a projected entry is zero
unless both endpoints survive and $\sum_e m_e(\xi)=2\ell$.
A support edge of zero variance makes $W=0$ almost surely; below we
consider only positive support variances.  Independence factors
$\E|W|$ over the distinct support edges.  By
\eqref{eqn:shared-bernoulli-centered-moments}, each singleton edge
contributes $2p_{\varphi(e)}(1-p_{\varphi(e)})$, and each edge of
multiplicity at least two contributes at most
$p_{\varphi(e)}(1-p_{\varphi(e)})$.
There are $e_1(\xi)$ singleton edges, so, using $0\leq S\leq1$, we obtain
\begin{align}
      |\E[SW]| \leq \E[S|W|] \leq \E[|W|]
 &\leq2^{e_1(\xi)}(R^2d)^{-\ell}
       \prod_{e\in\gE(\gG_\xi)}
                p_{\varphi(e)}(1-p_{\varphi(e)}).
 \label{eqn:shared-absolute-word-bound}
\end{align}

\smallskip\noindent\emph{Plural words.}
Since $e_1(\xi)=0$ for a plural word, \eqref{eqn:shared-absolute-word-bound} simplifies to
\begin{align}
 |\E[SW]| \leq \E[S|W|] \leq \E[|W|]
 &\leq(R^2d)^{-\ell}
       \prod_{e\in\gE(\gG_\xi)}
                p_{\varphi(e)}(1-p_{\varphi(e)}).
 \label{eqn:bipartite-plural-word-bound}
\end{align}
Apply Lemma~\ref{lem:weighted-variance-label-sum} to
\[
 w_{ij}=\frac{p_{ij}(1-p_{ij})}{R^2d},
 \qquad
 \max_i\sum_jw_{ij}=\frac{\rowvarmax}{R^2},
 \qquad
 \max_j\sum_iw_{ij}=\frac{\colvarmax}{R^2}.
\]
The two budgets have product one, and their maximum is
$\max\{\rowvarmax,\colvarmax\}/R^2$.
Restoring the word normalization contributes
$(R^2d)^{\abs{\gE(\gG_\xi)}-\ell}$ and yields
\begin{align}
 \abs{\widetilde{\GraphMoment}_{\ell,i}(\WordClass{\xi})}
 &\leq
 \frac{\max\{\rowvarmax,\colvarmax\}}
      {R^2}\,nm
 \left(\frac{\widetilde\chi_\ell}{N}\right)^{\abs{g(\gG_\xi)}}
 (R^2d)^{\abs{\gE(\gG_\xi)}-\ell}.
 \label{eqn:plural-projected-bipartite-bound}
\end{align}
Interchanging the parts proves the same bound for the other color.
This label sum uses only admissibility and applies to the singleton
estimates below as well. The variance formulas and
\eqref{eqn:bipartite-normalization} give
\[
 R^2d\geq(1-p_\star)
       \sqrt{\overline{\drow}\,\overline{\dcol}}
 \geq(1-p_\star)d.
\]
The profile bound gives
$\max\{\rowvarmax,\colvarmax\}\leq C_0$ and
$p_\star\leq C_0d/N$. Hence, whenever $p_\star\leq1/2$,
\begin{align}
 1\leq
 \frac{\max\{\rowvarmax,\colvarmax\}}{R^2}
 &\leq2C_0,
 &1\leq\widetilde\chi_\ell&\leq4C_0^2.
 \label{eqn:bipartite-profile-constant-bounds}
\end{align}

\smallskip\noindent\emph{Singleton words under general bands.}
For singleton words, the degree estimates will provide
an additional factor. A singleton factor is centered and independent of every other word
factor, so $\E W=0$.  On the event $\widetilde\Omega$ in \eqref{eqn:bipartite-verification-band-event} there is no deletion. Consequently, $\widetilde\Omega\subseteq\{S=1\}$ and $1-S\le\mathbf1_{\widetilde\Omega^{\rm c}}$, thus
\begin{align}
 |\E[SW]|
 &=|\E[(S-1)W]|
 \leq\E[|W|\indi{\widetilde\Omega^{\rm c}}].
 \label{eqn:shared-singleton-cancellation}
\end{align}
To separate the absolute word size from the survival event, define
the tilted probability law
$\mathds{Q}_{\xi,\varphi}$ by
\begin{align}
 \frac{d\mathds{Q}_{\xi,\varphi}}{d\P}
 &\coloneqq\frac{|W|}{\E|W|}.
 \label{eqn:word-tilted-law}
\end{align}
The density factors over support edges; the other edges retain their
original independent laws. Thus
$\E[|W|\indi A]=\E|W|\,\mathds{Q}_{\xi,\varphi}(A)$ for every event $A$.

Condition on the support-edge values under $\mathds{Q}_{\xi,\varphi}$.
There are at most $2\ell$ such edges, and all remaining edges still
have their original independent laws.  For each choice of support
values, the conditional probability of $\widetilde\Omega^{\rm c}$
is therefore at most $\varepsilon_{\ell,{\rm B}}$, as defined in \eqref{eqn:bipartite-forced-band-probability}.
That definition maximizes over both the positions of the fixed edges
and their prescribed values, so the same bound applies to every
labeling $\varphi$.
Averaging over those values and using \eqref{eqn:shared-absolute-word-bound}
yields
\begin{align}
 |\E[SW]|
 &\leq\varepsilon_{\ell,{\rm B}}\E|W|
 \leq\varepsilon_{\ell,{\rm B}}2^{e_1(\xi)}
 (R^2d)^{-\ell}
 \prod_{e\in\gE(\gG_\xi)}p_{\varphi(e)}(1-p_{\varphi(e)}).
 \label{eqn:shared-forced-singleton-word-bound}
\end{align}
We next express the factor $2^{e_1(\xi)}$ in terms of the statistics
in the singleton clause of \ref{eqn:bipartite-graph-condition}.
A singleton chain of length $k$ contributes $k$ to $e_1(\xi)$ and
$\lfloor(k-1)/2\rfloor$ to $\varsigma(\xi)$, as defined in
\eqref{eqn:singleton-certificate-count}.  Since
$k\leq2\lfloor(k-1)/2\rfloor+2$, summing over singleton chains
gives at most $2\varsigma(\xi)$ plus twice their number.  By
\eqref{eqn:core-support-edge-count}, the number of all support chains
is $\abs{\gE(\gG_\xi)}-
\abs{\gS_{\{\xi_0,\xi_\ell\}}(\gG_\xi)}$, which also bounds the
number of singleton chains.  Moreover,
$\lfloor(k-1)/2\rfloor\leq k/2$ on each chain and
$e_1(\xi)\leq2\ell$.  Hence
\begin{align}
 e_1(\xi)
 &\leq2\varsigma(\xi)+2\left(
       \abs{\gE(\gG_\xi)}-
       \abs{\gS_{\{\xi_0,\xi_\ell\}}(\gG_\xi)}\right),
 &0\leq\varsigma(\xi)&\leq e_1(\xi)/2\leq\ell.
 \label{eqn:shared-singleton-chain-count}
\end{align}
Since $\ell-\varsigma(\xi)\geq0$ and $R^2d\geq1$, using \eqref{eqn:bipartite-forced-band-condition} and this chain count, we obtain
\begin{align}
 \varepsilon_{\ell,{\rm B}}2^{e_1(\xi)}
 &\leq
 (4R^2d)^{-\ell}4^{\varsigma(\xi)}
 2^{2\left(\abs{\gE(\gG_\xi)}-
       \abs{\gS_{\{\xi_0,\xi_\ell\}}(\gG_\xi)}\right)}
 \nonumber\\
 &\leq
 2^{2\left(\abs{\gE(\gG_\xi)}-
       \abs{\gS_{\{\xi_0,\xi_\ell\}}(\gG_\xi)}\right)}
 (R^2d)^{-\varsigma(\xi)}.
 \label{eqn:bipartite-forced-singleton-payment}
\end{align}

The factor in \eqref{eqn:bipartite-forced-singleton-payment} depends
only on the canonical word.  Apply
Lemma~\ref{lem:weighted-variance-label-sum} to the remaining variance
product in \eqref{eqn:shared-forced-singleton-word-bound}, exactly as
for plural words.  This gives the singleton clause of
\ref{eqn:bipartite-graph-condition} with the parameters in
\eqref{eqn:bipartite-general-band-verification} and proves
item~\ref{item:bipartite-graph-general-bands}.

\smallskip\noindent\emph{Singleton words under upper-only cutoffs.}
Assume \eqref{eqn:bipartite-upper-cap-condition} and fix a singleton
word $\xi$ and an injective, part-preserving labeling $\varphi$.
Since $L_{\rm r}=L_{\rm c}=0$, the algorithm removes precisely those
vertices whose original degrees exceed their upper caps.  Indeed,
after the first simultaneous deletion, the degrees of the retained
vertices can only decrease, so all remaining vertices still satisfy
their caps and the algorithm stops.  Thus survival can be expressed
directly in terms of the original degrees.

For each $v\in\varphi(\gV(\gG_\xi))$, let $B_v$ indicate that its
original degree exceeds $U_{\rm r}$ if $v\in\gV_1$, or $U_{\rm c}$
if $v\in\gV_2$.  At a full cap this indicator is identically zero.
Every vertex of the word survives exactly when all these indicators
vanish.  Expanding their product gives
\begin{align}
 S=\prod_{v\in\varphi(\gV(\gG_\xi))}(1-B_v)
  =\sum_{T\subseteq\varphi(\gV(\gG_\xi))}
       (-1)^{|T|}\prod_{v\in T}B_v.
 \label{eqn:shared-survival-expansion}
\end{align}
Consider any term $\E[W\prod_{v\in T}B_v]$ appearing in this expansion. If the set $T$ contains neither endpoint of a singleton edge, the corresponding centered Bernoulli variable is, under the original law $\P$, independent of both all remaining factors in $W$ and all $B_v$ with $v\in T$, thus its expectation vanishes and the entire term is zero.
Consequently, only sets $T$ that contain at least one endpoint of every singleton edge can contribute. These are the vertex covers of the singleton edges. Taking absolute values and using the tilted law in \eqref{eqn:word-tilted-law}, we obtain
\begin{align}
 |\E[SW]|
 &\leq
 \sum_{\substack{T\subseteq\varphi(\gV(\gG_\xi))\\
                  T\text{ covers the singleton edges}}}
 \E\left[|W|\prod_{v\in T}B_v\right]
 \nonumber\\
 &=\E|W|
 \sum_{\substack{T\subseteq\varphi(\gV(\gG_\xi))\\
                  T\text{ covers the singleton edges}}}
 \mathds{Q}_{\xi,\varphi}(B_v=1\text{ for every }v\in T).
\end{align}

It remains to bound the joint probability for a fixed set $T$.
If $T$ contains a vertex with a full upper cap, this probability is
zero because the corresponding $B_v$ vanishes identically.  We may
therefore assume in the following calculation that every vertex of
$T$ has a nonfull upper cap.
Let $\sF=\varphi(\gE(\gG_\xi))$, following the notation for fixed edges as in \eqref{eqn:bipartite-forced-adjacency}. We will consider an enlarged event. For each realization, replace all entries in $\sF$ by one and leave all other entries
unchanged. The probability of this enlarged event is at least as large as the original event, under the upper-cap only regime. After this replacement, the support edges contribute the deterministic quantity $\sum_{v\in T}\deg_{\varphi(\gG_\xi)}(v)$, and only the entries outside $\sF$ remain random. Since $W$ depends only on the support entries within $\sF$, it remains independent under $\P$ from any event $A$ that involves only entries outside of $\sF$. For such an event,
\[
 \mathds{Q}_{\xi,\varphi}(A)
 =\frac{\E[|W|\indi A]}{\E|W|}
 =\frac{\E|W|\,\P(A)}{\E|W|}
 =\P(A).
\]
Furthermore, if a vertex is removed, its degree is either at least $U_{\rm r}+1$ if in $\gV_{1}$, or at least $U_{\rm c}+1$ if in $\gV_{2}$. Thus by Handshake Lemma, per the event being true, the total number of edges in $T$ is at least $(U_{\rm r}+1)|T\cap\gV_1|+(U_{\rm c}+1)|T\cap\gV_2|$. We then have
\begin{align}
 &\mathds{Q}_{\xi,\varphi}(B_v=1, \, \forall v\in T)
 = \mathds{Q}_{\xi,\varphi}\left(
 \begin{gathered}
 D_i^{\rm r}\geq U_{\rm r}+1,\,\forall i\in T\cap\gV_1,\\
 D_j^{\rm c}\geq U_{\rm c}+1,\,\forall j\in T\cap\gV_2
 \end{gathered}\right)
 \nonumber\\
\leq & \P\left(
 \begin{aligned}
 &\sum_{v\in T}\deg_{\varphi(\gG_\xi)}(v)
  +\sum_{i\in T\cap\gV_1}\sum_{j:(i,j)\notin\sF}
       \widetilde{\emA}_{ij}
  +\sum_{j\in T\cap\gV_2}\sum_{i:(i,j)\notin\sF}
       \widetilde{\emA}_{ij}\\
 &\qquad\geq (U_{\rm r}+1)|T\cap\gV_1|
               +(U_{\rm c}+1)|T\cap\gV_2|
 \end{aligned}\right)
 \nonumber\\
=&\P\left(
 \begin{aligned}
 &\sum_{\substack{e\notin\sF\\
                   e\text{ has exactly one endpoint in }T}}
       \widetilde{\emA}_e
  +2\sum_{\substack{e\notin\sF\\
                   e\text{ has both endpoints in }T}}
       \widetilde{\emA}_e\\
 &\qquad\geq (U_{\rm r}+1)|T\cap\gV_1|
               +(U_{\rm c}+1)|T\cap\gV_2|
               -\sum_{v\in T}\deg_{\varphi(\gG_\xi)}(v)
 \end{aligned}\right).
\end{align}
For the same $\theta>0$ as in \eqref{eqn:bipartite-alpha-l-B}, Markov and independence give
\begin{align}
 &\mathds{Q}_{\xi,\varphi}(B_v=1\text{ for every }v\in T)
 \nonumber\\
 &\quad\leq
 \exp\!\left\{-\theta\bigl[(U_{\rm r}+1)|T\cap\gV_1|
                      +(U_{\rm c}+1)|T\cap\gV_2|\bigr]
       +\theta\sum_{v\in T}\deg_{\varphi(\gG_\xi)}(v)\right\}
 \nonumber\\
 &\qquad\times
 \E\exp\!\left\{
 \theta\sum_{\substack{e\notin\sF\\
                   e\text{ has exactly one endpoint in }T}}
       \widetilde{\emA}_e
 +2\theta\sum_{\substack{e\notin\sF\\
                   e\text{ has both endpoints in }T}}
       \widetilde{\emA}_e\right\}
 \nonumber\\
 &\quad=
 \exp\!\left\{-\theta\bigl[(U_{\rm r}+1)|T\cap\gV_1|
                      +(U_{\rm c}+1)|T\cap\gV_2|\bigr]
       +\theta\sum_{v\in T}\deg_{\varphi(\gG_\xi)}(v)\right\}
 \nonumber\\
 &\qquad\times
 \prod_{\substack{e\notin\sF\\
                   e\text{ has exactly one endpoint in }T}}
       \bigl[1+p_e(e^\theta-1)\bigr]
 \prod_{\substack{e\notin\sF\\
                   e\text{ has both endpoints in }T}}
       \bigl[1+p_e(e^{2\theta}-1)\bigr].
\end{align}
Apply $1+x\leq e^x$ to the products and separate the endpoint
contributions using
\[
 p_e(e^{2\theta}-1)
 =2p_e(e^\theta-1)+p_e(e^\theta-1)^2.
\]
The last term corrects for an edge shared by two selected vertices.
The resulting exponent satisfies
\begin{align}
 &\sum_{\substack{e\notin\sF\\
                   e\text{ has exactly one endpoint in }T}}
       p_e(e^\theta-1)
 +\sum_{\substack{e\notin\sF\\
                   e\text{ has both endpoints in }T}}
       p_e(e^{2\theta}-1)
 \nonumber\\
 &\quad=(e^\theta-1)
 \left(\sum_{i\in T\cap\gV_1}
             \sum_{j:(i,j)\notin\sF}p_{ij}
       +\sum_{j\in T\cap\gV_2}
             \sum_{i:(i,j)\notin\sF}p_{ij}\right)
 +(e^\theta-1)^2
 \sum_{\substack{e\notin\sF\\
                   e\text{ has both endpoints in }T}}p_e
 \nonumber\\
 &\quad\leq(e^\theta-1)
 \left(\sum_{i\in T\cap\gV_1}\mu_i^{\rm r}
       +\sum_{j\in T\cap\gV_2}\mu_j^{\rm c}\right)
 +\tfrac12(e^\theta-1)^2p_\star|T|^2.
\end{align}
Combining with the threshold prefactor gives one factor at most $q_{\theta,{\rm B}}$ per selected vertex, by \eqref{eqn:bipartite-q-theta-B}:
\begin{align}
 &\mathds{Q}_{\xi,\varphi}(B_v=1\text{ for every }v\in T)
 \nonumber\\
\leq &
 \prod_{i\in T\cap\gV_1}
       e^{-\theta(U_{\rm r}+1)+(e^\theta-1)\mu_i^{\rm r}}
 \prod_{j\in T\cap\gV_2}
       e^{-\theta(U_{\rm c}+1)+(e^\theta-1)\mu_j^{\rm c}}
\times
 \exp\left\{\theta\sum_{v\in T}\deg_{\varphi(\gG_\xi)}(v)
       +\tfrac12(e^\theta-1)^2p_\star|T|^2\right\}
 \nonumber\\
\leq & q_{\theta,{\rm B}}^{|T|}
 \exp\left\{\theta\sum_{v\in T}\deg_{\varphi(\gG_\xi)}(v)
       +\tfrac12(e^\theta-1)^2p_\star|T|^2\right\}.
 \label{eqn:shared-joint-upper-tail}
\end{align}
The final bound also holds for sets $T$ containing a vertex with a
full upper cap, since their joint probability is zero.  Thus it may
be used in the unrestricted sum over vertex covers above.
We now use the degree-two vertices inside the support chains.
Define the selected internal vertices by
\[
 T_{\rm int}
 =T\cap\varphi\bigl(\gS_{\{\xi_0,\xi_\ell\}}(\gG_\xi)\bigr).
\]
Each has support degree two.  The sum of the degrees of all other
support vertices is
$2(\abs{\gE(\gG_\xi)}-
\abs{\gS_{\{\xi_0,\xi_\ell\}}(\gG_\xi)})$, since the total
degree is $2\abs{\gE(\gG_\xi)}$.  It follows that
\[
 \sum_{v\in T}\deg_{\varphi(\gG_\xi)}(v)
 \leq2|T_{\rm int}|
 +2\left(\abs{\gE(\gG_\xi)}-
         \abs{\gS_{\{\xi_0,\xi_\ell\}}(\gG_\xi)}\right).
\]
Also $|T|\leq2\ell$, because the closed word has $2\ell$ edge
traversals.  Thus $|T|^2\leq(2\ell+1)|T|$, and the definition in \eqref{eqn:bipartite-q-theta-B} and \eqref{eqn:bipartite-alpha-l-B}  gives
\[
 q_{\theta,{\rm B}}^{|T|}
 \exp\!\left[\tfrac12(e^\theta-1)^2p_\star|T|^2\right]
 \leq\bigl(e^{-2\theta}\alpha_{\ell,{\rm B}}\bigr)^{|T|}.
\]
Substituting these bounds into \eqref{eqn:shared-joint-upper-tail}
yields
\begin{align}
 \mathds{Q}_{\xi,\varphi}(B_v=1\text{ for every }v\in T)
 &\leq
 e^{2\theta\left(\abs{\gE(\gG_\xi)}-
        \abs{\gS_{\{\xi_0,\xi_\ell\}}(\gG_\xi)}\right)}
 \alpha_{\ell,{\rm B}}^{|T_{\rm int}|}
 \bigl(e^{-2\theta}\alpha_{\ell,{\rm B}}\bigr)^{|T\setminus T_{\rm int}|}
 \nonumber\\
 &\leq
 e^{2\theta\left(\abs{\gE(\gG_\xi)}-
        \abs{\gS_{\{\xi_0,\xi_\ell\}}(\gG_\xi)}\right)}
 \alpha_{\ell,{\rm B}}^{|T_{\rm int}|}.
 \label{eqn:shared-internal-vertex-payment}
\end{align}
Here \eqref{eqn:bipartite-upper-cap-condition} and
$R^2d\geq1$ imply
$\alpha_{\ell,{\rm B}}\leq1$, so the last factor on the first line
is at most one.  Each selected internal vertex retains a factor
$\alpha_{\ell,{\rm B}}$; the other selected vertices are accounted
for by the exponential factor.

To sum over covers, first choose the noninternal vertices.
Each such vertex has degree at least one, so their number is at
most their total degree
$2(\abs{\gE(\gG_\xi)}-
\abs{\gS_{\{\xi_0,\xi_\ell\}}(\gG_\xi)})$.  Consequently there
are at most
$2^{2(\abs{\gE(\gG_\xi)}-
\abs{\gS_{\{\xi_0,\xi_\ell\}}(\gG_\xi)})}$ choices of this
part of $T$.  For each such choice, we enlarge the allowed selections
inside the chains by treating both endpoints of every singleton
chain as already selected, without adding a weight for them.
This removes the covering requirements at the ends of the chain;
only edges joining two internal vertices still need to be covered.

A singleton chain of length $k\geq3$ has $k-1$ internal vertices.
Among the edges joining them, one can choose
$\lfloor(k-1)/2\rfloor$ pairwise disjoint edges.  A cover must select
at least one endpoint of each, hence at least that many internal
vertices.  There are at most $2^{k-1}$ selections altogether.
Since $\alpha_{\ell,{\rm B}}\leq1$, the sum of their weights is
therefore at most
\[
 2^{k-1}\alpha_{\ell,{\rm B}}^{\lfloor(k-1)/2\rfloor}
 \leq(8\alpha_{\ell,{\rm B}})^{\lfloor(k-1)/2\rfloor}.
\]
The second inequality uses
$k-1\leq3\lfloor(k-1)/2\rfloor$ for $k\geq3$.
The argument also applies when the chain endpoints coincide, since
its internal vertices still form a path.  Distinct chain interiors
are disjoint, and the exponents
$\lfloor(k-1)/2\rfloor$ sum to $\varsigma(\xi)$ by
\eqref{eqn:singleton-certificate-count}.  All other internal
vertices, including those on singleton chains of length at most
two, may be selected freely in this upper bound.  Each contributes
$1+\alpha_{\ell,{\rm B}}$, and there are at most $2\ell$ of them.
Thus
\begin{align}
 \sum_{\substack{T\subseteq\varphi(\gV(\gG_\xi))\\
                  T\text{ covers the singleton edges}}}
       \alpha_{\ell,{\rm B}}^{|T_{\rm int}|}
\leq &
 2^{2\left(\abs{\gE(\gG_\xi)}-
        \abs{\gS_{\{\xi_0,\xi_\ell\}}(\gG_\xi)}\right)}
 (8\alpha_{\ell,{\rm B}})^{\varsigma(\xi)}
 (1+\alpha_{\ell,{\rm B}})^{2\ell}
 \nonumber\\
\leq &
 2^{2\left(\abs{\gE(\gG_\xi)}-
        \abs{\gS_{\{\xi_0,\xi_\ell\}}(\gG_\xi)}\right)}
 (8\alpha_{\ell,{\rm B}})^{\varsigma(\xi)}
 e^{2\ell\alpha_{\ell,{\rm B}}}.
\end{align}

Multiplying this cover sum by the exponential factor in
\eqref{eqn:shared-internal-vertex-payment} and the absolute word
bound \eqref{eqn:shared-absolute-word-bound} bounds $|\E[SW]|$.
By \eqref{eqn:shared-singleton-chain-count}, the factor
$2^{e_1(\xi)}$ is at most
$4^{\varsigma(\xi)}
2^{2(\abs{\gE(\gG_\xi)}-
\abs{\gS_{\{\xi_0,\xi_\ell\}}(\gG_\xi)})}$.
Its first factor combines with the chain selections as
\[
 4^{\varsigma(\xi)}
 (8\alpha_{\ell,{\rm B}})^{\varsigma(\xi)}
 =(32\alpha_{\ell,{\rm B}})^{\varsigma(\xi)}.
\]
Its second factor combines with the count of noninternal selections
and the degree exponential to give
$(4e^\theta)^{2(\abs{\gE(\gG_\xi)}-
\abs{\gS_{\{\xi_0,\xi_\ell\}}(\gG_\xi)})}$.
Allowing an additional factor of two to match the constants stated
in the lemma, we obtain
\begin{align}
 |\E[SW]|
 &\leq2e^{2\ell\alpha_{\ell,{\rm B}}}
 (32\alpha_{\ell,{\rm B}})^{\varsigma(\xi)}
 (4e^\theta)^{2\left(\abs{\gE(\gG_\xi)}-
       \abs{\gS_{\{\xi_0,\xi_\ell\}}(\gG_\xi)}\right)}
 (R^2d)^{-\ell}
       \prod_{e\in\gE(\gG_\xi)}p_{\varphi(e)}(1-p_{\varphi(e)}).
 \label{eqn:bipartite-upper-cap-word-bound}
\end{align}
Condition \eqref{eqn:bipartite-upper-cap-condition} supplies the
required decay along singleton chains:
\[
 (32\alpha_{\ell,{\rm B}})^{\varsigma(\xi)}
 \leq(R^2d)^{-\varsigma(\xi)}.
\]
All factors other than the variance product in
\eqref{eqn:bipartite-upper-cap-word-bound} are uniform over the labelings.  Applying
Lemma~\ref{lem:weighted-variance-label-sum} to its variance product
therefore gives the singleton clause with
\[
 \widetilde K_\ell=
 \frac{2\max\{\rowvarmax,\colvarmax\}}
      {R^2}e^{2\ell\alpha_{\ell,{\rm B}}},
 \qquad \widetilde\Gamma_\ell=4e^\theta.
\]
Since $2e^{2\ell\alpha_{\ell,{\rm B}}}\geq1$, these parameters also
cover the plural bound, proving
item~\ref{item:bipartite-graph-upper-caps}.

\end{proof}

\paragraph{Verification over the full trace range.}
We verify the graph bounds up to powers of order $\sqrt d\log N$
by checking the cutoff probability
criteria throughout that range.  The forcing probabilities and
$\alpha_{\ell,{\rm B}}$ are nondecreasing in $\ell$, so the same
tail estimates and Chernoff parameter apply to every smaller power.

\begin{lemma}[Bipartite cutoffs verify the full trace range]
\label{lem:bipartite-cutoff-full-trace-verification}
Under \eqref{eqn:rectangular-uniform-sparsity-bound},
the sparsity conditions in Proposition~\ref{prop:nonbacktracking-radius-bounds}\ref{item:nonbacktracking-bipartite}, and condition
\ref{item:cutoff-upper-only} or \ref{item:cutoff-strong-band}, choose
$R=(\rowvarmax\colvarmax)^{1/4}$. There are fixed $b_{\rm tr},C_{\rm tr}>0$ and $N_0$, depending only on
the fixed model constants and the aspect-ratio bound, such that for
$N\geq N_0$ and every integer
$3\leq\ell\leq\lfloor b_{\rm tr}R\sqrt{d}\log N\rfloor$,
$\widehat{\mX}/R$ satisfies
\ref{eqn:bipartite-graph-condition} at scale $R^2d$, with
\begin{align}
 \widetilde\chi_\ell,\widetilde\Gamma_\ell&\leq C_{\rm tr},
 &\widetilde K_\ell&\leq C_{\rm tr}
                 \exp(2\ell d^{-3}).
\end{align}
Under strong bands, $\widetilde K_\ell\leq C_{\rm tr}$.
\end{lemma}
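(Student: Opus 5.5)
The plan is to apply Lemma~\ref{lem:bipartite-graph-verification} with the general-band item~\ref{item:bipartite-graph-general-bands} under \ref{item:cutoff-strong-band}, and the upper-cap item~\ref{item:bipartite-graph-upper-caps} under \ref{item:cutoff-upper-only}. In each case it then suffices to check the relevant probability criterion uniformly over $3\leq\ell\leq\lfloor b_{\rm tr}R\sqrt d\log N\rfloor$ and to bound the parameters. The bounds on $\widetilde\chi_\ell$ and on $\max\{\rowvarmax,\colvarmax\}/R^2$ come directly from \eqref{eqn:bipartite-profile-constant-bounds}, valid once $p_\star\leq C_0d/N\leq1/2$; this gives $C_{\rm tr}\geq4C_0^2$. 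Because $R$ is bounded above and below by fixed constants (see \eqref{eqn:normalization-radius-bounds}), $R^2d\asymp d$. Since $\varepsilon_{\ell,{\rm B}}$ and $\alpha_{\ell,{\rm B}}$ are nondecreasing in $\ell$, we only need to check the largest $\ell$.

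\emph{Upper caps.} Take $\theta=\sqrt{\log d/(C_0d)}$, so that $q_{\theta,{\rm B}}\leq d^{-4}$ by \eqref{eqn:upper-only-retention-tail}. Then
\[
\alpha_{\ell,{\rm B}}\leq e^{2\theta}d^{-4}\exp\bigl[\tfrac12(e^\theta-1)^2p_\star(2\ell+1)\bigr].
\]
We have $(e^\theta-1)^2\leq C\log d/d$ and $p_\star\ell\leq C d^{3/2}\log N/N=o(1)$ by \eqref{eqn:rectangular-edge-sparsity-range}, so the exponential factor is $1+o(1)$. This gives $\alpha_{\ell,{\rm B}}\leq 2d^{-4}$ for large $N$, hence $32R^2d\,\alpha_{\ell,{\rm B}}\leq 64R_*^2d^{-3}\leq1$ as soon as $d\geq C_{\rm NB}$ is large enough. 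For the parameters, $\widetilde\Gamma_\ell=4e^\theta\leq4e$ and $e^{2\ell\alpha_{\ell,{\rm B}}}\leq\exp(4\ell d^{-4})\leq\exp(2\ell d^{-3})$ when $d\geq2$, which gives the stated bound on $\widetilde K_\ell$. One point needs care: $C_{\rm NB}$ must be large enough that $\log d/d$ is small and $\theta\leq1$. Apart from that, this case is mechanical.

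\emph{Strong bands.} Here we must check \eqref{eqn:bipartite-forced-band-condition}, namely $\varepsilon_{\ell,{\rm B}}\leq(4R^2d)^{-\ell}$. We apply Lemma~\ref{lem:bipartite-degree-tail-control}(ii) with $h=w_N(d)-2\ell$. The margin $w_N(d)$ contains $2j_N(d)=2\lceil\sqrt d\log N\rceil$, so choosing $b_{\rm tr}R_*\leq1$ guarantees $2\ell\leq 2j_N(d)$ and therefore $h\geq\sqrt{2C_0d\,r_N}+\tfrac23 r_N$. With $V=\max\{\overline{\drow},\overline{\dcol}\}\leq C_0d$, the Bennett exponent satisfies $h^2/(2(V+h/3))\geq r_N$. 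It follows that $\varepsilon_{\ell,{\rm B}}\leq 2Ne^{-r_N}=(4d)^{-j_N(d)}$.

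This is the step I expect to be the main obstacle, because we must compare $(4d)^{-j_N}$ with $(4R^2d)^{-\ell}$ when $R^2$ may exceed one. If $R\leq1$, the inequality is immediate from $\ell\leq j_N$. If $R>1$, I would shrink $b_{\rm tr}$ so that $\ell\log(4R^2d)\leq j_N\log(4d)$. This holds once $b_{\rm tr}R_*(1+\log R_*^2/\log 4)\leq1$, using $\log(4R^2d)\leq\log(4d)(1+\log R^2/\log4)$. If this fails, the fallback is to add a term $\log R_*$ into $r_N$; but the choice of $b_{\rm tr}$ should suffice.

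Under strong bands the parameters are $\widetilde\Gamma_\ell=2$ and $\widetilde K_\ell=\max\{\rowvarmax,\colvarmax\}/R^2\leq2C_0$, with no exponential factor. This gives the final claim. We conclude by taking $C_{\rm tr}$ to be the maximum of the constants collected above and $N_0$ to be large enough for all the ``large $N$'' steps.
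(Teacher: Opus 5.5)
Your proposal is correct and follows the paper's proof essentially step for step. In the upper-cap case you take $\theta=\sqrt{\log d/(C_0d)}$, use $q_{\theta,{\rm B}}\le d^{-4}$ together with a bound on the shared-edge correction, and verify $32R^2d\,\alpha_{\ell,{\rm B}}\le1$. In the strong-band case you use the Bennett tail with $h\ge\sqrt{2C_0d\,r_N(d)}+\tfrac23r_N(d)$ to get $(4d)^{-j_N(d)}$, then compare it with $(4R^2d)^{-\ell}$.

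The differences are only bookkeeping. You apply the tail directly at power $\ell$ and absorb the case $R^2>1$ by shrinking $b_{\rm tr}$. The paper instead works at power $j_N(d)$ with monotonicity in $\ell$, and uses $\ell\le\tfrac12j_N(d)$ together with $R^2\le C_0\le d$. You should also state explicitly that $R^2d\ge1$, which Lemma~\ref{lem:bipartite-graph-verification} requires; it follows from $R^2\ge1-p_\star$ and $d\ge C_{\rm NB}$.
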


\begin{proof}[Proof of Lemma~\ref{lem:bipartite-cutoff-full-trace-verification}]
We verify the two probability criteria at the original degree scale
$d$ for graph moments normalized at scale $R^2d$.
The sparsity restriction implies $p_\star\leq C_0d/N=o(1)$,
so $p_\star\leq1/2$ for all sufficiently large $N$.
The variance formulas and \eqref{eqn:cutoff-profile-constant} give
\[
 1-p_\star\leq R^2
 \leq\frac{\sqrt{\overline{\drow}\,\overline{\dcol}}}{d}
 \leq C_{\rm sp}\leq C_0.
\]
Indeed, the lower bound follows as in the proof of
Lemma~\ref{lem:bipartite-graph-verification}, and the upper
bound uses
$\sqrt{\overline{\drow}\,\overline{\dcol}}
\leq\sqrt{nm}\,p_\star\leq C_{\rm sp}d$.
Increasing $C_{\rm NB}$ to at least two ensures $R^2d\geq1$.
Moreover, \eqref{eqn:bipartite-profile-constant-bounds} gives
\[
 \widetilde\chi_\ell\leq4C_0^2,\qquad
 \frac{\max\{\rowvarmax,\colvarmax\}}{R^2}\leq2C_0.
\]
Initially choose $0<b_{\rm tr}\leq(2\sqrt{C_0})^{-1}$.
For every power in the stated range, the sparsity restriction yields
\[
 2\ell+1\leq\sqrt d\log N+1
 \leq N^{(1-\delta_{\rm NB})/12}(\log N)^{3/4}+1=o(N).
\]
Thus $2\ell+1\leq N/4$ for all such powers once $N$ is sufficiently
large, uniformly over the allowed values of $d$.

For upper caps, fix
$\theta=\sqrt{\log d/(C_0d)}\leq1$ for the entire range of $\ell$.
The upper margins give $q_{\theta,{\rm B}}\leq d^{-4}$ by
\eqref{eqn:upper-only-retention-tail}.  Using
$e^\theta-1\leq2\theta$ and $p_\star\leq C_0d/N$, we obtain
\begin{align}
 \tfrac12(e^\theta-1)^2p_\star(2\ell+1)
 &\leq2\theta^2\frac{C_0d}{N}(2\ell+1)
 =\frac{2(2\ell+1)\log d}{N}
 \leq\tfrac12\log d.
\end{align}
Choose $C_{\rm NB}$ large enough that
$2\theta\leq\frac12\log d$ and
$32C_0d^{-2}\leq1$ whenever
$d\geq C_{\rm NB}$.  The definition
\eqref{eqn:bipartite-alpha-l-B} then gives
\begin{align}
 \alpha_{\ell,{\rm B}}
 &\leq d^{-4}\exp\!\left(\tfrac12\log d+\tfrac12\log d\right)
 =d^{-3},
 &32R^2d\,\alpha_{\ell,{\rm B}}
 &\leq32C_0d^{-2}\leq1.
 \label{eqn:bipartite-full-trace-upper-cap-control}
\end{align}
Since $L_{\rm r}=L_{\rm c}=0$, this verifies
\eqref{eqn:bipartite-upper-cap-condition}.
Lemma~\ref{lem:bipartite-graph-verification}\ref{item:bipartite-graph-upper-caps}
therefore supplies the graph moment bounds, with
\[
 \widetilde K_\ell\leq4C_0\exp(2\ell d^{-3}),\qquad
 \widetilde\chi_\ell\leq4C_0^2,\qquad
 \widetilde\Gamma_\ell=4e^\theta\leq4e.
\]
We retain the exponential factor in $\widetilde K_\ell$, since $d$
is allowed to remain constant as $N$ grows.

For strong bands, every nonvacuous margin is at least $w_N(d)$.
Apply \eqref{eqn:bipartite-band-margin-tail} at power $j_N(d)$ with
\[
 h=w_N(d)-2j_N(d)
 =\sqrt{2C_0d\,r_N(d)}+\tfrac23r_N(d)>0.
\]
This choice satisfies
\[
 h^2-2r_N(d)(C_0d+h/3)
 =\tfrac23r_N(d)\sqrt{2C_0d\,r_N(d)}\geq0.
\]
Using the degree budget $\max\{\overline{\drow},\overline{\dcol}\}
\leq C_0d$ in the tail bound gives
\[
 \varepsilon_{j_N(d),{\rm B}}
 \leq2N\exp\!\left[-\frac{h^2}{2(C_0d+h/3)}\right]
 \leq2Ne^{-r_N(d)}=(4d)^{-j_N(d)}.
\]
To obtain the criterion at scale $R^2d$, note that
\[
 \ell\leq b_{\rm tr}R\sqrt d\log N
 \leq\tfrac12\sqrt d\log N\leq\tfrac12j_N(d).
\]
Increase $C_{\rm NB}$, if necessary, so that $C_{\rm NB}\geq C_0$.
Then $R^2\leq C_0\leq d$, and hence
$\log(4R^2d)\leq2\log(4d)$.
Consequently,
$\ell\log(4R^2d)\leq j_N(d)\log(4d)$.
Since forcing probabilities are nondecreasing in the power, we conclude
\begin{align}
 \varepsilon_{\ell,{\rm B}}
 &\leq\varepsilon_{j_N(d),{\rm B}}
 \leq(4d)^{-j_N(d)}
 \leq(4R^2d)^{-\ell}.
 \label{eqn:bipartite-full-trace-strong-band-control}
\end{align}
This verifies \eqref{eqn:bipartite-forced-band-condition}, so
Lemma~\ref{lem:bipartite-graph-verification}\ref{item:bipartite-graph-general-bands}
gives $\widetilde K_\ell\leq2C_0$,
$\widetilde\chi_\ell\leq4C_0^2$, and $\widetilde\Gamma_\ell=2$.

Taking $C_{\rm tr}\geq\max\{4C_0,4C_0^2,4e\}$ gives all the
stated parameter bounds throughout the required range.
\end{proof}

\subsection{Column rescaling}
\label{subsec:column-rescaling}

The column contraction in \eqref{eqn:rescaled-column-matrix}
introduces dependence between entries in the same column. We verify
the graph moment bounds by combining lower tails for the column
variances with finite differences along singleton chains.

\begin{lemma}[Graph moment verification for the rescaled matrix]
\label{lem:rescaled-column-moment-verification}
Under the hypotheses of
Proposition~\ref{prop:nonbacktracking-radius-bounds}\ref{item:nonbacktracking-rescaled},
let $\mZ$ be the matrix in \eqref{eqn:rescaled-column-matrix} and choose
\[
 R=(\rowvarmax\colvarmin)^{1/4},\qquad s=R^2d.
\]
There are fixed constants $A,c,C>0$, depending only on the fixed
hypothesis constants, such that, for all sufficiently large $N$ and
every integer $3\leq\ell\leq\lfloor c\sqrt d\log N\rfloor$,
$\mZ/R$ satisfies \ref{eqn:bipartite-graph-condition} at scale
$s$, with
\begin{align}
 \widetilde K_\ell&\leq C\exp(C\ell\eta),
 &\widetilde\chi_\ell&\leq C,
 &\widetilde\Gamma_\ell&=1,
 &\eta&=A\sqrt{\log d/d}.
 \label{eqn:rescaled-column-moment-parameters}
\end{align}
\end{lemma}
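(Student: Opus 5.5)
The proof follows the template of Lemma~\ref{lem:bipartite-graph-verification}. Fix a canonical word $\xi$ and an injective part-preserving labeling $\varphi$, so that the entries of $\mZ/R$ along the word are
\[
 \emZ_{ij}=U_i f_j\emX_{ij}.
\]
Set
\[
 W=(R^2d)^{-\ell}\prod_{e}(\widetilde{\emA}_{\varphi(e)}-p_{\varphi(e)})^{m_e(\xi)},
 \qquad
 F=\prod_{i}U_i^{\deg_{\rm r}}\prod_j f_j^{\,\deg_{\rm c}},
\]
where the exponents are the traversal counts of the word at the row and column vertices. The word product is then $FW$. Both $U_i$ and $f_j$ depend on the whole graph, so the plan is to \emph{decouple} the support edges $\sF=\varphi(\gE(\gG_\xi))$ from the rest. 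We introduce leave-out quantities $U_i^{(-\sF)}$ and
\[
 Q_j^{(-\sF)}=\sum_{i:(i,j)\notin\sF}U_i|\emX_{ij}|^2 .
\]
The associated $f_j^{(-\sF)}$ depend only on entries outside $\sF$ except through the row indicators. Removing the word edges incident to $j$ changes $Q_j$ by at most $\deg_{\varphi(\gG_\xi)}(j)/d$. The dependence of $U_i$ on $\sF$ is handled exactly as in the upper-cap argument of Lemma~\ref{lem:bipartite-graph-verification}, by forcing the support entries to one and using the tail $q_{\theta,{\rm B}}\le d^{-4}$ from \eqref{eqn:upper-only-retention-tail}.

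\emph{Plural words.} We use $|F|\le\prod_j f_j^2$ together with the fact that the column budget must drop from $\colvarmax$ to $\colvarmin$. By definition $f_j^2\le\colvarmin/Q_j$. Bennett's lower tail (Lemma~\ref{lem:Bennett}) applied to $Q_j^{(-\sF)}$ gives
\[
 Q_j^{(-\sF)}\ge(1-\eta)\sum_i p_{ij}(1-p_{ij})/d
\]
outside an event of probability $d^{-C}$. Here we use $d\ge c_{\rm deg}\log N$ and the upper-cap margins, and choose $A$ large; the thinning by $U_i$ costs only $O(d^{-3})$ in expectation. On the good event, we work under the tilted law \eqref{eqn:word-tilted-law}, which leaves entries off $\sF$ with their original law. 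The effective weight at a column vertex is then
\[
 w_{ij}=\frac{p_{ij}(1-p_{ij})}{R^2d}\cdot\frac{\colvarmin}{(1-\eta)\,\mathrm{var}_j},
\]
whose column sums are at most $\colvarmin(1-\eta)^{-1}/R^2$ and whose row sums are at most $\rowvarmax/R^2$. The product of the budgets is $(1-\eta)^{-1}$. Lemma~\ref{lem:weighted-variance-label-sum}(i), applied with budgets inflated by $(1+C\eta)$ per vertex, yields a factor $(1+C\eta)^{|\gV(\gG_\xi)|}\le e^{C\ell\eta}$, which goes into $\widetilde K_\ell$. On the bad event, $f_j\le1$ and the crude bound of Lemma~\ref{lem:bipartite-graph-verification} apply, multiplied by $d^{-C}$ per affected column. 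Summing over subsets of bad columns gives $(1+d^{-C})^{2\ell}$, which is absorbed as well. The shift in $Q_j$ caused by $\sF$ is at most $\deg(j)/d$; its product over columns is bounded by $\exp(C\sum_j\deg(j)/d)\le e^{C\ell/d}$.

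\emph{Singleton words.} These are the main obstacle. The entry $\widetilde{\emA}_e$ of a singleton edge $e=(i,j)$ is centered and independent of $W/\widetilde{\emA}_e$, but it enters $F$ through $U_i$ and $f_j$. The plan is a finite-difference expansion. Write $F=F|_{\widetilde{\emA}_e=0}+\Delta_eF$; the first term has zero expectation against the centered factor. We then bound $|\Delta_eF|$ via two differences. First, $\Delta_e f_j=O(f_j\,d^{-1}/Q_j)=O(d^{-1})$ by the Lipschitz property of $x\mapsto\min\{1,\sqrt{\colvarmin/x}\}$ on the good event $Q_j\gtrsim1$. Second, $\Delta_eU_i$ is nonzero only when $D_i^{\rm r}=U_{\rm r}+1$, an event of probability at most $q_{\theta,{\rm B}}\le d^{-4}$.

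Along a singleton chain of length $k$, each internal vertex carries one such difference, of size $O(d^{-1})$ or $O(d^{-4})$. Pairing consecutive internal vertices as in the cover argument (at least $\lfloor(k-1)/2\rfloor$ disjoint pairs) gives $d^{-\lfloor(k-1)/2\rfloor}$ per chain, and hence the factor $s^{-\varsigma(\xi)}$ after absorbing the ratio $R^2\asymp1$. The remaining combinatorial factor is the $2^{e_1(\xi)}$ coming from \eqref{eqn:shared-absolute-word-bound}. Its $4^{\varsigma}$ part is absorbed by choosing the constant in $O(d^{-1})$ less than $1/8$, after increasing $C_{\rm NB}$. The remaining $4^{\#\text{chains}}$ part is absorbed into $\widetilde K_\ell$ through the bound on the number of core edges; this costs only a constant power in $\ell$, which is why $\widetilde\Gamma_\ell=1$ can be kept at the price of enlarging $C$ in $\widetilde K_\ell\le Ce^{C\ell\eta}$.

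The delicate point is that differences at adjacent chain vertices share the column variable $Q_j$. I would handle this by a multivariate Taylor/inclusion--exclusion over the set of differenced vertices, mirroring \eqref{eqn:shared-survival-expansion}. Second differences of $f_j$ are $O(d^{-2})$, so this suffices. The label sum is then completed with Lemma~\ref{lem:weighted-variance-label-sum} as in the plural case, and the full range $\ell\le c\sqrt d\log N$ follows as in Lemma~\ref{lem:bipartite-cutoff-full-trace-verification}.
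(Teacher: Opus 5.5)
There is a genuine gap in your singleton step, and it is quantitative. The payment per pair of consecutive internal chain vertices has no constant slack, but your bookkeeping needs some. Since $\varsigma(\xi)$ can be of order $\ell$ at genus one (the cycle example of Figure~\ref{fig:singleton-cycle-compression}), a bound of the form $(C/s)^{\varsigma(\xi)}$ with $C>1$ cannot be absorbed into $\widetilde\Gamma_\ell^{2|\gE(\gU)|}$, into $\widetilde\chi_\ell^{g}$, or into $\widetilde K_\ell\le Ce^{C\ell\eta}$. It would put a factor $C^{\ell}$ into the trace and destroy the leading constant $(\rowvarmax\colvarmin)^{1/4}$. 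You incur such losses twice.

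First, you carry $2^{e_1(\xi)}$ from \eqref{eqn:shared-absolute-word-bound} and propose to absorb its $4^{\varsigma}$ part by making the constant in $\Delta f_j=O(d^{-1})$ smaller than $1/8$ through $C_{\rm NB}$. That constant is not tunable. Let $\psi(x)=\min\{1,\colvarmin/x\}$ denote the factor $f_j^2$ of a degree-two column. Switching one incident edge moves $Q_j\approx v_j\coloneqq\sum_i\E|X_{ij}|^2$ by $h=(1-2p_{ij})/d$, and hence moves $\psi$ by about $\colvarmin h/v_j^2$. This is a fixed profile multiple of $1/d$ that does not improve as $C_{\rm NB}$ grows.

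Second, you pass from $d^{-\varsigma}$ to $s^{-\varsigma}$ by ``absorbing $R^2\asymp1$'', which costs $R^{2\varsigma}$. The comparison that actually has to hold is between the column payment $1/(\colvarmin d)$ and $1/s=1/(R^2d)$. It goes the right way precisely because $R^2=\sqrt{\rowvarmax\colvarmin}<\colvarmin$, yet the hypothesis $\rowvarmax<\colvarmin$ never enters your singleton analysis.

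What closes the gap in the paper is exact bookkeeping at the selected columns. For each selected degree-two singleton column, apply $\E[(A-p)F(A)]=p(1-p)\bigl(F(1)-F(0)\bigr)$ to both incident edges, so differenced edges carry $p(1-p)$ and not $2p(1-p)$. Your own expansion $F=F|_{A_e=0}+\Delta_eF$ already gives this, so the imported $2^{e_1(\xi)}$ is an artifact of switching to absolute values. The factor two survives only on frozen edges at boundary rows and at the $O(g+1)$ unselected or core columns, costing $C^{g+1}$.

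Next, bound the mixed difference by $|\Delta_h\Delta_k\psi(x)|\le\colvarmin\min(h,k)/\max(\colvarmin,q)^2$ for $x\ge q$. Here $q$ is a lower bound for $Q_j$ insensitive to the word edges (the paper's $T_j$, built from rows outside the word). On the lower-tail event this gives exactly $\frac{z_j^2}{\colvarmin d}(1+C\eta)$ with $z_j=\colvarmin/v_j$. The $z_j^2$ supplies one factor $z_j$ for each of the two edges in the comparison weights $w_{ij}=z_jp_{ij}(1-p_{ij})/d$, whose column sums equal $\colvarmin$. The rest gives $(\colvarmin d)^{-\varsigma}=s^{-\varsigma}(\rowvarmax/\colvarmin)^{\varsigma/2}\le s^{-\varsigma}$.

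Your claim that second differences of $f_j$ are $O(d^{-2})$ is false at the clipping point $Q_j=\colvarmin$, where they are of order $d^{-1}$. This is harmless only because the cover argument needs just one $O(d^{-1})$ payment per selected column.

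Smaller repairs:
\begin{itemize}
\item $4^{\text{number of chains}}\le4^{3g+1}$ is exponential in $g$, not a power of $\ell$, so it belongs in $\widetilde\chi_\ell^{g}$ rather than $\widetilde K_\ell$.
\item In the plural case, use $f_j^{2\deg(j)}$ rather than $f_j^2$, to supply $z_j$ on every incident edge as your weights require.
\item The joint lower tail over several columns needs an argument, since the $Q_j$ share the row indicators $U_i$. The paper uses variables with at most one nonzero entry per row among the word columns, independent across rows.
\end{itemize}
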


\begin{proof}
\emph{Comparison weights.}
Set $v_j=\sum_i\E|\emX_{ij}|^2$ and $z_j=\colvarmin/v_j$.
The comparison weights $w_{ij}=z_jp_{ij}(1-p_{ij})/d$ have column
sums $\colvarmin$, row sums at most $\rowvarmax$, and maximum
$C/N$. All of $\rowvarmax,\colvarmin,R$, and $v_j$ are bounded
above and away from zero by fixed profile constants. We prove the
graph bounds uniformly over the stated range of powers.

\smallskip\noindent\emph{Lower tails outside the word support.}
Fix an injectively labelled word, with row set $I$ and column set $J$.
For $i\notin I$ and $j\in J$, let
\[
 C_{ij}=\widetilde{\emA}_{ij}
 \prod_{k\in J\setminus\{j\}}(1-\widetilde{\emA}_{ik})
 \indi{\sum_{k\notin J}\widetilde{\emA}_{ik}\le U_{\rm r}-1},
 \qquad T_j=\frac{1-2p_\star}{d}\sum_{i\notin I}C_{ij}.
\]
Such an edge survives the row cutoff, so
$T_j\le\sum_{i\notin I}U_i|\emX_{ij}|^2$.
Each row vector $(C_{ij})_{j\in J}$ has at most one nonzero entry,
and these vectors are independent across rows. Its success
probabilities $q_{ij}$ satisfy
$q_{ij}\ge p_{ij}(1-C|J|p_\star-Cd^{-10})$:
the row-tail estimate follows by Chernoff optimization at
$\theta=5\sqrt{\log d/(C_0d)}$, since the exponent is at most
$-(25/2+o(1))\log d$. Hence
$\E T_j\ge v_j-C(\ell p_\star+p_\star+d^{-10})$.
The sparsity range gives $\ell p_\star+p_\star=o(\eta)$.
For $t_j\ge0$, the elementary inequality
\[
 \E e^{-\sum_jt_jC_{ij}}
 =1+\sum_jq_{ij}(e^{-t_j}-1)
 \le\prod_j\bigl(1+q_{ij}(e^{-t_j}-1)\bigr)
\]
allows separate lower-tail Chernoff parameters in every column.
Choosing $A$ large enough therefore gives, for every $B\subseteq J$,
\begin{align}
 \P(T_j<v_j-\eta\text{ for all }j\in B)&\le d^{-10|B|},
 &\E\prod_{j\in J}(1+C E_j)&\le(1+Cd^{-10})^{|J|},
 \label{eqn:rescaled-column-joint-lower-tails}
\end{align}
where $E_j=\indi{T_j<v_j-\eta}$. These variables involve only rows
outside $I$, so conditioning or tilting the word edges does not
change these estimates.

\smallskip\noindent\emph{Boundary rows and conditional column integration.}
First condition every Bernoulli outside the word support. A word row
$i$ has support degree $k_i$ and remaining degree $K_i$. Its survival indicator is identically one when
$K_i\le U_{\rm r}-k_i$, identically zero when $K_i>U_{\rm r}$,
and can vary only on
$\mathcal B_i=\{U_{\rm r}-k_i<K_i\le U_{\rm r}\}$.
The variables $K_i$ are independent across word rows and independent
of all $T_j$. The same Chernoff calculation gives
\begin{align}
 \P(\mathcal B_i)&\le d^{-10}e^{\theta k_i}.
 \label{eqn:rescaled-column-row-boundary}
\end{align}
Let $B$ be the set of boundary rows. Fix also their support-edge
variables and discard configurations in which a word row is deleted.
Every word row then survives under every change of the remaining
support variables. Consequently, in each column factor we may write
\[
 Q_j=\sum_{i\notin I}U_i|\emX_{ij}|^2+\sum_{i\in I}|\emX_{ij}|^2.
\]
Each remaining variable affects only its own column, by an increment
$(1-2p_{ij})/d\in[0,1/d]$. Conditional integration thus factors by
columns. This is a conditional factorization of the word, not an
independence assertion about the normalized entries.

\smallskip\noindent\emph{Finite differences and singleton-chain payments.}
Write $g(x)=\min\{1,\colvarmin/x\}$, with $g(0)=1$. Its a.e. derivative
on $[q,\infty)$ takes values in
$[-\colvarmin/\max(\colvarmin,q)^2,0]$.
For $x\ge q$ and $h,k\in[0,1/d]$,
\begin{align}
 |\Delta_hg(x)|&\le\frac{\colvarmin h}{\max(\colvarmin,q)^2},
 &|\Delta_h\Delta_kg(x)|&\le
       \frac{\colvarmin\min(h,k)}{\max(\colvarmin,q)^2}.
 \label{eqn:rescaled-column-sharp-differences}
\end{align}
For the second estimate, integrate the difference of two derivatives;
their range has the displayed length, including across the clipping
point. In particular, no factor two is needed.
An internal degree-two singleton column has factor $g(Q_j)$.
If at least one incident row is not in $B$, apply
$\E[(A-p)F(A)]=p(1-p)(F(1)-F(0))$ on its one or two free edges.
Since $Q_j\ge T_j$, the resulting difference coefficient is at most
\begin{align}
 \frac{z_j^2}{\colvarmin d}(1+C\eta)(1+CE_j).
 \label{eqn:rescaled-column-singleton-payment}
\end{align}
Any frozen singleton edge contributes at most $2p_{ij}(1-p_{ij})$
when subsequently integrated absolutely; this factor two is charged
to its boundary row. For a degree-two plural column, discard excess
powers of $f_j\le1$ to leave $g(Q_j)^2$, which is at most
$z_j^2(1+C\eta)(1+CE_j)$. Its edge moments satisfy
$\E|A-p|^k\le p(1-p)$ for $k\ge2$.

On each singleton chain pair consecutive internal vertices, as in
\eqref{eqn:singleton-certificate-count}. Select the column in each
pair and match it to the row in that pair. There are exactly
$\varsigma$ selected columns, with distinct matched internal rows.
A selected column lacks the payment
\eqref{eqn:rescaled-column-singleton-payment} only if both incident
rows belong to $B$, and hence its matched row belongs to $B$.
There are therefore at most $|B\cap S_{\rm r}|$ missing payments,
where $S_{\rm r}$ denotes the internal degree-two rows.
Restoring them costs at most $(Cd)^{|B\cap S_{\rm r}|}$, including
the factors two from their frozen singleton edges.
An internal boundary row has probability at most $2d^{-10}$ by
\eqref{eqn:rescaled-column-row-boundary}. Core rows have total support
degree $O(g+1)$, where $g$ is the support genus, by
Lemma~\ref{lem:reduced-graph-code} and \eqref{eqn:core-size}.
Their boundary choices and the factors $e^{\theta k_i}$ cost only
$C^{g+1}$. Summing over the boundary sets thus costs
\[
 C^{g+1}(1+Cd^{-9})^{|S_{\rm r}|}.
\]
Here the probability of an exact boundary set is bounded by the
product of the selected row-tail probabilities; its joint estimate
with the column events factors because the corresponding row sets
are disjoint.

The unselected singleton columns number at most one per core chain.
They and the noninternal columns cost $C^{g+1}$: restore a factor
$z_j$ per incident support edge using $z_j\ge c>0$, and bound their
remaining singleton moments absolutely. The core-degree bound again
controls every such constant. Thus, writing $W_Z$ for the word
product before normalization by $R$, the preceding estimates and
\eqref{eqn:rescaled-column-joint-lower-tails} give
\begin{align}
 |\E W_Z|&\le C^{g+1}e^{C\ell\eta}d^{-\ell}
 \prod_{(i,j)\in\gE(\gG_\xi)}p_{ij}(1-p_{ij})z_j
 \begin{cases}
  1,&\xi\text{ plural},\\
  (\colvarmin d)^{-\varsigma},&\xi\text{ singleton}.
 \end{cases}
 \label{eqn:rescaled-column-word-bound}
\end{align}
No factor exponential in the number of internal vertices is hidden
in $C^{g+1}$; their remaining factors are in $e^{C\ell\eta}$.

\smallskip\noindent\emph{graph moment bounds.}
Apply Lemma~\ref{lem:weighted-variance-label-sum} to $w_{ij}/R^2$.
The row and column sums are bounded by
$\sqrt{\rowvarmax/\colvarmin}$ and
$\sqrt{\colvarmin/\rowvarmax}$, respectively. These bounds have
product one, so the lemma gives the plural graph bound at scale $s$.
For singleton words the additional scale factor is
\[
 s^{|\gE(\gG_\xi)|-\ell}(\colvarmin d)^{-\varsigma}
 =s^{|\gE(\gG_\xi)|-\ell-\varsigma}
       (\rowvarmax/\colvarmin)^{\varsigma/2}
 \le s^{|\gE(\gG_\xi)|-\ell-\varsigma}.
\]
Absorb $C^{g+1}$ into $\widetilde K_\ell$ and
$\widetilde\chi_\ell^g$ to obtain
\eqref{eqn:rescaled-column-moment-parameters}.
\end{proof}

\subsection{Hermitian degree cutoffs}
\label{subsec:hermitian-moment-verification}

The independent variables in the original Hermitian model are the
unordered edges $\{u,v\}$, $u<v$; each determines both symmetric
entries. We verify entry and graph moments over the required
power range using the degree estimates in
Section~\ref{subsec:hermitian-cutoff-retention}.

\paragraph{Entry moments.}
\begin{lemma}[Hermitian entry moment bounds]
\label{lem:projected-hermitian-sparse-bounds}
Under \eqref{eqn:uniform-sparsity-bound}, the zero-padded matrix
$\widehat{\mH}$ satisfies Definition~\ref{def:sparse-random-matrix}
at scale $d$ with moment constant $2C_{\rm sp}$.
\end{lemma}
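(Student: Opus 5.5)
The proof follows the bipartite analogue, Lemma~\ref{lem:projected-rectangular-sparse-bounds}, and checks the three requirements of Definition~\ref{def:sparse-random-matrix} entry by entry. Recall $\widehat{\mH}=\mP\mH\mP$ with $\mP=\diag(\indi{u\in I_\star})$. Each padded entry is therefore $\widehat{\emH}_{uv}=\indi{u,v\in I_\star}\,\emH_{uv}$, and since the projection factor lies in $\{0,1\}$ we have $|\widehat{\emH}_{uv}|\leq|\emH_{uv}|$ pointwise. Every absolute moment of $\widehat{\emH}_{uv}$ is thus dominated by the corresponding moment of $\emH_{uv}=(\emA_{uv}-p_{uv})/\sqrt d$. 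This domination holds regardless of the dependence that $I_\star$ introduces, so no independence is needed at this stage. The diagonal is zero because $p_{uu}=0$ and $\emA_{uu}=0$.

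The almost-sure bound comes first. Since $|\emA_{uv}-p_{uv}|\leq1$, we get $|\widehat{\emH}_{uv}|\leq d^{-1/2}\leq 2C_{\rm sp}d^{-1/2}$, assuming $C_{\rm sp}\geq1/2$. If that assumption fails, one should simply read the constant as $\max\{1,2C_{\rm sp}\}$, as the paper effectively does through $C_0$.

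For the moments, \eqref{eqn:shared-bernoulli-centered-moments} gives
\[
 \E|\emA_{uv}-p_{uv}|^k\leq 2p_{uv}(1-p_{uv})\leq 2p_{uv}\quad\text{for every }k\geq1.
\]
Dividing by $d^{k/2}$ and applying the uniform sparsity bound $p_{uv}\leq C_{\rm sp}d/N$ from \eqref{eqn:uniform-sparsity-bound} yields
\[
 \E|\widehat{\emH}_{uv}|^k\leq \frac{2p_{uv}}{d^{k/2}}\leq\frac{2C_{\rm sp}}{N d^{(k-2)/2}}.
\]
Taking $k=1$ gives $\E|\widehat{\emH}_{uv}|\leq 2C_{\rm sp}\sqrt d/N$, and $k\geq2$ gives the remaining condition in \eqref{eqn:sparse-variance-budgets}.

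No step is a real obstacle. The one point to state carefully is that the pointwise domination $|\widehat{\emH}_{uv}|\leq|\emH_{uv}|$ is what allows moments to be taken under the original product law, even though the active set $I_\star$ depends on the whole graph. The constant bookkeeping, $2C_{\rm sp}$ versus $C_{\rm sp}$, is the only other point to watch.
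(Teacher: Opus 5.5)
Your proposal is correct and follows the paper's proof essentially line for line. Both arguments use the pointwise domination $|\widehat{\emH}_{uv}|\leq|\emA_{uv}-p_{uv}|/\sqrt d$, then the centered Bernoulli bound $\E|\emA_{uv}-p_{uv}|^k\leq2p_{uv}$ for all $k\geq1$, combined with $p_{uv}\leq C_{\rm sp}d/N$.

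Your hedge about needing $C_{\rm sp}\geq1/2$ is unnecessary. Since $d=\max_u\mu_u\leq(N-1)p_\star\leq(N-1)C_{\rm sp}d/N$, it follows that $C_{\rm sp}\geq N/(N-1)>1$. Hence the almost-sure bound $d^{-1/2}\leq2C_{\rm sp}d^{-1/2}$ holds automatically.
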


\begin{proof}
The bound $|\widehat{\emH}_{uv}|\leq|\emA_{uv}-p_{uv}|/\sqrt d$
gives the entry maximum $d^{-1/2}$. By
\eqref{eqn:shared-bernoulli-centered-moments} and
$p_\star\leq C_{\rm sp}d/N$, for every $k\geq1$,
\[
 \E|\widehat{\emH}_{uv}|^k
 \leq\frac{2p_{uv}}{d^{k/2}}
 \leq\frac{2C_{\rm sp}}{Nd^{(k-2)/2}}.
\]
\end{proof}

\paragraph{graph moment verification.}
The degree estimates in Section~\ref{subsec:hermitian-cutoff-retention}
supply the probabilities needed for singleton words.  The remaining change from the bipartite calculation
is the sum over vertex labels: the Hermitian variance weights have
row sums at most one.

The graph moment proof will use either the general-band condition
\begin{align}
 \varepsilon_{\ell,{\rm H}}&\leq(4d)^{-\ell},
 \label{eqn:hermitian-verification-forced-condition}
\end{align}
or the upper-only condition
\begin{align}
 L=0,\qquad32d\alpha_{\ell,{\rm H}}\leq1.
 \label{eqn:hermitian-verification-cap-condition}
\end{align}
As in the bipartite model, $\alpha_{\ell,{\rm H}}$ accounts for
shared edges in joint degree events.
Define
\begin{align}
 \chi_\ell\coloneqq
 1\vee\left(\frac Nd\max_{u<v}p_{uv}(1-p_{uv})\right).
 \label{eqn:hermitian-verification-chi}
\end{align}

\begin{lemma}[Hermitian graph moment verification]
\label{lem:hermitian-graph-verification}
Assume \eqref{eqn:uniform-sparsity-bound}, $d\geq1$, and let
$\ell\geq3$ be an integer.  For $\widehat{\mH}$, both lines of
\ref{eqn:hermitian-graph-condition} hold at scale $d$, uniformly
over the canonical classes, under either of the following conditions.
In both cases use $\chi_\ell$ from \eqref{eqn:hermitian-verification-chi}.
\begin{enumerate}[label=\textup{(\roman*)},leftmargin=2.2em,itemsep=0.6em]
\item Under \eqref{eqn:hermitian-verification-forced-condition}, take
$K_\ell=1$ and $\Gamma_\ell=2$.
\item Under \eqref{eqn:hermitian-verification-cap-condition}, take
$K_\ell=2e^{2\ell\alpha_{\ell,{\rm H}}}$ and $\Gamma_\ell=4e^\theta$.
\end{enumerate}
\end{lemma}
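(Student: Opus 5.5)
The plan is to repeat the proof of Lemma~\ref{lem:bipartite-graph-verification}, with three simplifications: there is a single vertex class, the independent variables are the unordered edges $\{u,v\}$ with $u<v$, and no normalization by $R$ is needed, since the scale is $d$ itself.

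\textbf{Setup and absolute word bound.} Fix a canonical word $\xi\in\sQ^\ell$ and an injective labeling $\varphi$, and write
\[
 W=d^{-\ell}\prod_{e\in\gE(\gG_\xi)}(\emA_{\varphi(e)}-p_{\varphi(e)})^{m_e(\xi)},
\]
with $S$ the indicator that every labeled vertex survives. The projected word product is then $SW$. Independence over the distinct unordered support edges and \eqref{eqn:shared-bernoulli-centered-moments} give
\[
 \E|W|\leq 2^{e_1(\xi)}d^{-\ell}\prod_e p_{\varphi(e)}(1-p_{\varphi(e)}).
\]

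\textbf{Plural words.} For plural words $e_1(\xi)=0$. We apply Lemma~\ref{lem:weighted-variance-label-sum}(ii) with $w_{uv}=p_{uv}(1-p_{uv})/d$; these weights have row sums at most one and maximum at most $\chi_\ell/N$. Restoring $d^{|\gE(\gG_\xi)|}$ yields the plural line with $K_\ell=1$.

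\textbf{Singleton words under (i).} Since $\E W=0$ and $\Omega\subseteq\{S=1\}$, we have $|\E[SW]|\leq\E[|W|\indi{\Omega^{\rm c}}]$. Under the tilted law \eqref{eqn:word-tilted-law}, condition on the at most $2\ell$ support-edge values; the remaining edges keep their law, so the conditional failure probability is at most $\varepsilon_{\ell,{\rm H}}$. The chain count \eqref{eqn:shared-singleton-chain-count} is purely combinatorial, so it holds verbatim. Combined with \eqref{eqn:hermitian-verification-forced-condition}, it gives
\[
 \varepsilon_{\ell,{\rm H}}2^{e_1}\leq 4^{|\gE|-|\gS|}d^{-\varsigma},
\]
which is the singleton line with $\Gamma_\ell=2$.

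\textbf{Singleton words under (ii).} Since $L=0$, survival is $\prod_v(1-B_v)$ with $B_v=\indi{D_v>U}$. Expanding, only sets $T$ that are vertex covers of the singleton edges survive. This uses independence of a singleton edge from every other factor; a singleton edge $\{u,v\}$ affects only $B_u$ and $B_v$.

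For a fixed cover $T$, force the support edges to one. Markov's inequality with parameter $\theta$ then gives \eqref{eqn:shared-joint-upper-tail} with $q_{\theta,{\rm H}}$. An edge with both endpoints in $T$ carries weight $e^{2\theta}$, and the correction $\frac12(e^\theta-1)^2p_\star|T|^2$ absorbs it, yielding $\alpha_{\ell,{\rm H}}$.

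The internal-vertex payment and the cover sum over singleton chains then proceed unchanged: pairing internal vertices and using $k-1\leq 3\lfloor (k-1)/2\rfloor$ gives the factors $(8\alpha)^{\varsigma}$ and $e^{2\ell\alpha}$. Condition \eqref{eqn:hermitian-verification-cap-condition} turns $(32\alpha_{\ell,{\rm H}})^{\varsigma}$ into $d^{-\varsigma}$, which gives $K_\ell=2e^{2\ell\alpha_{\ell,{\rm H}}}$ and $\Gamma_\ell=4e^\theta$. Since $K_\ell\geq1$, the plural line is covered as well.

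\textbf{Main obstacle.} The delicate step is the joint tail bound in (ii). Here a single unordered edge contributes to two degrees at once, both inside $T$ and through the support. We must check that forcing the support edges to one enlarges the event, which holds because degrees are monotone under upper caps. We must also check that the shared-edge correction stays at most $p_\star|T|^2/2$ in the symmetric setting, without double counting the symmetric entries.
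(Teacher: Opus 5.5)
Your proposal is correct and follows the same route as the paper. The paper likewise reuses the bipartite argument unchanged: the absolute word bound, singleton cancellation through the tilted law and forced-band probability, and the vertex-cover Chernoff estimate with the shared-edge correction. The only model-specific change is the label sum, via the Hermitian part of Lemma~\ref{lem:weighted-variance-label-sum}. The one point the paper states explicitly and you omit is that a word with a diagonal support edge or a zero-variance edge contributes zero; this is needed so that $\E|W|>0$ and the tilted law is well defined.
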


\begin{proof}[Proof of Lemma~\ref{lem:hermitian-graph-verification}]
Use the word product, survival indicator, and tilted law from the
proof of Lemma~\ref{lem:bipartite-graph-verification}, with
normalization $d^{-\ell}$ and unordered support edges. A diagonal
edge or an edge of zero variance contributes zero. For every other
word, the absolute moment bound, singleton cancellation, and
upper-cap cover argument apply unchanged: each unordered edge is
independent of the others and has at most two selected endpoints.

The variance label sum is the only model-specific change.
The weights $p_{uv}(1-p_{uv})/d$ have row sums at most one and
maximum at most $\chi_\ell/N$, with $1\leq\chi_\ell\leq C_0$.
The Hermitian part of Lemma~\ref{lem:weighted-variance-label-sum} gives
\begin{align}
 &\sum_{\varphi\text{ injective}}c(\varphi\xi)d^{-\ell}
       \prod_{e\in\gE(\gG_\xi)}p_{\varphi(e)}(1-p_{\varphi(e)})
 \leq N^2
    \left(\frac{\chi_\ell}{N}\right)^{\abs{g(\gG_\xi)}}
    d^{\abs{\gE(\gG_\xi)}-\ell}.
 \label{eqn:hermitian-verification-tree-sum}
\end{align}
This proves the plural bound with $K_\ell=1$.

For general bands, \eqref{eqn:shared-forced-singleton-word-bound}
and \eqref{eqn:shared-singleton-chain-count}, with
\eqref{eqn:hermitian-verification-forced-condition}, give
\begin{align}
 \varepsilon_{\ell,{\rm H}}2^{e_1(\xi)}
 &\leq
 2^{2\left(\abs{\gE(\gG_\xi)}-
       \abs{\gS_{\{\xi_0,\xi_\ell\}}(\gG_\xi)}\right)}
 d^{-\varsigma(\xi)}.
 \label{eqn:hermitian-verification-singleton-payment}
\end{align}
Multiplying the variance label sum by this factor gives
$\Gamma_\ell=2$.

For upper caps, the weighted Chernoff estimate
\eqref{eqn:shared-joint-upper-tail} and the ensuing cover count give
\eqref{eqn:bipartite-upper-cap-word-bound} with
$R^2d$, $\alpha_{\ell,{\rm B}}$ replaced by
$d$, $\alpha_{\ell,{\rm H}}$. Condition
\eqref{eqn:hermitian-verification-cap-condition} supplies
$(32\alpha_{\ell,{\rm H}})^{\varsigma(\xi)}\leq d^{-\varsigma(\xi)}$.
The variance label sum then proves the claimed
$K_\ell=2e^{2\ell\alpha_{\ell,{\rm H}}}$ and $\Gamma_\ell=4e^\theta$,
which also cover the plural words.
\end{proof}

\paragraph{Verification over the full trace range.}
Both forcing probabilities and upper-cap corrections are nondecreasing
in $\ell$.  It therefore suffices to check either condition at the
largest required power, using the same $\theta$ throughout the
upper-cap alternative. We verify these moment criteria under the
cutoff regimes in Section~\ref{sec:regularization-thresholds}.

\begin{lemma}[Hermitian cutoffs verify the full trace range]
\label{lem:hermitian-cutoff-full-trace-verification}
Assume \eqref{eqn:uniform-sparsity-bound},
\eqref{eqn:rectangular-edge-sparsity-range}, and either
condition~\ref{item:cutoff-upper-only} or
condition~\ref{item:cutoff-strong-band}.
There are fixed $b_{\rm tr},C_{\rm tr}>0$ and $N_0$, depending only
on the fixed hypothesis constants, such that for $N\geq N_0$,
$\widehat{\mH}$ satisfies \ref{eqn:hermitian-graph-condition}
at sparsity scale $d$ throughout
$3\leq\ell\leq\lfloor b_{\rm tr}\sqrt d\log N\rfloor$, with
\begin{align}
 \chi_\ell,\Gamma_\ell\leq C_{\rm tr},\qquad
 K_\ell\leq C_{\rm tr}\exp(2\ell d^{-3}).
\end{align}
Under strong bands, $K_\ell\leq C_{\rm tr}$.
\end{lemma}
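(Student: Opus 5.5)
This is the Hermitian analogue of Lemma~\ref{lem:bipartite-cutoff-full-trace-verification}, and I would prove it along the same lines. The plan is to check one of the two probability criteria, \eqref{eqn:hermitian-verification-cap-condition} under \ref{item:cutoff-upper-only} or \eqref{eqn:hermitian-verification-forced-condition} under \ref{item:cutoff-strong-band}, uniformly in the range $3\le\ell\le\lfloor b_{\rm tr}\sqrt d\log N\rfloor$. Lemma~\ref{lem:hermitian-graph-verification} then delivers \ref{eqn:hermitian-graph-condition} with explicit parameters.

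\emph{Common preliminaries.} By \eqref{eqn:uniform-sparsity-bound} and \eqref{eqn:rectangular-edge-sparsity-range}, we have $p_\star\le C_0d/N=o(1)$. It follows that $\chi_\ell\le 1\vee C_0\le C_0$ from \eqref{eqn:hermitian-verification-chi}. The same sparsity range gives $2\ell+1\le\sqrt d\log N+1=o(N)$, uniformly over admissible $d$. Increasing $C_{\rm NB}$ ensures $d\ge1$ and $d\ge C_0$. Finally, since $\varepsilon_{\ell,{\rm H}}$ and $\alpha_{\ell,{\rm H}}$ are nondecreasing in $\ell$, it suffices to check each criterion at the top of the range.

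\emph{Upper caps.} Take $\theta=\sqrt{\log d/(C_0d)}\le1$, fixed for all $\ell$. The margin \eqref{eqn:hermitian-cutoff-upper-only} gives $q_{\theta,{\rm H}}\le d^{-4}$, as in \eqref{eqn:hermitian-upper-only-retention-tail}. Using $e^\theta-1\le2\theta$, the correction term satisfies $\tfrac12(e^\theta-1)^2p_\star(2\ell+1)\le 2(2\ell+1)\log d/N\le\tfrac12\log d$. Enlarging $C_{\rm NB}$ so that $2\theta\le\tfrac12\log d$, we get $\alpha_{\ell,{\rm H}}\le d^{-4}\cdot d^{1/2}\cdot d^{1/2}=d^{-3}$. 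Hence $32d\alpha_{\ell,{\rm H}}\le32d^{-2}\le1$ once $d\ge C_{\rm NB}$, which verifies \eqref{eqn:hermitian-verification-cap-condition}. Lemma~\ref{lem:hermitian-graph-verification}(ii) then gives $K_\ell=2e^{2\ell\alpha_{\ell,{\rm H}}}\le2\exp(2\ell d^{-3})$ and $\Gamma_\ell=4e^\theta\le4e$.

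\emph{Strong bands.} Set $h=w_N(d)-2j_N(d)=\sqrt{2C_0d\,r_N(d)}+\tfrac23 r_N(d)$. This choice satisfies $h^2\ge2r_N(d)(C_0d+h/3)$. Apply \eqref{eqn:hermitian-verification-margin-tail} at power $j_N(d)$; since $d\le C_0d$ in the denominator, this gives
\[
\varepsilon_{j_N(d),{\rm H}}\le2Ne^{-r_N(d)}=(4d)^{-j_N(d)}.
\]
Choosing $b_{\rm tr}\le\tfrac12$ ensures $\ell\le\tfrac12\sqrt d\log N\le j_N(d)$. Monotonicity in the power then yields $\varepsilon_{\ell,{\rm H}}\le(4d)^{-j_N(d)}\le(4d)^{-\ell}$. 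This verifies \eqref{eqn:hermitian-verification-forced-condition}, so Lemma~\ref{lem:hermitian-graph-verification}(i) gives $K_\ell=1$ and $\Gamma_\ell=2$.

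Taking $C_{\rm tr}\ge\max\{2,C_0,4e\}$ covers both regimes. The only delicate point is keeping $\theta$ and $b_{\rm tr}$ fixed across the whole range of $\ell$, so that the shared-edge correction in $\alpha_{\ell,{\rm H}}$ stays below $\tfrac12\log d$. This is exactly where $2\ell+1=o(N)$ is used. The Hermitian case is actually simpler than the bipartite one, because no $R$-rescaling of the scale is needed.
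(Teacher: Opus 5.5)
Your proposal is correct and follows the paper's proof essentially step for step. The paper likewise reuses the bipartite degree-tail computations with $R=1$. Under upper caps it fixes $\theta=\sqrt{\log d/(C_0d)}$ to obtain $q_{\theta,{\rm H}}\le d^{-4}$, $\alpha_{\ell,{\rm H}}\le d^{-3}$ and $32d\,\alpha_{\ell,{\rm H}}\le 1$. Under strong bands it applies \eqref{eqn:hermitian-verification-margin-tail} at power $j_N(d)$ together with monotonicity in the power, and in both cases it then invokes Lemma~\ref{lem:hermitian-graph-verification}. Your write-up only spells out details the paper delegates to the bipartite argument: the choice $b_{\rm tr}\le\frac12$, the bound $2\ell+1=o(N)$, and $\chi_\ell\le C_0$.
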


\begin{proof}[Proof of Lemma~\ref{lem:hermitian-cutoff-full-trace-verification}]
Apply the degree-tail estimates from the proof of
Lemma~\ref{lem:bipartite-cutoff-full-trace-verification} with
$R=1$ and sparsity scale $d$ in \eqref{eqn:hermitian-sparsity-scale}. 
For upper caps, \eqref{eqn:hermitian-upper-only-retention-tail} gives
$q_{\theta,{\rm H}}\leq d^{-4}$ at
$\theta=\sqrt{\log d/(C_0d)}$. The same shared-edge correction yields
$\alpha_{\ell,{\rm H}}\leq d^{-3}$ and
$32d\alpha_{\ell,{\rm H}}\leq1$ throughout the stated trace range.
Lemma~\ref{lem:hermitian-graph-verification} therefore gives
$K_\ell\leq2e^{2\ell d^{-3}}$, $\chi_\ell\leq C_0$, and
$\Gamma_\ell\leq4e$.

For strong bands, \eqref{eqn:hermitian-verification-margin-tail}
at $j_N(d)$ gives
$\varepsilon_{j_N(d),{\rm H}}\leq(4d)^{-j_N(d)}$;
monotonicity then verifies
\eqref{eqn:hermitian-verification-forced-condition} for every
required power, with $K_\ell=1$ and $\Gamma_\ell=2$.
\end{proof}

%%%%%%%%%%%%%%%%%%%%%%%%%%%%%%%%%%%%%%%%%%%%%%%%%%%%%%%%%%%%%%%%%%%%%%
%%%%%%%%%%%%%%%%%%%%%%%  Technical Lemmas  %%%%%%%%%%%%%%%%%%%%%%%%%%%
%%%%%%%%%%%%%%%%%%%%%%%%%%%%%%%%%%%%%%%%%%%%%%%%%%%%%%%%%%%%%%%%%%%%%%

\section{Technical Lemmas}
\label{sec:technical-lemmas}

\begin{lemma}[Chernoff's inequality, {\cite[Section~2.3]{vershynin2018high}}]
\label{lem:Chernoff}
Let $X_i$ be independent Bernoulli variables with means $p_i$, and choose
$S=\sum_iX_i$ and $\mu=\E S$. For $\mu>0$,
\begin{align}
 \P(S\geq t)&\leq e^{-\mu}(e\mu/t)^t,
 &&t\geq\mu,\nonumber\\
 \P(S\leq(1-\delta)\mu)&\leq e^{-\delta^2\mu/2},
 &&0<\delta<1.\nonumber
\end{align}
We also use the exponential form: for deterministic $a_i\geq0$ and
$\theta\in\R$,
\[
 \E\exp\!\left(\theta\sum_i a_iX_i\right)
 \leq\exp\!\left(\sum_i p_i(e^{\theta a_i}-1)\right).
\]
In particular, for $\theta>0$ and $t\in\R$,
\[
 \P\left(\sum_i a_iX_i\geq t\right)
 \leq\exp\!\left[-\theta t+\sum_i p_i(e^{\theta a_i}-1)\right].
\]
If $\mu=0$, then $S=0$ almost surely.
\end{lemma}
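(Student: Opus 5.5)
The statement to prove is Lemma~\ref{lem:Chernoff}, the standard Chernoff bounds. I would argue everything from the exponential moment bound and then specialize.

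\emph{Step 1: exponential form.} By independence,
\[
 \E\exp\Bigl(\theta\sum_i a_iX_i\Bigr)=\prod_i\bigl(1+p_i(e^{\theta a_i}-1)\bigr).
\]
Applying $1+x\leq e^x$ to each factor gives the displayed bound; this works for any real $\theta$, since each factor is positive.

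\emph{Step 2: the two tail bounds.} For $\theta>0$, Markov's inequality applied to $e^{\theta\sum_ia_iX_i}$ gives
\[
 \P\Bigl(\sum_ia_iX_i\geq t\Bigr)\leq e^{-\theta t}\,\E e^{\theta\sum_ia_iX_i},
\]
and Step~1 bounds the right-hand side. This proves the weighted tail form.

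For the upper tail of $S$, I would take $a_i=1$, which gives the bound $\exp[-\theta t+\mu(e^\theta-1)]$. For $t\geq\mu>0$, choose $\theta=\log(t/\mu)\geq0$; the case $t=\mu$ is trivial with $\theta=0$. This choice yields $e^{-\mu}(e\mu/t)^t$.

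For the lower tail, I would use $\theta<0$ with Markov's inequality applied to $e^{\theta S}$ on the event $\{S\leq(1-\delta)\mu\}$. This gives
\[
 \P\bigl(S\leq(1-\delta)\mu\bigr)\leq\exp\bigl[-\theta(1-\delta)\mu+\mu(e^{\theta}-1)\bigr].
\]
Choosing $e^{\theta}=1-\delta$ gives the bound $\bigl(e^{-\delta}(1-\delta)^{-(1-\delta)}\bigr)^{\mu}$. It then remains to check the elementary inequality
\[
 -\delta-(1-\delta)\log(1-\delta)\leq-\delta^2/2 \qquad\text{on }(0,1).
\]
To do this, I would expand $(1-\delta)\log(1-\delta)=-\delta+\sum_{k\geq2}\delta^k/(k(k-1))$. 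This shows the left-hand side equals $-\sum_{k\geq2}\delta^k/(k(k-1))\leq-\delta^2/2$.

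\emph{Step 3: the degenerate case.} If $\mu=0$, every $p_i=0$, so $S=0$ almost surely.

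\emph{Main obstacle.} The only step that is not immediate is the elementary series inequality in the lower-tail optimization. Everything else follows directly from Markov's inequality and $1+x\leq e^x$.
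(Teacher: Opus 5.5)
Your proposal is correct, and for the exponential form and the weighted tail it follows the paper's argument exactly: factor the expectation by independence, apply $1+x\leq e^x$ to each positive factor, then use Markov's inequality. The paper does not prove the two classical tail bounds and instead cites them from Vershynin's book, whereas you derive them directly with $\theta=\log(t/\mu)$ and $e^{\theta}=1-\delta$ plus the series expansion of $(1-\delta)\log(1-\delta)$; these computations are correct and make the lemma self-contained.
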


\begin{proof}[Proof of the exponential form]
Since $X_i$ equals $1$ with probability $p_i$ and $0$ with probability
$1-p_i$, for every $\theta\in\R$ we have
\[
 \E e^{\theta a_iX_i}
 =(1-p_i)+p_i e^{\theta a_i}
 =1+p_i(e^{\theta a_i}-1)
 \leq\exp\!\left(p_i(e^{\theta a_i}-1)\right).
\]
The inequality follows from $1+x\leq e^x$ for every real $x$, so it
also applies when $\theta<0$. Independence lets us factor the
expectation of the product, giving
\[
 \begin{aligned}
 \E\exp\!\left(\theta\sum_i a_iX_i\right)
 &=\E\prod_i e^{\theta a_iX_i}
 =\prod_i\E e^{\theta a_iX_i}\\
 &\leq\prod_i\exp\!\left(p_i(e^{\theta a_i}-1)\right)
 =\exp\!\left(\sum_i p_i(e^{\theta a_i}-1)\right).
 \end{aligned}
\]
For $\theta>0$, Markov's inequality applied to
$\exp(\theta\sum_i a_iX_i)$ at threshold $e^{\theta t}$ gives the
stated exponential tail bound.
\end{proof}

\begin{lemma}[Bennett's inequality, {\cite[Theorem $2.9.2$]{vershynin2018high} }]\label{lem:Bennett}
    Let $\rX_1,\dots, \rX_n$ be independent random variables. Assume that $|\rX_i - \E \rX_i| \leq K$ almost surely for every $i$. Then for any $t>0$, we have
    \begin{align}
        \P \Bigg( \sum_{i=1}^{n} (\rX_i - \E \rX_i) \geq t \Bigg) \leq \exp \Bigg( - \frac{\sigma^2}{K^2} \cdot h \bigg( \frac{Kt}{\sigma^2} \bigg)\Bigg)\,, \notag
    \end{align}
    where $\sigma^2 = \sum_{i=1}^{n}\Var(\rX_i)$ is the variance of the
    sum and $h(t) \coloneqq(1+t)\log(1+t)-t$.
    Furthermore, define $\rY_i=-\rX_i$ and apply the inequality above;
    for any $t>0$, we then have
    \begin{align}
        \P \Bigg( \sum_{i=1}^{n} (\rX_i - \E \rX_i) \leq -t \Bigg) \leq \exp \Bigg( - \frac{\sigma^2}{K^2} \cdot h \bigg( \frac{Kt}{\sigma^2} \bigg)\Bigg)\,.
    \end{align}
For either sign and any deterministic $V>0$ with $\sigma^2\leq V$,
we use the standard quadratic consequence
\begin{align}
 \P\left(\pm\sum_i(\rX_i-\E \rX_i)\geq t\right)
 &\leq\exp\!\left[-\frac{t^2}{2(V+Kt/3)}\right],
 \qquad t>0,
 \label{eqn:bennett-quadratic-tail}
\end{align}
which uses $h(u)\geq u^2/[2(1+u/3)]$ for $u\geq0$.
\end{lemma}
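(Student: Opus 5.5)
The statement is Bennett's inequality together with its lower-tail version and the quadratic consequence \eqref{eqn:bennett-quadratic-tail}. Since the first display is quoted from \cite[Theorem~2.9.2]{vershynin2018high}, I will take it as given and only derive the lower tail and the quadratic form.

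\emph{Step 1 (lower tail).} Set $\rY_i=-\rX_i$. These variables are independent, satisfy $|\rY_i-\E\rY_i|=|\rX_i-\E\rX_i|\leq K$, and have the same variances. Hence the total variance $\sigma^2$ is unchanged. The event $\sum_i(\rX_i-\E\rX_i)\leq -t$ coincides with $\sum_i(\rY_i-\E\rY_i)\geq t$, so the upper-tail bound applied to the $\rY_i$ gives the lower tail with the same exponent.

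\emph{Step 2 (monotonicity in the variance).} The quadratic bound is stated with a deterministic $V\geq\sigma^2$, so I need the Bennett exponent to be monotone in $\sigma^2$. For fixed $a=Kt>0$, write
\[
\phi(\sigma^2)=\frac{\sigma^2}{K^2}h\!\left(\frac{a}{\sigma^2}\right).
\]
Its derivative in $x=\sigma^2$ equals $K^{-2}\bigl(h(u)-uh'(u)\bigr)$ with $u=a/x$. Since $h'(u)=\log(1+u)$, this is $K^{-2}\bigl(\log(1+u)-u\bigr)\leq0$. Thus $\phi$ is nonincreasing in $\sigma^2$, and replacing $\sigma^2$ by $V$ can only weaken the bound. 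If $\sigma^2=0$, the sum is almost surely zero and the tail probability vanishes for every $t>0$.

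\emph{Step 3 (elementary inequality).} It remains to show $h(u)\geq u^2/[2(1+u/3)]$ for $u\geq0$. Let $g(u)=h(u)-u^2/(2+2u/3)$. Then $g(0)=g'(0)=0$, since $h'(u)=\log(1+u)$. Direct differentiation gives $h''(u)=1/(1+u)$, while the second derivative of $u^2/(2+2u/3)$ equals $27/(3+u)^3$. Because $(3+u)^3\geq 27(1+u)$ for $u\geq0$, it follows that $g''\geq0$, and hence $g\geq0$.

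Substituting $u=Kt/V$ then yields
\[
\frac{V}{K^2}h(u)\geq\frac{t^2}{2(V+Kt/3)},
\]
which proves \eqref{eqn:bennett-quadratic-tail} for both signs.

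The only nonroutine point is the monotonicity in Step 2, and it reduces to the sign of $\log(1+u)-u$.
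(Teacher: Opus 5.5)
Your argument is correct and follows the same route the paper indicates: quote the upper tail from Vershynin, apply it to the negated variables for the lower tail, and use $h(u)\geq u^2/[2(1+u/3)]$, which you verify correctly via $g(0)=g'(0)=0$ and $(3+u)^3\geq 27(1+u)$. Your Step~2 is valid but can be shortened: apply the elementary inequality at $u=Kt/\sigma^2$ first, and the resulting exponent $t^2/[2(\sigma^2+Kt/3)]$ is visibly decreasing in $\sigma^2$, so replacing $\sigma^2$ by $V$ is immediate.
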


\begin{lemma}[Weyl's inequality and spectral counting, \cite{Weyl1912DasAV}]
    \label{lem:weyl}
    Let $\rmA, \rmE \in \C^{m \times n}$. Then
    \begin{align}
        |\sigma_i(\rmA+\rmE)-\sigma_i(\rmA)|
        \leq \|\rmE\|,
        \qquad 1\leq i\leq\min\{m,n\}.
    \end{align}
    If $m=n$ and $\rmA,\rmE\in\C^{n\times n}$ are Hermitian, then
    \begin{align}
        |\lambda_i(\rmA+\rmE)-\lambda_i(\rmA)|
        \leq\|\rmE\|,
        \qquad 1\leq i\leq n.
    \end{align}
\end{lemma}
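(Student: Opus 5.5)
This is the standard Weyl inequality, so I will prove both parts from variational characterizations.

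\emph{Singular values.} I will use the Courant--Fischer min-max formula
\[
 \sigma_i(\rmA)=\max_{\dim V=i}\ \min_{x\in V,\ \|x\|_2=1}\|\rmA x\|_2,
\]
where the maximum is over $i$-dimensional subspaces $V\subseteq\C^n$. Fix a subspace $V$ of dimension $i$ and a unit vector $x\in V$. The triangle inequality gives
\[
 \|(\rmA+\rmE)x\|_2\leq\|\rmA x\|_2+\|\rmE\|.
\]
Taking the minimum over unit $x\in V$ and then the maximum over $V$ yields $\sigma_i(\rmA+\rmE)\leq\sigma_i(\rmA)+\|\rmE\|$. Applying the same bound with $\rmA+\rmE$ in place of $\rmA$ and $-\rmE$ in place of $\rmE$ gives the reverse inequality, since $\|-\rmE\|=\|\rmE\|$.

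An alternative route is through the Hermitian linearization $\widetilde{\mH}$ from \eqref{eqn:block-linearization}. It satisfies $\|\widetilde{\rmE}\|=\|\rmE\|$, and its top eigenvalues are the singular values, so this part reduces to the Hermitian case.

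\emph{Eigenvalues.} For Hermitian matrices I will use
\[
 \lambda_i(\rmA)=\max_{\dim V=i}\ \min_{x\in V,\ \|x\|_2=1}x^*\rmA x.
\]
For every unit vector $x$,
\[
 x^*\rmA x-\|\rmE\|\leq x^*(\rmA+\rmE)x\leq x^*\rmA x+\|\rmE\|.
\]
These inequalities are preserved when taking the min over $x\in V$ and then the max over $V$. This gives $|\lambda_i(\rmA+\rmE)-\lambda_i(\rmA)|\leq\|\rmE\|$.

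The only care needed is the correct ordering convention together with the range $i\leq\min\{m,n\}$. With the max-min formula stated over subspaces of $\C^n$, this is automatic.
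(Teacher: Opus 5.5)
Your proof is correct. It is the standard Courant--Fischer argument, and the paper gives no proof of its own; it only cites Weyl, so your argument supplies the omitted justification.

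The one step worth stating explicitly is the singular-value max--min formula. It follows from $\sigma_i(\rmA)^2=\lambda_i(\rmA^*\rmA)$, which holds for $i\leq\min\{m,n\}$ because the remaining eigenvalues of $\rmA^*\rmA$ are zero.

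Your alternative route through the block linearization is equally valid. It matches how the paper itself passes between singular values and the Hermitian matrix $\widetilde{\mH}$.
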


\begin{lemma}[Schur test in matrix version, \cite{schur1911bemerkungen}]\label{lem:schur_test}
    Let $\{a_j\}_{j=1}^{n}$ and $\{b_l\}_{l=1}^{m}$ be two sequences of positive real numbers, and let $\lambda, \mu$ be positive real numbers. For matrix $\rmX \in \C^{n \times m}$, we have $\|\rmX\| \leq \sqrt{\lambda \mu}$ if
    \begin{align}
        \sum_{j=1}^{n} |\ermX_{jl}| \cdot a_{j} \leq \lambda \, b_{l}, \,\, \forall l\in [m], \quad \textnormal{ and }\quad \sum_{l=1}^{m} |\ermX_{jl}| \cdot b_{l} \leq \mu\, a_{j}, \,\, \forall j \in [n].
    \end{align}
    As a consequence, we take $a_{j} = b_{l} = 1$ and choose $\lambda$ (resp.\ $\mu$) as the maximum column (resp.\ row) sum, then
    \begin{align}
        \|\rmX\| \leq \bigg( \max_{j\in [n]} \sum_{l=1}^{m} |\ermX_{jl}| \bigg)^{1/2} \cdot \bigg( \max_{l\in [m]}\sum_{j=1}^{n} |\ermX_{jl}| \bigg)^{1/2}.
    \end{align}
\end{lemma}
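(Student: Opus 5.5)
We would prove the Schur test by bounding the bilinear form $\vy^*\rmX\vx$ directly, using a weighted Cauchy--Schwarz inequality adapted to the test sequences $\{a_j\}$ and $\{b_l\}$. Since $\|\rmX\|=\sup\{|\vy^*\rmX\vx|:\|\vx\|_2=\|\vy\|_2=1\}$ for $\vx\in\C^m$ and $\vy\in\C^n$, it suffices to show
\[
 \sum_{j,l}|\ermX_{jl}|\,|y_j|\,|x_l|\leq\sqrt{\lambda\mu}\,\|\vx\|_2\|\vy\|_2 .
\]

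The key step is to split each summand using the positive weights:
\[
 |\ermX_{jl}|\,|y_j|\,|x_l|
 =\Bigl(|\ermX_{jl}|\tfrac{a_j}{b_l}\Bigr)^{1/2}|x_l|\cdot
  \Bigl(|\ermX_{jl}|\tfrac{b_l}{a_j}\Bigr)^{1/2}|y_j| .
\]
We then apply Cauchy--Schwarz over the index pairs $(j,l)$. The first factor gives $\sum_l |x_l|^2 b_l^{-1}\sum_j|\ermX_{jl}|a_j\leq\lambda\|\vx\|_2^2$ by the column hypothesis. The second gives $\sum_j|y_j|^2a_j^{-1}\sum_l|\ermX_{jl}|b_l\leq\mu\|\vy\|_2^2$ by the row hypothesis. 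Multiplying the two square roots yields $\sqrt{\lambda\mu}\,\|\vx\|_2\|\vy\|_2$, and taking the supremum proves $\|\rmX\|\leq\sqrt{\lambda\mu}$.

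For the stated consequence, we set $a_j=b_l=1$. Then the hypotheses read $\sum_j|\ermX_{jl}|\leq\lambda$ for every $l$ and $\sum_l|\ermX_{jl}|\leq\mu$ for every $j$. These hold with $\lambda$ the maximum absolute column sum and $\mu$ the maximum absolute row sum. If one of these sums vanishes, $\rmX=0$ and the bound is trivial, so the positivity requirement on $\lambda,\mu$ causes no loss.

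There is no real obstacle here. The only point needing care is choosing the split so that each Cauchy--Schwarz factor matches exactly one of the two hypotheses.
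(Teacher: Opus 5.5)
Your argument is correct and is the standard proof of the Schur test. Splitting each term with the weights $(a_j/b_l)^{1/2}$ and $(b_l/a_j)^{1/2}$ and applying Cauchy--Schwarz over the index pairs $(j,l)$ makes each factor match exactly one hypothesis, and your handling of the degenerate case in the corollary (a vanishing maximal row or column sum forces the matrix to be zero) is fine. The paper gives no proof of its own here and simply cites Schur, so there is nothing further to compare.
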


\Needspace{14\baselineskip}
\begin{lemma}[Schur-complement determinant and positivity]\label{lem:schur-complement}
Let
\[
 \mM=\begin{bmatrix}\mA&\mB\\\mC&\mD\end{bmatrix},
 \qquad
 \mS=\mD-\mC\mA^{-1}\mB,
\]
where $\mA\in\C^{n\times n}$ is invertible,
$\mD\in\C^{m\times m}$, and the other blocks have compatible
dimensions. Then
\[
 \det\mM=\det\mA\,\det\mS.
\]
If, in addition, $\mM$ is Hermitian and $\mA\succ0$, then
\[
 \mM\succ0\quad\Longleftrightarrow\quad\mS\succ0.
\]
\end{lemma}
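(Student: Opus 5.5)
The statement to prove is the Schur-complement lemma (Lemma~\ref{lem:schur-complement}): the determinant identity, and the equivalence of positivity of $\mM$ with that of $\mS$ when $\mM$ is Hermitian and $\mA\succ0$. Both parts follow from one block factorization.

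\emph{Block factorization.} Since $\mA$ is invertible, direct multiplication gives
\[
 \mM=\begin{bmatrix}\id_n&\bzero\\ \mC\mA^{-1}&\id_m\end{bmatrix}
 \begin{bmatrix}\mA&\bzero\\ \bzero&\mS\end{bmatrix}
 \begin{bmatrix}\id_n&\mA^{-1}\mB\\ \bzero&\id_m\end{bmatrix}.
\]
To check it, multiply the last two factors to get $\begin{bmatrix}\mA&\mB\\ \bzero&\mS\end{bmatrix}$. Left-multiplying by the first factor then gives the lower-right block $\mC\mA^{-1}\mB+\mS=\mD$.

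\emph{Determinant identity.} The two outer factors are unit block-triangular, so each has determinant one. Multiplicativity of the determinant then gives $\det\mM=\det\mA\,\det\mS$.

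\emph{Positivity equivalence.} Assume now that $\mM$ is Hermitian, so $\mA=\mA^*$, $\mD=\mD^*$ and $\mC=\mB^*$. Set
\[
 \mL=\begin{bmatrix}\id_n&\mA^{-1}\mB\\ \bzero&\id_m\end{bmatrix}.
\]
Because $\mA^{-1}$ is Hermitian, $(\mC\mA^{-1})^*=\mA^{-1}\mB$. Hence the first factor of the block factorization equals $\mL^*$, and the factorization becomes the congruence $\mM=\mL^*\diag(\mA,\mS)\mL$. In particular $\mS$ is Hermitian.

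The matrix $\mL$ is invertible, and congruence by an invertible matrix preserves positive definiteness in both directions. Indeed, $\vf^*\mM\vf=(\mL\vf)^*\diag(\mA,\mS)(\mL\vf)$, and $\vf\mapsto\mL\vf$ is a bijection that sends nonzero vectors to nonzero vectors. Therefore $\mM\succ0$ holds if and only if $\diag(\mA,\mS)\succ0$. Since $\mA\succ0$ by assumption, this is equivalent to $\mS\succ0$.

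There is no genuine obstacle here. The only point requiring care is confirming that the left factor is exactly $\mL^*$, which uses the Hermitian structure ($\mC=\mB^*$ and $\mA^{-1}$ Hermitian).
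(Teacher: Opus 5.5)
Your proposal is correct and follows the paper's proof: the same block LDU factorization, the determinant identity from the unit block-triangular outer factors, and the positivity equivalence by congruence, using that the outer factors are adjoints of each other. You also supply details the paper leaves implicit: the check that the left factor is exactly $\mL^*$, which uses $\mC=\mB^*$ and that $\mA^{-1}$ is Hermitian, and the observation that $\mS$ is Hermitian.
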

\begin{proof}[Proof of Lemma~\ref{lem:schur-complement}]
The factorization
\[
 \mM=
 \begin{bmatrix}\id_n&0\\\mC\mA^{-1}&\id_m\end{bmatrix}
 \begin{bmatrix}\mA&0\\0&\mS\end{bmatrix}
 \begin{bmatrix}\id_n&\mA^{-1}\mB\\0&\id_m\end{bmatrix}
\]
gives the determinant identity. When $\mM$ is Hermitian, the first
and last factors are adjoints of each other, so the same
factorization gives the positivity criterion by congruence.
\end{proof}

\end{document}